\numberwithin{equation}{section}
\theoremstyle{plain}
\newtheorem{theorem}{Theorem}[section]
\newtheorem{proposition}[theorem]{Proposition}
\newtheorem{lemma}[theorem]{Lemma}
\newtheorem{corollary}[theorem]{Corollary}
\theoremstyle{definition}
\newtheorem{definition}[theorem]{Definition}
\newtheorem{remark}[theorem]{Remark}
\newcommand*{\defeq}{\mathrel{\vcenter{\baselineskip0.5ex \lineskiplimit0pt
			\hbox{\scriptsize.}\hbox{\scriptsize.}}}%
	=}
\begin{document}

\title{On a metric view of the polynomial shift locus}

\author{Yan Mary He}
\address{Department of Mathematics, University of Oklahoma, Norman, OK 73019}
\email{he@ou.edu}

\author{Hongming Nie}
\address{Institute for Mathematical Sciences, Stony Brook University, NY 11794}
\email{hongming.nie@stonybrook.edu}
\date{\today}

\begin{abstract}

We relate generic points in the shift locus $\mathcal{S}_D$ of degree $D\ge 2$ polynomials to metric graphs. Using thermodynamic metrics on the space of metric graphs, we obtain a 
distance function $\rho_D$ on $\mathcal{S}_D$. We study the (in)completeness of the metric space $(\mathcal{S}_D, \rho_D)$.  We prove that when $D \ge 3$, 
the space $(\mathcal{S}_D, \rho_D)$ is incomplete and its metric completion contains a subset homeomorphic to  the space $\mathbb{P}\mathcal{ST}_D^*$ introduced by DeMarco and Pilgrim. This provides a new way to understand the space $\mathbb{P}\mathcal{ST}_D^*$.
\end{abstract}

\maketitle

\section{Introduction}\label{sec:intro}
Fix  an integer  $D \ge 2$. 
Let $\mathcal{M}_D$ be the space of complex affine conjugacy classes of polynomials in $\mathbb{C}[z]$ of degree $D$, 
which has the structure of a complex orbifold of dimension $D-1$. The {\it shift locus} $\mathcal{S}_D$ is a subset of $\mathcal{M}_D$ consisting of  elements with only escaping critical points. 
The dynamics on the Julia set of every element in $\mathcal{S}_D$ is conjugate to the one-sided full shift on $D$ symbols.
When $D = 2$, the shift locus $\mathcal{S}_2$ is the complement of the Mandelbrot set in the complex plane. 
 Due to the connectivity of the Mandelbrot set (see \cite{DouadyHubbard}), the set $\mathcal{S}_2$ is homotopic to a circle. 
 When $D \ge 3$, however, the topology of  $\mathcal{S}_D$ becomes 
  complicated and  hard to analyze; see  \cite{Blanchard91, Branner88, Branner92, Calegari21, Calegari22, DeMarco12, DeMarco11, DeMarcoP11, DeMarco17, DeMarco11B,  DeMarco10, Goldberg94, Kiwi05}.
On the other hand, one can define dynamically meaningful metrics on $\mathcal{S}_D$ 
and study the {\it geometry} of $\mathcal{S}_D$ with respect to such metrics.

In this paper, 
we introduce such a metric on $\mathcal{S}_D$ and investigate the  (in)completeness of the resulting metric space. 
More importantly, when $D \ge 3$, we naturally identify a subset in the metric completion with 
the space $\mathbb{P}\mathcal{ST}^*_D$ constructed in \cite{DeMarco11}. 
This offers a new perspective to the space $\mathbb{P}\mathcal{ST}^*_D$, in addition to a topological interpretation in \cite{DeMarcoP11}, a combinatorial description in \cite{DeMarco17}, and a computational study in \cite{DeMarco10}.

Our metric is obtained by constructing a Weil-Petersson type non-degenerate symmetric bilinear form $\langle \cdot, \cdot \rangle_{\mathcal{S}_D}$ on the tangent space $T_{[f]}\mathcal{S}_D$ at any {\it generic} point $[f] \in \mathcal{S}_D$ (for genericity, see Definition \ref{def:generic}).  
Given two paths 
 representing two tangent vectors $\vec{v}_1, \vec{v}_2 \in T_{[f]}\mathcal{S}_D$, using appropriate information (height and twisting) of the critical points,  
 we obtain two smooth paths  in the space of metric rose graphs with $2D-2$ petals. We then define $\langle \vec{v}_1, \vec{v}_2 \rangle_{\mathcal{S}_D}$ 
by  the inner product (a Weil-Petersson type metric), constructed in  \cite{Aougab23, Pollicott14},  of the corresponding tangent vectors  in the space of metric graphs with unit entropy, see Section \ref{sec:PG}. 

A smooth curve in $\mathcal{S}_D$ is \emph{generic} if every interior point is generic; and is \emph{piecewise generic} if it is a finite union of generic curves.
 
 \begin{proposition}\label{thm:metric}
The function $\rho_D: \mathcal{S}_D\times\mathcal{S}_D\to\mathbb{R}$
	defined by 
	$$\rho_D([f_1],[f_2]) \defeq \inf_{\gamma=\cup_{j=1}^m\gamma_j}\sum_{j=1}^m\int_{a_j}^{a_{j+1}}\langle \dot{\gamma_j}(t), \dot{\gamma_j}(t)\rangle_{\mathcal{S}_D}^{1/2}dt$$
	is a distance function, where the infimum is taken over piecewise generic curves $\gamma:[0,1]\to\mathcal{S}_D$ connecting $[f_1]$ and $[f_2]$ with generic pieces $\gamma_j:[a_j,a_{j+1}]\to\mathcal{S}_D$. 
\end{proposition}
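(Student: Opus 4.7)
The plan is to verify the four axioms of a distance function in turn, with the main work going into non-degeneracy. The bilinear form $\langle\cdot,\cdot\rangle_{\mathcal{S}_D}$ is defined at every generic point, so the length integrand $\langle \dot\gamma_j(t),\dot\gamma_j(t)\rangle^{1/2}_{\mathcal{S}_D}$ makes sense on each interior of a generic piece. A preliminary step is to check that the infimum is taken over a nonempty family: I will argue that generic points form an open dense subset of the complex orbifold $\mathcal{S}_D$ (this should follow from Definition~\ref{def:generic}, since the non-generic locus is cut out by coincidences among the heights/twists of critical points, hence is locally a proper analytic subvariety), and then use connectedness of $\mathcal{S}_D$ to connect any two classes $[f_1],[f_2]$ by a smooth arc, which can be perturbed off the non-generic locus on each subinterval to produce a piecewise generic curve.

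Once nonemptiness is settled, the easy axioms follow from formal properties. Taking the constant path at $[f]$ gives $\rho_D([f],[f])=0$, reversing the parametrization of a piecewise generic curve yields symmetry, and the triangle inequality follows by concatenating a curve from $[f_1]$ to $[f_2]$ with one from $[f_2]$ to $[f_3]$, noting that we may insert an intermediate generic point near $[f_2]$ so that the concatenation is still piecewise generic with the same integral up to an arbitrarily small error. Positivity ($\rho_D\ge 0$) is immediate from the fact that the bilinear form is positive semidefinite; at generic points it is in fact positive definite by the cited results of \cite{Aougab23, Pollicott14} on the Weil--Petersson type metric on the space of metric rose graphs of fixed entropy, together with the construction of the form via pullback along the "associated metric graph" map $\Phi$ sending $[f]$ to its rose graph with heights and twists.

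The heart of the proof is non-degeneracy: if $[f_1]\neq[f_2]$, then $\rho_D([f_1],[f_2])>0$. The approach is local. Choose a generic point $[f^*]$ close to $[f_1]$ (using density of generic points); in a small neighborhood $U$ of $[f^*]$, the bilinear form varies continuously and is positive definite, so it is bounded below by a genuine Riemannian metric $g_U$ on an orbifold chart of $\mathcal{S}_D$, which in turn controls the Euclidean distance. If $[f_2]\not\in U$, then any connecting piecewise generic curve must exit $U$, so its $\rho_D$-length dominates the $g_U$-distance from $[f_1]$ to $\partial U$, which is positive. If $[f_2]\in U$, then along any piecewise generic curve from $[f_1]$ to $[f_2]$ the integrand dominates $g_U$-length, and since $\Phi$ is a local immersion at $[f^*]$ with respect to the chart (the height and twisting parameters give local coordinates on generic $\mathcal{S}_D$, matching the parameters on the space of metric rose graphs), the pulled-back metric is non-degenerate, hence the $\rho_D$-length of the curve is bounded below by a positive constant times the Euclidean distance between $[f_1]$ and $[f_2]$.

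The main obstacle is precisely this last step: establishing that $\Phi$ is a local embedding at generic points, so that the pullback bilinear form really is non-degenerate. This requires that heights and twists of the $2D-2$ critical points provide local coordinates on $\mathcal{S}_D$ near a generic class, which should follow from the construction of the generic locus and the standard fact that escaping critical values (equivalently their B\"ottcher-coordinate data) parametrize the shift locus locally up to the affine conjugacy ambiguity. Once this local immersion property is in hand, the comparison with the Weil--Petersson metric on metric graphs of unit entropy transfers non-degeneracy from the target to $\mathcal{S}_D$ and completes the proof.
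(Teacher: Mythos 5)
Your plan diverges noticeably from the paper's proof, and there is one genuine gap worth naming: the symmetry axiom. You dispatch it in a clause, claiming that reversing the parametrization of a piecewise generic curve yields symmetry. That would be immediate if $\langle\cdot,\cdot\rangle_{\mathcal{S}_D}$ were an honest bilinear form, but the form is built from the map $\iota_{[f_0]}$, which is defined piecewise --- by \eqref{def_st1} when $\vec{v}\notin\mathfrak{T}_{[f_0]}$ and by \eqref{def_st2} when $\vec{v}\in\mathfrak{T}_{[f_0]}$ --- and by Proposition~\ref{prop:iota}~(2) is not even injective on $T_{[f_0]}\mathcal{S}_D$ (it collapses twist directions whenever there is a nonzero height component), so it is certainly not linear. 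The paper therefore devotes part of Proposition~\ref{prop:iota}~(1) precisely to proving $\iota_{[f_0]}(-\vec{v})=-\iota_{[f_0]}(\vec{v})$, which is what gives $\|-\vec{v}\|=\|\vec{v}\|$ and hence symmetry of $\rho_D$. Without invoking this, reversing a parametrization is not obviously length-preserving, and you should cite that identity explicitly.

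Beyond this, the routes differ considerably. The paper's proof is very short: well-definedness comes from Lemma~\ref{lem:finite}, where the locally finite stratification of $\mathcal{S}_D$ by the non-generic conditions $h_i = D^n h_j$ (together with compactness of a connecting arc) produces a piecewise generic curve between any two points; $\rho_D([f],[f])=0$ and the triangle inequality are called straightforward; and symmetry uses Proposition~\ref{prop:iota}~(1). The paper does not address non-degeneracy of $\rho_D$ at all, whereas you spend most of your effort on it. Your sketch leans on two claims not available in the paper as stated: that the ``associated metric graph map'' is a local immersion at generic points (again, $\iota_{[f_0]}$ fails injectivity on the full tangent space), and that the norm is bounded below on a compact neighborhood crossing the non-generic locus, where the form is not defined; continuity on the open generic set alone does not rule out degeneration as one approaches a non-generic wall. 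Your well-definedness argument (density of generic points plus perturbation) is morally the same as Lemma~\ref{lem:finite}, but the paper's stratification argument is what makes ``perturb off the non-generic locus while keeping endpoints fixed and the number of pieces finite'' precise.
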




\begin{figure}[h]
	\begin{minipage}[b]{0.33\textwidth}
		\includegraphics[width=\textwidth]{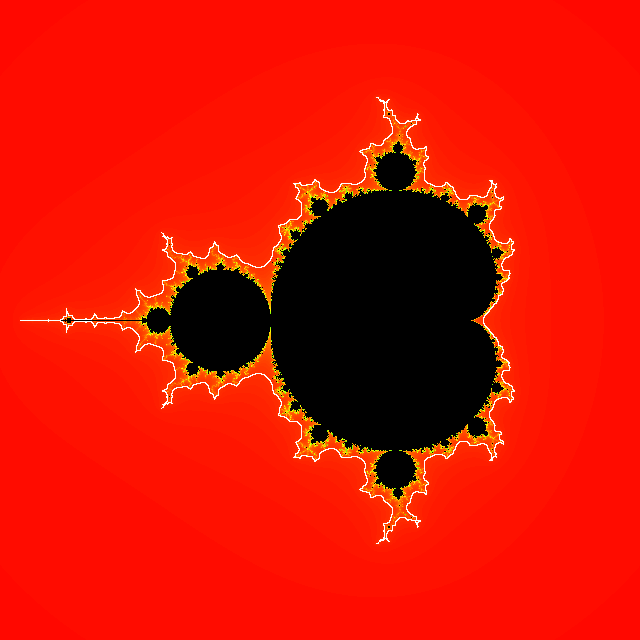}
	\end{minipage}
	\begin{minipage}[b]{0.33\textwidth}
		\includegraphics[width=\textwidth]{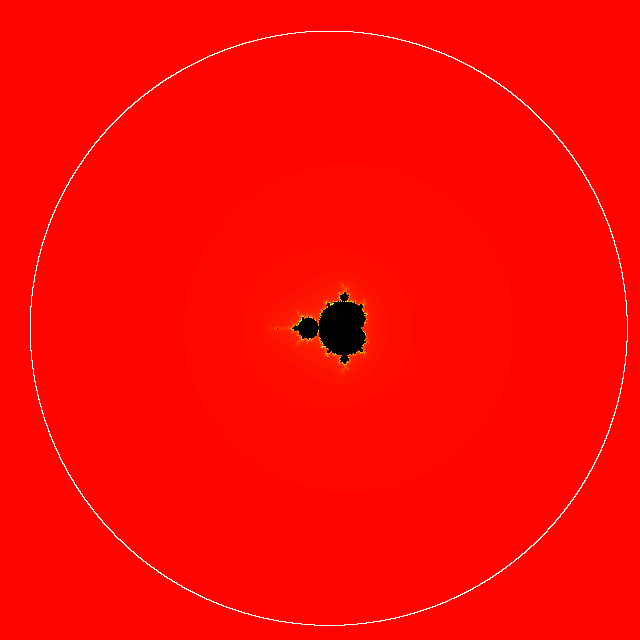}
	\end{minipage}
	\caption{In $\mathcal{S}_2$, the level set (white curve) of a small (left) or large (right) escaping rate of the critical point $0$ has short $\rho_2$-length.}
	\label{fig:1}
\end{figure}

We now focus on the (in)completeness of $(\mathcal{S}_D,\rho_D)$. Let us first introduce the intermediate space $\mathcal{T}_D^\ast$, constructed in  \cite{DeMarco11}, in the monotone-light factorization of the critical heights map. Consider the set $\mathcal{H}_D$ of $(D-1)$-tuples of nonnegative real numbers, ordered nonincreasingly. The escaping rates of the critical points induce a map 
	$$\mathcal{G}: \mathcal{M}_D\to\mathcal{H}_D,$$
sending $[f]\in  \mathcal{M}_D$ to the (ordered) escaping rates of  critical points of $f$, see Section \ref{sec:height}. For brevity, we write $h_j(f)$ the $j$-th coordinate of $\mathcal{G}([f])$. 
Denote by $\mathcal{T}^\ast_D$ the quotient space of $\mathcal{M}_D$ obtained by collapsing connected components of fibers of $\mathcal{G}$ to points. Then  $\mathcal{G}$ factors as 
$$\mathcal{M}_D\to\mathcal{T}^\ast_D\to\mathcal{H}_D.$$
The first map $\mathcal{M}_D\to\mathcal{T}^\ast_D$ is monotone (i.e., the fibers are connected) and the second map $\mathcal{T}^\ast_D\to\mathcal{H}_D$ is light (i.e., the fibers are totally disconnected), see \cite[Theorem 1.3 and Corollary 1.4]{DeMarco11}.  

The stretching 
defines continuous $\mathbb R_+$-actions on $\mathcal{T}^\ast_D$ and $\mathcal{H}_D$, which induces cone structures on $\mathcal{T}_D^*$ and $\mathcal{H}_D$, see \cite[Theorem 1.5 and Lemma 5.1]{DeMarco11}. Denote by $O$ the origin of $\mathcal{T}_D^*$, corresponding to the quotient of the connectedness locus. 
Passing to quotients by stretching gives the following maps on among the projective spaces 
\begin{equation*}
	\mathbb{P}\mathcal{M}_D\to\mathbb{P}\mathcal{T}^\ast_D\to\mathbb{P}\mathcal{H}_D.
\end{equation*}
Restricting to $\mathcal{S}_D$ yields the factorization 
$$\mathcal{S}_D\to\mathcal{ST}^\ast_D\to\mathcal{SH}_D,$$
and the corresponding factorization among the projective spaces 
$$ \mathbb{P}\mathcal{S}_D\to\mathbb{P}\mathcal{ST}^\ast_D\to\mathbb{P}\mathcal{SH}_D.$$

Let $\widehat{\mathcal{S}}_D$ denote the metric completion of the space $(\mathcal{S}_D, \rho_D)$; that is, $\widehat{\mathcal{S}}_D$ is the set of equivalence classes of  Cauchy sequences  $\{[f_k]\}_{k\ge0}$ in 
$(\mathcal{S}_D, \rho_D)$. We say that a sequence $\{[f_k]\}_{k\ge0}$ in $\mathcal{S}_D$ is \emph{degenerating} if the fastest escaping rate of critical points tends to $\infty$ as $k \to \infty$. We denote by  $\mathcal{Y} \subset \widehat{\mathcal{S}}_D$ the subset of equivalence classes of degenerating Cauchy sequences in $(\mathcal{S}_D, \rho_D)$.  Let $\mathcal{X}\subset\mathcal{Y}$ be the subset of equivalence classes with representatives $\{[f_k]\}_{k\ge0}$ having  convergent $h_j(f_k)/h_1(f_k)$ in $\mathbb{R}_{>0}$, as $k\to\infty$, for each $2\le j\le D-1$. 
Denote by $\pi:\mathbb{P}\mathcal{T}^\ast_D\to\mathbb{P}\mathcal{H}_D$ the map in the above factorizations. 

\begin{theorem}\label{thm:X}
	Fix the notations as above. Then the following hold:
	\begin{enumerate}
		\item The metric space $(\mathcal{S}_D,\rho_D)$ is complete if and only if $D=2$. 
		\item For $D \ge 3$, there exists a continuous and surjective map $\phi:\widehat{\mathcal{S}}_D\setminus\mathcal{S}_D\to\mathbb{P}\mathcal{T}^\ast_D\setminus\{\mathit{O}\}$ such that 
		\begin{enumerate}
			\item $\pi(\phi(\mathcal{Y}))$ is the proper subset of $\mathbb{P}\mathcal{H}_D$ consisting of elements with either 0 or at least $2$ zero entries, in particular, $\mathbb{P}\mathcal{ST}^\ast_D\subseteq\phi(\mathcal{Y})$; and
			\item $\phi|_{\mathcal{X}}$ is  a homeomorphism onto $\mathbb{P}\mathcal{ST}^\ast_D$ and $\phi|_{\mathcal{Y}}^{-1}(\mathbb{P}\mathcal{ST}^\ast_D)=\mathcal{X}$.
		\end{enumerate}
	\end{enumerate}
\end{theorem}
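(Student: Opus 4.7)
I would approach Theorem~\ref{thm:X} by separating the completeness dichotomy from the construction of $\phi$, then analyzing the image stratification.

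\textbf{Part (1).} When $D=2$ there is a single escaping critical point, and the rose graph with $2D-2=2$ petals is determined by an explicit B\"ottcher-type coordinate. I would compare $\rho_2$, up to bi-Lipschitz, with the pullback under this parametrization of a standard complete metric, and check from the definition of the thermodynamic form that the $\rho_2$-distance from a fixed basepoint diverges both as the critical escape rate tends to $0$ and as it tends to $\infty$. Any Cauchy sequence therefore stays in a compact subset of $\mathcal{S}_2$ and converges. For $D\ge 3$ I would exhibit an explicit non-convergent Cauchy sequence: starting from any generic $[f_0]$, apply the stretching $\mathbb{R}_+$-action to obtain $[f_k]$ with $h_1(f_k)=k$. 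Since common rescaling of all heights is absorbed by the unit-entropy normalization of the rose graph, the Weil--Petersson arclength along the stretching ray grows only logarithmically in $k$, so the tails are $\rho_D$-Cauchy while $h_1\to\infty$ prevents any limit in $\mathcal{S}_D$.

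\textbf{Construction of $\phi$.} Given $\mathbf{x}=[\{[f_k]\}]\in\widehat{\mathcal{S}}_D\setminus\mathcal{S}_D$, use the stretching action to normalize each representative to a class $[\tilde f_k]\in\mathbb{P}\mathcal{M}_D$ with $h_1(\tilde f_k)=1$. The length estimate from part (1) shows that the Cauchy property of $\{[f_k]\}$ in $\rho_D$ forces $\{[\tilde f_k]\}$ to accumulate in the natural compact stretching-quotient model of $\mathbb{P}\mathcal{M}_D$, and any two accumulation points have the same image in $\mathbb{P}\mathcal{T}^*_D$ by the monotone part of the DeMarco--Pilgrim factorization. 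I define $\phi(\mathbf{x})$ as this common image. Well-definedness on equivalence classes and continuity follow from the same monotone-quotient argument applied to pairs of sequences at $\rho_D$-distance tending to $0$. Surjectivity onto $\mathbb{P}\mathcal{T}^*_D\setminus\{O\}$ is obtained by reversing the construction: lift any nonzero point to $\mathbb{P}\mathcal{M}_D$ and follow a stretching ray, which yields a Cauchy sequence in $\mathcal{S}_D$ mapping to it.

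\textbf{Parts (2a) and (2b).} The composition $\pi\circ\phi$ records the limit of the projective heights $h_j(f_k)/h_1(f_k)$. I would show that the exactly-one-zero stratum is unattainable: realizing a limit with $h_{D-1}/h_1\to 0$ while all other ratios stay positive requires the slow critical point to either remain bounded with an unconstrained twist, or to enter a non-escaping regime; in either case a direct estimate on the thermodynamic form for the slow petal produces infinite $\rho_D$-arclength. The strata with all ratios positive, and with at least two ratios vanishing simultaneously (where a coordinated collapse is realizable by a finite-length stretching-plus-twist path), are attained by the same construction as in part (1). The inclusion $\mathbb{P}\mathcal{ST}^*_D\subseteq\phi(\mathcal{Y})$ then follows because $\mathbb{P}\mathcal{ST}^*_D$ lies over the all-positive stratum. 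For (b), $\mathcal{X}$ is by definition the subset of $\mathcal{Y}$ whose class lies over the all-positive stratum, so $\phi(\mathcal{X})\subseteq\mathbb{P}\mathcal{ST}^*_D$ and $\phi|_\mathcal{Y}^{-1}(\mathbb{P}\mathcal{ST}^*_D)=\mathcal{X}$. Injectivity of $\phi|_\mathcal{X}$ follows from the light part of the factorization (total disconnectedness of fibers of $\pi$ over the open stratum) combined with the length inequality refined to detect twist information, and continuity of the inverse follows from continuity of heights and twists on $\mathcal{S}_D$ together with properness on the open stratum.

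\textbf{Main obstacle.} The crux is a two-sided comparison between $\rho_D$ and the natural stretching-plus-twist coordinates near infinity: the upper bound (finite arclength along stretching rays) drives both incompleteness and existence of $\phi$, while a matched lower bound is what pins the image to $\mathbb{P}\mathcal{T}^*_D$ rather than the coarser $\mathbb{P}\mathcal{H}_D$ and excludes the exactly-one-zero stratum. Sharpening this lower bound enough to distinguish all three height strata simultaneously, and to recover twist data on the open stratum for the homeomorphism in~(2b), is the step I expect to demand the most delicate work.
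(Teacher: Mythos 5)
Your outline captures the right high-level shape — a completeness dichotomy, a map $\phi$ induced by the monotone factorization, and a stratification of the image by zero entries in projective heights — but several of the pivotal estimates you invoke are wrong, and the surjectivity mechanism does not produce the sequences you need.

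\textbf{The arclength estimate along stretching rays is incorrect, and the error breaks the Cauchy claim.} You assert that for $D\ge 3$ the entropy arclength from $[f_0]$ to the stretched $[f_k]$ with $h_1(f_k)=k$ ``grows only logarithmically in $k$, so the tails are $\rho_D$-Cauchy.'' If the arclength from the base to $[f_k]$ were comparable to $\log k$, then the arclength from $[f_k]$ to $[f_{2k}]$ would be comparable to $\log 2$, a fixed positive constant, and the sequence could not be Cauchy. The mechanism that actually works is exponential decay: after renormalizing to unit entropy, the only coordinate tending to $\infty$ is $\hat\ell_k(e_1)$, and the derivative term of the entropy appearing in the norm formula \eqref{eq_gammadotgamma} is multiplied by a factor of size $\exp(-C\,\hat\ell_k(e_1))$. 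Integrating this along the stretch ray produces a tail distance bounded by a quantity that goes to zero as $k\to\infty$ \emph{uniformly} in the increment. This is exactly the calculation the paper carries out in Proposition~\ref{prop:X} (and again in Lemma~\ref{lem:Y}); without this exponential decay you have no Cauchy sequence and hence no proof that $(\mathcal{S}_D,\rho_D)$ is incomplete for $D\ge3$.

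\textbf{Surjectivity does not follow from lifting and stretching.} You propose: ``lift any nonzero point to $\mathbb{P}\mathcal{M}_D$ and follow a stretching ray.'' Stretching fixes the projective class, so a stretch ray sits over a single point of $\mathbb{P}\mathcal{T}_D^\ast$; to witness the boundary strata one needs the projective heights to \emph{change} along the sequence. Moreover, a point $\zeta$ with $\pi(\zeta)$ having zero entries does not lift into $\mathcal{S}_D$ at all (a zero height means a bounded critical orbit), so your lift cannot even land in the shift locus. The paper instead chooses genuine height-deformation rays inside $\mathcal{S}_D$ converging in $\overline{\mathcal{M}}_D$ to the desired boundary point: Lemma~\ref{lem:Y} realizes the $\ge 2$-zero stratum with $h_1\to\infty$ (giving an element of $\mathcal{Y}$) and Lemma~\ref{lem:Z} realizes the exactly-one-zero stratum with $h_1\to 0$ (hence not in $\mathcal{Y}$), with Remark~\ref{rmk:Y} showing the exactly-one-zero stratum cannot be realized with $h_1\to\infty$; the exclusion of the one-zero stratum from $\pi(\phi(\mathcal{Y}))$ relies on Proposition~\ref{prop_714}, not on any ``non-escaping regime'' heuristic.

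\textbf{Part (2b) is missing its key mechanism.} You say the homeomorphism follows from the lightness of $\pi$ ``combined with the length inequality refined to detect twist information,'' but this is the wrong direction: to prove $\phi|_{\mathcal{X}}$ is injective you need that two Cauchy sequences over the same $\zeta\in\mathbb{P}\mathcal{ST}_D^\ast$ are $\rho_D$-equivalent, and this requires showing that the entropy metric \emph{forgets} twist data in the limit, i.e.\ that twist-deformation orbits shrink to zero diameter as $h_1\to\infty$. That is the content of Corollary~\ref{coro:twist}, which rests on Proposition~\ref{coro:length-0}. Without this twist-collapse statement, which your outline never isolates, the ``if'' direction of Proposition~\ref{prop:X} and hence both surjectivity of $\phi|_{\mathcal{X}}$ and the equality $\phi|_{\mathcal{Y}}^{-1}(\mathbb{P}\mathcal{ST}^\ast_D)=\mathcal{X}$ cannot be obtained.

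In short, the stratification picture and the role of the monotone-light factorization are sound, but the three quantitative inputs — exponential (not logarithmic) decay of the entropy norm in the degenerate direction, realization of boundary strata by height-deformation rays chosen to approach the boundary from inside $\mathcal{S}_D$, and collapse of twist orbits — are missing or reversed, and each is essential.
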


The above result provides a new way of understanding the space $\mathbb{P}\mathcal{ST}^\ast_D$. 
Its proof contains a detailed study of  degenerating Cauchy sequences in $(\mathcal{S}_D, \rho_D)$, see Section \ref{sec:cauchy}. 
It is worth mentioning that $(\mathcal{S}_D, \rho_D)$ has infinite diameter, see Section \ref{subsec_infdiam}. 

The space $\mathcal{M}_D$ has a natural compactification $\overline{\mathcal{M}}_D$ via the moduli space $\mathcal{T}_D$ of metrized polynomial-like trees, see \cite{DeMarco08}. 
We denote by $\overline{\mathcal{S}}_D \subset \overline{\mathcal{M}}_D$ the compactification of $\mathcal{S}_D\subset\mathcal{M}_D$. Our next result characterizes degenerating sequences $\{[f_k]\}_{k\ge 0}$ in $\mathcal{S}_D$ with limit in $\overline{\mathcal{S}}_D \setminus\mathcal{S}_D$ that are not Cauchy. This gives a comparison between $\overline{\mathcal{S}}_D$ and $\widehat{\mathcal{S}}_D$.

\begin{theorem}\label{coro:tree}
	Let $\{[f_k]\}_{k\ge 0}\subset\mathcal{S}_D$ be a 
	sequence converging in $\overline{\mathcal{S}}_D$ with $h_1(f_k)\to\infty$, as $k\to\infty$. Then the following hold: 
	\begin{enumerate}
		\item If $D = 2$, then $\rho_D([f_0],[f_k]) \to \infty$. 
		\item If $D\ge 3$, then $\rho_D([f_0],[f_k])\to\infty$ if and only if $h_{D-1}(f_k)/h_{D-2}(f_k)\to 0$.
	\end{enumerate}	
\end{theorem}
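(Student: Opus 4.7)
The plan is to analyze the $\rho_D$-length of paths from $[f_0]$ to $[f_k]$ by pulling back to the Weil--Petersson geometry on the space of unit-entropy rose graphs with $2D-2$ petals, and to combine this with Theorem \ref{thm:X}. Convergence of $\{[f_k]\}$ in $\overline{\mathcal{S}}_D$ forces the projective critical heights to converge in $\mathbb{P}\mathcal{H}_D$ to a tuple $(1:\alpha_2:\cdots:\alpha_{D-1})$ with $1=\alpha_1\ge\alpha_2\ge\cdots\ge\alpha_{D-1}\ge 0$. I would decompose any candidate path into a stretching piece (changing $h_1$) and a reshaping piece (changing the projective class), and estimate the length of each piece via the WP inner product computed on the associated rose graph, using the formalism of Section \ref{sec:PG} and the Cauchy-sequence analysis of Section \ref{sec:cauchy}.

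For part~(1), when $D=2$ there is a single critical height, so any sequence with $h_1(f_k)\to\infty$ is trivially projectively convergent and degenerates only through stretching. With $2$ petals the unit-entropy constraint forces equal lengths $\log 2$, so the rose shape is rigid and only the twist varies along the stretching ray. A direct computation of $\langle\cdot,\cdot\rangle_{\mathcal{S}_2}$ along stretching, together with the observation that each level set $\{h_1=c\}$ has $\rho_2$-diameter uniformly bounded in $c$, yields $\rho_2([f_0],[f_k])\to\infty$. Alternatively, the infinite-diameter statement of Section \ref{subsec_infdiam} serves as a shortcut, since the stretching rays exhaust $\mathcal{S}_2$ up to a bounded transversal.

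For part~(2), with $D\ge 3$, suppose first that $h_{D-1}/h_{D-2}\not\to 0$, so along a subsequence this ratio is bounded below by some $c>0$. Along such a subsequence, the rescaling $[g_k]$ of $[f_k]$ with $h_1(g_k)=1$ has projective image remaining in a compact subset of $\mathbb{P}\mathcal{ST}^\ast_D$. Using the homeomorphism $\phi|_{\mathcal{X}}\to\mathbb{P}\mathcal{ST}^\ast_D$ from Theorem \ref{thm:X}(2)(b), I would build a bounded-$\rho_D$-length path from $[f_0]$ to $[f_k]$ by first traveling along a stretching ray whose projective class lies in $\mathcal{X}$ and then performing a short reshaping to land at $[f_k]$; this shows $\rho_D([f_0],[f_k])\not\to\infty$. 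Conversely, if $h_{D-1}/h_{D-2}\to 0$, the projective limit either has exactly one zero entry (excluded from $\pi(\phi(\mathcal{Y}))$ by Theorem \ref{thm:X}(2)(a)) or has multiple zero entries with a vanishing deepest ratio; in either situation any bounded-length subsequence would yield a Cauchy element of $\widehat{\mathcal{S}}_D\setminus\mathcal{S}_D$ lying in $\mathcal{Y}$, contradicting the description of $\pi(\phi(\mathcal{Y}))$.

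The main obstacle is the second sub-case of the converse direction: showing that, even when the projective height limit already has $\ge 2$ zero entries, the refined condition $h_{D-1}/h_{D-2}\to 0$ alone forces divergence of $\rho_D([f_0],[f_k])$. This requires a two-scale rescaling---first by $h_1$ to resolve the macroscopic tree, then by $h_{D-2}$ to resolve the microscopic tree on the degenerating petals---together with a comparison showing that the WP metric on the full rose dominates (up to a constant) the WP metric on the sub-rose formed by the smallest petals. Making this hierarchical WP comparison precise is, in my view, the technical heart of the proof.
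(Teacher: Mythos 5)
Your proposal tries to deduce this theorem from Theorem~\ref{thm:X}(2), and the paper explicitly warns in the introduction that this cannot work. For the bounded (``only if'') direction you claim that, when $h_{D-1}/h_{D-2}$ is bounded away from $0$, the rescaled sequence has projective image in a compact subset of $\mathbb{P}\mathcal{ST}^\ast_D$, and you then invoke the homeomorphism $\phi|_{\mathcal X}$. Two things break here. First, $h_{D-1}/h_{D-2}\to c>0$ is compatible with $h_{D-1}/h_1\to 0$, so the projective height limit may have $\ge 2$ zero entries and the rescaled sequence leaves every compact subset of $\mathbb{P}\mathcal{ST}^\ast_D$. Second, and more fundamentally, convergence of $[f_k]$ in $\overline{\mathcal{S}}_D=\mathcal{M}_D\cup\mathbb{P}\mathcal{T}_D$ gives a limit in $\mathbb{P}\mathcal{T}_D$ but not a unique accumulation point in $\mathbb{P}\mathcal{T}^\ast_D$: the fibers of the light map $\mathbb{P}\mathcal{T}^\ast_D\to\mathbb{P}\mathcal{T}_D$ have no uniform cardinality bound, so $\{[f_k]\}$ may wander through an unbounded number of twist-deformation components and need not be Cauchy even when $\rho_D([f_0],[f_k])$ is bounded. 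The paper replaces the appeal to Theorem~\ref{thm:X} with Proposition~\ref{prop:deg-con}, which bounds the $\rho_D$-distance between components of a fiber over a fixed height tuple by routing a three-piece piecewise height line segment through the all-heights-equal locus and invoking the completion $\widehat{\mathfrak{M}}^1_{\mathfrak{R}_{2D-2}}$ of the rose-graph space (Proposition~\ref{prop:completion-g}, Theorem~\ref{thm:com}). Your ``short reshaping'' step is exactly this non-trivial estimate, and it is not a consequence of Theorem~\ref{thm:X}(2)(b).

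For the divergent (``if'') direction, your contradiction argument assumes that boundedness of $\rho_D([f_0],[f_k])$ along a subsequence would produce a Cauchy element of $\widehat{\mathcal{S}}_D\setminus\mathcal{S}_D$ lying in $\mathcal{Y}$; this step is unjustified since $(\mathcal{S}_D,\rho_D)$ is not proper, so bounded sets need not contain Cauchy subsequences. Moreover, when the projective height limit has $\ge 2$ zero entries and $h_{D-1}/h_{D-2}\to 0$, that limit \emph{does} lie in $\pi(\phi(\mathcal{Y}))$, so no contradiction is obtained from Theorem~\ref{thm:X}(2)(a); you flag this as ``the main obstacle,'' but the ``two-scale rescaling / hierarchical WP comparison'' you suggest is not developed and is not the paper's route. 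The paper's argument is a direct lower bound that avoids projective limits entirely: with $h_1\to\infty$ and $h_{D-1}/h_{D-2}\to 0$, the base length functions are singular with $\mathbf{I}=\{D-1\}$ (case (2c) of Proposition~\ref{lem:entropy-infinite}, Corollary~\ref{coro:singular}), so the unit-entropy base length functions $\hat\ell_k$ have one edge shrinking to $0$ while the rest diverge; Proposition~\ref{prop_714} then forces $\rho_{\mathfrak h,\mathcal{R}}(\hat\ell_B([f_0]),\hat\ell_B([f_k]))\to\infty$, and Corollary~\ref{coro:greater} transfers this to $\rho_D$. This is Proposition~\ref{prop:infinite-length} applied after passing to a uniformly divergent subsequence via Lemma~\ref{lem:existence-uni}.
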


We cannot deduce Theorem \ref{coro:tree} (2) from Theorem \ref{thm:X} (2), since the cardinality of the fibers of the map $\mathbb{P}\mathcal{T}^\ast_D\to\mathbb{P}\mathcal{T}_D$ does not have a uniform upper bound. Instead, we show that for a given set of critical heights, the distance between two twist deformation components in the fiber has an upper bound which depends only on the critical heights, see Proposition \ref{prop:deg-con}. 


We end this introduction by summarizing related works about Weil-Petersson type metrics on hyperbolic components. McMullen \cite{McMullen08} constructed a 
  pressure metric on the space $\mathcal{B}_D$ of expanding Blaschke products of degree $D$ at least $2$. Since via Bers embedding, the space $\mathcal{B}_D$ can be identified with the hyperbolic component $\mathcal{H}\subset\mathcal{M}_D$ containing $[z^D]$, 
   McMullen's metric provides a Weil-Petersson type metric  on $\mathcal{H}$. 
    Ivrii \cite{Ivrii}  studied the metric property of McMullen's metric on $\mathcal{B}_2$, where he proved that the metric is incomplete and gave a partial completion of the space. In our previous work \cite{HN}, inspired by  \cite{Bridgeman10} and \cite{Bridgeman08}, we constructed a Weil-Petersson type metric on hyperbolic components of the moduli space of degree $D$ rational maps satisfying a condition on repelling multipliers with the aid of Hausdorff dimension of Julia sets and Lyapunov exponents of invariant measures. Although our previous metric in \cite{HN} induces a distance function on $\mathcal{S}_D$, it seems difficult to understand the (in)completeness of the resulting metric space.


\subsection*{Outline}
The paper is organized as follows. In Section \ref{sec_prelim}, we give  background in polynomial dynamics and metric graphs. In Section \ref{sec_Rosegraphs}, we state results concerning 
length functions on the $n$-petal rose graph. 
In Section \ref{sec_3}, we relate generic points in $\mathcal{S}_D$ to length functions on $(2D-2)$-petal rose graph and  show  Proposition \ref{thm:metric}. In Section \ref{sec_completion}, we discuss Cauchy sequences in $\mathcal{S}_D$ and establish  Theorem \ref{thm:X}. We then prove Theorem \ref{coro:tree} in Section \ref{sec:corolalry}. Finally, we illustrate our results for the cubic shift locus $\mathcal{S}_3$  in Section \ref{sec:cubic}.


\subsection*{Acknowledgments} The authors would like to thank Kevin Pilgrim for helpful comments.

\subsection*{Notations}
For brevity, we will use the following notations. We denote by $\widehat{\mathbb{R}}_{\ge 0}:= \mathbb{R}_{\ge 0}\cup\{\infty\}$ and $\widehat{\mathbb{R}}_{> 0}:=\mathbb{R}_{> 0}\cup\{\infty\}$.  For two sequences $\{a_k\}_{k\ge 0}$ and $\{b_k\}_{k\ge 0}$ of positive numbers, we write $a_k=o(b_k)$ if $a_k/b_k\to 0$ as $k\to\infty$, write  $a_k=O(b_k)$ if there exists $C>0$ such that $a_k/b_k<C$  for all sufficiently large $k$, and write $a_k\asymp b_k$ if there exist positive numbers $C_1,C_2>0$ such that $C_1<a_k/b_k<C_2$ for all sufficiently large $k$.  We denote by $\dot{g}(f(t))$ the derivative $\frac{d(g\circ f(t))}{dt}$ and by $\dot{g}(f(t_0))$ the derivative $\frac{d(g\circ f(t))}{dt}\big|_{t=t_0}$.
 Moreover, for a set $S$, we denote by $|S|$ the cardinality of $S$; and for $n\ge 1$, we denote by $I_n$ the $n \times n$ identity matrix.


\section{Preliminaries}  \label{sec_prelim}
In this section we provide backgrounds on polynomials dynamics and thermodynamic metrics on the space of metric graphs. 

\subsection{Polynomial dynamics}\label{sec:poly}
In this subsection, we discuss the critical heights map and its factorization following  \cite{DeMarco11}. 

\subsubsection{Critical heights map}\label{sec:height}
 We denote by $\mathrm{Poly}_D$ the space of monic and centered polynomials of degree $D\ge2$ in $\mathbb{C}[z]$; that is, $f\in \mathrm{Poly}_D$ is of the form 
$$f(z)= z^D + a_{D-2}z^{D-2} + \cdots + a_0$$ 
where the coefficients $a_j \in \mathbb{C}$ for $0\le j\le D-2$. Pick $f \in \mathrm{Poly}_D$. The {\it basin of infinity} of $f$ is defined by $\Omega_f\defeq\{z \in \mathbb C : f^n(z) \to \infty \text{ as } n \to \infty \}$, 
where $f^n$ denotes the $n$-th iterate of $f$.
For $z\in\mathbb{C}$, the {\it escaping rate function} is 
$$G_f(z) \defeq \lim\limits_{n\to\infty}\frac{1}{D^n}\log\max\{|f^n(z)|,1\}.$$
It is continuous in $\mathbb{C}$, harmonic on $\Omega_f$ and satisfies $G_f(f(z))=D\cdot G_f(z)$ for $z \in \mathbb C$. A basic observation is that $G_f(z) > 0$ if and only if $z \in \Omega_f$.  The function $G_f(z)$ induces a holomorphic $1$-form $\omega_f \defeq2\partial G_f$ on $\Omega_f$. 
The zeros of the conformal metric $|\omega_f|$ are the critical points of $f$ in $\Omega_f$ and their iterated preimages. Away from its zeros, $|\omega_f|$ is a locally Euclidean metric on  $\Omega_f$. Moreover, since $G_f(f(z))=D\cdot G_f(z)$, we have $f^\ast\omega_f = D \cdot \omega_f$, which implies that $f$ is locally a homothety with expansion factor $D$ with respect to the metric $|\omega_f|$ away from its zeros. 

A point $c\in\mathbb{C}$ is a \emph{critical point} of $f$ if $f'(c)=0$. Denote by $\mathrm{Crit}_f$ the multiset of the critical points of $f$; each element repeated according to its multiplicity. Then $\mathrm{Crit}_f$ contains $D-1$ points. 
For $1\le i\le D-1$, let $h_i(f)$ be the $i$-th largest value in the set $\{G_f(c) : c \in \mathrm{Crit}_f\}$.
Define
$$\mathcal{H}_D\defeq \{(h_1,\ldots,h_{D-1}): 0 \le h_{D-1} \le \ldots \le h_1 <\infty\},$$ 
One obtains the map $\mathrm{Poly}_D\to \mathcal{H}_D$, sending $f\in \mathrm{Poly}_D$ to $(h_1(f),\ldots,h_{D-1}(f))\in\mathcal{H}_D$.

Denote by  $\mathcal{M}_D$ the moduli space of degree $D$ polynomials in $\mathbb{C}[z]$;
that is, $\mathcal{M}_D$ is the quotient space of $\mathrm{Poly}_D$, modulo 
 the conjugation of rotations of order $D-1$. 
 Since 
 $G_f(z) = G_{M\circ f\circ M^{-1}}(M(z))$ for all complex affine maps $M$, there is a well-defined map, known as the \emph{critical heights map},
\begin{align*}
	\mathcal{G}: \mathcal{M}_D\ \ \ &\to\ \ \ \ \mathcal{H}_D\\
	[f]\ \ \ &\mapsto(h_1(f),\ldots,h_{D-1}(f)). 
\end{align*}

\begin{proposition}[{\cite[Theorem 1.1]{DeMarco11}}]\label{thm:G}
The 
map $\mathcal{G}$ 
  is continuous, proper and surjective.
\end{proposition}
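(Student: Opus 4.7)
The plan is to establish continuity, properness, and surjectivity as three separate claims. The first two are essentially potential-theoretic and fairly routine, while surjectivity is the main obstacle.

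For continuity, I would start from joint continuity of the escaping rate $G_f(z)$ in $(f,z)$, which follows from locally uniform convergence of the defining limit: a telescoping estimate gives $G_f(z) = D^{-n}\log^+|f^n(z)| + O(D^{-n})$ uniformly on compact subsets of $\mathrm{Poly}_D \times \mathbb{C}$, using that $f$ is monic to control $|f^n(z)|$. Since the multiset $\mathrm{Crit}_f$ depends continuously on $f$ (roots of $f'$ depend continuously on coefficients), the unordered tuple $\{G_f(c):c\in\mathrm{Crit}_f\}$ varies continuously with $f$, and sorting preserves continuity. Invariance of $G_f(c)$ under affine conjugation then lets the resulting map descend continuously to $\mathcal{M}_D$. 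For properness, suppose $h_1(f_k)\le M$ for a sequence $[f_k]$. The uniform asymptotic $G_f(z)=\log|z|+O(1)$ as $|z|\to\infty$ on $\mathrm{Poly}_D$ implies the sublevel set $\{G_f\le M\}$ lies in a disk of radius depending only on $M$ and $D$, so every critical point of $f_k$ is uniformly bounded. Since $f_k$ is monic and centered, Vieta bounds then force the coefficients into a bounded subset of $\mathbb{C}^{D-1}$, so compactness in $\mathrm{Poly}_D$ descends to $\mathcal{M}_D$.

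The main obstacle is surjectivity, which I would approach by combining closedness of the image with an explicit construction along each ray. Continuity and properness together imply that $\mathcal{G}(\mathcal{M}_D)$ is closed in $\mathcal{H}_D$. I would then invoke the stretching $\mathbb{R}_{>0}$-action on $\mathcal{M}_D$, realized by quasiconformal deformations that scale $G_f$ by a positive constant and hence intertwine $\mathcal{G}$ with positive scalar multiplication on $\mathcal{H}_D$; this reduces the problem to hitting at least one point of each positive ray in $\mathcal{H}_D$. For a target tuple with all coordinates positive, I would construct a realizing polynomial via a Böttcher-coordinate surgery—prescribing the external data for each critical orbit at its desired level and then gluing—or, in favorable regimes, via an asymptotic analysis of an explicit family such as $z^D+c$ for large $|c|$, where one height is directly computable and the remaining $D-2$ coefficients can be perturbed to interpolate arbitrary ratios. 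Closedness of the image then fills in the boundary tuples with one or more vanishing entries (which correspond to polynomials whose smallest critical heights drop to zero, i.e.\ some critical points join the filled Julia set). The delicate step is producing polynomials realizing every ordered ratio of positive heights; this likely requires either a quasi-conformal gluing construction assembling $D-1$ prescribed escaping pieces, or an induction on the number of distinct heights paired with a local-surjection argument for $\mathcal{G}$ at a carefully chosen base point in the shift locus.
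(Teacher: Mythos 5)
This proposition is cited from \cite{DeMarco11} and not proved in the present paper, so there is no in-paper proof to compare against; I will assess your proposal on its own terms.

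Your continuity argument is fine: joint continuity of $G_f(z)$ on compact subsets of $\mathrm{Poly}_D\times\mathbb{C}$, continuity of the critical multiset, sorting, and descent to the quotient is the standard route and works.

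The properness paragraph has a real gap, in two places. First, the asserted ``uniform asymptotic $G_f(z)=\log|z|+O(1)$ on $\mathrm{Poly}_D$'' is false as a uniform statement over the whole parameter space: for $f=z^D+a_0$ with $|a_0|$ large and $z$ near a root of $f$ (so $|z|\sim|a_0|^{1/D}$), the quantity $G_f(z)-\log|z|$ is of size $-\tfrac{D-1}{D^2}\log|a_0|\to-\infty$. The uniform lower bound $G_f(z)\ge\log|z|-C$ only holds for $|z|\ge R(f)$, where $R(f)$ grows with the coefficients, which makes this estimate circular when you use it to bound the coefficients. The correct tool is the inverse B\"ottcher map $\psi_f=\phi_f^{-1}$, univalent on $\{|w|>e^{h_1(f)}\}$ with $\psi_f(w)=w+O(1/w)$ (using that $f$ is monic and centered), so by the area/Koebe theorem the omitted set $\{G_f\le h_1(f)\}$ has capacity $e^{h_1(f)}$ and is contained in a disk of radius $O(e^{h_1(f)})$. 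Second, once the critical points $c_1,\dots,c_{D-1}$ are confined to $\{|z|\le R\}$, Vieta applied to $f'(z)=D\prod(z-c_i)$ bounds $a_1,\dots,a_{D-2}$ but says nothing about $a_0$: the family $z^D+a_0$ has all critical points at $0$ with $a_0$ unbounded. You must also use that the critical \emph{values} $f(c_i)$ lie in $\{G_f\le Dh_1(f)\}\subset\{|z|\le O(e^{DM})\}$, and then recover $a_0=f(0)=f(c_1)-D\int_0^{c_1}\prod(w-c_j)\,dw$, which is now bounded. Both repairs are routine, but as written the argument does not go through.

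The surjectivity paragraph is not a proof; it is a list of candidate strategies, and you acknowledge this (``I would construct\dots via a B\"ottcher-coordinate surgery'', ``this likely requires either\dots or\dots''). The one concrete family you name, $z^D+c$, has a single critical point of multiplicity $D-1$ and so only realizes the diagonal ray $(h,\dots,h)$; the claim that perturbing the remaining coefficients ``interpolate[s] arbitrary ratios'' is exactly the content to be proven, and is asserted rather than established. Your observations that continuity plus properness give a closed image, and that stretching lets you work projectively, are both correct reductions, but the heart of the matter---hitting every positive $(D-1)$-tuple, typically via a quasi-conformal gluing of $D-1$ prescribed external annuli or an explicit wringing-based local-to-global argument as in Branner--Hubbard and DeMarco--Pilgrim---is left open. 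As submitted, this third part is a plan, not a proof.
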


We will use the following definition for genericity.

\begin{definition}\label{def:generic}
We say that a point in $\mathcal{H}_D$ is {\it generic} if every entry is positive and the ratio of any two entries is not a power of $D$. Moreover, we say that an element $[f]\in\mathcal{M}_D$ is {\it generic} if $\mathcal{G}([f])$ is generic in $\mathcal{H}_D$. 
\end{definition} 
\subsubsection{Deformation of polynomials}\label{sec:def}
Let $\mathbb{H} \defeq \{w = \theta+is:\theta\in\mathbb{R}, s>0\}$ be the upper half plane. A point $w = \theta+is \in \mathbb{H}$ acts on the complex plane $\mathbb{C}$ by the linear map 
via $w \cdot(x+iy)=(x+\theta y)+is y$
for $x+iy \in \mathbb{C}$; equivalently, the point 
$w$ acts on $\mathbb{C}$ as the matrix 
\[
\left( {\begin{array}{cc}
		1 & \theta \\
		0 & s \\ 
\end{array} } \right) \in \mathrm{GL}_2(\mathbb{R}).
\]
Observe that the {\it parabolic subgroup} $\{ \theta+is \in \mathbb{H} : s = 1 \}$ acts by horizontal shears, and the {\it hyperbolic subgroup} $\{ \theta+is \in \mathbb{H} : \theta = 0 \}$ acts by vertical stretches.

The above action of $\mathbb{H}$ on $\mathbb{C}$ induces an action of $\mathbb{H}$ on $\mathrm{Poly}_D$. Pick $f\in\mathrm{Poly}_D$. If $\Omega_f\cap\mathrm{Crit}_f\not=\emptyset$, that is $h_1(f)>0$, 
 consider the \emph{fundamental annulus} of $f$
$$A(f)\defeq \{ h_1(f) < G_f(z) < Dh_1(f) \}.$$
With respect to the holomorphic $1$-form $\omega_f = 2\partial G_f$ on $\Omega_f$, the 
 annulus $A(f)$ is a rectangle of width $2\pi$ and height $(D-1)h_1(f)$ with vertical edges identified. Then the action of $\mathbb{H}$ on  $\mathbb{C}$ induces an action of $\mathbb{H}$ on  $A(f)$ which is transported by $f$ throughout $\Omega_f$. If $\Omega_f\cap\mathrm{Crit}_f=\emptyset$, then the action is trivial. Such an action is analytic on $\mathbb{H}$.



The above action $\mathbb{H}\times\mathrm{Poly}_D\to\mathrm{Poly}_D$ descends to an action $\mathbb{H} \times \mathcal{M}_D \to \mathcal{M}_D$, 
known as the \emph{Branner-Hubbard wringing motion} \cite{Branner88, Branner92}. 
The action of the parabolic subgroup of $\mathbb{H}$ on $\mathcal{M}_D$ is called \emph{turning}, which preserves critical heights; and the action of the hyperbolic subgroup of $\mathbb{H}$ on $\mathcal{M}_D$ is called \emph{stretching}. 

For a polynomial $f\in\mathrm{Poly}_D$ with $\Omega_f\cap\mathrm{Crit}_f\not=\emptyset$, the {\it foliated equivalence class} of $c\in\Omega_f\cap\mathrm{Crit}_f$ is the closure in $\Omega_f$ of its grand orbit $\{z \in \Omega_f : f^n(z) = f^m(c) \text{ for some } n,m \in \mathbb Z\}$.  
Let $N \ge 1$ be the number of distinct foliated equivalence classes of $f$. The foliated equivalence classes of points in $\Omega_f\cap\mathrm{Crit}_f$ divide the fundamental annulus $A(f)$ into $N$ {\it fundamental subannuli} $A_1(f), \dots, A_{N}(f)$, which are ordered by increasing height. 
The wringing motion can be defined on each subannulus $A_i(f)$ independently so that the resulting deformation of $\Omega_f$ is well-defined  and continuous. 

Denote by $\mathcal{M}_D^N$ the subset of $\mathcal{M}_D$ consisting of elements with exactly $N$ foliated equivalence classes of critical points. The wringing motion on each fundamental subannulus gives an action 
\begin{equation}\label{equ:action}
 \mathbb{H}^N \times \mathcal{M}^N_D \to \mathcal{M}^N_D.
\end{equation} 
The  wringing by $w = \theta + is \in \mathbb H$ applied to $[f] \in \mathcal{M}_D^N$ is the action of 
\begin{equation}\label{equ:action1}
 	\left(\frac{2\pi m_1\theta}{(d-1)h_1(f)}+is, \ldots, \frac{2\pi m_{N}\theta}{(d-1)h_1(f)}+is \right) \in \mathbb{H}^{N}, 
\end{equation} 
    where $m_j$ is the modulus of the subannulus $A_j(f)$ for each $1\le j\le N$. 
    The action of  the parabolic subgroup of $\mathbb{H}^N$ in each factor is called {\it twisting}, which preserves critical heights; and the action of the hyperbolic subgroup of $\mathbb{H}^N$ in each factor is called {\it multistretching}. 

\begin{lemma}[{\cite[Lemma 5.2]{DeMarco11}}]\label{lem:DP}
Restricted to the space $\mathcal{S}_D^N \defeq\mathcal{M}_D^N\cap\mathcal{S}_D, N \ge 1$, the action \eqref{equ:action} defines a continuous action
$$ \mathbb{H}^N \times \mathcal{S}^N_D \to \mathcal{S}^N_D.$$
Moreover, for each $[f]\in \mathcal{S}^N_D$, the orbit map $\mathbb{H}^N \times \{[f]\} \to \mathcal{S}^N_D$ is analytic, and the stabilizer of $[f]$ is a lattice of translations in $\mathbb{R}^N$. 
\end{lemma}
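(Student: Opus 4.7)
The plan is to realize the $\mathbb{H}^N$-action as a family of quasiconformal deformations and then invoke the Ahlfors--Bers measurable Riemann mapping theorem. For $[f] \in \mathcal{S}_D^N$, the basin $\Omega_f$ is the complement of the Cantor filled Julia set $K_f$. Given $\mathbf{w} = (w_1,\ldots,w_N) \in \mathbb{H}^N$, I would construct a Beltrami differential $\mu_{\mathbf{w}}$ on $\Omega_f$ by declaring it, in the $\omega_f$-coordinate on each fundamental subannulus $A_j(f)$, to equal the Beltrami coefficient of the linear map $T_{w_j}$; then spread $\mu_{\mathbf{w}}$ throughout $\Omega_f$ using $f^{\ast}\omega_f = D\,\omega_f$, and extend by zero across $K_f$. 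Solving $\bar\partial\varphi = \mu_{\mathbf{w}}\,\partial\varphi$ and normalizing $\varphi$ so that $\varphi\circ f\circ\varphi^{-1}$ is monic and centered yields the polynomial $f_{\mathbf{w}}$ defining the action.

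For continuity of $(\mathbf{w}, [f])\mapsto [f_{\mathbf{w}}]$, I would use that the critical points, their foliated equivalence classes, and hence the subannuli $A_j(f)$ depend continuously on $[f]\in\mathcal{S}_D^N$; this can be made precise via a holomorphic motion of $K_f$ over a neighborhood of any base point. Then $\mu_{\mathbf{w}}$ depends continuously on both $[f]$ and $\mathbf{w}$, and joint continuity of the action follows from continuous dependence of the solution of the Beltrami equation on its coefficient. For analyticity of the orbit map at a fixed $[f]$, a direct computation shows that the Beltrami coefficient of $T_{w_j}$ equals $(1+iw_j)/(1-iw_j)$, a holomorphic function of $w_j\in\mathbb{H}$. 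Hence $\mu_{\mathbf{w}}$ depends holomorphically on $\mathbf{w}$, and Ahlfors--Bers delivers holomorphic dependence of $\varphi$, and therefore of $f_{\mathbf{w}}$; this descends to a holomorphic orbit map on $\mathcal{M}_D^N$.

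For the stabilizer, write $w_j=\theta_j+is_j$ and suppose $\mathbf{w}$ fixes $[f]$. A stretching by $s_j$ on $A_j(f)$ multiplies the modulus of that subannulus by $s_j$, and matching the resulting tuple of critical heights to that of $f$ forces $s_1=\cdots=s_N=1$. The stabilizer therefore lies in the translation subgroup $\mathbb{R}^N\subset\mathbb{H}^N$. A pure twist on $A_j(f)$ by one horizontal period in the $\omega_f$-coordinate realizes a Dehn twist around the core curve of $A_j(f)$ and so preserves $[f]$; these periods span a rank-$N$ lattice $\Lambda\subset\mathbb{R}^N$. To identify the stabilizer with $\Lambda$, one must show that any non-lattice twist produces a non-trivial quasiconformal deformation of the complex structure on $\Omega_f$, and hence a distinct class in $\mathcal{M}_D^N$.

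The main obstacle is this last step: ruling out accidental identifications among non-lattice twists. I would handle it by quasiconformal rigidity of the basin dynamics: any identification must be implemented by a quasiconformal self-conjugacy of $f|_{\Omega_f}$ supported in $\bigcup_j A_j(f)$, and such self-conjugacies are generated by full-period Dehn twists around the $A_j(f)$, since the only conformal automorphisms of the dynamical pair $(\Omega_f,f)$ are the finitely many rotations of order dividing $D-1$ already absorbed into the quotient $\mathcal{M}_D$.
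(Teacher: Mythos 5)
The paper does not give its own proof of this lemma---it is cited verbatim from \cite[Lemma 5.2]{DeMarco11}---so the comparison has to be against what the paper records about that proof, which is the parenthetical remark immediately following: the stabilizer of $[f]$ contains the lattice generated by $(0,\dots,0,d_j/m_j,0,\dots,0)$, where $d_j$ is the least common multiple of the degrees of the covering maps $f^n\colon\widetilde{A}_j(f)\to A_j(f)$ over all iterates $n$ and all components $\widetilde{A}_j(f)$ of the preimages. Your construction of the action and the continuity and analyticity arguments via Beltrami differentials and Ahlfors--Bers are sound and in the spirit of DeMarco's argument, as is the observation that stretching changes heights and hence forces $s_1=\cdots=s_N=1$.

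The gap is in your identification of the twist lattice. You assert that a single-period Dehn twist on $A_j(f)$ preserves $[f]$ because it ``realizes a Dehn twist around the core curve of $A_j(f)$.'' This is not correct: the twist does not stay on $A_j(f)$ but is spread throughout $\Omega_f$ by the dynamics, and on a component $\widetilde{A}_j(f)$ of the $n$-th preimage on which $f^n$ has degree $d$, a twist of one period on $A_j(f)$ pulls back to a twist of $1/d$ of a period on $\widetilde{A}_j(f)$. When $d>1$ this is a genuine quasiconformal deformation that does not close up, so the resulting class in $\mathcal{M}_D^N$ is different. A trivial deformation requires twisting by $d_j$ periods, where $d_j$ is the lcm of all these covering degrees---and the finiteness of this lcm is itself a nontrivial point that your proposal does not address (it fails, for instance, if one naively imagines the degrees grow without bound; in the shift locus the relevant degrees are bounded because preimage annuli disconnect). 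Your final paragraph compounds this: a qc self-conjugacy of $f$ implementing an identification is not supported only on $\bigcup_j A_j(f)$ but on its full grand orbit, and the assertion that such self-conjugacies are generated by single-period twists is exactly the claim that needs proof and is false as stated. To repair the argument you would need to track the twist through the tower of preimage annuli and show that closing up forces the twist parameter to lie in the lattice $\langle d_j/m_j\rangle$, then separately argue finiteness of $d_j$.
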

From the proof of \cite[Lemma 5.2]{DeMarco11}, the stabilizer of $[f]\in\mathcal{S}_D^N$ contains the lattice in $\mathbb{R}^N$ generated by
 \begin{equation}\label{equ:bound}
 (0,\dots,0,d_j/m_j,0,\dots,0)
 \end{equation}
with $1\le j\le N$,
where $m_j \defeq m_j(f)>0$ is the modulus of the  subannulus $A_j(f)$ of $f$ and $1\le d_j \defeq d_j(f)<+\infty$ is the least common multiple, 
taken over all $n\ge1$ and all the connected components $\widetilde{A}_j(f)$ of the $n$-th preimages of $A_j(f)$, of the degree of the map $f^n: \widetilde{A}_j(f)\to A_j(f)$.


\begin{proposition}[{\cite[Theorem 1.2]{DeMarco11}}]\label{prop:fiber}
The fiber of $\mathcal{G}$ at any generic point in $\mathcal{H}_D$ is a finite and disjoint union of smooth real $(D-1)$-dimensional tori, each component of which coincides with a twist-deformation orbit. 
\end{proposition}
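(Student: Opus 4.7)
The plan is to show that for generic $h=(h_1,\ldots,h_{D-1})\in\mathcal{H}_D$, any $[f]\in\mathcal{G}^{-1}(h)$ automatically lies in the stratum $\mathcal{S}_D^{D-1}$, and then to exploit the $\mathbb{H}^{D-1}$-action of Lemma \ref{lem:DP} together with properness of $\mathcal{G}$. First I would verify the containment $\mathcal{G}^{-1}(h)\subset\mathcal{S}_D^{D-1}$: positivity of every $h_i$ forces every critical point of any representative $f$ to escape to infinity, so $[f]\in\mathcal{S}_D$. If two critical points $c,c'\in\Omega_f$ were to share a foliated equivalence class, then from the definition of the grand orbit together with $G_f(f(z))=D\cdot G_f(z)$ one would have $G_f(c)/G_f(c')=D^k$ for some $k\in\mathbb{Z}$, which is ruled out by the hypothesis that no ratio $h_i/h_j$ is a power of $D$. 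Hence the $D-1$ critical points lie in $D-1$ distinct foliated equivalence classes and $[f]\in\mathcal{S}_D^{D-1}$.

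Next I would invoke Lemma \ref{lem:DP}: the group $\mathbb{H}^{D-1}$ acts continuously on $\mathcal{S}_D^{D-1}$ with analytic orbit maps, and the stabilizer of each $[f]$ is a lattice of translations in the twisting factor $\mathbb{R}^{D-1}$. Twisting preserves critical heights by construction, so every twist orbit lies in $\mathcal{G}^{-1}(h)$; since by \eqref{equ:bound} the stabilizer already contains a rank $D-1$ sublattice (each $d_j/m_j$ being positive and finite), the full stabilizer has rank exactly $D-1$ and each twist orbit is a smooth real $(D-1)$-dimensional torus. To identify these tori with connected components of the fiber, I would note that $\dim_{\mathbb{R}}\mathbb{H}^{D-1}=2(D-1)=\dim_{\mathbb{R}}\mathcal{M}_D^{D-1}$ and that by \eqref{equ:action1} the multistretching direction changes each critical height with nonzero derivative while twisting fixes heights. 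The $\mathbb{H}^{D-1}$-orbit map is therefore a local real-analytic diffeomorphism onto an open neighborhood of $[f]$ in $\mathcal{M}_D^{D-1}$, and intersecting its image with the level set $\mathcal{G}^{-1}(h)$ recovers precisely the twist orbit. Hence every twist orbit is open in $\mathcal{G}^{-1}(h)$, and, being a compact torus, it is also closed and therefore a connected component.

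Finiteness then follows immediately from Proposition \ref{thm:G}: properness of $\mathcal{G}$ makes $\mathcal{G}^{-1}(h)$ compact, and a compact disjoint union of open-and-closed tori must be finite. The main obstacle I foresee is the local-diffeomorphism step, since nondegeneracy of the $\mathbb{H}^{D-1}$-orbit map on $\mathcal{M}_D^{D-1}$ requires an infinitesimal argument; I would either differentiate \eqref{equ:action1} directly, using that stretching moves the $j$-th critical height at a rate proportional to the modulus of $A_j(f)$, or adapt Branner--Hubbard's analysis of the wringing motion to the present multi-subannulus setting.
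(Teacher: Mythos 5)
The paper does not prove this statement; it is imported verbatim from \cite[Theorem 1.2]{DeMarco11}, so there is no in-paper argument to compare against. Your reconstruction is nevertheless the correct route and closely follows DeMarco--Pilgrim's own reasoning: genericity forces the $D-1$ escaping critical points into distinct foliated equivalence classes (hence $[f]\in\mathcal{S}_D^{D-1}$), twisting preserves heights while the full stabilizer of the $\mathbb{H}^{D-1}$-action is a rank-$(D-1)$ lattice by \eqref{equ:bound}, and openness of the twist orbit in the fiber plus compactness from properness of $\mathcal{G}$ gives both the torus structure and the finiteness of components.

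The one place where you can tighten the argument is the step you flag as the ``main obstacle.'' You do not actually need to differentiate \eqref{equ:action1} or redo a Branner--Hubbard modulus computation. Lemma \ref{lem:DP} already gives you everything: the orbit map $\mathbb{H}^{D-1}\to\mathcal{S}_D^{D-1}$ is real analytic, and by $\mathbb{H}^{D-1}$-equivariance its rank is constant along the orbit. Since its fiber over $[f]$ is the stabilizer, which is discrete (a lattice in $\mathbb{R}^{D-1}\subset\mathbb{H}^{D-1}$), the constant rank must equal $\dim_{\mathbb{R}}\mathbb{H}^{D-1}=2(D-1)$. Hence the orbit map is an immersion into the $2(D-1)$-real-dimensional manifold $\mathcal{S}_D^{D-1}$, so its image is open, the induced map $\mathbb{H}^{D-1}/\mathrm{Stab}\to\mathcal{S}_D^{D-1}$ is a local diffeomorphism, and your intersection argument goes through exactly as you wrote it. You do still want the observation that multistretching changes the critical heights, but only to identify $\mathbb{H}^{D-1}\cdot[f]\cap\mathcal{G}^{-1}(h)$ with the twist orbit, not to establish the immersion property itself. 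With that substitution, the proposal is complete and correct.
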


\begin{remark}\label{rmk:twist}
Let $[f]$ be a generic point in $\mathcal{S}_D$. Proposition \ref{prop:fiber} suggests that the tangent space $T_{[f]}\mathcal{S}_D$ admits a \emph{height-twist} parametrization; namely, 
$$T_{[f]}\mathcal{S}_D = \mathfrak{H}_{[f]} \oplus \mathfrak{T}_{[f]}$$
where $\mathfrak{H}_{[f]}$ is the real $(D-1)$-dimensional subspace of $T_{[f]}\mathcal{S}_D$ spanned by the height directions.
 and $\mathfrak{T}_{[f]}$ is the real $(D-1)$-dimensional subspace of$T_{[f]}\mathcal{S}_D$ spanned by the twist directions.
  Then for any  $\vec{v}\in T_{[f]}\mathcal{S}_D$, a sufficiently small segment $[f_t]\defeq[f]+t\vec{v}, t\in(-\epsilon,\epsilon)$ either is contained in the twist-deformation orbit of $[f]$, in which case the critical heights $\mathcal{G}([f_t])$ are constant in $t$, or changes its critical heights $\mathcal{G}([f_t])$ 
in $t$.
\end{remark}

\subsubsection{Factorization of $\mathcal{G}$}

Let $\mathcal{T}_D^\ast$ be the quotient of $\mathcal{M}_D$ obtained by collapsing connected components of the fibers of the map $\mathcal{G} : \mathcal{M}_D \to \mathcal{H}_D$ to points. 
Then $\mathcal{G}$ factors uniquely as $\mathcal{M}_D \to \mathcal{T}_D^\ast \to \mathcal{H}_D$
such that $\mathcal{M}_D \to \mathcal{T}_D^\ast$ is monotone 
and $\mathcal{T}_D^\ast \to \mathcal{H}_D$ is light, see \cite[Corollary 1.4]{DeMarco11}.

Denote by the moduli space $\mathcal{T}_D$ of isometry classes of metrized polynomial-like trees, see \cite{DeMarco08} for more details. The space $\mathcal{T}_D$ carries the geometric topology defined by convergence of finite subtrees.  Including the unique trivial tree associated to polynomials with connected Julia sets into $\mathcal{T}_D$, there is a natural critical heights map $\mathcal{T}_D \to \mathcal{H}_D$ whose fibers are totally disconnected, see \cite[Lemma 4.1]{DeMarco11}.
Therefore 
$\mathcal{G}$ can factor as  the following sequence of continuous, proper and surjective maps
\begin{equation*} 
\mathcal{M}_D \to \mathcal{T}_D^\ast \to \mathcal{T}_D \to\mathcal{H}_D.
\end{equation*}
The fibers of the map $\mathcal{T}_D^\ast  \to \mathcal{T}_D$ are totally disconnected and finite over the shift locus in $\mathcal{T}_D$; see \cite[Theorem 1.3]{DeMarco11}.

The stretching defines a continuous $\mathbb R_+$-action on each of  $\mathcal{T}_D^\ast$, $\mathcal{T}_D$ and $\mathcal{H}_D$, which is free and proper on the complement of the connectedness locus, see \cite[Lemma 5.1]{DeMarco11}. 
Moreover, the $\mathbb R_+$-actions are equivariant with respect to all the above projection maps. 
The stretching operation induces a cone structure on each of the spaces $\mathcal{T}_D^\ast$, $\mathcal{T}_D$ and $\mathcal{H}_D$ with origin at the quotient of the connectedness locus, see \cite[Theorem 1.5]{DeMarco11}. Thus there are natural projectivizations $\mathbb{P}\mathcal{T}_D^\ast$ of $\mathcal{T}_D^\ast$, $\mathbb{P}\mathcal{T}_D$ of $\mathcal{T}_D$ and $\mathbb{P}\mathcal{H}_D$  of $\mathcal{H}_D$, identified with the corresponding slice with maximal critical height $1$. Both  $\mathbb{P}\mathcal{T}_D$ and $\mathbb{P}\mathcal{T}_D^\ast$ are compact and contractible, see  \cite[Theorem 1.3]{DeMarco08} and \cite[Theorem 1.6]{DeMarco11}. Moreover,  the space $\mathcal{M}_D$ can be compactified with $\mathbb{P}\mathcal{T}_D$ in the following sense.
 
 \begin{proposition}[{\cite[Theorem 1.5]{DeMarco08}}] \label{thm:com}
 The moduli space $\mathcal{M}_D$ admits a natural compactification $\overline{\mathcal{M}}_D = \mathcal{M}_D \cup \mathbb{P}\mathcal{T}_D$ such that $\mathcal{M}_D$ is dense in $\overline{\mathcal{M}}_D$.
\end{proposition}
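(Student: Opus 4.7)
The plan is to attach $\mathbb{P}\mathcal{T}_D$ to $\mathcal{M}_D$ as the boundary ``at infinity'' of the stretching $\mathbb{R}_+$-action. For each $[f]\in\mathcal{M}_D$ lying outside the connectedness locus, the quotient of $\Omega_f$ by the grand-orbit foliation encodes a metrized polynomial-like tree $T_{[f]}\in\mathcal{T}_D\setminus\{O\}$, with edge lengths recorded by the escaping rates. Composing the assignment $[f]\mapsto T_{[f]}$ with the stretching-induced cone projection $\mathcal{T}_D\setminus\{O\}\to\mathbb{P}\mathcal{T}_D$ produces a continuous map $\tau_{\mathbb{P}}$ from the non-connectedness locus of $\mathcal{M}_D$ to $\mathbb{P}\mathcal{T}_D$. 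I would then topologize $\overline{\mathcal{M}}_D=\mathcal{M}_D\cup\mathbb{P}\mathcal{T}_D$ by retaining the native topologies on both pieces and declaring $[f_k]\in\mathcal{M}_D$ to converge to $[T]\in\mathbb{P}\mathcal{T}_D$ precisely when $h_1(f_k)\to\infty$ and $\tau_{\mathbb{P}}([f_k])\to[T]$ in $\mathbb{P}\mathcal{T}_D$.

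For compactness, given an arbitrary sequence in $\overline{\mathcal{M}}_D$, the case when a tail remains in $\mathbb{P}\mathcal{T}_D$ or in a compact subset of $\mathcal{M}_D$ is handled directly by compactness of $\mathbb{P}\mathcal{T}_D$ (recalled in the excerpt) and local compactness of $\mathcal{M}_D$. Otherwise, properness of $\mathcal{G}:\mathcal{M}_D\to\mathcal{H}_D$ from Proposition \ref{thm:G}, combined with the fact that the sequence leaves every compact set, forces $h_1(f_k)\to\infty$ after passing to a subsequence. Applying stretching to normalize each $[f_k]$ to $h_1=1$ places $\tau_{\mathbb{P}}([f_k])$ in $\mathbb{P}\mathcal{T}_D$, and compactness of $\mathbb{P}\mathcal{T}_D$ supplies a convergent sub-subsequence whose limit $[T]$ is, by definition, the limit of the original subsequence in $\overline{\mathcal{M}}_D$.

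The hard part will be verifying that this gluing is Hausdorff and that $\tau_{\mathbb{P}}$ is continuous up to the boundary. To handle this, I would work with the geometric topology on $\mathcal{T}_D$, where convergence is tested on finite subtrees truncated at fixed critical-height levels; two candidate boundary limits for a single sequence would then have to agree on every finite truncation, and hence coincide as projectivized trees. Separation between interior points and boundary points is automatic from the requirement $h_1(f_k)\to\infty$ in the convergence definition. For density, I would exploit the surjectivity of the projection $\mathcal{M}_D\to\mathcal{T}_D$ implicit in the factorization $\mathcal{M}_D\to\mathcal{T}_D^\ast\to\mathcal{T}_D$, together with the stretching action: given $[T]\in\mathbb{P}\mathcal{T}_D$ with unit-height representative lifting to some $[f]\in\mathcal{M}_D$, the stretched family $\{[f_s]\}$ with $s\to\infty$ keeps $\tau_{\mathbb{P}}([f_s])=[T]$ fixed while driving $h_1(f_s)\to\infty$, so $[f_s]\to[T]$ in $\overline{\mathcal{M}}_D$, completing the density claim.
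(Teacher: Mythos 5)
This statement is quoted in the paper as \cite[Theorem 1.5]{DeMarco08} and is not proved here; the paper simply invokes it, so there is no ``paper's own proof'' to compare against. Your sketch is, in outline, close to the construction actually used in the cited source: one forms from each non-connectedness $[f]$ a metrized polynomial-like tree, projects to $\mathbb{P}\mathcal{T}_D$ via the stretching cone structure, topologizes $\mathcal{M}_D\cup\mathbb{P}\mathcal{T}_D$ so that escape to infinity in the height coordinate together with geometric convergence of normalized trees describes boundary convergence, and then uses compactness of $\mathbb{P}\mathcal{T}_D$ together with properness of the critical heights map for compactness, and stretching for density.

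One concrete error: you describe the tree as ``the quotient of $\Omega_f$ by the grand-orbit foliation.'' That is not the right equivalence relation. The polynomial-like tree $T(f)$ is the quotient of $\Omega_f$ by the \emph{connected components of level sets of the Green's function} $G_f$, so that the vertical coordinate on the tree is recorded by $G_f$ and branching occurs at critical levels. The grand-orbit relation $z\sim w\iff f^n(z)=f^m(w)$ (and the induced foliated equivalence classes appearing in Section \ref{sec:def} of this paper) is a different decomposition: it governs the fundamental subannuli and the multistretch/twist deformations, not the tree quotient. Using the grand-orbit quotient would not produce a tree.

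Two further points of rigor you should flag if expanding this sketch. First, a topology is not determined by declaring which sequences converge; you should instead specify a neighborhood basis at each boundary point $[T]\in\mathbb{P}\mathcal{T}_D$, e.g., sets of the form $U\cup\{[f]\in\mathcal{M}_D:h_1(f)>R,\ \tau_{\mathbb{P}}([f])\in U\}$ for $U$ open in $\mathbb{P}\mathcal{T}_D$ and $R>0$, and only then verify that this is a topology, that it is Hausdorff, second countable, and that the declared convergences hold. Second, in the compactness step, the normalization to $h_1=1$ is not needed before applying $\tau_{\mathbb{P}}$, since $\tau_{\mathbb{P}}$ already lands in the projectivization; what is needed is the continuity of the (unprojectivized) tree map $\mathcal{M}_D\to\mathcal{T}_D$ in the geometric topology, which is itself a nontrivial content of the cited paper and should be invoked explicitly rather than assumed.
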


Quotienting out by stretching gives a sequence of continuous, proper and surjective maps
\begin{equation*}
	\mathbb{P}\mathcal{M}_D \to \mathbb{P}\mathcal{T}^\ast_D\to \mathbb{P}\mathcal{T}_D \to \mathbb{P}\mathcal{H}_D.
\end{equation*}
Restricted to $\mathcal{S}_D$, the above process yields a sequence of continuous, proper and surjective maps
\begin{equation*}
	\mathbb{P}\mathcal{S}_D \to \mathbb{P}\mathcal{ST}^\ast_D \to \mathbb{P}\mathcal{ST}_D\to\mathbb{P}\mathcal{SH}_D.
\end{equation*}
The spaces $\mathbb{P}\mathcal{ST}^\ast_D$, $\mathbb{P}\mathcal{ST}_D$ and  $\mathbb{P}\mathcal{SH}_D$ carry a canonical, locally finite simplicial structure, and the projections $\mathbb{P}\mathcal{ST}^\ast_D\to \mathbb{P}\mathcal{ST}_D \to \mathbb{P}\mathcal{SH}_D$ are simplicial, see \cite[Theorem 1.7]{DeMarco11}.

\subsection{Metric graphs}
In this subsection, we state results concerning thermodynamic metrics on the space of metric graphs following \cite{Aougab23}. 

\subsubsection{Graphs and length functions}
 A \emph{graph} is a tuple $\Gamma=(V,E,\mathfrak{o},\mathfrak{t},\bar{}\ )$ where $V$ is the set of  vertices, $E$ is the set of directed edges, $\mathfrak{o}, \mathfrak{t}: E\to V$ are functions that specify the originating and terminating vertices of an edge, and $\bar{}: E\to E$ is a fixed point free involution such that $\mathfrak{o}(e)=\mathfrak{t}(\bar e)$ for $e\in E$. We fix an orientation on $\Gamma$, that is, a subset $E_+\subset E$ that contains exactly one edge from each pair $\{e,\bar e\}$. Then $E=E_+\sqcup\bar{E}_+$ and the number of edges of $\Gamma$ is $|E_+|=|E|/2$. For each $v\in V$, the \emph{valance} of $v$ is  the number  of edges from $e\in E_+$ with $\mathfrak{o}(e)=v$  or with $\mathfrak{t}(e)=v$. An edge $e\in E_+$ for which $\mathfrak{o}(e)=\mathfrak{t}(e)=v$ contributes $2$ to the valance of $v$. 
 
 A \emph{length function} on $\Gamma$ is a function $\ell:E_+\to\mathbb{R}_{>0}$ that assigns to each edge a positive real number.  It extends to a function $\ell:E\to\mathbb{R}_{>0}$  by $\ell(e)=\ell(\bar e)$  if $e\in\bar{E}_+$ and then to a function on edge paths $\gamma$ by 
$$\ell(\gamma)\defeq\sum_{i=1}^n\ell(e_i)$$ 
where $\gamma=(e_1,\dots,e_n)$ is a sequence of edges in $E$ such that $\mathfrak{t}(e_i)=\mathfrak{o}(e_{i+1})$ for $1\le i\le n-1$. The \emph{space of length functions} on $\Gamma$ is 
 $$\mathfrak{M}_\Gamma\defeq\{\ell:E_+\to\mathbb{R}_{>0}\},$$
 which is regarded as  a subset of $\mathbb{R}^{|E_+|}$.  Any $\ell\in\mathfrak{M}_\Gamma$ endows a  natural metric structure on $\Gamma$.  The pair $(\Gamma,\ell)$ is called a \emph{metric graph}.

\subsubsection{Entropy and pressure}\label{sec:entropy}
Consider a finite connected graph $\Gamma=(V,E,\mathfrak{o},\mathfrak{t},\bar{}\ )$. An edge path $(e_1,\dots,e_n)$ in $\Gamma$ is \emph{reduced} if $e_i\not=\bar{e}_{i+1}$ for $i=1,\dots,n-1$, and is a \emph{circuit} if in addition $\mathfrak{t}(e_n)=\mathfrak{o}(e_1)$ and $e_n\not=\bar e_1$. Denote by $\mathfrak{C}_\Gamma$ the set of all circuits in $\Gamma$. Given a length function $\ell\in\mathfrak{M}_\Gamma$ and a real number $T\ge 0$, denote by $\mathfrak{C}_{\Gamma,\ell}(T)$ the set of all circuits in $\Gamma$ having lengths at most $T$, i.e.,
$$\mathfrak{C}_{\Gamma,\ell}(T):=\{\gamma\in\mathfrak{C}_\Gamma:\ell(\gamma)\le T\}.$$
The \emph{entropy} $\mathfrak{h}_\Gamma(\ell)$ of $\ell\in\mathfrak{M}_\Gamma$ is defined by 
$$\mathfrak{h}_\Gamma(\ell):=\lim_{T\to\infty}\frac{1}{T}\log|\mathfrak{C}_{\Gamma,\ell}(T)|.$$

\begin{lemma}[{\cite[Lemma 3.4]{Aougab23}}]
 Let $\Gamma$ be a finite connected graph and let $\ell\in\mathfrak{M}_\Gamma$. Then $\mathfrak{h}_\Gamma(a\cdot\ell)=a^{-1}\mathfrak{h}_\Gamma(\ell)$ for any $a\in\mathbb{R}_{>0}$. In particular, 
  $$\mathfrak{h}_\Gamma(\mathfrak{h}_\Gamma(\ell)\cdot\ell)=1.$$
 \end{lemma}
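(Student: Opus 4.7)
The plan is to observe that the defining limit depends only on the counting function $T\mapsto|\mathfrak{C}_{\Gamma,\ell}(T)|$ through its exponential growth rate, and that rescaling the length function by $a$ merely rescales the ``time'' parameter $T$. First I would note that, by definition, $(a\cdot\ell)(\gamma)=a\cdot\ell(\gamma)$ for every circuit $\gamma\in\mathfrak{C}_\Gamma$, so that
$$\mathfrak{C}_{\Gamma,a\cdot\ell}(T)=\{\gamma\in\mathfrak{C}_\Gamma:a\cdot\ell(\gamma)\le T\}=\mathfrak{C}_{\Gamma,\ell}(T/a)$$
for every $T\ge 0$ and every $a>0$. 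Substituting this identity into the definition of entropy gives
$$\mathfrak{h}_\Gamma(a\cdot\ell)=\lim_{T\to\infty}\frac{1}{T}\log|\mathfrak{C}_{\Gamma,\ell}(T/a)|,$$
and the change of variable $S=T/a$ (which is a bijection of $\mathbb{R}_{>0}$ to itself, sending $T\to\infty$ to $S\to\infty$) transforms the right-hand side into
$$\lim_{S\to\infty}\frac{1}{aS}\log|\mathfrak{C}_{\Gamma,\ell}(S)|=a^{-1}\mathfrak{h}_\Gamma(\ell),$$
proving the first claimed identity.

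For the ``in particular'' assertion, I would simply apply the scaling formula with $a=\mathfrak{h}_\Gamma(\ell)$, obtaining $\mathfrak{h}_\Gamma(\mathfrak{h}_\Gamma(\ell)\cdot\ell)=\mathfrak{h}_\Gamma(\ell)^{-1}\cdot\mathfrak{h}_\Gamma(\ell)=1$. This step is legitimate as long as $\mathfrak{h}_\Gamma(\ell)$ is a strictly positive real number; for the finite connected graphs considered in the paper (where the first Betti number is at least two, so circuits proliferate exponentially in length), this positivity is a standard fact that I would cite rather than reprove.

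The main potential pitfall is not a real obstacle: one might worry that the limit defining $\mathfrak{h}_\Gamma(a\cdot\ell)$ need be verified to exist independently, but since the reindexing $S=T/a$ is a bijection of the parameter space, existence of one limit is equivalent to existence of the other, and the two are related precisely by the factor $a^{-1}$. I therefore expect the entire proof to be a short one-paragraph calculation.
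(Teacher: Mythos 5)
Your proof is correct. The paper cites this lemma directly from Aougab--Clay--Rieck without reproving it, so there is no in-paper proof to compare against, but the argument you give is the standard one: the identity $\mathfrak{C}_{\Gamma,a\cdot\ell}(T)=\mathfrak{C}_{\Gamma,\ell}(T/a)$ followed by the change of variable $S=T/a$ in the defining limit is exactly how one would establish the homogeneity $\mathfrak{h}_\Gamma(a\cdot\ell)=a^{-1}\mathfrak{h}_\Gamma(\ell)$, and the ``in particular'' part is an immediate specialization. You are also right to flag the implicit requirement that $\mathfrak{h}_\Gamma(\ell)\in\mathbb{R}_{>0}$ for the normalization $\mathfrak{h}_\Gamma(\ell)\cdot\ell$ to lie in $\mathfrak{M}_\Gamma$; as stated, the lemma's hypothesis is only that $\Gamma$ is finite and connected, but the paper's subsequent standing assumption (no valence-$1$ or valence-$2$ vertices and $\chi(\Gamma)<0$) is what guarantees the positive finite entropy needed for that clause to be meaningful, and your decision to cite that fact rather than reprove it is the right call in this context.
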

 
We assume that that $\Gamma$ has no vertices with valence equal to $1$ or $2$ and that the Euler characteristic of $\Gamma$ is less than $0$, that is, $\chi(\Gamma)\defeq|V|-|E_{+}|<0$. 
Let $A_\Gamma\defeq(A_\Gamma(e,e'))_{|E|\times|E|}$ be the matrix defined by 
\begin{equation*} 
A_\Gamma(e,e') \defeq \begin{cases}
	1 \text{ if $\mathfrak{t}(e)=\mathfrak{o}(e')$ and $\bar e\not=e'$}\\
	0 \text{ otherwise}
\end{cases}.
\end{equation*}
Given a function $\phi: E\to\mathbb{R}$, conider the matrix $A_{\Gamma,\phi}:=(A_{\Gamma,\phi}(e,e'))_{|E|\times|E|}$ with  
\begin{equation}\label{eq_matrixgraphphi}
A_{\Gamma,\phi}(e,e')=A_\Gamma(e,e')\exp(-\phi(e)).
\end{equation}
The \emph{pressure} $\mathfrak{P}_\Gamma(\phi)$ of $\phi$ is defined by 
$$\mathfrak{P}_\Gamma(\phi):=\log\mathrm{spec}(A_{\Gamma,-\phi}),$$
where $\mathrm{spec}(A_{\Gamma,-\phi})$ is the spectral radius of $A_{\Gamma,-\phi}$. 

The following 
result characterizes 
 $\mathfrak{h}_\Gamma(\ell)$ as a zero of the  function $t \mapsto \mathfrak{P}_\Gamma(-t\ell)$.
\begin{proposition}[{\cite[Lemma 3.1 (2)]{Pollicott14} and \cite[Theorem 3.7]{Aougab23}}]
Fix $\Gamma$ as above. Then for any $\ell\in\mathfrak{M}_\Gamma$, 
$$\mathfrak{P}_\Gamma(-\mathfrak{h}_\Gamma(\ell)\ell)=0.$$
In particular, $\mathfrak{P}_\Gamma(-\ell)=0$ if and only if $\mathfrak{h}_\Gamma(\ell)=1$.
\end{proposition}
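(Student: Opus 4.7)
The plan is to establish this classical result by combining Perron--Frobenius spectral asymptotics for the weighted edge matrix with a Tauberian argument linking the spectral radius to the metric-length counting function.

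First, I would verify that for each $t \in \mathbb{R}$, the nonnegative matrix $A_{\Gamma,t\ell}$ is irreducible and aperiodic, using the assumptions that $\Gamma$ is finite connected, has no vertices of valence $1$ or $2$, and satisfies $\chi(\Gamma) < 0$. (The no-valence-$1$ condition prevents sinks in the directed edge graph; $\chi(\Gamma)<0$ together with no-valence-$2$ guarantees enough branching to rule out periodicity.) Then Perron--Frobenius applies: the spectral radius $\lambda(t) \defeq \mathrm{spec}(A_{\Gamma,t\ell})$ is a simple, strictly positive eigenvalue, depends real-analytically on $t$, and satisfies
$$\mathrm{tr}\bigl(A_{\Gamma,t\ell}^n\bigr) \asymp \lambda(t)^n = \exp\bigl(n\,\mathfrak{P}_\Gamma(-t\ell)\bigr) \quad \text{as } n \to \infty.$$

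Second, I would interpret the trace combinatorially. Expanding the matrix product and recalling the definition \eqref{eq_matrixgraphphi} together with the constraint $\bar{e}\neq e'$ in the definition of $A_\Gamma$, one obtains
$$\mathrm{tr}\bigl(A_{\Gamma,t\ell}^n\bigr) = \sum_{\substack{\gamma \in \mathfrak{C}_\Gamma \\ |\gamma|_{\mathrm{comb}}=n}} \exp(-t\,\ell(\gamma)),$$
where $|\gamma|_{\mathrm{comb}}$ denotes the combinatorial length. Summing over $n$ yields the partition function
$$Z(t) \defeq \sum_{\gamma \in \mathfrak{C}_\Gamma} \exp(-t\,\ell(\gamma)) = \int_0^{\infty} e^{-tT}\, d\bigl|\mathfrak{C}_{\Gamma,\ell}(T)\bigr|.$$
By a standard Laplace/Tauberian argument, the abscissa of convergence of the right-hand integral is exactly $\mathfrak{h}_\Gamma(\ell)$, by the very definition of entropy as the exponential growth rate of $|\mathfrak{C}_{\Gamma,\ell}(T)|$.

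Third, I would combine these. Set $P(t) \defeq \mathfrak{P}_\Gamma(-t\ell)$. Using analyticity of $\lambda(t)$ and the strict positivity of $\ell$, one checks that $P$ is strictly decreasing, with $P(0) = \log\mathrm{spec}(A_\Gamma) > 0$ (nontrivial growth from $\chi(\Gamma)<0$) and $P(t) \to -\infty$ as $t \to \infty$; hence there is a unique $t^\ast$ with $P(t^\ast)=0$. The trace asymptotic from Step 1 shows $\sum_n \mathrm{tr}(A_{\Gamma,t\ell}^n)$, and thus $Z(t)$, converges precisely when $t > t^\ast$. Matching this with the entropy characterization from Step 2 forces $t^\ast = \mathfrak{h}_\Gamma(\ell)$, which is the desired identity $\mathfrak{P}_\Gamma(-\mathfrak{h}_\Gamma(\ell)\,\ell) = 0$. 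The ``in particular'' clause is then immediate: plugging $\mathfrak{h}_\Gamma(\ell)=1$ into the main identity gives $\mathfrak{P}_\Gamma(-\ell)=0$, and conversely the uniqueness of $t^\ast$ forces $\mathfrak{h}_\Gamma(\ell)=1$ whenever $\mathfrak{P}_\Gamma(-\ell)=0$.

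The main obstacle is the Tauberian step bridging combinatorial length (the natural variable for the trace) with metric length (the natural variable for entropy), since the lengths $\ell(e)$ need not be rationally related. One must carefully justify that the spectral-radius growth rate of the trace sum, grouped by combinatorial length, coincides with the metric-length abscissa of convergence; this is standard but requires the irreducibility/aperiodicity verified in Step 1.
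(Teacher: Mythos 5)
Your proof follows the standard thermodynamic-formalism route (Perron--Frobenius for the weighted edge matrix, combinatorial expansion of the trace, abscissa-of-convergence argument), which is essentially the argument one finds in the cited references of Pollicott--Sharp and Aougab--Clay--Rieck, and the overall logic is sound: irreducibility of $A_\Gamma$, strict monotonicity of $P(t)\defeq\log\mathrm{spec}(A_{\Gamma,t\ell})$ with $P(0)>0$ and $P(t)\to-\infty$, uniqueness of the root $t^\ast$, identification of $Z(t)=\sum_n\mathrm{tr}(A_{\Gamma,t\ell}^n)$ with $\sum_{\gamma\in\mathfrak{C}_\Gamma}e^{-t\ell(\gamma)}$, and matching of the two abscissae of convergence. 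The ``in particular'' clause then falls out from the uniqueness of $t^\ast$.

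There is, however, one concrete error in Step~1 that you should repair: the hypotheses on $\Gamma$ do \emph{not} guarantee that $A_\Gamma$ is aperiodic. Consider $\Gamma=K_{3,3}$, which is connected, has every vertex of valence $3$, and has $\chi=6-9=-3<0$. Because $K_{3,3}$ is bipartite, every cyclically reduced circuit has even combinatorial length; equivalently, the directed edge graph $D_\Gamma$ is itself bipartite, so $A_\Gamma$ has period $2$. Consequently the asserted asymptotic $\mathrm{tr}(A_{\Gamma,t\ell}^n)\asymp\lambda(t)^n$ is simply false along odd $n$ (the trace vanishes). Fortunately the argument does not need aperiodicity: irreducibility alone gives a simple positive Perron eigenvalue $\lambda(t)$, and the only fact you actually use is that $\sum_n\mathrm{tr}(A_{\Gamma,t\ell}^n)<\infty$ if and only if $\lambda(t)<1$. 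That equivalence holds for any irreducible nonnegative matrix: if $\lambda(t)<1$ the geometric decay of $\|A_{\Gamma,t\ell}^n\|$ forces convergence, and if $\lambda(t)\ge 1$ then along the subsequence $n=kp$ (with $p$ the period, applying Perron--Frobenius to the primitive blocks of $A_{\Gamma,t\ell}^p$) the trace is bounded below by a positive multiple of $\lambda(t)^{kp}$, so the series diverges. Replacing the claimed aperiodicity and the $\asymp$-asymptotic by this weaker convergence dichotomy closes the gap; the rest of the argument, including the Tauberian identification of the abscissa of convergence with $\mathfrak{h}_\Gamma(\ell)$, is correct as you wrote it. (Your closing worry about rational independence of edge lengths is, as you more or less note yourself, a non-issue once $Z(t)$ is written as the single sum $\sum_n\mathrm{tr}(A_{\Gamma,t\ell}^n)$: the two abscissae agree because they are abscissae of the \emph{same} series.)
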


Moreover, the entropy and the pressure of a length function satisfy the following properties. 
\begin{proposition}[{\cite[Theorem 3.7 (2) (3), Lemmas 3.8 and 3.9]{Aougab23}\label{prop:property}}]
Fix $\Gamma$ as above. Then  the following hold:
\begin{enumerate}
\item The pressure function $\mathfrak{P}_\Gamma:\mathfrak{M}_\Gamma\to\mathbb{R}$ is real analytic and convex.
\item The entropy function $\mathfrak{h}_\Gamma:\mathfrak{M}_\Gamma\to\mathbb{R}$ is real analytic and strictly convex.
\end{enumerate}
\end{proposition}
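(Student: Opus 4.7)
The plan is to attack the two assertions separately. Part (1) rests on Perron--Frobenius together with the thermodynamic formalism for the subshift of finite type associated to $A_\Gamma$, and Part (2) then reduces to Part (1) via the defining relation $\mathfrak{P}_\Gamma(-\mathfrak{h}_\Gamma(\ell)\ell)=0$ from the preceding proposition.

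For Part (1), the entries $A_\Gamma(e,e')\exp(\phi(e))$ of $A_{\Gamma,-\phi}$ depend entirely analytically on $\phi\in\mathbb{R}^{|E|}$. The hypotheses on $\Gamma$ (finite, connected, all valences at least $3$, and $\chi(\Gamma)<0$) make the non-backtracking transition matrix $A_\Gamma$ irreducible and aperiodic, and the same then holds for $A_{\Gamma,-\phi}$. Perron--Frobenius therefore shows that $\mathrm{spec}(A_{\Gamma,-\phi})$ is a simple, strictly dominant, positive eigenvalue. Kato's analytic perturbation theory for simple eigenvalues yields real analyticity of $\phi\mapsto\mathrm{spec}(A_{\Gamma,-\phi})$, and hence of $\mathfrak{P}_\Gamma$. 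Convexity follows from the variational principle
$$\mathfrak{P}_\Gamma(\phi)=\sup_\mu\Bigl\{h(\mu)+\int\phi\,d\mu\Bigr\},$$
with $\mu$ ranging over shift-invariant probability measures on the non-backtracking edge sequence space: each $\phi\mapsto h(\mu)+\int\phi\,d\mu$ is affine, so the supremum is convex.

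For the analyticity claim in Part (2), I would apply the implicit function theorem to $F(\ell,t)\defeq\mathfrak{P}_\Gamma(-t\ell)$, which is jointly real analytic on $\mathfrak{M}_\Gamma\times\mathbb{R}_{>0}$ by Part (1) and satisfies $F(\ell,\mathfrak{h}_\Gamma(\ell))=0$. Standard thermodynamic formalism identifies the derivative $\partial_tF(\ell,t)=-\int\ell\,d\mu_{-t\ell}$, where $\mu_{-t\ell}$ is the equilibrium measure of the potential $-t\ell$. Since $\ell>0$ on every edge and $\mu_{-t\ell}$ is fully supported, this is strictly negative, and the implicit function theorem delivers an analytic $\ell\mapsto\mathfrak{h}_\Gamma(\ell)$. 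Combining $F(\ell,\mathfrak{h}_\Gamma(\ell))=0$ with the variational principle then yields
$$\mathfrak{h}_\Gamma(\ell)=\sup_\mu\frac{h(\mu)}{\int\ell\,d\mu},$$
and for each fixed $\mu$ the function $\ell\mapsto h(\mu)/\int\ell\,d\mu$ is the reciprocal of a positive linear form, hence convex; a supremum of convex functions is convex, so $\mathfrak{h}_\Gamma$ is convex.

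The main obstacle is upgrading convexity to \emph{strict} convexity. My plan is to compute the Hessian of $\mathfrak{h}_\Gamma$ from the identity $F(\ell,\mathfrak{h}_\Gamma(\ell))\equiv 0$. Differentiating twice in $\ell$ and dividing by the nonzero factor $\partial_tF$ expresses $D^2\mathfrak{h}_\Gamma(\ell)$ on the kernel of $D\mathfrak{h}_\Gamma(\ell)$ as a positive multiple of the second variation $D_\phi^2\mathfrak{P}_\Gamma$ evaluated at the tangent direction. Ruelle's formula identifies this second variation with the asymptotic variance $\sigma_{\mu_\ell}^2$ of the perturbing potential with respect to the equilibrium state $\mu_\ell$, and this variance vanishes only when the perturbation is cohomologous to a constant over the shift. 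The delicate step is then to verify that no nonzero tangent vector in $\mathfrak{M}_\Gamma$ (a perturbation depending only on the edge) can be a coboundary: by Livsic's theorem this reduces to the combinatorial statement that if two length functions agree on every circuit of $\Gamma$ then they coincide on every edge, which one checks by exhibiting, for each edge $e\in E_+$, two circuits through $e$ whose symmetric difference isolates $\ell(e)$. This circuit-separation step is the part of the argument that actually uses the standing assumption $\chi(\Gamma)<0$ and the absence of valence-$\le 2$ vertices, and is the substantive combinatorial input.
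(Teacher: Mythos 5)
Your proposal follows the classical thermodynamic-formalism route (Perron--Frobenius, analytic perturbation, variance = second variation of pressure, Liv\v{s}ic), which is a genuinely different path from what Aougab--Clay--Rieck do: they work with the explicit combinatorial expansion of $F_\Gamma(\ell)=\det(I-A_{\Gamma,\ell})$ over the cycle complex $\mathcal{C}_\Gamma$ (compare Lemma~\ref{lem:for} in this paper, which records their formulas) and verify positivity of the relevant quadratic forms term by term, avoiding Liv\v{s}ic and equilibrium-state machinery altogether. Your approach is more portable and conceptually cleaner if all its inputs are available; theirs is more elementary and makes the later computations (such as the norm formula \eqref{eq_gammadotgamma} used throughout this paper) drop out directly.

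Two specific steps in your write-up are, however, false as stated and need repair. First, you assert that the standing hypotheses make $A_\Gamma$ aperiodic. This fails whenever $\Gamma$ is bipartite: for the theta graph (two vertices joined by three edges, which has all valences $3$ and $\chi=-1$), every non-backtracking closed path has even edge-length, so $A_\Gamma$ has period $2$. What you actually need, and what does hold, is irreducibility together with the fact that the Perron eigenvalue of a nonnegative irreducible matrix is algebraically simple; that is enough for Kato's perturbation theorem, and the Perron eigenvalue still equals the spectral radius, so $\mathfrak{P}_\Gamma$ is analytic. You should drop the aperiodicity claim and argue via irreducibility alone. Second, and more seriously for the strict-convexity step, your plan to show that no edge-supported $v$ is a coboundary rests on ``exhibiting, for each edge $e$, two circuits whose symmetric difference isolates $\ell(e)$.'' The same theta graph kills this: every circuit there traverses an even number of edges, so the symmetric difference of any two circuit edge-multisets has even cardinality and can never be a single edge. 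The correct statement is weaker but sufficient, namely that the circuit vectors $\Delta\mapsto(\Delta(e))_{e\in E_+}$ span $\mathbb{R}^{|E_+|}$, which one must prove by a different combinatorial argument (taking rational linear combinations of at least three circuits, as in the theta graph where $(1,1,0)+(1,0,1)-(0,1,1)=(2,0,0)$). As written, your finishing step would fail.

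One further point you gloss over but which does work out: positive-definiteness of $\mathbf{H}[\mathfrak{h}_\Gamma]$ restricted to $\ker D\mathfrak{h}_\Gamma$ does imply positive-definiteness on all of $\mathbb{R}^{|E_+|}$. This follows from degree-$(-1)$ homogeneity via Euler's identity: differentiating $\nabla\mathfrak{h}_\Gamma(\ell)\cdot\ell=-\mathfrak{h}_\Gamma(\ell)$ gives $\langle v,\mathbf{H}[\mathfrak{h}_\Gamma]\ell\rangle=-2\,\nabla\mathfrak{h}_\Gamma\cdot v$, so any $v\in\ker\mathbf{H}[\mathfrak{h}_\Gamma]$ automatically lies in $\ker D\mathfrak{h}_\Gamma$; combined with positive semi-definiteness (from convexity) this forces $\ker\mathbf{H}[\mathfrak{h}_\Gamma]=\{0\}$. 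You should include this, since otherwise the restriction to the kernel appears to leave a gap.
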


\subsubsection{Thermodynamic metrics}
Let $\mathfrak{M}_\Gamma^1$ be the set of length functions on $\Gamma$ with unit entropy, i.e.,
$$\mathfrak{M}_\Gamma^1:=\{\ell\in\mathfrak{M}_\Gamma: \mathfrak{h}_{\Gamma}(\ell) = 1\}.$$
As for the set $\mathfrak{M}_\Gamma$, we regard $\mathfrak{M}_\Gamma^1$ as a subset of $\mathbb{R}^{|E_+|}$. Denote by $\langle\cdot,\cdot\rangle$ the  standard Euclidean inner product on $\mathbb{R}^{|E_+|}$. The \emph{tangent space} $T_\ell \mathfrak{M}_\Gamma^1$ at the length function $\hat\ell\in\mathfrak{M}_\Gamma^1$ is the space of vectors $\vec{v}\in\mathbb{R}^{|E_+|}$ such that $\langle\vec{v},\nabla\mathfrak{h}_\Gamma(\hat{\ell})\rangle=0$. The tangent bundle $T\mathfrak{M}_\Gamma^1$ is the subspace of  $\mathfrak{M}_\Gamma^1\times\mathbb{R}^{|E_+|}$ consisting of pairs $(\hat\ell,\vec{v})$ where $\hat\ell \in \mathfrak{M}_\Gamma^1$ and $\vec{v}\in T_{\hat\ell} \mathfrak{M}_\Gamma^1$.

Given a length function $\hat\ell\in\mathfrak{M}_\Gamma^1$ and tangent vectors $\vec{v}_1,\vec{v}_2\in T_{\hat\ell} \mathfrak{M}_\Gamma^1$, following  \cite{Aougab23}, 
 the \emph{entropy metric} is 
\begin{equation*} 
\langle\vec{v}_1,\vec{v}_2\rangle_{\mathfrak{h},\Gamma}:=\langle\vec{v}_1,\mathbf{H}[\mathfrak{h}_{\Gamma}(\hat\ell)]\vec{v}_2\rangle, 
\end{equation*}
where $\mathbf{H}[f(x)]$ is  the Hessian of a  smooth  function $f:\mathbb{R}^n\to \mathbb{R}$. The associated norm on 
 $T\mathfrak{M}_\Gamma^1$ is 
$$||(\hat\ell,\vec{v})||^2_{\mathfrak{h},\Gamma}:=\langle\vec{v},\mathbf{H}[\mathfrak{h}_{\Gamma}(\hat\ell)]\vec{v}\rangle.$$

The following lemma is a consequence of the strictly convexity of $\mathfrak{h}_{\Gamma}$ (see Proposition \ref{prop:property} (2)).
\begin{lemma} \label{lem_entropyposdef}
The entropy metric is positive-definite.
\end{lemma}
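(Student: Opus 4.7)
The plan is to reduce positive-definiteness of $\langle\cdot,\cdot\rangle_{\mathfrak{h},\Gamma}$ on $T_{\hat\ell}\mathfrak{M}_\Gamma^1$ to the strict convexity of $\mathfrak{h}_\Gamma$ recorded in Proposition \ref{prop:property}(2). By definition $\langle \vec v,\vec v\rangle_{\mathfrak{h},\Gamma}=\langle \vec v,\mathbf{H}[\mathfrak{h}_\Gamma(\hat\ell)]\vec v\rangle$, so what is required is that the Hessian $\mathbf{H}[\mathfrak{h}_\Gamma(\hat\ell)]$ restricts to a positive-definite symmetric bilinear form on $T_{\hat\ell}\mathfrak{M}_\Gamma^1$. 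Convexity of the real-analytic function $\mathfrak{h}_\Gamma$ on the open convex set $\mathfrak{M}_\Gamma\subset\mathbb{R}^{|E_+|}$ already gives that $\mathbf{H}[\mathfrak{h}_\Gamma(\hat\ell)]$ is positive semi-definite on all of $\mathbb{R}^{|E_+|}$, so $\langle \vec v,\vec v\rangle_{\mathfrak{h},\Gamma}\ge 0$ holds unconditionally; the whole content of the lemma lies in ruling out null directions on $T_{\hat\ell}\mathfrak{M}_\Gamma^1\setminus\{\vec 0\}$.

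I would argue by contradiction. Suppose $\vec v\in T_{\hat\ell}\mathfrak{M}_\Gamma^1\setminus\{\vec 0\}$ satisfies $\langle \vec v,\mathbf{H}[\mathfrak{h}_\Gamma(\hat\ell)]\vec v\rangle=0$. The Cauchy--Schwarz inequality for positive semi-definite forms then forces $\mathbf{H}[\mathfrak{h}_\Gamma(\hat\ell)]\vec v=\vec 0$. The homogeneity $\mathfrak{h}_\Gamma(a\ell)=a^{-1}\mathfrak{h}_\Gamma(\ell)$ combined with Euler's identity yields $\mathbf{H}[\mathfrak{h}_\Gamma(\hat\ell)]\hat\ell=-2\,\nabla\mathfrak{h}_\Gamma(\hat\ell)\ne \vec 0$, so the radial direction $\hat\ell$ is never in the kernel of $\mathbf{H}[\mathfrak{h}_\Gamma(\hat\ell)]$ and is transverse to $T_{\hat\ell}\mathfrak{M}_\Gamma^1$; the kernel is thus a proper subspace of $T_{\hat\ell}\mathfrak{M}_\Gamma^1$. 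To exclude $\vec v$ from it, I would invoke the covariance interpretation of $\mathbf{H}[\mathfrak{h}_\Gamma(\hat\ell)]$ from the thermodynamic formalism of \cite{Aougab23}: namely, $\mathbf{H}[\mathfrak{h}_\Gamma(\hat\ell)]$ is proportional to the covariance matrix of the edge-indicator functions against the equilibrium state for the potential $-\hat\ell$ constructed from the Perron--Frobenius eigenvector of the matrix $A_{\Gamma,\hat\ell}$ in \eqref{eq_matrixgraphphi}. A kernel vector corresponds to a measure-theoretic coboundary relation among edge lengths, and the irreducibility of $A_\Gamma$ — ensured by $\chi(\Gamma)<0$ together with the absence of valence-$1$ or valence-$2$ vertices assumed in Section \ref{sec:entropy} — excludes nontrivial such relations, forcing $\vec v=\vec 0$.

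The main obstacle is the standard subtlety that strict convexity of a smooth function does not on its own imply pointwise positive-definiteness of its Hessian (witness $x\mapsto x^4$), so pure convex analysis is insufficient here. What closes the gap is the dynamical content of the Hessian — its identification, via the Perron--Frobenius machinery behind $\mathfrak{P}_\Gamma$ in Section \ref{sec:entropy}, as a covariance form — together with the combinatorial non-degeneracy hypotheses on $\Gamma$ that rule out coboundary relations.
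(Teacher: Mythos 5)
The paper's own ``proof'' of this lemma is a one-line remark invoking the strict convexity of $\mathfrak{h}_\Gamma$ from Proposition~\ref{prop:property}(2), so there is no detailed argument here to compare against; both you and the paper ultimately rest on the same thermodynamic result in \cite{Aougab23}. Your observation that classical strict convexity of a smooth function does not, by itself, force pointwise positive-definiteness of the Hessian (your $x\mapsto x^4$ example) is correct and worth making: read literally, the paper's remark is a non sequitur, and the lemma needs the stronger Hessian positivity that \cite{Aougab23} actually establishes via Perron--Frobenius and covariance arguments. Your Euler-identity computation giving $\mathbf{H}[\mathfrak{h}_\Gamma(\hat\ell)]\hat\ell=-2\nabla\mathfrak{h}_\Gamma(\hat\ell)$ together with $\nabla\mathfrak{h}_\Gamma(\hat\ell)\cdot\hat\ell=-\mathfrak{h}_\Gamma(\hat\ell)=-1\neq 0$ is also right, and the Cauchy--Schwarz reduction from ``null vector'' to ``kernel vector'' is standard for positive semi-definite forms.

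Two points need attention. First, the sentence ``the kernel is thus a proper subspace of $T_{\hat\ell}\mathfrak{M}_\Gamma^1$'' does not follow from the two facts $\hat\ell\notin\ker\mathbf{H}[\mathfrak{h}_\Gamma(\hat\ell)]$ and $\hat\ell$ transverse to $T_{\hat\ell}\mathfrak{M}_\Gamma^1$; nothing so far shows the kernel even lies in $T_{\hat\ell}\mathfrak{M}_\Gamma^1$, and what you actually need is $\ker\mathbf{H}[\mathfrak{h}_\Gamma(\hat\ell)]\cap T_{\hat\ell}\mathfrak{M}_\Gamma^1=\{\vec{0}\}$, which is precisely the lemma. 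That clause is a detour with no logical force and should be removed. Second, the decisive step --- reading the Hessian as a covariance form for the equilibrium state attached to the Perron--Frobenius eigendata of $A_{\Gamma,\hat\ell}$, and arguing that a kernel vector would give a coboundary relation that the hypotheses $\chi(\Gamma)<0$ and the absence of valence-$1,2$ vertices rule out --- is invoked here only at the level of citation. As written your argument is an accurate and more explicit sketch of what \cite{Aougab23} prove (not an independent proof), which is also true of the paper's remark; the real value you add is in flagging that ``strictly convex'' in the paper's phrasing must be understood as shorthand for Hessian positivity transverse to the scaling direction, not the weaker classical notion.
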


\begin{remark}
Aougab-Clay-Rieck \cite{Aougab23} also defined a \emph{pressure metric} on $\mathfrak{M}_\Gamma^1$ as follows. For any 
$\hat\ell\in\mathfrak{M}_\Gamma^1$ and 
 $\vec{v}_1,\vec{v}_2\in T_{\hat\ell} \mathfrak{M}_\Gamma^1$, the pressure metric is 
$$\langle\vec{v}_1,\vec{v}_2\rangle_{\mathfrak{P},\Gamma}:=\langle\vec{v}_1,\mathbf{H}[\mathfrak{P}_{\Gamma}(-\hat\ell)]\vec{v}_2\rangle$$
and has the associated norm 
$$||(\hat\ell,\vec{v})||^2_{\mathfrak{P},\Gamma}:=\langle\vec{v},\mathbf{H}[\mathfrak{P}_{\Gamma}(-\hat\ell)]\vec{v}\rangle.$$
The two metrics are related by the following equation
$$\langle\vec{v}_1,\vec{v}_2\rangle_{\mathfrak{h},\Gamma}=\frac{\langle\vec{v}_1,\vec{v}_2\rangle_{\mathfrak{P},\Gamma}}{\langle\ell,\nabla\mathfrak{P}_\Gamma(-\hat\ell)\rangle}.$$
We use the terminologies \emph{entropy metric} and \emph{pressure metric} following \cite{Aougab23}; while other authors name such metrics differently. 
Pollicott and Sharp  \cite{Pollicott14} used the term Weil-Petersson metric for $||\cdot||_{\mathfrak{P},\Gamma}$. Kao \cite{Kao17} used the term Weil-Petersson metric for $||\cdot||_{\mathfrak{h},\Gamma}$ and the term pressure metric for $||\cdot||_{\mathfrak{P},\Gamma}$. Xu \cite{Xu19} used the term pressure metric for $||\cdot||_{\mathfrak{h},\Gamma}$. 
\end{remark}

Since $\mathfrak{M}_\Gamma^1$ is connected (\cite[Corollary 4.5]{Aougab23}), the entropy metric 
$\langle\cdot\ , \cdot\rangle_{\mathfrak{h},\Gamma}$ defines the {\it entropy length} of a piecewise smooth path $\gamma:[0,1]\to\mathfrak{M}_\Gamma^1$ by
\begin{equation*}
\mathfrak{L}_{\mathfrak{h},\Gamma}(\gamma):=\int_0^1||(\gamma,\dot{\gamma})||_{\mathfrak{h},\Gamma}dt.
\end{equation*}
It induces a distance function $\rho_{\mathfrak{h},\Gamma}:\mathfrak{M}_\Gamma^1\times\mathfrak{M}_\Gamma^1\to\mathbb{R}$ by
\begin{equation} \label{eq_entropydistance}
\rho_{\mathfrak{h},\Gamma}(\hat\ell_0,\hat\ell_1):= \inf\{\mathfrak{L}_{\mathfrak{h},\Gamma}(\gamma): \gamma:[0,1]\to\mathfrak{M}_\Gamma^1 \text{ piecewise smooth}, \gamma(0)=\hat\ell_0, \gamma(1)=\hat\ell_1\}.
\end{equation}

The next two lemmas provide formulae for the term $||(\gamma,\dot{\gamma})||_{\mathfrak{h},\Gamma}$, 
 which will be useful for computing the entropy length $\mathfrak{L}_{\mathfrak{h},\Gamma}(\gamma)$. 
Recall the matrix $A_{\Gamma,\ell}$ for $\ell\in\mathfrak{M}_\Gamma$ in (\ref{eq_matrixgraphphi}), and define
$$F_\Gamma(\ell):=\mathrm{det}(I_{|E|}-A_{\Gamma,\ell}).$$

\begin{lemma}[{\cite[Proposition 4.6]{Aougab23}}] \label{lem_4.6}
If $\gamma:(-1,1)\to\mathfrak{M}_\Gamma^1$  is a smooth path, then
$$||(\gamma,\dot{\gamma})||_{\mathfrak{h},\Gamma}^2=\frac{-\langle\dot{\gamma},\mathbf{H}[F_\Gamma(\gamma)]\dot{\gamma}\rangle}{\langle\gamma,\nabla F_\Gamma(\gamma)\rangle}=\frac{\langle\ddot{\gamma},\nabla F_\Gamma(\gamma)\rangle}{\langle\gamma,\nabla F_\Gamma(\gamma)\rangle}.$$
\end{lemma}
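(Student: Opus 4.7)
The plan is to reduce everything to two successive implicit differentiations, using the characterization from the previous proposition that $\mathfrak{h}_\Gamma(\ell)$ is the unique value with $\mathfrak{P}_\Gamma(-\mathfrak{h}_\Gamma(\ell)\ell)=0$, combined with the Perron--Frobenius fact that $\mathfrak{P}_\Gamma(-\ell)=0$ is equivalent to $F_\Gamma(\ell)=0$ near $\mathfrak{M}_\Gamma^1$. Writing $H:=\mathfrak{h}_\Gamma$ and $F:=F_\Gamma$ for brevity, this gives the implicit identity $F(H(\ell)\,\ell)=0$ valid on a neighborhood of $\mathfrak{M}_\Gamma^1$.

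\medskip

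First I would relate $\nabla H$ to $\nabla F$. Applying the chain rule to $F(H(\ell)\,\ell)=0$ at a point $\hat\ell\in\mathfrak{M}_\Gamma^1$ (so $H(\hat\ell)=1$) yields, for each coordinate,
\begin{equation*}
\partial_i H(\hat\ell)\cdot\langle\hat\ell,\nabla F(\hat\ell)\rangle+\partial_i F(\hat\ell)=0,
\end{equation*}
hence $\nabla H(\hat\ell)=-\nabla F(\hat\ell)/\langle\hat\ell,\nabla F(\hat\ell)\rangle$. In particular, the tangent space condition $\langle\vec v,\nabla H(\hat\ell)\rangle=0$ is equivalent to $\langle\vec v,\nabla F(\hat\ell)\rangle=0$. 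The nondegeneracy $\langle\hat\ell,\nabla F(\hat\ell)\rangle\neq 0$ needed here is the main delicate point: it comes from the fact that along the ray $t\mapsto F(t\hat\ell)$, the value vanishes simply at $t=1$ because the Perron--Frobenius eigenvalue of $A_{\Gamma,t\hat\ell}$ is simple and crosses $1$ transversely.

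\medskip

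Next I would differentiate along the path. Because $\gamma(t)\in\mathfrak{M}_\Gamma^1$, the two scalar identities $H(\gamma(t))\equiv 1$ and $F(\gamma(t))\equiv 0$ both hold. Differentiating each twice and evaluating at the given time produces
\begin{equation*}
\langle\dot\gamma,\nabla H(\gamma)\rangle=0,\qquad \langle\ddot\gamma,\nabla H(\gamma)\rangle+\langle\dot\gamma,\mathbf{H}[H(\gamma)]\dot\gamma\rangle=0,
\end{equation*}
and similarly for $F$:
\begin{equation*}
\langle\dot\gamma,\nabla F(\gamma)\rangle=0,\qquad \langle\ddot\gamma,\nabla F(\gamma)\rangle+\langle\dot\gamma,\mathbf{H}[F(\gamma)]\dot\gamma\rangle=0.
\end{equation*}
The first identity rearranges to $\langle\dot\gamma,\mathbf{H}[H(\gamma)]\dot\gamma\rangle=-\langle\ddot\gamma,\nabla H(\gamma)\rangle$, and substituting the expression for $\nabla H$ from the previous step gives the right-hand equality $\langle\ddot\gamma,\nabla F(\gamma)\rangle/\langle\gamma,\nabla F(\gamma)\rangle$. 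The second identity then substitutes $\langle\ddot\gamma,\nabla F(\gamma)\rangle=-\langle\dot\gamma,\mathbf{H}[F(\gamma)]\dot\gamma\rangle$ to yield the left-hand equality. Since by definition $\|(\gamma,\dot\gamma)\|_{\mathfrak{h},\Gamma}^2=\langle\dot\gamma,\mathbf{H}[H(\gamma)]\dot\gamma\rangle$, both formulas follow.

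\medskip

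The structural difficulty is not in the differentiation, which is mechanical, but in justifying the denominator $\langle\gamma,\nabla F_\Gamma(\gamma)\rangle\neq 0$ and the equivalence $F_\Gamma=0\Leftrightarrow\mathfrak{h}_\Gamma=1$ near $\mathfrak{M}_\Gamma^1$. Both rest on the Perron--Frobenius properties of $A_{\Gamma,\ell}$: simplicity of the leading eigenvalue (so that $F_\Gamma$ has a simple zero transverse to the scaling direction) and positivity of the associated eigenvector (so that the pressure-zero set coincides with the determinant-zero set on the positive cone). Once these are in hand, the rest of the proof is a direct application of the chain rule.
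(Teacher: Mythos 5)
The paper does not reproduce a proof of this lemma---it is cited directly from \cite[Proposition 4.6]{Aougab23}---so there is no in-text argument to compare against; your reconstruction stands on its own and is correct. One small simplification: the identity $F_\Gamma(\mathfrak{h}_\Gamma(\ell)\,\ell)=0$ needs only one direction of the equivalence you invoke, since $\mathfrak{h}_\Gamma(\ell)\,\ell\in\mathfrak{M}_\Gamma^1$ and the spectral radius of the irreducible nonnegative matrix $A_{\Gamma,\hat\ell}$ is an eigenvalue by Perron--Frobenius, so $\det(I-A_{\Gamma,\hat\ell})$ vanishes on all of $\mathfrak{M}_\Gamma^1$; the converse (that $F_\Gamma=0$ forces unit entropy nearby) is not needed for the differentiation, and the equivalence of the tangent conditions falls out of the gradient relation you compute anyway. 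The two second-order identities obtained from $\mathfrak{h}_\Gamma(\gamma(t))\equiv 1$ and $F_\Gamma(\gamma(t))\equiv 0$, together with $\nabla\mathfrak{h}_\Gamma(\hat\ell)=-\nabla F_\Gamma(\hat\ell)/\langle\hat\ell,\nabla F_\Gamma(\hat\ell)\rangle$, give both displayed equalities exactly as you write. You correctly isolate the one nontrivial input, $\langle\hat\ell,\nabla F_\Gamma(\hat\ell)\rangle\neq 0$: this follows from the simplicity of the Perron--Frobenius eigenvalue and the strict decrease of the spectral radius along the stretching ray $t\mapsto t\hat\ell$, and the paper implicitly relies on exactly this nonvanishing when it uses the formula of Lemma \ref{lem:for}(3) as a denominator in \eqref{eq_gammadotgamma}.
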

Let $D_\Gamma$ be the directed graph with adjacency matrix $A_\Gamma$; that is, the vertex set of $D_\Gamma$ is the edge set $E$ of $\Gamma$ and there is an edge from $e\in E$ to $e'\in E$ if $A_\Gamma(e, e')=1$. Denote by $\mathcal{C}_\Gamma$ the {\it cycle complex} of $D_\Gamma$. The cycle complex is the abstract simplicial complex with an $n$-simplex for each collection $\Delta = \{z_1, \ldots, z_{n+1} \}$ of pairwise disjoint simple cycles, i.e., embedded loops in $D_\Gamma$.

Let $\Delta\in\mathcal{C}_\Gamma$. For  $e\in E_+$, set $\Delta(e)\in\{0,1,2\}$ to be the cardinality of the intersection $\{e,\bar e\}\cap\Delta$. For $\ell\in\mathfrak{M}_\Gamma$, set 
$$\ell(\Delta):= \sum_{e\in E_+}\Delta(e)\ell(e)$$ and for a vector $\vec{v}\in\mathbb{R}^{|E_+|}$, set $$\vec{v}(\Delta) := \sum_{e\in E_+}\Delta(e)\vec{v}(e).$$  

\begin{lemma}[{\cite[Theorem 4.2, Lemmas 4.7 and 4.8]{Aougab23}}]\label{lem:for}
Let $\Gamma$ be a finite connected graph. The following hold:
\begin{enumerate}
\item For $(\ell, \vec{v})\in T\mathfrak{M}_\Gamma$,
$$ F_\Gamma(\ell)=-\sum_{\Delta\in\mathcal{C}_\Gamma}(-1)^{|\Delta|}\exp(-\ell(\Delta)).$$
\item For $(\hat\ell,\vec{v})\in T\mathfrak{M}_\Gamma^1$, 
$$\langle\vec{v},\mathbf{H}[F_\Gamma(\hat\ell)]\vec{v}\rangle=\sum_{\Delta\in\mathcal{C}_\Gamma}(-1)^{|\Delta|}\vec{v}(\Delta)^2\exp(-\hat\ell(\Delta)).$$
\item For $\ell\in T\mathfrak{M}_\Gamma$,
$$\langle\ell,\nabla F_\Gamma(\ell)\rangle=-\sum_{\Delta\in\mathcal{C}_\Gamma}(-1)^{|\Delta|}\ell(\Delta)\exp(-\ell(\Delta)).$$
\end{enumerate}
\end{lemma}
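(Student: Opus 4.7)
The plan is to deduce all three parts from a single combinatorial identity for $\det(I_{|E|} - A_{\Gamma,\ell})$, with (2) and (3) then following by differentiation. Part (1) is the structural heart of the lemma; Parts (2) and (3) are routine calculus applied to (1), using only that $\ell(\Delta) = \sum_{e \in E_+} \Delta(e)\ell(e)$ is a linear function of $\ell \in \mathbb R^{|E_+|}$.

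For Part (1), I would expand $F_\Gamma(\ell) = \det(I_{|E|} - A_{\Gamma,\ell})$ by the Leibniz formula as a sum over permutations $\sigma$ of the edge set $E$. Splitting each factor $(I - A_{\Gamma,\ell})_{e,\sigma(e)}$ into its two pieces and grouping by the set $S \subseteq E$ of non-fixed points of $\sigma$, the contribution of $S$ becomes $(-1)^{|S|}$ times a sum over derangements of $S$. A derangement $\tau$ of $S$ contributes nonzero only if every cycle of $\tau$ traces a sequence of edges in the directed graph $D_\Gamma$, and the condition $\bar e \ne e'$ in the definition of $A_\Gamma$ ensures that these cycles are simple cycles in $D_\Gamma$ (with no immediate backtracks in $\Gamma$). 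Thus $\tau$ corresponds exactly to a vertex-disjoint collection of simple cycles $\{z_1,\dots,z_k\}$, i.e., an element $\Delta \in \mathcal{C}_\Gamma$. The sign of $\tau$ is $\prod_j (-1)^{|z_j|-1}$, which combines with $(-1)^{|S|} = \prod_j (-1)^{|z_j|}$ to produce $(-1)^k$ per collection. Each cycle contributes $\exp(-\sum_{e \in z_j}\ell(e))$, and summing these exponents reassembles, by the definition of $\Delta(e)$, into $\exp(-\ell(\Delta))$. Reindexing by $\Delta \in \mathcal{C}_\Gamma$ (and accounting for how the paper counts $|\Delta|$, cycles vs.\ simplex dimension, which fixes the overall sign) yields the claimed identity.

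For Part (2), since $\ell \mapsto \ell(\Delta)$ is linear in $\ell$, one has
$$\partial_{e}\partial_{e'} \exp(-\ell(\Delta)) = \Delta(e)\Delta(e')\exp(-\ell(\Delta)),$$
so $\langle \vec v, \mathbf{H}[\exp(-\hat\ell(\Delta))]\vec v\rangle = \vec v(\Delta)^2\exp(-\hat\ell(\Delta))$. Summing term by term in the identity from Part (1) (and absorbing the overall sign coming from the $-$ in front) delivers the formula. For Part (3), differentiate the identity in Part (1) once to obtain $\partial_{e} F_\Gamma(\ell) = \sum_\Delta (-1)^{|\Delta|}\Delta(e)\exp(-\ell(\Delta))$ up to sign, then form $\langle \ell, \nabla F_\Gamma(\ell)\rangle$ and collapse $\sum_e \ell(e)\Delta(e)$ back to $\ell(\Delta)$.

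The main obstacle is the combinatorial bookkeeping in Part (1): one must verify that the permutation expansion of $\det(I - A_{\Gamma,\ell})$ collapses \emph{exactly} onto the cycle complex $\mathcal{C}_\Gamma$ of $D_\Gamma$ rather than onto the larger set of closed walks (as would happen if one used a trace/zeta-function expansion), and that the restriction $\bar e \ne e'$ is what rules out non-reduced contributions. The sign convention —namely whether $|\Delta|$ denotes the number of cycles in $\Delta$ or the dimension of the simplex (one less)— must be tracked carefully, since this determines the leading minus sign in the displayed formula and correspondingly the sign in Lemma \ref{lem_4.6}. Once Part (1) is pinned down with the paper's conventions, Parts (2) and (3) are immediate differentiations of a linear-in-$\ell$ exponential sum.
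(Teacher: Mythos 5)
The paper does not prove Lemma~\ref{lem:for}; it imports it verbatim from \cite{Aougab23}, so there is nothing internal to compare against. Your outline is the right derivation in spirit, but as written it has a genuine gap in Part~(1): you group the Leibniz expansion of $\det(I_{|E|}-A_{\Gamma,\ell})$ by ``the set $S$ of non-fixed points of $\sigma$'' and then sum ``over derangements of $S$.'' This omits exactly the terms where $D_\Gamma$ has self-loops, i.e.\ where $A_\Gamma(e,e)=1$, which happens precisely when $e$ is a loop edge of~$\Gamma$. For the rose graphs $\mathfrak{R}_n$ --- the only graphs to which the lemma is actually applied here --- \emph{every} edge is a loop, so $D_\Gamma$ has a self-loop at every vertex, and the corresponding length-one simple cycles are vertices of $\mathcal{C}_\Gamma$ that your count never sees. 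The clean fix is the standard principal-minor identity $\det(I-A)=\sum_{S\subseteq E}(-1)^{|S|}\det(A|_S)$, with $\det(A|_S)$ summed over \emph{all} permutations of $S$ (not just derangements); each contributing permutation is then a vertex-disjoint union of simple cycles in $D_\Gamma$ (possibly of length one) with support $S$, i.e.\ a simplex of $\mathcal{C}_\Gamma$. Your sign bookkeeping $(-1)^{|S|}\operatorname{sgn}(\tau)=\prod_j(-1)^{2|z_j|-1}=(-1)^{k}$ is correct once the loops are admitted.

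On the sign: deferring the leading minus to ``how the paper counts $|\Delta|$'' is not enough, because the three displayed formulas are not mutually consistent under any single reading of $|\Delta|$. Carrying out the expansion above, or checking directly on $\mathfrak{R}_2$ where $F_\Gamma=(1-x)(1-y)(1-x-y-3xy)$ with $x=e^{-\ell(e_1)}$ and $y=e^{-\ell(e_2)}$, gives $F_\Gamma(\ell)=\sum_{\Delta\in\mathcal{C}_\Gamma}(-1)^{|\Delta|}\exp(-\ell(\Delta))$ with $|\Delta|$ the number of cycles and $\Delta=\emptyset$ contributing~$1$ --- that is, \emph{without} the leading minus displayed in Part~(1). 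That corrected identity, differentiated once and twice as you describe, yields Parts~(3) and~(2) with precisely the signs shown, and it is that pair of signs that makes \eqref{eq_gammadotgamma} come out right. Your proposed route would in fact surface this discrepancy, so it should be treated as a correction rather than a convention to be absorbed silently.
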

Thus, if $\gamma:(-1,1)\to\mathfrak{M}_\Gamma^1$  is a smooth path, then by Lemmas \ref{lem_4.6} and \ref{lem:for}, we have
\begin{equation}\label{eq_gammadotgamma}
||(\gamma,\dot{\gamma})||_{\mathfrak{h},\Gamma}^2=\frac{\sum_{\Delta\in\mathcal{C}_\Gamma}(-1)^{|\Delta|}\dot{\gamma}(\Delta)^2\exp(-\gamma(\Delta))}{\sum_{\Delta\in\mathcal{C}_\Gamma}(-1)^{|\Delta|}\gamma(\Delta)\exp(-\gamma(\Delta))}.
\end{equation}

\section{Metric rose graphs}\label{sec_Rosegraphs}
Fix $n\ge 2$.  We say that an undirected graph  $\mathfrak{R}_n$ is the \emph{$n$-petal rose graph} $\mathfrak{R}_n$ 
if it has only $1$ vertex and $n$ edges $e_1,\dots, e_n$. See Figure \ref{fig_rose} for examples. In this section, we 
provide results concerning the entropy of length functions on $\mathfrak{R}_n$ and the completion of the metric space $(\mathfrak{M}_{\mathfrak{R}_n}^1, ||\cdot||_{\mathfrak{h},\mathfrak{R}_n})$. We will relate the shift locus $\mathcal{S}_D$ to the space $\mathfrak{M}_{\mathfrak{R}_{2D-2}}^1$ in the next section.
\begin{figure}[H]
\centering
\includegraphics[width=0.6\textwidth]{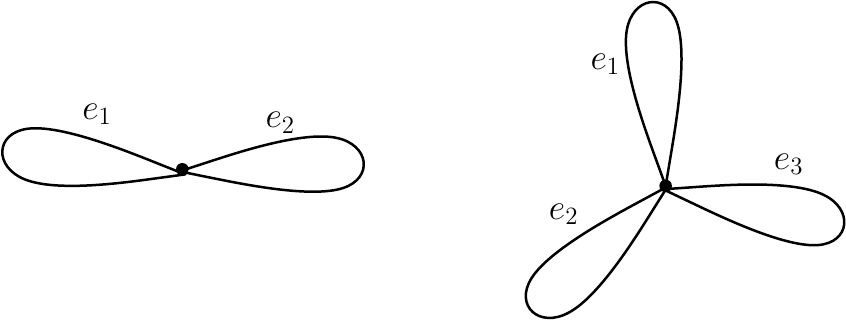}
\caption{The $2$-petal rose $\mathfrak{R}_2$ (left) and the $3$-petal rose $\mathfrak{R}_3$ (right).} 
\label{fig_rose}
\end{figure}

\subsection{Entropy on $\mathfrak{R}_n$}\label{sec:entropy-rose}
Consider the matrix 
 $\overline{A}_{\mathfrak{R}_n,\ell}=(a_\ell(i,j))_{1\le i,j\le n}$ 
with $$a_\ell(i,j) := (2-\delta_{i,j})\exp(-\ell(e_i))\ \ \text{where}\ \ \delta_{i,j}=\begin{cases}
1\ \ &\text{if}\ \ 1\le i=j\le n\\
0\ \ &\text{if}\ \ 1\le i\not=j\le n.
\end{cases}$$
Define the function 
$\overline{F}_n:\mathfrak{M}_{\mathfrak{R}_n}\to\mathbb{R}$ by 
\begin{equation}\label{eq_Fnell}
\overline{F}_n(\ell):= \det(I_n-\overline{A}_{\mathfrak{R}_n,\ell}),
\end{equation}
and for $h\in\mathbb{R}_{>0}$, set 
$$\overline{F}_{n,h}(\ell):= \overline{F}_n(h\cdot\ell).$$
Given a length function $\ell\in\mathfrak{M}_{\mathfrak{R}_n}$ and a (possibly empty) subset $S$ of $\{1, \ldots, n\}$, we write
$$\ell(S) := \sum_{j \in S} \ell(e_j).$$

\begin{lemma}[{\cite[Lemma 5.1]{Pollicott14} and \cite[Proposition 7.3]{Aougab23}}] \label{lem_entropy}
For any $n\ge 2$ and any $\ell\in\mathfrak{M}_{\mathfrak{R}_n}$, we have  $\overline{F}_{n,h}(\ell)=0$  if $h=\mathfrak{h}_{\mathfrak{R}_n}(\ell)$. More precisely, the entropy $\mathfrak{h}_{\mathfrak{R}_n}(\ell)$ of $\ell$ satisfies 
\begin{align}\label{eq_entropy}
1 = \sum_{\emptyset\not=S \subseteq \{1, \ldots, n\}} (2\cdot|S| - 1)\exp({-\mathfrak{h}_{\mathfrak{R}_n}(\ell)\ell(S)}).
\end{align}
\end{lemma}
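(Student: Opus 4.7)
The plan is to combine a direct expansion of $\overline{F}_{n,h}(\ell)=\det(I_n-\overline{A}_{\mathfrak{R}_n,h\ell})$ with the spectral characterization of $\mathfrak{h}_{\mathfrak{R}_n}(\ell)$ from the preceding proposition (which identifies $h=\mathfrak{h}_\Gamma(\ell)$ with $\mathfrak{P}_\Gamma(-h\ell)=0$, i.e., with $\mathrm{spec}(A_{\Gamma,h\ell})=1$). The key observation is that $\overline{A}_{\mathfrak{R}_n,\ell}$ captures the symmetric part of the full $2n\times 2n$ transition matrix $A_{\mathfrak{R}_n,\ell}$ under the edge-reversal involution $e_i\leftrightarrow\bar e_i$, and this symmetric subspace always contains the strictly positive Perron eigenvector.

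First, to establish $\overline{F}_{n,h}(\ell)=0$ at $h=\mathfrak{h}_{\mathfrak{R}_n}(\ell)$, I would start from the eigenvector equation $A_{\mathfrak{R}_n,h\ell}v=v$, which for the rose reads $v(e)=e^{-h\ell(e)}\bigl(\sum_{e'}v(e')-v(\bar e)\bigr)$ for every directed edge $e$, since the only forbidden transition from $e$ is to $\bar e$. Adding the two equations at $e_i$ and $\bar e_i$ and using $\ell(\bar e_i)=\ell(e_i)$ produces the symmetric reduced system
\begin{equation*}
\bigl(1+e^{-h\ell(e_i)}\bigr)w_i=2e^{-h\ell(e_i)}\sum_{j=1}^n w_j\qquad(1\le i\le n),
\end{equation*}
with $w_i\defeq v(e_i)+v(\bar e_i)$; its coefficient matrix is exactly $I_n-\overline{A}_{\mathfrak{R}_n,h\ell}$. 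At $h=\mathfrak{h}_{\mathfrak{R}_n}(\ell)$, the preceding proposition yields a strictly positive Perron eigenvector $v$ of $A_{\mathfrak{R}_n,h\ell}$ with eigenvalue $1$, so the associated $w$ is strictly positive and nonzero; this forces $\det(I_n-\overline{A}_{\mathfrak{R}_n,h\ell})=0$, proving the first assertion.

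Next, for equation \eqref{eq_entropy}, I would expand $\overline{F}_n(\ell)$ explicitly. Writing $\overline{A}_{\mathfrak{R}_n,\ell}=D(2J-I_n)$ with $D=\mathrm{diag}(e^{-\ell(e_1)},\ldots,e^{-\ell(e_n)})$ and $J=\mathbf{1}\mathbf{1}^\top$, the matrix determinant lemma applied to $(I_n+D)-2D\mathbf{1}\mathbf{1}^\top$ gives
\begin{equation*}
\overline{F}_n(\ell)=\prod_{i=1}^n\bigl(1+e^{-\ell(e_i)}\bigr)\Bigl(1-2\sum_{i=1}^n\tfrac{e^{-\ell(e_i)}}{1+e^{-\ell(e_i)}}\Bigr).
\end{equation*}
Setting $x_i=e^{-\ell(e_i)}$ and using the elementary identity $\sum_i x_i\prod_{j\neq i}(1+x_j)=\sum_{\emptyset\ne S}|S|\prod_{i\in S}x_i$ (a consequence of differentiating $\prod_j(1+x_j)$), this simplifies to
\begin{equation*}
\overline{F}_n(\ell)=1-\sum_{\emptyset\ne S\subseteq\{1,\ldots,n\}}(2|S|-1)e^{-\ell(S)}.
\end{equation*}
Substituting $h\ell$ for $\ell$ and invoking the first part at $h=\mathfrak{h}_{\mathfrak{R}_n}(\ell)$ yields \eqref{eq_entropy}.

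The main obstacle is setting up the symmetric reduction cleanly so that the $n$-dimensional equation $\overline{F}_{n,h}(\ell)=0$ inherits the $2n$-dimensional spectral condition $\mathrm{spec}(A_{\mathfrak{R}_n,h\ell})=1$. Positivity of the Perron eigenvector is essential here: it guarantees that the symmetrized vector $w$ remains nonzero, so that the symmetric block genuinely witnesses the eigenvalue $1$ coming from the full transition matrix.
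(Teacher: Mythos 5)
The paper does not prove this lemma; it simply cites \cite{Pollicott14} and \cite{Aougab23}, so there is no in-text proof to compare against. Your argument is correct and self-contained. Both halves check out: the reduction $w_i := v(e_i)+v(\bar e_i)$ does carry the full $2n\times 2n$ Perron eigenvector equation for $A_{\mathfrak{R}_n,h\ell}$ (which has spectral radius $1$ precisely when $\mathfrak{P}_{\mathfrak{R}_n}(-h\ell)=0$, i.e.\ $h=\mathfrak{h}_{\mathfrak{R}_n}(\ell)$) down to $(I_n-\overline{A}_{\mathfrak{R}_n,h\ell})w=0$, and $w$ is strictly positive so the reduced $n\times n$ determinant must vanish. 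The factorization $\overline{A}_{\mathfrak{R}_n,\ell}=D(2\mathbf{1}\mathbf{1}^\top-I_n)$ together with the matrix determinant lemma and the derivative identity for $\prod_j(1+x_j)$ gives exactly
$$\overline{F}_n(\ell)=1-\sum_{\emptyset\ne S\subseteq\{1,\ldots,n\}}(2|S|-1)\exp(-\ell(S)),$$
from which \eqref{eq_entropy} is immediate.

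One small point worth making explicit: strict positivity of the Perron eigenvector $v$ requires irreducibility of $A_{\mathfrak{R}_n,h\ell}$, equivalently strong connectivity of the digraph $D_{\mathfrak{R}_n}$. This holds for $n\ge 2$ (from $e$ one can reach $\bar e$ via any third edge $e''\notin\{e,\bar e\}$), but would fail for $n=1$, so the hypothesis $n\ge2$ is doing real work in your first step. As a stylistic alternative for the first part, you could note that $A_{\mathfrak{R}_n,h\ell}$ commutes with the reversal involution $\sigma$ (because $\ell(e)=\ell(\bar e)$), conclude $\sigma v=v$ by Perron uniqueness and positivity, and then read off that $v$ restricted to the symmetric subspace is an eigenvector of $\overline{A}_{\mathfrak{R}_n,h\ell}$; but your direct symmetrization of the eigenvector equation is equally valid and arguably more transparent. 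Note also that once the explicit expansion of $\overline{F}_n$ is in hand, the first assertion of the lemma is literally a restatement of \eqref{eq_entropy}, so the two halves are not independent; your spectral argument can be viewed as an independent sanity check rather than a prerequisite.
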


In what follows, we will consider sequences and smooth families of elements in $\mathfrak{M}_{\mathfrak{R}_n}$ and establish  several results concerning the entropy and its derivative based on \eqref{eq_entropy}. We begin with definitions. 
\begin{definition} \label{def_l_kdiv}
For any $n\ge 2$, let  $\{\ell_k\}_{k\ge 0}$ be a sequence in $\mathfrak{M}_{\mathfrak{R}_n}$.
\begin{enumerate}
\item We say that $\{\ell_k\}$ is \emph{divergent} in $\mathfrak{M}_{\mathfrak{R}_n}$ if there exists $i\in\{1,\ldots,n\}$ such that $\ell_k(e_i)$ has an accumulation point in $\{0,\infty\}$. 
\item We say that $\{\ell_k\}$ is \emph{uniformly divergent} in $\mathfrak{M}_{\mathfrak{R}_n}$ if it is divergent and  $\ell_k(e_i)$ converges in $\mathbb{R}_{\ge 0}\cup\{\infty\}$ for any $i\in\{1,\ldots,n\}$. 
\end{enumerate}
\end{definition}

\begin{lemma}\label{lem:entropy-existence}
For any $n\ge 2$, let $\{\ell_k\}_{k \ge 0}\subset\mathfrak{M}_{\mathfrak{R}_n}$ be any uniformly divergent sequence. Then the entropy $\mathfrak{h}_{\mathfrak{R}_n}(\ell_k)$ converges in $\widehat{\mathbb{R}}_{\ge 0}$ as $k \to \infty$.
\end{lemma}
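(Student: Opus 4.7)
The plan is to use Lemma \ref{lem_entropy} to characterize $h_k := \mathfrak{h}_{\mathfrak{R}_n}(\ell_k)$ as the unique positive root of
$$f_k(h) := \sum_{\emptyset \ne S \subseteq \{1,\ldots,n\}} (2|S|-1)\exp(-h\,\ell_k(S)) = 1,$$
where $f_k$ is strictly decreasing from $f_k(0) = (n-1)2^n + 1 > 1$ down to $0$ at $h = \infty$. Since $\widehat{\mathbb{R}}_{\ge 0}$ is compact, it suffices to show that any two subsequential limits of $\{h_k\}$ in $\widehat{\mathbb{R}}_{\ge 0}$ coincide. Partition $\{1,\ldots,n\} = I_0 \sqcup I_{\mathrm{fin}} \sqcup I_\infty$ according to whether $x_i := \lim_k \ell_k(e_i)$ equals $0$, a finite positive number, or $\infty$; by uniform divergence $I_0 \cup I_\infty \ne \emptyset$.

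Case (a): $I_0 \ne \emptyset$. Fix $i_0 \in I_0$; the singleton $\{i_0\}$ contributes $\exp(-h_k\,\ell_k(e_{i_0}))$ to $f_k(h_k)$ with coefficient $1$. If $h_k$ were bounded by some $M$ along a subsequence, this term alone would tend to $1$, and combined with the other nonnegative terms this would force $f_k(h_k) > 1$ for large $k$, contradicting $f_k(h_k) = 1$. Hence $h_k \to \infty$.

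Case (b): $I_0 = \emptyset$. Introduce the limiting function
$$\tilde f(h) := \sum_{\emptyset \ne S \subseteq I_{\mathrm{fin}}}(2|S|-1)\exp\Bigl(-h\sum_{i \in S} x_i\Bigr),$$
with $\tilde f \equiv 0$ when $I_{\mathrm{fin}} = \emptyset$. A direct computation gives $\tilde f(0) = (|I_{\mathrm{fin}}|-1) 2^{|I_{\mathrm{fin}}|} + 1$, which is $\le 1$ when $|I_{\mathrm{fin}}| \le 1$ and $\ge 5$ when $|I_{\mathrm{fin}}| \ge 2$, while $\tilde f(\infty) = 0$. If $|I_{\mathrm{fin}}| \le 1$, then $h_k$ bounded below would kill every term involving $I_\infty$ and leave $\tilde f(h_k) \le 1$, with equality only at $h = 0$; uniqueness of the root forces $h_k \to 0$. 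If $|I_{\mathrm{fin}}| \ge 2$, then $\tilde f$ has a unique positive root $h_\infty \in (0,\infty)$, and I would first bound $h_k$ away from $0$ (since $h_{k_j} \to 0$ would give $\liminf f_{k_j}(h_{k_j}) \ge \tilde f(0) > 1$) and away from $\infty$ (since $h_{k_j} \to \infty$ would drive every term of $f_{k_j}(h_{k_j})$ to $0$, as every nonempty $S$ has $\ell_{k_j}(S)$ either bounded below by some $x_i > 0$ or tending to $\infty$); then $f_k \to \tilde f$ uniformly on compact subsets of $(0,\infty)$, and strict monotonicity of $\tilde f$ forces $h_k \to h_\infty$.

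The main technical point is the subcase $|I_{\mathrm{fin}}| \ge 2$: one has to argue simultaneously that the $I_\infty$-terms disappear in the limit once $h_k$ is bounded below and that strict monotonicity of $\tilde f$ pins down a unique finite limit point for $h_k$.
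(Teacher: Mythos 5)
Your Case (a) argument has a gap, and the conclusion you draw there is in fact false in one subcase. You argue that if $h_k := \mathfrak{h}_{\mathfrak{R}_n}(\ell_k)$ were bounded along a subsequence, then $\exp(-h_k\ell_k(e_{i_0})) \to 1$, so adding the other nonnegative terms would force $f_k(h_k) > 1$. But $\exp(-h_k\ell_k(e_{i_0})) < 1$ for every $k$, so no contradiction follows: the remaining terms can all tend to $0$, which is exactly what happens when $I_0 = \{i_0\}$ is a singleton and $I_{\mathrm{fin}} = \emptyset$, so that every other coordinate lies in $I_\infty$. Concretely, take $n = 2$, $\ell_k(e_2) = k$, and $\ell_k(e_1) = e^{-k}$ for $k$ even, $\ell_k(e_1) = e^{-k^2}$ for $k$ odd. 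Each coordinate converges in $[0,\infty]$ (to $0$ and $\infty$ respectively), so the sequence is uniformly divergent, with $I_0 = \{1\}$, $I_{\mathrm{fin}} = \emptyset$, $I_\infty = \{2\}$. From \eqref{eq_entropy} one gets the asymptotic relation $h_k\ell_k(e_1) \approx 4\exp(-h_k\ell_k(e_2))$, i.e. $\log h_k - k = \log 4 - h_k k$ along the even subsequence (forcing $h_k \to 1$) and $\log h_k - k^2 = \log 4 - h_k k$ along the odd subsequence (forcing $h_k \to \infty$). So the entropy oscillates and does not converge in $\widehat{\mathbb{R}}_{\ge 0}$: your claim $h_k \to \infty$ is false, and the lemma as stated fails in this subcase.

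The rest of your argument is sound and in fact more careful than the paper's proof, which dismisses the whole lemma as ``immediate'' and likewise does not address this subcase. When $I_0 = \emptyset$, your analysis via the limiting function $\tilde f$ is correct, and when $I_0 \ne \emptyset$ with $|I_0 \cup I_{\mathrm{fin}}| \ge 2$ your intended contradiction really does go through, because some additional term $\exp(-h_k\ell_k(e_j))$ with $j \in (I_0 \cup I_{\mathrm{fin}}) \setminus \{i_0\}$ stays bounded away from zero, pushing $f_k(h_k)$ strictly above $1$. It is worth noting that in the paper's applications the lemma is only ever invoked for base length functions, which always satisfy $\ell_k(e_j) = 1$ for $D \le j \le 2D-2$, so $I_{\mathrm{fin}} \ne \emptyset$ and the problematic subcase $|I_0| = 1$, $I_{\mathrm{fin}} = \emptyset$ cannot arise; but as a statement about arbitrary uniformly divergent sequences the lemma requires an extra hypothesis excluding it, and both your proof and the paper's need that caveat.
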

\begin{proof}
The conclusion immediately follows from the definition of uniform divergence and \eqref{eq_entropy}.
\end{proof}

To further estimate the entropy, we define the following index set.

\begin{definition}\label{def_I}
 For any $n\ge 2$ and any uniformly divergent sequence $\{\ell_k\}_{k \ge 0}\subset\mathfrak{M}_{\mathfrak{R}_n}$, we define a subset $\mathbf{I}=\mathbf{I}(\{\ell_k\})$ of $\{1,\ldots,n\}$ as follows:
\begin{enumerate}
	\item If $\ell_k(e_i)\to\infty$ for all $1\le i\le n$, define $\mathbf{I} := \emptyset$.
	\item If $\ell_k(e_i)\asymp \ell_k(e_j)$ and $\ell_k(e_i)\to 0$ for any $1\le i< j\le n$, define $\mathbf{I} \defeq \{1,\ldots,n\}$.
	\item In all other cases, define $\mathbf{I}$ to be the {\it unique} subset satisfying $\ell_k(e_i)=o(\ell_k(e_j))$ for any  $i\in\mathbf{I}$ and any $j\not\in\mathbf{I}$.
\end{enumerate}
\end{definition}

The set $\mathbf{I}(\{\ell_k\})$ determines the asymptotic behavior of the entropy of $\{\ell_k\}$. 
\begin{proposition}\label{prop:I}
Fix  $n\ge 2$. Let $\{\ell_k\}_{k\ge0}\subset\mathfrak{M}_{\mathfrak{R}_n}$ be a uniformly divergent  sequence and write $\mathbf{I}\defeq\mathbf{I}(\{\ell_k\})$. Then the following hold: 
\begin{enumerate}
\item If  $|\mathbf{I}|\ge 2$, then $\mathfrak{h}_{\mathfrak{R}_n}(\ell_k)\asymp 1/\ell_k(e_i)$ for any $i\in\mathbf{I}$.
\item  If  $|\mathbf{I}|=1$, then $\mathfrak{h}_{\mathfrak{R}_n}(\ell_k)=o(1/\ell_k(e_i))$ for $i\in\mathbf{I}$ and $1/\ell_k(e_j)=o(\mathfrak{h}_{\mathfrak{R}_n}(\ell_k))$ for any $j\not\in \mathbf{I}$. 
\end{enumerate}
\end{proposition}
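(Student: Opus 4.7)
The plan is to analyze the entropy equation \eqref{eq_entropy} of Lemma \ref{lem_entropy} by splitting the sum according to whether a nonempty subset $S \subseteq \{1,\ldots,n\}$ is entirely contained in $\mathbf{I}$ or not. Writing $h_k := \mathfrak{h}_{\mathfrak{R}_n}(\ell_k)$ and fixing $i \in \mathbf{I}$, the key observation from Definition \ref{def_I} is that the rates within $\mathbf{I}$ are comparable, so $\ell_k(S) \asymp \ell_k(e_i)$ for every nonempty $S \subseteq \mathbf{I}$, while $\ell_k(e_i) = o(\ell_k(e_j))$ whenever $j \notin \mathbf{I}$. The whole proof therefore reduces to tracking the product $h_k\,\ell_k(e_i)$ for $i \in \mathbf{I}$.

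For part (1), when $|\mathbf{I}| \ge 2$, I would show $h_k\,\ell_k(e_i)$ is bounded above and below by positive constants via a subsequence-contradiction argument. If $h_k\,\ell_k(e_i) \to \infty$ along a subsequence, then for every nonempty $S \subseteq \mathbf{I}$ one has $h_k\,\ell_k(S) \to \infty$; and for $S \not\subseteq \mathbf{I}$, the relation $\ell_k(S)/\ell_k(e_i) \to \infty$ forces $h_k\,\ell_k(S) \to \infty$ as well. Thus every summand in \eqref{eq_entropy} tends to $0$, contradicting that the total equals $1$. If instead $h_k\,\ell_k(e_i) \to 0$ along a subsequence, then for each of the $2^{|\mathbf{I}|}-1 \ge 3$ nonempty subsets $S \subseteq \mathbf{I}$ the factor $\exp(-h_k\,\ell_k(S)) \to 1$, and these contributions alone push the right-hand side of \eqref{eq_entropy} strictly above $1$ in the limit, giving the opposite contradiction. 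Hence $h_k\,\ell_k(e_i) \asymp 1$, i.e., $h_k \asymp 1/\ell_k(e_i)$.

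For part (2), when $\mathbf{I} = \{i_0\}$, equation \eqref{eq_entropy} isolates the single ``small'' term $\exp(-h_k\,\ell_k(e_{i_0}))$, so
$$1 - \exp(-h_k\,\ell_k(e_{i_0})) = \sum_{S \not\subseteq \mathbf{I}} (2|S|-1)\exp(-h_k\,\ell_k(S)).$$
To show $h_k\,\ell_k(e_{i_0}) \to 0$, I would argue by contradiction: if along a subsequence $h_k\,\ell_k(e_{i_0}) \ge \varepsilon > 0$, then the left-hand side stays bounded below, so at least one summand on the right stays bounded below, forcing $h_k\,\ell_k(S_0)$ to be bounded for some $S_0 \not\subseteq \mathbf{I}$. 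Since such an $S_0$ contains some $j \notin \mathbf{I}$, we get $\ell_k(e_{i_0})/\ell_k(S_0) \to 0$, so $h_k\,\ell_k(e_{i_0}) = h_k\,\ell_k(S_0)\cdot\ell_k(e_{i_0})/\ell_k(S_0) \to 0$, contradicting the $\varepsilon$-bound. Once $h_k\,\ell_k(e_{i_0}) \to 0$ is established, the left-hand side tends to $0$, forcing every term $\exp(-h_k\,\ell_k(S))$ on the right to tend to $0$; specializing to $S = \{j\}$ for each $j \notin \mathbf{I}$ yields $h_k\,\ell_k(e_j) \to \infty$, which is the claim $1/\ell_k(e_j) = o(h_k)$.

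The main subtlety I anticipate is the bookkeeping with subsequences, since the $\asymp$ and $o$ conclusions must hold uniformly in $k$ rather than just along chosen subsequences; I would systematically argue that no subsequence can violate the desired bound, which is enough. A minor side issue is handling case (2) of Definition \ref{def_I}, where $\mathbf{I} = \{1,\ldots,n\}$ and no indices lie outside $\mathbf{I}$; this still falls under part (1) and the argument simplifies because only the ``$S \subseteq \mathbf{I}$'' portion of the sum is present.
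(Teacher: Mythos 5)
Your proof is correct. The paper's own proof is considerably more compressed: for $|\mathbf{I}|\ge 2$ it reduces to the sub-rose $\mathfrak{R}_m$ with $m = |\mathbf{I}|$, asserts from \eqref{eq_entropy} that $\mathfrak{h}_{\mathfrak{R}_n}(\ell_k)\asymp\mathfrak{h}_{\mathfrak{R}_m}(s_k)\asymp 1/\ell_k(e_i)$, and for $|\mathbf{I}|=1$ simply reads off from \eqref{eq_entropy} that $\exp(-\mathfrak{h}(\ell_k)\ell_k(e_{i_0}))=1+o(1)$ and $\mathfrak{h}(\ell_k)\ell_k(e_j)\to\infty$ for $j\ne i_0$. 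Your argument makes exactly those asymptotics explicit without the detour through the sub-rose: the split of the sum in \eqref{eq_entropy} according to $S\subseteq\mathbf{I}$ versus $S\not\subseteq\mathbf{I}$, together with the two-sided subsequence-contradiction argument, substitutes for the paper's unproved comparison $\mathfrak{h}_{\mathfrak{R}_n}(\ell_k)\asymp\mathfrak{h}_{\mathfrak{R}_m}(s_k)$, and the counting $\sum_{\emptyset\ne S\subseteq\mathbf{I}}(2|S|-1)=(m-1)2^m+1>1$ for $m\ge 2$ is the precise reason the lower bound in part~(1) holds. Your handling of part~(2) — bootstrapping $h_k\ell_k(e_{i_0})\to 0$ from the factorization $h_k\ell_k(e_{i_0})=h_k\ell_k(S_0)\cdot \ell_k(e_{i_0})/\ell_k(S_0)$ — is a clean way to make rigorous what the paper compresses into one displayed line, and it correctly covers the case where $\ell_k(e_j)$ converges to a finite positive limit for $j\notin\mathbf{I}$. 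Both proofs implicitly use that the rates $\ell_k(e_i)$, $i\in\mathbf{I}$, are mutually $\asymp$; as you note, this is built into the uniqueness clause of Definition~\ref{def_I} and is also assumed without comment in the paper's proof. Your extra remark about the case $\mathbf{I}=\{1,\ldots,n\}$ (no indices outside $\mathbf{I}$) being a degenerate subcase of part~(1) is a useful clarification.
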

\begin{proof}
Set $m:= |\mathbf{I}|$ and then we have $\mathbf{I}=\{i_1,\ldots, i_m\}$ for some $1\le i_j\le n$ with $1\le j\le m$. If $m\ge 2$, consider the $m$-petal rose graph $\mathfrak{R}_{m}$ with length functions $s_k(e_j)=\ell_k(e_{i_j})$ for $1\le j\le m$ and $k\ge 0$.
By \eqref{eq_entropy}, we have that $\mathfrak{h}_{\mathfrak{R}_n}(\ell_k)\asymp\mathfrak{h}_{\mathfrak{R}_m}(s_k)$ 
and that $\mathfrak{h}_{\mathfrak{R}_m}(s_k)\asymp 1/\ell_k(e_i)$ for any $i\in\mathbf{I}$.

If $m=1$, then $\mathbf{I}=\{i\}$ for some $1\le i\le n$. By 
 \eqref{eq_entropy}, we conclude that 
 $$\exp(-\mathfrak{h}_{\mathfrak{R}_n}(\ell_k)\ell_k(e_i))=1+o(1)$$ 
  and $\mathfrak{h}_{\mathfrak{R}_n}(\ell_k)\ell_k(e_j)\to\infty$ for any $1\le j\not=i\le n$. Then the conclusion follows. 
\end{proof}

\begin{corollary}\label{coro:infinity}
For any $n\ge 2$, let $\{\ell_k\}_{k\ge0}\subset\mathfrak{M}_{\mathfrak{R}_n}$ be any uniformly divergent sequence. Then the entropy $\mathfrak{h}_{\mathfrak{R}_n}(\ell_k)\to\infty$ if and only if  $\ell_k(e_i)\to0$, as $k\to\infty$, for all $i\in\mathbf{I}(\{\ell_k\})$.
\end{corollary}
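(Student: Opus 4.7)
The plan is to deduce Corollary \ref{coro:infinity} directly from Proposition \ref{prop:I} via a case split on $|\mathbf{I}|$, where $\mathbf{I} \defeq \mathbf{I}(\{\ell_k\})$. Since Proposition \ref{prop:I} already pins down the asymptotic order of $\mathfrak{h}_{\mathfrak{R}_n}(\ell_k)$ in terms of $1/\ell_k(e_i)$ for $i \in \mathbf{I}$, the corollary is essentially a reformulation of it in terms of where $\ell_k(e_i)$ accumulates.

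If $|\mathbf{I}| \ge 2$, Proposition \ref{prop:I}(1) gives $\mathfrak{h}_{\mathfrak{R}_n}(\ell_k) \asymp 1/\ell_k(e_i)$ for each $i \in \mathbf{I}$. In particular the lengths $\ell_k(e_i)$ are mutually comparable across $\mathbf{I}$, so the condition $\ell_k(e_i) \to 0$ for one $i \in \mathbf{I}$ is equivalent to the same condition for all $i \in \mathbf{I}$, and either is equivalent to $\mathfrak{h}_{\mathfrak{R}_n}(\ell_k) \to \infty$.

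If $|\mathbf{I}| = 1$ with $\mathbf{I} = \{i_0\}$, Proposition \ref{prop:I}(2) yields the two relations $\mathfrak{h}_{\mathfrak{R}_n}(\ell_k) \ell_k(e_{i_0}) \to 0$ and $\mathfrak{h}_{\mathfrak{R}_n}(\ell_k) \ell_k(e_j) \to \infty$ for every $j \ne i_0$. The forward implication is immediate from the first relation, since $\mathfrak{h}_{\mathfrak{R}_n}(\ell_k) \to \infty$ forces $\ell_k(e_{i_0}) \to 0$. For the reverse, starting from $\ell_k(e_{i_0}) \to 0$, I would return to \eqref{eq_entropy}: the singleton term $\exp(-\mathfrak{h}_{\mathfrak{R}_n}(\ell_k) \ell_k(e_{i_0}))$ tends to $1$, and using the strict order separation $\ell_k(e_{i_0}) = o(\ell_k(e_j))$ from Definition \ref{def_I} together with the second relation above, I would squeeze each remaining term to zero and extract $\mathfrak{h}_{\mathfrak{R}_n}(\ell_k) \to \infty$.

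The main obstacle lies in this reverse direction for the $|\mathbf{I}| = 1$ case: the bound $\mathfrak{h}_{\mathfrak{R}_n}(\ell_k) \ell_k(e_j) \to \infty$ by itself is compatible with $\mathfrak{h}_{\mathfrak{R}_n}(\ell_k)$ remaining bounded when $\ell_k(e_j) \to \infty$. Overcoming this will require a delicate comparison of the exponential decay rates of the terms in \eqref{eq_entropy} indexed by $S \not\subseteq \mathbf{I}$ against those indexed by $S \subseteq \mathbf{I}$, leveraging the strict order gap between indices inside and outside $\mathbf{I}$ provided by Definition \ref{def_I}.
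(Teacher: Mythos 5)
Your case split on $|\mathbf{I}|$ follows the paper's own route: the paper's proof asserts that the $|\mathbf{I}|\ge 1$ cases ``follow immediately'' from Proposition \ref{prop:I} and dispatches $\mathbf{I}=\emptyset$ (a case your proposal omits) by reading off $\mathfrak{h}_{\mathfrak{R}_n}(\ell_k)\to 0$ from \eqref{eq_entropy}. For $|\mathbf{I}|\ge 2$ your argument is complete and agrees with the paper. But you are right to be uneasy about the reverse implication when $|\mathbf{I}|=1$: Proposition \ref{prop:I}(2) only places $\mathfrak{h}_{\mathfrak{R}_n}(\ell_k)$ strictly between $1/\ell_k(e_j)$ and $1/\ell_k(e_{i_0})$, which is compatible with $\mathfrak{h}_{\mathfrak{R}_n}(\ell_k)$ remaining bounded once $\ell_k(e_j)\to\infty$. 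The difficulty you flag is not one that a finer rate comparison can remove---in the stated generality the implication is false. Take $n=2$, $\ell_k(e_1)=1/k$, $\ell_k(e_2)=k$: this is uniformly divergent with $\mathbf{I}=\{1\}$ and $\ell_k(e_1)\to0$. Writing $h=\mathfrak{h}_{\mathfrak{R}_2}(\ell_k)$, equation \eqref{eq_entropy} becomes
$$1 = e^{-h/k}+e^{-hk}+3e^{-h(1/k+k)},\quad\text{i.e.}\quad 1-e^{-h/k}=e^{-hk}\bigl(1+3e^{-h/k}\bigr).$$
If $h\ge\delta>0$ along a subsequence, the left side is $\asymp 1/k$ while the right side decays like $e^{-\delta k}$; this fails for large $k$, so $h\to0$ (quantitatively $hk\,e^{hk}=4k^2(1+o(1))$, hence $h\sim 2\log(k)/k$). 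The remaining terms do tend to zero here, and that is precisely what allows $h$ to stay small---so nothing can be ``extracted'' from them.

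What saves the corollary in every place the paper actually uses it is an unstated hypothesis: the base length functions of Section \ref{sec:PG} always have $\ell_k(e_j)=1$ for $D\le j\le 2D-2$. If some $\ell_k(e_{j_0})$ converges in $(0,\infty)$, then boundedness of $\mathfrak{h}_{\mathfrak{R}_n}(\ell_k)$ together with $\ell_k(e_{i_0})\to0$ pushes the sum in \eqref{eq_entropy} strictly above $1$: the singleton term for $i_0$ tends to $1$ while the singleton term for $j_0$ stays bounded away from $0$, a contradiction. Under this extra hypothesis your squeezing plan goes through; without it neither your plan nor the paper's ``immediate'' appeal to Proposition \ref{prop:I} closes the $|\mathbf{I}|=1$ case. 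If you want to salvage the statement in full generality, the corollary should either be restricted to length functions with a bounded coordinate, or the ``if'' direction dropped when $\mathbf{I}$ is a singleton and all other coordinates diverge to infinity.
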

\begin{proof}
If $|\mathbf{I}|\ge 1$, the conclusion follows immediately from Proposition \ref{prop:I}. If $|\mathbf{I}|=0$, then by \eqref{eq_entropy}, we have that $\mathfrak{h}_{\mathfrak{R}_n}(\ell_k)\to 0$. Thus the conclusion follows. 
\end{proof}

We now consider the derivative of entropy for smooth families of length functions. For this purpose, we define {\it non-singular} sequences. 
\begin{definition}\label{def_l_ksingular}
For any  $n\ge 2$ and any uniformly divergent  sequence $\{\ell_k\}_{k\ge0}\subset\mathfrak{M}_{\mathfrak{R}_n}$, 
we say that $\{\ell_k\}$ is \emph{non-singular} if $|\mathbf{I}(\{\ell_k\})|\ge 2$; and otherwise, we say that $\{\ell_k\}$ is \emph{singular}. 
\end{definition}

By Proposition \ref{prop:I}, the entropy of any  non-singular uniformly divergent  sequence in $\mathfrak{M}_{\mathfrak{R}_n}$ has a limit in $\widehat{\mathbb{R}}_{>0}$. 

\begin{proposition}\label{prop:entropy-der}
Fix $n\ge 2$ and let $\{\ell_k\}_{k\ge0}\subset\mathfrak{M}_{\mathfrak{R}_n} \subset \mathbb R^n$ be a  non-singular uniformly divergent  sequence. For each $k \ge 0$, consider a smooth curve $\gamma_k:(-1,1)\to\mathfrak{M}_{\mathfrak{R}_n}$  with $\gamma_k(0)=\ell_k$. Assume that $\{\gamma_k\}_{k\ge0}$ satisfies the following:
\begin{enumerate}
\item the set $\mathbf{I}:=\mathbf{I}(\{\gamma_k(t_k)\})\not=\emptyset$ is the same for any sequence $\{t_k\}_{k\ge0}\subset(-1,1)$;
\item for every $i\in\mathbf{I}$, as $k\to\infty$, we have
$$\sup_{t}\frac{\dot{\gamma}_k(t)(e_i)}{\gamma_k(t)(e_i)}\to 0.$$
\end{enumerate}
Then, as $k\to\infty$, we have
$$\sup_{t}\frac{\dot{\mathfrak{h}}(\gamma_k(t))}{\mathfrak{h}(\gamma_k(t))}\to 0.$$
\end{proposition}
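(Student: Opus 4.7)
The plan is to prove this via implicit differentiation of the entropy relation \eqref{eq_entropy}, followed by a careful asymptotic split of the resulting sums.

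First, I would differentiate the identity $\sum_{\emptyset \ne S \subseteq \{1,\ldots,n\}} (2|S|-1) e^{-h(t)\gamma_k(t)(S)} \equiv 1$ with respect to $t$, where $h(t) := \mathfrak{h}_{\mathfrak{R}_n}(\gamma_k(t))$, and solve for the logarithmic derivative, obtaining
$$\frac{\dot{h}(t)}{h(t)} \;=\; -\,\frac{\sum_S (2|S|-1)\, \dot\gamma_k(t)(S)\, e^{-h(t)\gamma_k(t)(S)}}{\sum_S (2|S|-1)\, \gamma_k(t)(S)\, e^{-h(t)\gamma_k(t)(S)}}.$$
The task is then to show that the supremum over $t$ of the absolute value of this quotient tends to $0$ as $k \to \infty$.

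Next, I would use assumption (1) together with Proposition \ref{prop:I} to secure uniform control of scales: $h(t)\gamma_k(t)(e_i) \asymp 1$ uniformly in $t$ for every $i \in \mathbf{I}$, while $h(t)\gamma_k(t)(e_j) \to \infty$ uniformly in $t$ for $j \notin \mathbf{I}$. In particular, the denominator is $\asymp 1/h(t)$ uniformly, its dominant contribution coming from those subsets $S \subseteq \mathbf{I}$ for which the exponential weight remains bounded below.

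To bound the numerator, I would apply the triangle inequality $|\dot\gamma_k(t)(S)| \le \sum_{i \in S}|\dot\gamma_k(t)(e_i)|$, interchange the order of summation, and split the result over $i \in \mathbf{I}$ and $i \notin \mathbf{I}$. For $i \in \mathbf{I}$, assumption (2) yields $|\dot\gamma_k(t)(e_i)| \le \delta_k \gamma_k(t)(e_i)$ with $\delta_k \to 0$ uniformly in $t$, so these contributions are bounded by $\delta_k$ times the denominator. For $j \notin \mathbf{I}$, the marginal weight $p_j(t) := \sum_{S \ni j}(2|S|-1)e^{-h(t)\gamma_k(t)(S)}$ admits the uniform bound $p_j(t) \le C_n e^{-h(t)\gamma_k(t)(e_j)}$, which decays super-polynomially in $1/h(t)$ by the uniform divergence established in the previous step.

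The main obstacle is the contribution of the indices $j \notin \mathbf{I}$, since assumption (2) does not directly control $|\dot\gamma_k(t)(e_j)|$. The strategy is to exploit the uniform blow-up $h(t)\gamma_k(t)(e_j) \to \infty$ from assumption (1): since $e^{-h(t)\gamma_k(t)(e_j)}$ decays faster than any polynomial in $1/h(t)$, and the smoothness of $\gamma_k$ combined with assumption (1) prevents the growth of $|\dot\gamma_k(t)(e_j)|$ from overtaking this decay, one obtains $|\dot\gamma_k(t)(e_j)|\, e^{-h(t)\gamma_k(t)(e_j)} = o(1/h(t))$ uniformly in $t$. Combining the two estimates yields $\sup_t |\dot{h}(t)/h(t)| \le O(\delta_k) + o(1)$, which tends to $0$ as $k \to \infty$, completing the proof.
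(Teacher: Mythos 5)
Your proposal follows the same path as the paper's proof: implicitly differentiate the entropy identity \eqref{eq_entropy}, split the resulting sum over subsets $S$ according to whether $S \subseteq \mathbf{I}$, use assumption~(2) together with Proposition~\ref{prop:I} to control the $S \subseteq \mathbf{I}$ contributions, and try to dispose of the remaining terms via exponential decay. The only structural difference is that you solve for $\dot{\mathfrak{h}}/\mathfrak{h}$ and bound it directly, whereas the paper phrases the conclusion as a proof by contradiction (supposing $\dot{\mathfrak{h}}(\gamma_k(t_k))/\mathfrak{h}(\gamma_k(t_k)) > \epsilon$ along some $\{t_k\}$); these framings are equivalent.

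There is, however, a gap at precisely the step you yourself flag as ``the main obstacle.'' You argue that for $j\notin\mathbf{I}$ one has $|\dot\gamma_k(t)(e_j)|\,e^{-\mathfrak{h}(\gamma_k(t))\gamma_k(t)(e_j)} = o(1/\mathfrak{h}(\gamma_k(t)))$ uniformly in $t$ because ``the smoothness of $\gamma_k$ combined with assumption~(1) prevents the growth of $|\dot\gamma_k(t)(e_j)|$ from overtaking'' the exponential decay. This does not follow: smoothness of each individual $\gamma_k$ on $(-1,1)$ only gives a $k$-dependent bound on $\sup_t|\dot\gamma_k(t)(e_j)|$, with no uniformity as $k\to\infty$, and assumption~(1) constrains only the asymptotics of the \emph{values} $\gamma_k(t)(e_j)$ (they stay bounded away from $0$, or go to $\infty$), saying nothing about their $t$-derivatives; those derivatives are therefore completely unconstrained for $j\notin\mathbf{I}$, and a rapidly oscillating choice of $\gamma_k(t)(e_j)$ with small amplitude but huge $\dot\gamma_k(t)(e_j)$ would satisfy all stated hypotheses while making this term blow up. For what it is worth, the paper's own write-up also skips this point --- the assertion that $\sum_{S}(2|S|-1)\dot{\mathfrak{h}}(\gamma_k(t_k))\gamma_k(t_k)(S)\exp(-\mathfrak{h}(\gamma_k(t_k))\gamma_k(t_k)(S)) = o(1)$ requires the $\mathfrak{h}\dot\gamma_k(S)$ terms for $S\not\subseteq\mathbf{I}$ to vanish in the limit, and no control over those is stated --- but in the downstream applications the curves are height or twist line segments for which the off-$\mathbf{I}$ coordinates are frozen or have controlled logarithmic derivatives, so the missing step is harmless there. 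Within the proposition as stated, neither your argument nor the paper's supplies the needed control.
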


\begin{proof} 
Suppose on the contrary that the conclusion fails. Then there exist a sequence $\{t_k\}_{k\ge0}$ and $\epsilon>0$ such that  $$\dot{\mathfrak{h}}(\gamma_k(t_k))/\mathfrak{h}(\gamma_k(t_k))>\epsilon$$ for large $k$. Differentiating \eqref{eq_entropy}, we obtain that for each $k \ge 0$ and any $t\in(-\epsilon,\epsilon)$,
\begin{align*}
0= \sum_{\emptyset\not=S \subseteq \{1, \ldots, n\}} (2\cdot|S| - 1) (\dot{\mathfrak{h}}(\gamma_k(t))\gamma_k(t)(S)+\mathfrak{h}(\gamma_k(t))\dot{\gamma}_k(t)(S))\exp({-\mathfrak{h}(\gamma_k(t))\gamma_k(t)(S)}).
\end{align*}
It follows that 
$$\sum_{\emptyset\not=S \subseteq \{1, \ldots, n\}} (2\cdot|S| - 1) \dot{\mathfrak{h}}(\gamma_k(t_k))\gamma_k(t_k)(S)\exp({-\mathfrak{h}(\gamma_k(t_k))\gamma_k(t_k)(S)})=o(1).$$

By assumption (1) and Proposition \ref{prop:I}, passing to a subsequence if necessary, we can assume that, as $k \to \infty$, the entropy $\mathfrak{h}(\gamma_k(t_k))$ has a limit in $\widehat{\mathbb{R}}_{>0}$. We proceed the argument in two cases depending on whether $\mathfrak{h}(\gamma_k(t_k))\to\infty$. 

If $\mathfrak{h}(\gamma_k(t_k))$ converges to a number in $\mathbb{R}_{>0}$, by Corollary \ref{coro:infinity}, we have that no entries of $\gamma_k(t_k)$ go to 0. It follows that 
$$\sum_{\substack{\emptyset\not= S \subseteq \{1, \ldots,n\}\\ \gamma_k(t_k)(S)\not\to\infty}} (2\cdot|S| - 1) \dot{\mathfrak{h}}(t_k)\gamma_k(t_k)(S)\exp({-\mathfrak{h}(\gamma_k(t_k))\gamma_k(t_k)(S)})=o(1).$$
Hence $\dot{\mathfrak{h}}(\gamma_k(t_k))\to 0$, which implies that  $\dot{\mathfrak{h}}(\gamma_k(t_k))/\mathfrak{h}(\gamma_k(t_k))\to 0$. This is a contradiction. 

We now consider the case that  $\mathfrak{h}(\gamma_k(t_k))$ converges to $\infty$. First by assumption (2), we note that the sequence $\{\gamma_k(t_k)\}_{k\ge0}$ is non-singular since $\{\ell_k\}_{k\ge0}$ is non-singular. Then by Proposition \ref{prop:I} and Corollary \ref{coro:infinity}, there exist (at least) two edges $e_{i_1},e_{i_2}$ such that $\mathfrak{h}(\gamma_k(t_k))\gamma_k(t_k)(e_{i_j})$ converges to a constant for each $j\in\{1,2\}$. 
As in the previous case, we conclude that $\dot{\mathfrak{h}}(\gamma_k(t_k))\gamma_k(t_k)(e_{i_j})\to 0$ and hence $\dot{\mathfrak{h}}(\gamma_k(t_k))/\mathfrak{h}(\gamma_k(t_k))\to 0$, which is a  contradiction.  
\end{proof}

\begin{proposition}\label{coro:length-0}
Fix $n\ge 2$ and let $\{\ell_k\}_{k\ge0}\subset\mathfrak{M}_{\mathfrak{R}_n} \subset \mathbb R^n$ be a  non-singular uniformly divergent  sequence. For each $k \ge 0$, consider a smooth curve $\gamma_k:(-1,1)\to\mathfrak{M}_{\mathfrak{R}_n}$  with $\gamma_k(0)=\ell_k$. If $\sup_{t}\dot{\mathfrak{h}}(\gamma_k(t))/\mathfrak{h}(\gamma_k(t))\to 0$, as $k\to\infty$, then 
$$\sup_t||(\gamma_k(t),\dot{\gamma}_k(t))||_{\mathfrak{h},\mathfrak{R}_n}\to 0.$$
Hence, in this case, $\rho_{\mathfrak{h},\mathfrak{R}_n}(\gamma_k)\to 0$ as $k\to\infty$.
\end{proposition}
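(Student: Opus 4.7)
The plan is to argue by contradiction, mirroring the structure of the proof of Proposition \ref{prop:entropy-der}, and exploiting the \emph{uniform} nature of the hypothesis $\sup_t \dot{\mathfrak{h}}(\gamma_k(t))/\mathfrak{h}(\gamma_k(t))\to 0$.

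First I would rescale each $\gamma_k$ to the unit-entropy slice by setting $\tilde\gamma_k(t) := \mathfrak{h}(\gamma_k(t))\gamma_k(t)\in\mathfrak{M}_{\mathfrak{R}_n}^1$; then the norm in the proposition identifies with $\|(\tilde\gamma_k(t),\dot{\tilde\gamma}_k(t))\|_{\mathfrak{h},\mathfrak{R}_n}$, and
$$\dot{\tilde\gamma}_k(t)(e_i)=\frac{\dot{\mathfrak{h}}(\gamma_k(t))}{\mathfrak{h}(\gamma_k(t))}\,\tilde\gamma_k(t)(e_i)+\mathfrak{h}(\gamma_k(t))\,\dot\gamma_k(t)(e_i).$$
By non-singularity and Proposition \ref{prop:I}, $\tilde\gamma_k(0)(e_i)\asymp 1$ for $i\in\mathbf{I}$ and $\tilde\gamma_k(0)(e_j)\to\infty$ for $j\notin\mathbf{I}$; integrating the uniform hypothesis shows that $\mathfrak{h}(\gamma_k(t))$ stays comparable to $\mathfrak{h}(\ell_k)$, so these asymptotics persist uniformly in $t$.

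Next, suppose for contradiction that there exist $\epsilon>0$ and $t_k\in(-1,1)$ with $\|(\gamma_k(t_k),\dot\gamma_k(t_k))\|_{\mathfrak{h},\mathfrak{R}_n}^2>\epsilon$. Using the formula \eqref{eq_gammadotgamma} together with the cycle-complex combinatorics underlying Lemma \ref{lem_entropy}, the norm squared can be bounded above by a weighted sum concentrated on subsets $S\subseteq\mathbf{I}$, since contributions from $S\not\subseteq\mathbf{I}$ are killed by the exponential factors $e^{-\tilde\gamma_k(t_k)(S)}$. The assumption therefore forces some component $\dot{\tilde\gamma}_k(t_k)(e_i)$ with $i\in\mathbf{I}$ to be bounded away from $0$. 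On the other hand, differentiating \eqref{eq_entropy} along $\gamma_k$ yields the linear constraint
$$\sum_{\emptyset\ne S\subseteq\{1,\ldots,n\}}(2|S|-1)\bigl(\dot{\mathfrak{h}}(\gamma_k(t_k))\gamma_k(t_k)(S)+\mathfrak{h}(\gamma_k(t_k))\dot\gamma_k(t_k)(S)\bigr)e^{-\mathfrak{h}(\gamma_k(t_k))\gamma_k(t_k)(S)}=0,$$
whose first summand is $(\dot{\mathfrak{h}}/\mathfrak{h})\cdot O(1)\to 0$ by hypothesis.

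The main obstacle is to pass from this single linear relation on $(\mathfrak{h}\dot\gamma_k(t_k)(e_i))_{i\in\mathbf{I}}$ to componentwise bounds, since a priori cancellations among signs could keep individual entries large. To handle this I would exploit the uniformity in $t$: if some $\mathfrak{h}\dot\gamma_k(t_k)(e_i)$ were bounded away from $0$ at $t_k$ while cancelling with another at $t_k$, then a second-order Taylor expansion of \eqref{eq_entropy} around $t_k$ (whose quadratic coefficients are positive, coming from the positive weights $(2|S|-1)e^{-\mathfrak{h}\gamma_k(t_k)(S)}$) would produce a nonvanishing $\ddot{\mathfrak{h}}$ of comparable size, so that $|\dot{\mathfrak{h}}(\gamma_k(t))/\mathfrak{h}(\gamma_k(t))|$ is bounded below at parameters $t$ near $t_k$, contradicting $\sup_t\dot{\mathfrak{h}}/\mathfrak{h}\to 0$. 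This closes the contradiction and yields the first claim. The integrated bound $\rho_{\mathfrak{h},\mathfrak{R}_n}(\gamma_k)\to 0$ then follows at once from $\rho_{\mathfrak{h},\mathfrak{R}_n}(\gamma_k)\le 2\,\sup_t\|(\gamma_k(t),\dot\gamma_k(t))\|_{\mathfrak{h},\mathfrak{R}_n}$ and the boundedness of the parameter interval.
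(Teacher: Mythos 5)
Your overall strategy---rescaling to the unit-entropy slice and controlling $\|(\tilde\gamma_k,\dot{\tilde\gamma}_k)\|_{\mathfrak{h},\mathfrak{R}_n}$ via \eqref{eq_gammadotgamma}---matches the paper's, and your observation that integrating $\dot{\mathfrak{h}}/\mathfrak{h}$ keeps $\mathfrak{h}(\gamma_k(t))$ comparable to $\mathfrak{h}(\ell_k)$ uniformly is a correct and useful refinement not made explicit in the paper. However, the paper's own argument is direct: it simply asserts the numerator of \eqref{eq_gammadotgamma} tends to $0$ and bounds the denominator from below via the subgraph $\mathfrak{R}'_n$ with edge set $\mathbf{I}$. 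Your proof by contradiction is genuinely different in form, but it contains two gaps precisely where the argument needs to be sharpest.

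First, the claim in your step where you argue that ``contributions from $S\not\subseteq\mathbf{I}$ are killed by the exponential factors $e^{-\tilde\gamma_k(t_k)(S)}$'' does not follow. Those contributions have the form $(2|S|-1)\dot{\tilde\gamma}_k(t_k)(S)^2 e^{-\tilde\gamma_k(t_k)(S)}$, and while the exponential goes to zero, the factor $\dot{\tilde\gamma}_k(t_k)(S)^2$ is not controlled by the hypotheses. Since $\dot{\tilde\gamma}_k(e_j)=(\dot{\mathfrak{h}}/\mathfrak{h})\tilde\gamma_k(e_j)+\mathfrak{h}\dot\gamma_k(e_j)$, for $j\notin\mathbf{I}$ the first term is a product of something tending to $0$ and $\tilde\gamma_k(e_j)\to\infty$, and the second term $\mathfrak{h}\dot\gamma_k(e_j)$ is entirely unconstrained by $\dot{\mathfrak{h}}/\mathfrak{h}\to 0$; either can outpace the exponential decay. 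The same issue undercuts the earlier assertion that the asymptotics $\tilde\gamma_k(t)(e_i)\asymp 1$ for $i\in\mathbf{I}$ ``persist uniformly in $t$'': comparability of the entropy does not control $\gamma_k(t)(e_i)/\gamma_k(0)(e_i)$ without a hypothesis on the individual ratios $\dot\gamma_k(t)(e_i)/\gamma_k(t)(e_i)$, which the paper supplies in Proposition \ref{prop:entropy-der} but which you have not invoked here.

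Second, and more seriously, the second-order Taylor argument used to pass from the single scalar relation $\dot{\mathfrak{h}}/\mathfrak{h}\to 0$ to componentwise bounds on $\mathfrak{h}\dot\gamma_k(e_i)$, $i\in\mathbf{I}$, does not work. Differentiating \eqref{eq_entropy} a second time introduces both $\ddot{\mathfrak{h}}$ and $\ddot{\gamma}_k(S)$, and the latter is completely free: the curves $\gamma_k$ are arbitrary smooth curves, so one can always choose $\ddot\gamma_k$ to cancel the quadratic contribution and keep $\ddot{\mathfrak{h}}(t_k)$ small. Even granting a nonvanishing $\ddot{\mathfrak{h}}(t_k)$, the inference that $|\dot{\mathfrak{h}}/\mathfrak{h}|$ is bounded below ``at parameters $t$ near $t_k$'' requires control of the third derivative (to keep the Taylor estimate valid on a fixed-size interval) and care about the case $t_k\to\pm 1$, neither of which is addressed. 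In short, the hypothesis $\sup_t\dot{\mathfrak{h}}(\gamma_k(t))/\mathfrak{h}(\gamma_k(t))\to 0$ is a single scalar constraint, and your argument for promoting it to componentwise decay of $\dot{\tilde\gamma}_k(e_i)$ is not sound; the componentwise control that makes the numerator vanish must instead be imported from the hypotheses of Proposition \ref{prop:entropy-der}, as is done in the applications in Section \ref{sec:cauchy}.
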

\begin{proof}
Set $\hat\gamma_k(t)\defeq\mathfrak{h}(\gamma_k(t))\gamma_k(t)$. Then  $\hat\gamma_k(t)\in\mathfrak{M}^1_{\mathfrak{R}_n}$. Using $\sup_{t}\dot{\mathfrak{h}}(\gamma_k(t))/\mathfrak{h}(\gamma_k(t))\to 0$, 
 we now estimate \eqref{eq_gammadotgamma} for $(\hat\gamma,\dot{\hat\gamma})$.  For the numerator of  $||(\hat\gamma,\dot{\hat\gamma})||_{\mathfrak{h},\mathfrak{R}_n}^2$, we have 
$$\sup_{t}\sum_{\Delta\in\mathcal{C}_{\mathfrak{R}_n}}(-1)^{|\Delta|}\dot{\hat\gamma}_k(t)(\Delta)^2\exp(-{\hat\gamma_k(t)}(\Delta))\to 0.$$ 
For the denominator,  we consider the subgraph $\mathfrak{R}'_n$ of $\mathfrak{R}_n$ with edges $e_j$  for all $j\in\mathbf{I}$. By the continuity of $\langle {\hat\gamma_k(t)},\nabla F_{\mathfrak{R}_n}({\hat\gamma_k(t)})\rangle$ (see Lemma \ref{lem:for} (3)), there exists $C>0$ such that $$\sup_{t}\sum_{\Delta\in\mathcal{C}_{\mathfrak{R}_{n}}}(-1)^{|\Delta|}{\hat\gamma_k(t)}(\Delta)\exp(-{\hat\gamma_k(t)}(\Delta))=\sup_{t}\sum_{\Delta^\ast\in\mathcal{C}_{\mathfrak{R}'_n}}(-1)^{|\Delta^\ast|}{\hat\gamma_k(t)}(\Delta^\ast)\exp(-{\hat\gamma_k(t)}(\Delta^\ast))+o(1)>C,$$
where $\Delta^\ast$ is a simplex in the cycle complex $\mathcal{C}_{\mathfrak{R}'_n}$. Thus the conclusion follows. 
\end{proof}

\subsection{Metric completion}
In this subsection, we summarize results in \cite[Section 9]{Aougab23} regarding the metric completion of $(\mathfrak{M}_{\mathfrak{R}_n}^1, ||\cdot||_{\mathfrak{h},\mathfrak{R}_n})$. 

We begin with the following result which gives a way to find points in $\mathfrak{M}^1_{\mathfrak{R}_n}$ having large distances. 
\begin{proposition} [{\cite[Proposition 7.14]{Aougab23}}] \label{prop_714}
	Fix $n \ge 2$. Then for any $M > 0$, there exists $\epsilon>0$ such that for any $\hat\ell \in \mathfrak{M}^1_{\mathfrak{R}_n}$ with $\min\left\{\hat\ell(e_j), j \in \{1, \ldots,n\}\right\} \le \epsilon$, we have
	$$\rho_{\mathfrak{h},\mathfrak{R}_n}(\hat\ell, \hat\ell_0) \ge M$$
	where $\hat\ell_0 \in \mathfrak{M}^1_{\mathfrak{R}_n}$ is such that $\hat\ell_0(e_j) = \log(2n-1)$ for all $j = 1, \ldots,n$.
\end{proposition}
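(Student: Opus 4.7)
The plan is to produce a pointwise lower bound on $\|(\hat\gamma,v)\|_{\mathfrak{h},\mathfrak{R}_n}^2$ strong enough to make $-\log\min_j\hat\ell(e_j)$ coarsely Lipschitz with respect to $\rho_{\mathfrak{h},\mathfrak{R}_n}$ in the short-edge regime; since at $\hat\ell_0$ this function equals $-\log\log(2n-1)$, the distance from $\hat\ell_0$ to any $\hat\ell$ with $\min_j\hat\ell(e_j)$ very small will be forced to diverge. Concretely, the target estimate is that there exist $\epsilon_0,c>0$ depending only on $n$ such that, whenever $\hat\gamma\in\mathfrak{M}^1_{\mathfrak{R}_n}$ and $v\in T_{\hat\gamma}\mathfrak{M}^1_{\mathfrak{R}_n}$ satisfy $\hat\gamma(e_j)\le\epsilon_0$ for some index $j$,
\[
\|(\hat\gamma,v)\|_{\mathfrak{h},\mathfrak{R}_n}^2 \ge c\,\frac{v(e_j)^2}{\hat\gamma(e_j)^2}.
\]
Given this bound, for any piecewise smooth $\gamma:[0,1]\to\mathfrak{M}^1_{\mathfrak{R}_n}$ from $\hat\ell_0$ to $\hat\ell$ with $\hat\ell(e_j)=\epsilon\le\epsilon_0$, continuity of $t\mapsto\gamma(t)(e_j)$ provides a sub-interval on which $\gamma(t)(e_j)$ decreases from $\epsilon_0$ to $\epsilon$; integrating $\|(\gamma(t),\dot\gamma(t))\|_{\mathfrak{h},\mathfrak{R}_n}\ge \sqrt{c}\,|\dot\gamma(t)(e_j)|/\gamma(t)(e_j)$ on that sub-interval and taking the infimum over $\gamma$ yields $\rho_{\mathfrak{h},\mathfrak{R}_n}(\hat\ell_0,\hat\ell)\ge\sqrt{c}\,\log(\epsilon_0/\epsilon)$, which exceeds any prescribed $M$ once $\epsilon\le\epsilon_0\exp(-M/\sqrt{c})$.

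To establish the pointwise estimate I would differentiate \eqref{eq_entropy} twice to obtain an explicit formula for $\mathbf{H}[\mathfrak{h}(\hat\gamma)]$ in terms of the weights $p_S:=(2|S|-1)\exp(-\hat\gamma(S))$, which sum to $1$ on $\mathfrak{M}^1_{\mathfrak{R}_n}$ by Lemma \ref{lem_entropy}. Applying Proposition \ref{prop:I} in the singular regime $\mathbf{I}=\{j\}$, one has $\hat\gamma(e_i)\asymp|\log\hat\gamma(e_j)|$ for $i\ne j$ once $\hat\gamma(e_j)$ is small, so the singletons concentrate the mass with $p_{\{j\}}=1-O(\hat\gamma(e_j))$ and $p_{\{i\}}\asymp\hat\gamma(e_j)$ for $i\ne j$, while all larger simplices contribute $o(\hat\gamma(e_j))$. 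The tangency condition $\langle v,\nabla\mathfrak{h}(\hat\gamma)\rangle=0$ then takes the form
\[
v(e_j)\bigl(1+O(\hat\gamma(e_j))\bigr)+\hat\gamma(e_j)\sum_{i\ne j}\bigl(\alpha_i+o(1)\bigr)v(e_i)=0,
\]
so any tangent vector with $v(e_j)\asymp 1$ must have $v(e_i)\asymp 1/\hat\gamma(e_j)$ for some $i\ne j$. Substituting these asymptotics into the Hessian expression I expect to read off $\|(\hat\gamma,v)\|_{\mathfrak{h},\mathfrak{R}_n}^2\asymp v(e_j)^2/\hat\gamma(e_j)^2+\sum_{i\ne j}v(e_i)^2$, from which the claimed lower bound follows.

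The main obstacle will be verifying this two-sided Hessian asymptotic uniformly in $v$: one must rule out degeneracies in which the linear coupling imposed by the tangency constraint conspires with cancellations inside the Hessian formula to erase the claimed $1/\hat\gamma(e_j)^2$ factor. A clean way to carry this out is to introduce the logarithmic coordinate $u_j:=-\log\hat\gamma(e_j)$ on $\mathfrak{M}^1_{\mathfrak{R}_n}$, in which the entropy constraint is polynomial in $e^{-u_j}$ and the short-edge regime corresponds to $u_j\to+\infty$; the entropy norm should then pull back to something uniformly comparable to the Euclidean norm in the $u$-coordinates on the region $\{\max_j u_j\ge -\log\epsilon_0\}$, and the desired logarithmic distance lower bound becomes a triangle-inequality estimate in this model. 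Carrying out this change of coordinates and verifying the uniform comparison of norms is the technical heart of the argument.
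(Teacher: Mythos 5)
The paper cites this result from \cite{Aougab23} without reproducing a proof, so I am evaluating your argument on its own merits. Your overall architecture --- a pointwise lower bound on $\|(\hat\gamma,v)\|^2_{\mathfrak{h},\mathfrak{R}_n}$ that detects the short edge, followed by integration along any competitor path over the sub-interval where $\gamma(t)(e_j)$ descends from $\epsilon_0$ to $\epsilon$ --- is a sensible way to force divergence. But the specific pointwise estimate you assert, $\|(\hat\gamma,v)\|^2\ge c\,v(e_j)^2/\hat\gamma(e_j)^2$, is false. Already for $n=2$: parametrize $\mathfrak{M}^1_{\mathfrak{R}_2}$ by $a:=\hat\ell(e_1)$, so that $b:=\hat\ell(e_2)$ satisfies $e^{-b}=(1-e^{-a})/(1+3e^{-a})$, and the tangent direction is $\dot\gamma=(1,b'(a))$ with $b'(a)\sim -1/a$ and $b''(a)\sim 1/a^2$ as $a\to 0$. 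Since $\mathfrak{h}\circ\gamma\equiv 1$, differentiating twice gives $\langle\dot\gamma,\mathbf{H}[\mathfrak{h}]\dot\gamma\rangle=-b''\,\mathfrak{h}_b$, and with $\Phi(a,b,h)=e^{-ha}+e^{-hb}+3e^{-h(a+b)}-1$ one computes $\Phi_b=-(1-e^{-a})\sim -a$ and $\Phi_h\sim -a\log(1/a)$, hence $\mathfrak{h}_b=-\Phi_b/\Phi_h\sim -1/\log(1/a)$ and
\[
\|(\hat\gamma,\dot\gamma)\|^2_{\mathfrak{h},\mathfrak{R}_2}=-b''(a)\,\mathfrak{h}_b\sim \frac{1}{a^2\log(1/a)},
\]
which is smaller than $\dot\gamma(e_1)^2/\hat\gamma(e_1)^2=1/a^2$ by a factor of $\log(1/a)$. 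The cancellation you flag as ``the main obstacle'' genuinely occurs and erodes the $1/\hat\gamma(e_j)^2$ factor; in particular the claimed two-sided asymptotic $\|(\hat\gamma,v)\|^2\asymp v(e_j)^2/\hat\gamma(e_j)^2+\sum_{i\ne j}v(e_i)^2$ fails, and the entropy metric is not uniformly comparable to the Euclidean metric in the $u_j=-\log\hat\gamma(e_j)$ coordinates (the ratio degenerates like $1/\sqrt{\log(1/\hat\gamma(e_j))}$).

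The strategy can nonetheless be salvaged, because the correct weaker estimate $\|(\hat\gamma,v)\|\gtrsim |v(e_j)|/\bigl(\hat\gamma(e_j)\sqrt{\log(1/\hat\gamma(e_j))}\bigr)$ still yields, after integrating over the sub-interval where $\gamma(t)(e_j)$ runs from $\epsilon_0$ down to $\epsilon$, a lower bound of the form $\rho_{\mathfrak{h},\mathfrak{R}_n}(\hat\ell_0,\hat\ell)\gtrsim \sqrt{\log(1/\epsilon)}-\sqrt{\log(1/\epsilon_0)}$, which still tends to infinity as $\epsilon\to 0$ and hence suffices. But as written your proposal does not establish the key pointwise inequality; the rate $\log(\epsilon_0/\epsilon)$ you state is wrong, and the two-sided Hessian asymptotic you planned to ``read off'' is the step that fails. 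A secondary inaccuracy: for $n\ge 3$ the claim that $\hat\gamma(e_i)\asymp|\log\hat\gamma(e_j)|$ for every $i\ne j$ is not uniform over $\mathfrak{M}^1_{\mathfrak{R}_n}$ --- the entropy constraint only controls the smallest of the other edges, and the remaining ones can be arbitrarily large --- so the ``concentration of mass on singletons'' and the resulting structure of the tangency constraint must be argued more carefully.
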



Now label the edges of $\mathfrak{R}_n$ by $1, 2, \ldots, n$. Given a subset $S \subseteq \{1,2, \ldots, n\}$, denote by $\mathfrak{M}_S$ the set of (generalized) length functions $\ell$ which assign infinity to edges with label $i$ that are not in the set $S$; namely, 
$$\mathfrak{M}_S \defeq \{\ell \in(0,\infty]^{n} : \ell_i < \infty \text{ if } i \in S \text{ and } \ell_i = \infty \text{ if } i \notin S \}.$$ 
Suppose that $S \subseteq \{1,2, \ldots, n\}$ with cardinality $|S|>1$. Let $\iota_S: S \to \{1,2, \ldots,|S|\}$ be the order preserving bijection.
Consider the embedding $\varepsilon_S : [0,\infty]^{|S|} \to [0,\infty]^{n}$ given by
$$\varepsilon_S(\ell_1, \ldots,\ell_{|S|})_i =
\begin{cases}
	\ell_{\iota_S(i)} & \text{ if } i \in S\\
	\infty & \text{ otherwise}.
\end{cases}$$
It follows that $\varepsilon_S(\mathfrak{M}_{\mathfrak{R}_{|S|}}) = \mathfrak{M}_S.$
\begin{lemma}[{\cite[Lemma 9.1]{Aougab23}}]
	Let $n \ge 2$ and let $S \subseteq \{1,2, ..., n\}$ be a set with 
	 $|S|>1$. Then the following hold:
	\begin{enumerate}
		\item If $\ell \in \mathfrak{M}_{\mathfrak{R}_{|S|}}$, then the entropy $\mathfrak{h}(\varepsilon_S(\ell))$ equals the entropy $\mathfrak{h}(\ell)$.
		\item $\varepsilon_S$ restrcits to a homeomorphism $\mathfrak{M}^1_{\mathfrak{R}_{|S|}} \to \mathfrak{M}^1_S.$
	\end{enumerate}
\end{lemma}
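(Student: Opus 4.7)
The plan is to deduce both statements directly from the characterization of entropy in Lemma \ref{lem_entropy}, which gives the equation
\begin{equation*}
1 = \sum_{\emptyset \neq S' \subseteq \{1,\ldots,n\}} (2|S'|-1)\exp(-\mathfrak{h}_{\mathfrak{R}_n}(\ell')\, \ell'(S')),
\end{equation*}
together with the fact that the right-hand side is a strictly monotone function of the entropy parameter, so the entropy is uniquely characterized by this equation.

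For part (1), I would fix $\ell \in \mathfrak{M}_{\mathfrak{R}_{|S|}}$ and write out the entropy equation for $\ell' := \varepsilon_S(\ell) \in \mathfrak{M}_S$, interpreted with the convention $\exp(-\infty) = 0$. Since $\ell'(e_i) = \infty$ for every $i \notin S$, any subset $S' \subseteq \{1,\ldots,n\}$ with $S' \not\subseteq S$ contributes $\ell'(S') = \infty$ and hence zero to the sum. Only subsets $\emptyset \neq S' \subseteq S$ survive, and the surviving sum, after relabeling via $\iota_S$, is precisely the entropy equation of $\ell$ on $\mathfrak{R}_{|S|}$. Because the entropy is the unique positive number satisfying this equation, we conclude $\mathfrak{h}_{\mathfrak{R}_n}(\varepsilon_S(\ell)) = \mathfrak{h}_{\mathfrak{R}_{|S|}}(\ell)$.

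For part (2), the strategy is to use (1) to pin down the image and then verify the topological properties. By (1), $\varepsilon_S$ sends $\mathfrak{M}^1_{\mathfrak{R}_{|S|}}$ into $\mathfrak{M}^1_S$, and conversely every $\ell' \in \mathfrak{M}^1_S$ pulls back to a unique $\ell \in \mathfrak{M}^1_{\mathfrak{R}_{|S|}}$ by restricting to the finite coordinates, so $\varepsilon_S$ restricts to a bijection onto $\mathfrak{M}^1_S$. Continuity of $\varepsilon_S$ is immediate coordinatewise (each output coordinate is either an input coordinate or the constant $\infty$), and continuity of the inverse is just the coordinate projection $\mathfrak{M}^1_S \to \mathfrak{M}^1_{\mathfrak{R}_{|S|}}$ that forgets the infinite entries.

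I expect no serious obstacle: the whole proof reduces to the observation that "sending a coordinate to $\infty$" annihilates exactly the terms in the entropy sum whose index set involves that coordinate, so the entropy equation for $\varepsilon_S(\ell)$ is literally the entropy equation for $\ell$. The only point where care is needed is fixing the right topology on $\mathfrak{M}_S$ (inherited from $(0,\infty]^n$) and making sure $\varepsilon_S$ is a topological embedding, but this is standard. No additional tools beyond Lemma \ref{lem_entropy} and elementary point-set topology are required.
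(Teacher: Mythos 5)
Your plan is workable and the statement is true, but part (1) as written has a gap: you invoke the entropy equation of Lemma \ref{lem_entropy}, yet that lemma is stated for $\ell \in \mathfrak{M}_{\mathfrak{R}_n} \subset \mathbb{R}_{>0}^{n}$, whereas $\varepsilon_S(\ell)$ lives in $\mathfrak{M}_S \subset (0,\infty]^{n}$ and has genuinely infinite coordinates. The convention $\exp(-\infty)=0$ makes the right-hand side of \eqref{eq_entropy} a sensible expression, but it does not by itself establish that $\mathfrak{h}_{\mathfrak{R}_n}(\varepsilon_S(\ell))$ actually satisfies that extended equation --- which is exactly the content one needs. You would have to first prove a version of Lemma \ref{lem_entropy} on $\widehat{\mathfrak{M}}_{\mathfrak{R}_n}$, for example by approximating $\varepsilon_S(\ell)$ with finite-valued length functions and passing to the limit, and that in turn requires a continuity-at-the-boundary statement for $\mathfrak{h}$ that risks being circular with the Section~9 material of \cite{Aougab23} you are trying to reprove.

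The cleaner fix is to bypass \eqref{eq_entropy} and argue directly from the circuit-counting definition of entropy in Section \ref{sec:entropy}: since $\varepsilon_S(\ell)$ assigns length $\infty$ to every edge $e_i$ with $i \notin S$, a circuit in $\mathfrak{R}_n$ has finite $\varepsilon_S(\ell)$-length if and only if it traverses only edges indexed by $S$. Such circuits are precisely the circuits of the sub-rose on $\{e_i : i \in S\}$, which is identified with $\mathfrak{R}_{|S|}$ via $\iota_S$, and under this identification $\varepsilon_S(\ell)$-lengths become $\ell$-lengths by the very definition of $\varepsilon_S$. Hence $\left|\mathfrak{C}_{\mathfrak{R}_n,\varepsilon_S(\ell)}(T)\right| = \left|\mathfrak{C}_{\mathfrak{R}_{|S|},\ell}(T)\right|$ for every $T$, and the two entropies agree by definition, with no appeal to the pressure/determinant machinery at all. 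Once (1) is in hand this way, your argument for (2) is fine: $\varepsilon_S$ is a coordinate-wise continuous bijection $\mathfrak{M}_{\mathfrak{R}_{|S|}} \to \mathfrak{M}_S$ with continuous inverse given by forgetting the infinite coordinates, and by (1) it preserves entropy, so it restricts to a homeomorphism of the unit-entropy slices.
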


Set
$$\widehat{\mathfrak{M}}_{\mathfrak{R}_n}:= \bigcup_{S \subseteq \{1, \ldots,n\}} \mathfrak{M}_S = (0,\infty]^{n}$$
and
$$\widehat{\mathfrak{M}}^1_{\mathfrak{R}_n} := \{\ell \in \widehat{\mathfrak{M}}_{\mathfrak{R}_n} : \mathfrak{h}(\ell) = 1\}.$$
The tangent bundle $T\widehat{\mathfrak{M}}^1_{\mathfrak{R}_n}$ is defined as the subspace of $(\hat \ell, \vec{v}) \in \widehat{\mathfrak{M}}^1_{\mathfrak{R}_n} \times \mathbb{R}^{n}$ with $\langle \vec{v}, \nabla \overline{F}_n(\hat \ell) \rangle = 0$, 
where  $\overline{F}_n(\hat \ell)$ is as in (\ref{eq_Fnell}). Consider an embedding ${ \varepsilon_S} : \mathbb{R}^{|S|}  \to \mathbb{R}^{n}$ given by
$${\hat\varepsilon_S}(a_1, \dots, a_{|S|})_i =
\begin{cases}
	a_{\iota_S(i)} & \text{ if } i \in S\\
	0 & \text{ otherwise}.
\end{cases}$$
Then 
there is an embedding $T_S : T\mathfrak{M}^1_{\mathfrak{R}_{|S|}} \to T\widehat{\mathfrak{M}}^1_{\mathfrak{R}_n}$ defined by $T_S(\hat\ell, \vec{v}) := (\varepsilon_S(\hat\ell), \hat\varepsilon_S(\vec{v})).$

\begin{proposition} [{\cite[Propositions 9.5 and 9.6]{Aougab23}}]\label{prop:completion-g}
Fix $n \ge 2$. The following assertions hold:
\begin{enumerate}
\item The  norm $||\cdot||_{\mathfrak{h},\mathfrak{R}_n} : T\mathfrak{M}^1_{\mathfrak{R}_n} \to \mathbb{R}$ extends to a continuous semi-norm $||\cdot||_{\mathfrak{h},\mathfrak{R}_n} : T\widehat{\mathfrak{M}}^1_{\mathfrak{R}_n} \to \mathbb{R}$. 
\item The embedding $T_S : T\mathfrak{M}^1_{\mathfrak{R}_{|S|}} \to T\widehat{\mathfrak{M}}^1_{\mathfrak{R}_n}$ is norm-preserving. 
\item The extended semi-norm defines a distance function $\rho_{\mathfrak{h},\mathfrak{R}_n}$ on $\widehat{\mathfrak{M}}^1_{\mathfrak{R}_n}$.
\end{enumerate}
\end{proposition}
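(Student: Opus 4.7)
The plan is to derive all three statements from the explicit formula \eqref{eq_gammadotgamma}, combined with the identities in Lemma \ref{lem:for}. The numerator and denominator of
\[
\|(\hat\ell,\vec v)\|^2_{\mathfrak{h},\mathfrak{R}_n}=\frac{\sum_{\Delta\in\mathcal{C}_{\mathfrak{R}_n}}(-1)^{|\Delta|}\vec v(\Delta)^2\exp(-\hat\ell(\Delta))}{\sum_{\Delta\in\mathcal{C}_{\mathfrak{R}_n}}(-1)^{|\Delta|}\hat\ell(\Delta)\exp(-\hat\ell(\Delta))}
\]
are finite sums in which each summand depends on $\hat\ell$ only through $\exp(-\hat\ell(\Delta))$ and the affine factor $\hat\ell(\Delta)$. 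Under the conventions $\exp(-\infty)=0$ and $\infty\cdot\exp(-\infty)=0$, every simplex $\Delta$ containing an edge of infinite length contributes zero. This single observation is what drives the whole argument.

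For (1) and (2), at a point $\hat\ell\in\mathfrak{M}_S\subset\widehat{\mathfrak{M}}^1_{\mathfrak{R}_n}$ the surviving simplices are precisely those supported in $S$, and the order-preserving bijection $\iota_S$ identifies them with the simplices of $\mathcal{C}_{\mathfrak{R}_{|S|}}$. Under the embedding $T_S(\hat\ell,\vec v)=(\varepsilon_S(\hat\ell),\hat\varepsilon_S(\vec v))$ we have $\hat\varepsilon_S(\vec v)(\Delta)=\vec v(\iota_S(\Delta))$, so the formula above reduces term-by-term to the corresponding formula on $\mathfrak{R}_{|S|}$; this yields the norm-preserving statement of (2). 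The denominator remains strictly positive on $\mathfrak{M}^1_S$ because the preceding lemma identifies $\mathfrak{M}^1_{\mathfrak{R}_{|S|}}$ with $\mathfrak{M}^1_S$ via $\varepsilon_S$, and on $\mathfrak{M}^1_{\mathfrak{R}_{|S|}}$ the denominator is positive by the strict convexity of $\mathfrak{h}_{\mathfrak{R}_{|S|}}$ (Proposition \ref{prop:property}) together with the positive-definiteness of the entropy metric (Lemma \ref{lem_entropyposdef}). For (1), continuity across strata is then a dominated-convergence argument for finite sums: as $\hat\ell(e_i)\to\infty$ all summands containing $e_i$ decay to zero uniformly on compact subsets of the tangent fiber, while the surviving summands vary continuously and the denominator stays bounded away from zero on the limit stratum.

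For (3), symmetry, non-negativity, and the triangle inequality are immediate from the path-integral definition of $\rho_{\mathfrak{h},\mathfrak{R}_n}$. The delicate point is non-degeneracy: $\rho_{\mathfrak{h},\mathfrak{R}_n}(\hat\ell_1,\hat\ell_2)=0$ implies $\hat\ell_1=\hat\ell_2$. I would argue by the stratum decomposition $\widehat{\mathfrak{M}}^1_{\mathfrak{R}_n}=\bigsqcup_S\mathfrak{M}^1_S$. If $\hat\ell_1\ne\hat\ell_2$ both lie in the same stratum $\mathfrak{M}^1_S$, then any path remaining in $\mathfrak{M}^1_S$ has length bounded below by the positive distance in $\mathfrak{M}^1_{\mathfrak{R}_{|S|}}$, via the isometry established in (2). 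For paths that leave $\mathfrak{M}^1_S$, the unit-entropy constraint forces that enlarging some edge length be compensated by shrinking another; Proposition \ref{prop_714} then implies that any configuration with a sufficiently short edge lies at a definite positive distance from both endpoints, giving a uniform positive lower bound on the length of such excursions. A parallel case analysis handles pairs in distinct strata, again anchored by Proposition \ref{prop_714}. The main obstacle I anticipate is making these estimates uniformly quantitative when an approximating sequence of paths shuttles between many strata; this should be resolved using the finiteness of the index set $\{S:S\subseteq\{1,\ldots,n\}\}$ together with a compactness argument on any length-minimizing sequence, reducing the problem to finitely many strictly positive lower bounds.
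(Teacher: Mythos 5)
The proposition is a direct citation of \cite[Propositions~9.5 and 9.6]{Aougab23}; the present paper does not supply a proof, so I am evaluating your reconstruction on its own merits.

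Your treatment of (1) and (2) is sound and is essentially the expected argument: the circuit sum in \eqref{eq_gammadotgamma} is finite, each summand is controlled by $\exp(-\hat\ell(\Delta))$, so simplices touching an infinite-length edge drop out, and the surviving terms are matched bijectively with those of $\mathcal{C}_{\mathfrak{R}_{|S|}}$ via $\iota_S$; strict convexity of $\mathfrak{h}_{\mathfrak{R}_{|S|}}$ keeps the denominator positive on the stratum, and continuity across strata follows because the decaying terms vanish uniformly on compact tangent sets.

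Your argument for (3), however, has a genuine gap in the way you invoke Proposition~\ref{prop_714}. That proposition concerns configurations with a \emph{short} edge, $\min_j \hat\ell(e_j)\le\epsilon$, and says such configurations are far from the fixed basepoint $\hat\ell_0$. The boundary strata of $\widehat{\mathfrak{M}}^1_{\mathfrak{R}_n}$ are characterized by some edge lengths being \emph{infinite}, not small. A path leaving $\mathfrak{M}^1_S$ moves an infinite coordinate down to a large finite value; to preserve unit entropy, the finite coordinates adjust by a small amount but need not approach zero at all (indeed, by \eqref{eq_entropy} the perturbation to the constraint is $O(\exp(-\ell(e_j)))$, which is negligible when $\ell(e_j)$ is merely large). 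Consequently, Proposition~\ref{prop_714} gives no lower bound on the length of such excursions, and in fact small excursions off the stratum cost \emph{small} length --- this is precisely why boundary points are at finite distance and belong to the completion at all. The statement ``the unit-entropy constraint forces that enlarging some edge length be compensated by shrinking another'' is true only in the infinitesimal sense that the gradient constraint $\langle\vec v,\nabla\overline{F}_n\rangle=0$ must hold; it does not force any edge to become short. Establishing non-degeneracy requires a different device, e.g.\ a length-nonincreasing projection/comparison of paths onto the stratum $\mathfrak{M}^1_S$ combined with the positive-definiteness supplied by (2), or a direct lower bound for the displacement of a finite coordinate common to both endpoints. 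As written, the lower bound in your case analysis is unsupported.
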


In particular, the space $\widehat{\mathfrak{M}}^1_{\mathfrak{R}_n}$ gives the metric completion of $\mathfrak{M}_{\mathfrak{R}_n}^1$.

\begin{theorem} [{\cite[Section 9.2]{Aougab23}}]\label{thm:com}
	The metric completion of $(\mathfrak{M}_{\mathfrak{R}_n}^1, \rho_{\mathfrak{h},\mathfrak{R}_n})$ is $(\widehat{\mathfrak{M}}^1_{\mathfrak{R}_n}, \rho_{\mathfrak{h},\mathfrak{R}_n})$.
\end{theorem}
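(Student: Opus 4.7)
The plan is to establish the two properties that characterize a metric completion: that $\mathfrak{M}^1_{\mathfrak{R}_n}$ sits isometrically inside $\widehat{\mathfrak{M}}^1_{\mathfrak{R}_n}$ as a dense subset, and that $(\widehat{\mathfrak{M}}^1_{\mathfrak{R}_n},\rho_{\mathfrak{h},\mathfrak{R}_n})$ is complete. The extended semi-norm, the resulting distance function, and the isometric identification of $\mathfrak{M}^1_{\mathfrak{R}_n}$ as a subset of $\widehat{\mathfrak{M}}^1_{\mathfrak{R}_n}$ are already supplied by Proposition \ref{prop:completion-g}. Thus only density and completeness remain.

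For \emph{density}, fix $\hat\ell^\infty \in \widehat{\mathfrak{M}}^1_{\mathfrak{R}_n}$ and let $S \defeq \{i : \hat\ell^\infty(e_i) < \infty\}$. Substituting $\hat\ell^\infty(e_j) = \infty$ for $j \notin S$ into the entropy identity of Lemma \ref{lem_entropy} shows $|S| \ge 2$: otherwise the identity would force the unique finite entry to vanish, contradicting $\widehat{\mathfrak{M}}_{\mathfrak{R}_n} = (0,\infty]^n$. Define a smooth family $\ell_t \in \mathfrak{M}_{\mathfrak{R}_n}$, $t \ge 1$, by $\ell_t(e_i) = \hat\ell^\infty(e_i)$ for $i \in S$ and $\ell_t(e_i) = t$ for $i \notin S$, and set $\hat\ell_t \defeq \mathfrak{h}_{\mathfrak{R}_n}(\ell_t)\cdot \ell_t \in \mathfrak{M}^1_{\mathfrak{R}_n}$. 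Since $|S| \ge 2$, the family $\{\ell_t\}$ is non-singular uniformly divergent with index set $\mathbf{I} = S$, and $\dot\ell_t$ is supported on $S^c$, so Proposition \ref{prop:entropy-der} applies and yields uniform decay of $\dot{\mathfrak{h}}/\mathfrak{h}$ along tails. Proposition \ref{coro:length-0} then forces the entropy length of the path $t \mapsto \hat\ell_t$ on $[k,T]$ to tend to zero uniformly in $T \ge k$; letting $T \to \infty$ gives $\rho_{\mathfrak{h},\mathfrak{R}_n}(\hat\ell_k,\hat\ell^\infty) \to 0$.

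For \emph{completeness}, let $\{\hat\ell_k\} \subset \widehat{\mathfrak{M}}^1_{\mathfrak{R}_n}$ be Cauchy. Proposition \ref{prop_714}, transported via the norm-preserving embeddings $T_S$ of Proposition \ref{prop:completion-g}(2), shows that if $\min_i \hat\ell_k(e_i)$ accumulated at $0$ then the distance from $\hat\ell_k$ to a fixed basepoint $\hat\ell_0 \in \mathfrak{M}^1_{\mathfrak{R}_n}$ would blow up, contradicting the Cauchy condition. Hence there is $\epsilon > 0$ with $\hat\ell_k \in [\epsilon,\infty]^n$ for all $k$. By compactness, after passing to a subsequence, $\hat\ell_k$ converges coordinatewise to some $\hat\ell^\infty \in [\epsilon,\infty]^n \subset \widehat{\mathfrak{M}}_{\mathfrak{R}_n}$. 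Passing to the limit in the entropy identity of Lemma \ref{lem_entropy} under the convention $\exp(-\infty) = 0$ gives $\mathfrak{h}_{\mathfrak{R}_n}(\hat\ell^\infty) = 1$, so $\hat\ell^\infty \in \widehat{\mathfrak{M}}^1_{\mathfrak{R}_n}$. Combining density with the Cauchy condition then upgrades subsequential coordinatewise convergence to a genuine $\rho_{\mathfrak{h},\mathfrak{R}_n}$-limit of the full sequence.

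The main obstacle I anticipate is the density step: producing an explicit path whose entropy length collapses to zero while the endpoints migrate to a generalized length function with infinite entries. The technical core is verifying the hypotheses of Proposition \ref{coro:length-0}, which rests on the non-singularity conclusion $|S| \ge 2$ (forced by the entropy identity) together with the uniform decay $\dot{\mathfrak{h}}/\mathfrak{h} \to 0$ from Proposition \ref{prop:entropy-der}. Completeness is comparatively routine once Proposition \ref{prop_714} rules out the only remaining obstruction, namely coordinates approaching zero along a Cauchy sequence.
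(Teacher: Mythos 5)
This theorem is quoted from \cite[Section 9.2]{Aougab23}; the present paper gives no proof, so your attempt is assessed on its own terms. Your two-part framework (density plus completeness) is the correct strategy, as is the observation that $|S| \ge 2$ is forced by the entropy identity. However, both remaining steps have genuine gaps. For density, Propositions \ref{prop:entropy-der} and \ref{coro:length-0} give only pointwise control over a fixed compact parameter interval, namely $\sup_t ||(\gamma_k(t),\dot\gamma_k(t))||_{\mathfrak{h},\mathfrak{R}_n}\to 0$; to conclude $\rho_{\mathfrak{h},\mathfrak{R}_n}(\hat\ell_k,\hat\ell^\infty)\to 0$ you need the tail integral $\int_k^\infty ||(\hat\ell_t,\dot{\hat\ell}_t)||_{\mathfrak{h},\mathfrak{R}_n}\,dt$ to tend to $0$, and a bound of the form $\sup\to 0$ on each unit subinterval does not make that tail integrable. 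The actual decay of the integrand is exponential in $t$, so the conclusion is true, but establishing that requires an explicit computation in the spirit of the estimates in the proofs of Lemmas \ref{lem:Y} and \ref{lem:Z}, not an appeal to the $o(1)$ statements cited.

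For completeness, transporting Proposition \ref{prop_714} ``via the norm-preserving embeddings $T_S$'' is not licensed: norm preservation says that tangent-vector lengths agree, but $\rho_{\mathfrak{h},\mathfrak{R}_n}$ on $\widehat{\mathfrak{M}}^1_{\mathfrak{R}_n}$ is an infimum over the strictly larger class of paths that may leave the image of $T_S$, so this distance can in principle be smaller than the $\rho_{\mathfrak{h},\mathfrak{R}_{|S|}}$-distance, and the lower bound of Proposition \ref{prop_714} need not persist. You would need either a version of Proposition \ref{prop_714} formulated directly on $\widehat{\mathfrak{M}}^1_{\mathfrak{R}_n}$, or an argument that the pointwise semi-norm lower bound behind its proof extends continuously via Proposition \ref{prop:completion-g}(1). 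Moreover, your final sentence --- that density plus the Cauchy condition upgrades coordinatewise subsequential convergence to $\rho_{\mathfrak{h},\mathfrak{R}_n}$-convergence --- skips a step: the density path you constructed approximates $\hat\ell^\infty$ along one particular ray with $\ell_t(e_i)=t$ for $i\notin S$, not along the given Cauchy sequence, so it does not directly control $\rho_{\mathfrak{h},\mathfrak{R}_n}(\hat\ell_{k_j},\hat\ell^\infty)$. Some continuity statement for $\rho_{\mathfrak{h},\mathfrak{R}_n}$ relative to the product topology on $(0,\infty]^n$ is needed to close the argument, and it is not among the facts you invoke.
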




\section{The entropy metric $\rho_D$ on $\mathcal{S}_D$} \label{sec_3}
In this section, we relate generic points in $\mathcal{S}_D$ to length functions on  $(2D-2)$-petal rose graph and establish Proposition \ref{thm:metric}.

\subsection{From polynomials to metric graphs} \label{sec:PG}
 Let $[f_0] \in \mathcal{S}_D$ be a generic point and denote by $T_{[f_0]}\mathcal{S}_D$ the tangent space at $[f_0]$ with the height-twist coordinates. Recall from Remark \ref{rmk:twist} that $\mathfrak{T}_{[f_0]}$ is the real $(D-1)$-dimensional subspace of $T_{[f_0]}\mathcal{S}_D$ 
 spanned by the twist directions and that $\mathfrak{H}_{[f_0]}$ is the real $(D-1)$-dimensional subspace of $T_{[f_0]}\mathcal{S}_D$ 
  spanned by the height directions.  Let $\mathcal{R} \defeq \mathfrak{R}_{2D-2}$ be the $(2D-2)$-petal rose graph with edges $e_1,\ldots, e_{2D-2}$. Recall that $\mathfrak{M}_{\mathcal{R}}$ is the space of length functions on $\mathcal{R}$ and $\mathfrak{M}^1_{\mathcal{R}}$ is the space of unit entropy length functions on $\mathcal{R}$.

 \subsubsection{
 Construction of $\langle \cdot, \cdot \rangle_{\mathcal{S}_D}$}
We construct 
 the bilinear form $\langle \cdot, \cdot \rangle_{\mathcal{S}_D}$ on $T_{[f_0]}\mathcal{S}_D$ 
 via  the entropy metric $||\cdot||_{\mathfrak{h}, \mathcal{R}}$ on $T\mathfrak{M}^1_{\mathcal{R}}$.
To this end, let $\vec{v} \in T_{[f_0]}\mathcal{S}_D$ and consider a smooth path in $\mathcal{S}_D$ defined by
 \begin{equation}\label{def_ft}
 	[f_t]:= [f_0] + t\vec{v},\  t \in (-\epsilon,\epsilon).
\end{equation}
Shrinking $\epsilon$ if necessary, we can assume that $[f_t]$ is generic in $\mathcal{S}_D$ for all $t$.
For each $t \in (-\epsilon,\epsilon)$, we now associate a length function $\ell_t : \{e_1,\ldots, e_{2D-2}\} \to \mathbb R$ to $[f_t]$ (see \eqref{def_st1} and \eqref{def_st2}). For brevity, we write $\mathcal{G}([f_t])=(h_1(t), \dots, h_{D-1}(t))$.

If $\vec{v}\notin\mathfrak{T}_{[f_0]}$, we set the length function $\ell_{[f_t]}=\ell_t: \{e_1,\ldots,e_{2D-2}\} \to \mathbb R_{>0}$ by
\begin{equation} \label{def_st1}
\ell_t(e_j):=
\begin{cases}
	h_1(t) \ \ \ &\text{for}\ \ j = 1\\
	h_j(t)/h_1(t) \ \ \ &\text{for}\ \ 2\le j\le D-1\\
	1\ \ \ &\text{for}\ \ D\le j\le 2D-2
\end{cases}.
\end{equation}
In this case, since only the first $D-1$ coordinates of $\ell_t$ change along the path $[f_t]$, we say that $[f_t]$ is a \emph{height line segment}.

If $\vec{v}\in\mathfrak{T}_{[f_0]}$, recall the numbers $d_j/m_j$, $1\le j\le D-1$  as in \eqref{equ:bound} for $[f_0]$. For each $t \in (-\epsilon,\epsilon)$, there exists
$$\Theta_t := (\theta_j(t))_{j=1}^{D-1}\in\mathbb{H}^{D-1}$$
with $0\le |\theta_j(t)|\le d_j/(2m_j)$ and $\Theta_0 = (0,0, \ldots ,0)$ such that $[f_t]$ is the map obtained by performing a twist deformation with $\Theta_t$ to the fundamental subannuli of $[f_0]$. We normalize $\Theta_t $ by setting $$\widetilde\theta_j(t)\defeq(2m_j/d_j)\theta_j(t).$$ Then $-1\le \widetilde\theta_j(t)\le 1$ is smooth in $t$ and $|\widetilde\theta_j(t)|\to 0$ as $t\to 0$.  
Noting that the height $h_j(t)$ is constant in $t$  for each $1\le j\le D-1$, we set
$$H_0 = H([f_0]):=\max\left\{h_1(0), 1/h_{D-1}(0)\right\}.$$
For sufficiently small $|t|$, we set the length function $\ell_{[f_t]}=\ell_t : \{e_1,\ldots,e_{2D-2}\} \to \mathbb R_{>0}$ by 
\begin{equation} \label{def_st2}
\ell_t(e_j) :=
\begin{cases}
	h_1(0)\ \ \ &\text{for}\ \ j=1\\
	h_j(0)/h_1(0) \ \ \ &\text{for}\ \ 2\le j\le D-1\\
	1 + \frac{\widetilde\theta_{j-(D-1)}(t)}{H_0}\ \ \ &\text{for}\ \ D\le j\le 2D-2
\end{cases}.
\end{equation}
In this case, since only the twist parameters change along the path $[f_t]$, we say that $[f_t]$ is a \emph{twist line segment}.

\begin{remark}\label{rmk:segment}
For the sake of connivence, we parametrize $\vec{v} \in T_{[f_0]}\mathcal{S}_D$ with the line segment $[f_t]$ as in \eqref{def_ft} so that all $h_j(t)$ and $\widetilde\theta_{j-(D-1)}(t)$ are linear.  In fact, for any arbitrary smooth path $\gamma(t)\subset\mathcal{S}_D$ with $\gamma(0)=[f_0]$ and $\gamma'(0)=\vec{v}$, one can assign a length function  for $\gamma(t)$ using $h_1(\gamma(t))$ and   $h_j(\gamma(t))/h_1(\gamma(t))$ for the heights coordinates and $1$  as in \eqref{def_st1} or the corresponding normalized  twisting information as in  \eqref {def_st2} or  the twist coordinates. 
One can see that  the value and the derivative at $t=0$ of such length functions, as functions in $t$,  are independent of the parametrization. This makes all our later arguments  independent of the parametrization. 
\end{remark} 

\begin{remark}\label{rmk:beta} 
A key observation is that for each $t \in (-\epsilon,\epsilon)$, we have $\ell_t(e_j)\in(0,1]$ for any $2\le j\le D-1$ and $\ell_t(e_2)\ge\ell_t(e_3)\ge\ldots\ge\ell_t(e_{D-1})$.
\end{remark}

For $t \in (-\epsilon,\epsilon)$, let $\mathfrak{h}(\ell_t)$ be the entropy of the above length function $\ell_t$ 
and consider the unit entropy length functions 
$\hat{\ell}_t=\hat{\ell}_{[f_t]}:= \mathfrak{h}(\ell_t)\ell_t.$
We obtain a map 
\begin{align*}
\iota_{[f_0]}: T_{[f_0]}\mathcal{S}_D\ \ \ &\to\ \ \ T_{\hat\ell_0}\mathfrak{M}_\mathcal{R}^1\\ \
\vec{v} \ \ &\to\ \ \dot{\hat{\ell}}_0\nonumber.
\end{align*}

We have the following properties of the map $\iota_{[f_0]}$. We say that a function $\gamma : (-1, 1) \to \mathcal{S}_D$ is a {\it generic curve} if it is a smooth function of $t$ and $\gamma(t)$ is a generic point in $\mathcal{S}_D$ for every $t \in (-1,1)$. We let $\pi_{\mathfrak{H}_{[f_0]}}: T_{[f_0]}\mathcal{S}_D\to\mathfrak{H}_{[f_0]}$ be the projection map. 

\begin{proposition}\label{prop:iota}
 Let $[f_0]\in\mathcal{S}_D$ be a generic point. Then the following hold:
\begin{enumerate}
\item For any non-zero $\vec{v}\in T_{[f_0]}\mathcal{S}_D$, the image $\iota_{[f_0]}(\vec{v})$ is non-zero and $\iota_{[f_0]}(-\vec{v})=-\iota_{[f_0]}(\vec{v})$.
\item If $\vec{v}_1,\vec{v}_2\in T_{[f_0]}\mathcal{S}_D\setminus\mathfrak{T}_{[f_0]}$ with $\pi_{\mathfrak{H}_{[f_0]}}(\vec{v}_1)=\pi_{\mathfrak{H}_{[f_0]}}(\vec{v}_2)$, then $\iota_{[f_0]}(\vec{v}_1)=\iota_{[f_0]}(\vec{v}_2)$. 
\item The map $\iota_{[f_0]}$ is injective on both subspaces $\mathfrak{T}_{[f_0]}$ and $\mathfrak{H}_{[f_0]}$. 
\end{enumerate}
\end{proposition}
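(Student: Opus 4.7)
The plan is to reduce the whole proposition to a single observation about the normalization map $\Phi:\mathfrak{M}_\mathcal{R}\to\mathfrak{M}_\mathcal{R}^1$ defined by $\Phi(\ell)=\mathfrak{h}(\ell)\ell$. Since $\mathfrak{h}(a\ell)=\mathfrak{h}(\ell)/a$ for $a>0$, the map $\Phi$ is constant along rays through the origin, so $\ell_0\in\ker d\Phi_{\ell_0}$. The converse containment is a one-line check using the formula $d\Phi_{\ell_0}(\vec{w})=\langle\nabla\mathfrak{h}(\ell_0),\vec{w}\rangle\ell_0+\mathfrak{h}(\ell_0)\vec{w}$: if the right-hand side vanishes, then $\vec{w}$ must be a scalar multiple of $\ell_0$. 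Thus $\ker d\Phi_{\ell_0}=\mathbb{R}\ell_0$, and the image $\iota_{[f_0]}(\vec{v})=\dot{\hat\ell}_0=d\Phi_{\ell_0}(\dot\ell_0)$ vanishes exactly when $\dot\ell_0$ is a scalar multiple of $\ell_0$.

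For part (1), I would use this to dispatch the two cases in the definition of $\ell_t$ separately. When $\vec{v}\notin\mathfrak{T}_{[f_0]}$, the last $D-1$ entries of $\dot\ell_0$ vanish by \eqref{def_st1} while the last $D-1$ entries of $\ell_0$ are all equal to $1$, so proportionality would force $\dot\ell_0=0$; unwinding the first $D-1$ coordinates of \eqref{def_st1} then yields $\dot h_j(0)=0$ for every $j$, contradicting $\vec{v}\notin\mathfrak{T}_{[f_0]}$. When $\vec{v}\in\mathfrak{T}_{[f_0]}$ is nonzero, the roles reverse: genericity of $[f_0]$ forces the first $D-1$ entries of $\ell_0$ to be strictly positive, while \eqref{def_st2} makes the first $D-1$ entries of $\dot\ell_0$ vanish, so again $\dot\ell_0=0$ and hence every $\dot{\widetilde\theta}_j(0)=0$, which contradicts $\vec{v}\neq 0$. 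Antisymmetry follows by noting that in both cases the assignment $t\mapsto \ell_t^{\vec{v}}$ satisfies $\ell_t^{-\vec{v}}=\ell_{-t}^{\vec{v}}$, which transfers to $\dot{\hat\ell}_0^{-\vec{v}}=-\dot{\hat\ell}_0^{\vec{v}}$.

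Part (2) is then essentially formal: when $\vec{v}_1,\vec{v}_2\notin\mathfrak{T}_{[f_0]}$ share the same height projection, the curves $t\mapsto h_1(t)$ and $t\mapsto h_j(t)/h_1(t)$ agree up to first order at $t=0$ for both $\vec{v}_1$ and $\vec{v}_2$, so by \eqref{def_st1} the length functions $\ell_t^{\vec{v}_1}$ and $\ell_t^{\vec{v}_2}$ share value and first derivative at $t=0$, yielding equal images under $\iota_{[f_0]}$.

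For part (3) the plan is first to establish that $\iota_{[f_0]}$ is $\mathbb{R}$-linear on each of $\mathfrak{H}_{[f_0]}$ and $\mathfrak{T}_{[f_0]}$, and then to invoke part (1). On $\mathfrak{H}_{[f_0]}$, parametrizing so that each $h_j(t)$ is linear in $t$ as in Remark \ref{rmk:segment} makes $\dot\ell_0$ a linear function of the height coordinates of $\vec{v}$ via \eqref{def_st1}; composing with the linear map $\dot\ell_0\mapsto d\Phi_{\ell_0}(\dot\ell_0)$ gives linearity of $\iota_{[f_0]}$ on $\mathfrak{H}_{[f_0]}$. An identical argument based on \eqref{def_st2} and linear parametrization of the twist parameters works on $\mathfrak{T}_{[f_0]}$. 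Once linearity is in place, injectivity on each subspace reduces to triviality of the kernel, which is exactly the nonvanishing statement in part (1). The main obstacle I anticipate is keeping careful track of the piecewise nature of the definition of $\ell_t$; this is why it is critical that the dichotomy in \eqref{def_st1}--\eqref{def_st2} aligns precisely with the height-twist splitting $T_{[f_0]}\mathcal{S}_D=\mathfrak{H}_{[f_0]}\oplus\mathfrak{T}_{[f_0]}$ underlying (3).
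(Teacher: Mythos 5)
Your proof is correct, and it reaches all three conclusions by a route that is genuinely different from the one taken in the paper. The main structural difference is in part (1): you compute $\ker d\Phi_{\ell_0}=\mathbb{R}\ell_0$ for the normalization map $\Phi(\ell)=\mathfrak{h}(\ell)\ell$ and then exploit the specific shape of $\ell_0$ and $\dot\ell_0$ (the ``active'' half of $\ell_t$ has vanishing derivative while $\ell_0$ is positive there) to rule out proportionality. This single lemma handles both the $\dot{\mathfrak{h}}(\ell_0)=0$ and $\dot{\mathfrak{h}}(\ell_0)\ne 0$ cases in one stroke, whereas the paper's argument explicitly splits on the vanishing of $\dot{\mathfrak{h}}(\ell_0)$ and inspects which block of coordinates of $\hat\ell_t$ has non-zero derivative in each case. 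Your version is cleaner in that the case split aligns with the natural dichotomy $\vec v\in\mathfrak{T}_{[f_0]}$ or not (which is exactly the dichotomy in the definition of $\ell_t$), and it gives a more conceptual explanation of why $\iota_{[f_0]}$ cannot kill a nonzero tangent vector. For part (3), the paper argues injectivity directly by a contradiction argument on the components of $\dot{\hat\ell}_{1,0}=\dot{\hat\ell}_{2,0}$, first extracting $\dot{\mathfrak{h}}(\ell_{1,0})=\dot{\mathfrak{h}}(\ell_{2,0})$ from the inert half and then deducing $\dot\ell_{1,0}=\dot\ell_{2,0}$; you instead establish that $\iota_{[f_0]}$ is $\mathbb{R}$-linear on each of $\mathfrak{H}_{[f_0]}$ and $\mathfrak{T}_{[f_0]}$ (which is true, since $\vec v\mapsto\dot\ell_0(\vec v)$ is linear on each summand and $d\Phi_{\ell_0}$ is linear) and invoke the trivial-kernel statement from part (1). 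Both arguments work; yours buys a uniform treatment of parts (1) and (3) at the modest extra cost of verifying linearity, a point worth stating explicitly since $\iota_{[f_0]}$ is defined piecewise and is not linear on all of $T_{[f_0]}\mathcal{S}_D$ (indeed part (2) already precludes global injectivity).
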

\begin{proof}
Let us begin with statement (1). For the first assertion, we observe that  the  length functions $\ell_t$ for $[f_t] = [f_0]+t\vec{v}, t\in(-\epsilon,\epsilon)$ is not constant in $t$. Consider the  normalized length functions $\hat \ell_t\defeq\mathfrak{h}(\ell_{t})\ell_t$. 
We proceed the argument in two cases depending on if $\dot{\mathfrak{h}}(\ell_{0})=0$. If $\dot{\mathfrak{h}}(\ell_{0})\not=0$, then the first (resp. last) $D-1$ coordinates of $\hat\ell_t$ have non-zero derivatives if $\vec{v}\in\mathfrak{T}_{[f_0]}$ (resp. $\vec{v}\not\in\mathfrak{T}_{[f_0]}$). Therefore $\iota_{[f_0]}(\vec{v})$ is non-zero. If $\dot{\mathfrak{h}}(\ell_{0})=0$, then $\iota_{[f_0]}(\vec{v})$ is also non-zero since the last (resp. first) $D-1$ coordinates of $\hat\ell_t$ have non-zero derivatives if $\vec{v}\in\mathfrak{T}_{[f_0]}$ (resp. $\vec{v}\not\in\mathfrak{T}_{[f_0]}$). For the second assertion, differentiating  \eqref{eq_entropy}, we conclude that $\dot{\mathfrak{h}}(\ell_{t})=-\dot{\mathfrak{h}}(\ell_{-t})$.  It follows that $\dot{\hat\ell}_0=-d\hat{\ell}_{-t}/dt|_{t=0}$. Thus  $\iota_{[f_0]}(-\vec{v})=-\iota_{[f_0]}(\vec{v})$.

For statement (2), observe that the length functions $\ell_{1,t}$ and $\ell_{2,t}$ in $\mathfrak{M}_{\mathcal{R}}$ for $[f_0]+t\vec{v}_1$ and $[f_0]+t\vec{v}_2$ are the same and hence have the same derivative with respect to $t$. Then differentiating  \eqref{eq_entropy}, we conclude that $\dot{\mathfrak{h}}(\ell_{1,t})=\dot{\mathfrak{h}}(\ell_{2,t})$. It follows that $\mathfrak{h}(\ell_{1,t})\ell_{1,t}$ and $\mathfrak{h}(\ell_{2,t})\ell_{2,t}$ have the same derivative. Thus the conclusion holds. 

For statement (3), consider two distinct vectors $\vec{v}_1,\vec{v}_2$ in $\mathfrak{T}_{[f_0]}$ (resp. $\mathfrak{H}_{[f_0]}$). Then the length functions $\ell_{1,t}$ and $\ell_{2,t}$ in $\mathfrak{M}_{\mathcal{R}}$ for $[f_0]+t\vec{v}_1$ and $[f_0]+t\vec{v}_2$, $t \in (-\epsilon,\epsilon)$ have different derivatives with respect to $t$ at $t=0$. Suppose on the contrary that $\mathfrak{h}(\ell_{1,t}) \ell_{1,t}$ and $\mathfrak{h}(\ell_{2,t})\ell_{2,t}$ have the same derivative. Then for any $1\le j\le 2D-2$, we have  
$$\dot{\mathfrak{h}}(\ell_{1,0})\ell_{1,0}(e_j)+\mathfrak{h}(\ell_{1,0})\dot\ell_{1,0}(e_j)=\dot{\mathfrak{h}}(\ell_{2,0})\ell_{2,0}(e_j)+\mathfrak{h}(\ell_{2,0})\dot\ell_{2,0}(e_j).$$
 It follows that $\dot{\mathfrak{h}}(\ell_{1,0})=\dot{\mathfrak{h}}(\ell_{2,0})$ since $\ell_{1,0}(e_j)=\ell_{2,0}(e_j)$ and $\dot\ell_{1,0}(e_j)=0=\dot\ell_{2,0}(e_j)$ for $1 \le j \le D-1$ (resp. $D\le j\le 2D-2$). We conclude that $\mathfrak{h}(\ell_{1,0})\dot\ell_{1,0}(e_j)=\mathfrak{h}(\ell_{2,0})\dot\ell_{2,0}(e_j)$ for all $1\le j\le 2D-2$, which implies that $\dot\ell_{1,0}(e_j)=\dot\ell_{2,0}(e_j)$ for all $1\le j\le 2D-2$ since $\mathfrak{h}(\ell_{1,0})=\mathfrak{h}(\ell_{2,0})$. Hence $\dot\ell_{1,0}=\dot\ell_{2,0}$. This is a contradiction.  
\end{proof}

 Via the map $\iota_{[f_0]}$, 
  we can define a symmetric bilinear form $\langle \cdot, \cdot \rangle_{\mathcal{S}_D}$ on the tangent space $T_{[f_0]}\mathcal{S}_D$: 
   for $\vec{v}_1,\vec{v}_2\in T_{[f_0]}\mathcal{S}_D$, 
$$\langle\vec{v}_1,\vec{v}_2\rangle_{\mathcal{S}_D}:=\langle\iota_{[f_0]}(\vec{v}_1),\iota_{[f_0]}(\vec{v}_2)\rangle_{\mathfrak{h},\mathcal{R}}.$$
We note that $\langle \cdot, \cdot \rangle_{\mathcal{S}_D}$ is non-degenerate by Proposition \ref{prop:iota} (1) and Lemma \ref{lem_entropyposdef}.
The associate norm  of  $\langle\cdot,\cdot\rangle_{\mathcal{S}_D}$ for any $\vec{v}\in T_{[f_0]}\mathcal{S}_D$ is 
\begin{equation*}
||([f_0],\vec{v})||=||\vec{v}||:=\langle\vec{v},\vec{v}\rangle_{\mathcal{S}_D}^{1/2}=||\iota_{[f_0]}(\vec{v})||_{\mathfrak{h}, \mathcal{R}}. 
\end{equation*}
Observe that both $\langle\cdot,\cdot\rangle_{\mathcal{S}_D}$ and $||\cdot||$ are continuous in $[f_0]$ and tangent vectors in $T_{[f_0]}\mathcal{S}_D$.

\begin{remark}
The normalization via the entropy $\mathfrak{h}(\ell_t)$ allows us to adopt the thermodynamic metric $\langle \cdot, \cdot \rangle_{\mathfrak{h},\mathcal{R}}$. However, the connection between this normalization and the polynomial dynamics  is elusive. 
\end{remark}


\subsubsection{Base length functions}
Let $[f] \in \mathcal{S}_D$ be a generic point. We associate a {\it base length function} $\ell_B([f]) : \{e_1, \ldots, e_{2D-2}\} \to \mathbb R_{>0}$ to $[f]$ as follows
\begin{align*}
	\ell_B([f]):=
	\begin{cases}
		h_1([f]) \ \ \ &\text{for}\ \ j = 1\\
		h_j([f])/h_1([f]) \ \ \ &\text{for}\ \ 2\le j\le D-1\\
		1\ \ \ &\text{for}\ \ D\le j\le 2D-2
	\end{cases}
\end{align*}
and set the {\it unit entropy base length function} $\hat\ell_B([f])=\mathfrak{h}(\ell_B([f]))\ell_B([f])$.

Observe that for two generic points $[f_1]$ and $[f_2]$ in $\mathcal{S}_D$, if $\mathcal{G}([f_1])=\mathcal{G}([f_2])$, then 
 $\ell_B([f_1])=\ell_B([f_2])$ and $\hat\ell_B([f_1])=\hat\ell_B([f_2])$. 

\begin{remark}
	For  $\vec{v}\in T_{[f_0]}\mathcal{S}_D$, consider $[f_t]$ defined as in \eqref{def_ft}. If $\vec{v}\not\in\mathfrak{T}_{[f_0]}$, then $\ell_B([f_t])=\ell_{[f_t]}$. If $\vec{v}\in\mathfrak{T}_{[f_0]}$, then $\ell_B([f_0]) = \ell_B([f_t])\neq \ell_{[f_t]}$ for $t\neq 0$. 
\end{remark}

\subsection{Proof of Proposition \ref{thm:metric}} \label{sec_piecewise}
We first show that any two points $[f_1]$ and $[f_2]$ in $\mathcal{S}_D$ can be connected by a piecewise generic curve. 
\begin{lemma}\label{lem:finite}
	Let $[f_1]$ and $[f_2]$ be two elements in $\mathcal{S}_D$.  Then there exists a piecewise generic curve in $\mathcal{S}_D$ connecting $[f_1]$ and $[f_2]$. 
\end{lemma}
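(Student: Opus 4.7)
My plan is to connect $[f_1]$ and $[f_2]$ by an arbitrary smooth path in $\mathcal{S}_D$ and then perturb it relative to its endpoints so that it meets the non-generic locus only at finitely many interior points, which will serve as the breakpoints of the desired piecewise generic curve. The essential observation is that, although the non-generic locus in $\mathcal{S}_D$ is a countable union of real-analytic hypersurfaces, only finitely many of them are relevant along any fixed compact path.

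First, I would invoke path-connectedness of $\mathcal{S}_D$: $\mathcal{M}_D$ is a connected complex orbifold of complex dimension $D-1\ge 1$, and $\mathcal{S}_D$ is the complement of the compact connectedness locus, which is connected and, being locally Euclidean, path-connected. Choose a smooth path $\gamma_0:[0,1]\to\mathcal{S}_D$ with $\gamma_0(0)=[f_1]$ and $\gamma_0(1)=[f_2]$. Since every element of $\mathcal{S}_D$ already has $h_j>0$ for all $j$, the only way for $[f]\in\mathcal{S}_D$ to be non-generic is $h_i(f)=D^k h_j(f)$ for some $1\le i<j\le D-1$ and $k\in\mathbb{Z}$. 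For each such triple, set
$$N_{ijk}:=\{[f]\in\mathcal{S}_D:h_i(f)=D^k h_j(f)\},$$
a closed proper real-analytic subset of $\mathcal{S}_D$ of real codimension $\ge 1$. Since $\gamma_0([0,1])$ is compact and the continuous positive functions $h_i$ are bounded above and bounded away from $0$ on it, only finitely many values of $k$ can yield an $N_{ijk}$ meeting some small open neighborhood $U$ of $\gamma_0([0,1])$; call this finite union $N\subset U$.

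Next, since $\mathcal{S}_D$ has real dimension $2D-2\ge 2$, a standard transversality argument lets me perturb $\gamma_0$ (rel.\ endpoints, inside $U$) to a smooth path $\gamma:[0,1]\to\mathcal{S}_D$ with the same endpoints such that $\gamma$ is transverse to every smooth stratum of every $N_{ijk}\subset N$. Transversality to the codimension-$1$ strata means $\gamma$ meets them at only finitely many interior points, and strata of codimension $\ge 2$ are missed entirely for dimension reasons. A sufficiently small perturbation preserves the boundedness of the $h_i$'s along $\gamma$, so no new $N_{ijk}$ becomes relevant. Listing those intersection points as $0<t_1<\cdots<t_n<1$ and setting $t_0:=0$, $t_{n+1}:=1$, each restriction $\gamma|_{[t_m,t_{m+1}]}$ is a generic smooth curve by construction, and their union is a piecewise generic curve from $[f_1]$ to $[f_2]$.

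The main obstacle is the transversality step, since the real-analytic varieties $N_{ijk}$ can be singular. This is handled by stratifying each $N_{ijk}$ into smooth submanifolds of $\mathcal{S}_D$ and applying transversality stratum by stratum, as indicated above; everything else reduces to a direct compactness argument or to path-connectedness of $\mathcal{S}_D$.
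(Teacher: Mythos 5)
Your argument is essentially sound and reaches the right conclusion, but it takes a genuinely different route from the paper. The paper works with the stratification $\mathcal{S}_D^{ng}=\bigcup_m \mathcal{S}_{D,m}$ (by the number of independent non-generic conditions satisfied) and exploits local finiteness: around each point of a connecting curve one can find a neighborhood in which only finitely many connected components of each $\mathcal{S}_{D,m}$ appear, then covers the compact curve by finitely many such neighborhoods and pieces together a piecewise generic replacement locally. You instead first use compactness to reduce to finitely many loci $N_{ijk}$, and then apply Thom-type transversality to push the curve off the non-generic set rel endpoints, getting finiteness of the crossing set directly from transversality of a $1$-manifold to codimension-$\ge 1$ strata. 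Your route is more concrete and makes the finiteness conclusion transparent; the paper's route sidesteps the need to discuss transversality by using the stratified structure and is more robust when the relevant sets are not obviously smooth.

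That said, there is a real subtlety you gloss over: the ordered critical heights $h_i:\mathcal{S}_D\to\mathbb{R}_{>0}$ are only continuous globally, not smooth or real-analytic, because of the sorting operation (and possible collisions of critical points). So the $N_{ijk}=\{h_i=D^k h_j\}$ are not real-analytic subsets in the naive sense. To make the transversality argument rigorous you should pass to local charts where the critical points of $f$ can be labelled (this is possible away from a further algebraic subvariety where $f'$ has repeated roots, which has codimension $\ge 2$ and is therefore also avoidable by perturbation). In such a chart, the conditions $G_f(c_{i'})=D^k G_f(c_{j'})$ over all relevant pairs do cut out real-analytic hypersurfaces, and the stratification-plus-transversality argument you outline goes through. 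This is a fixable technical gap rather than a wrong approach, but it is exactly the kind of thing the paper's use of the stratification $\mathcal{S}_{D,m}$ is designed to package away, and you should acknowledge it explicitly rather than calling $N_{ijk}$ real-analytic.
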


The proof of Lemma \ref{lem:finite} 
employ the stratification structure of $\mathcal{S}_D$.
Recall that an element  $[f]\in \mathcal{S}_D$ is non-generic if it satisfies a non-generic condition; that is, there exist $1\le i<j\le D-1$ and $n\ge 0$ such that $h_i(f)=D^{n}h_j(f)$. Denote by $\mathcal{S}_D^{ng}$ the set of non-generic elements in  $\mathcal{S}_D$. 
Given $1\le m\le D-1$, we denote by $\mathcal{S}_{D,m}$ the set of elements in $\mathcal{S}_D$ satisfying exactly $m$ independent non-generic conditions. Then we have 
$$\mathcal{S}_D^{ng}=\bigcup_{m=1}^{D-1}\mathcal{S}_{D,m}.$$
Denote by $\mathcal{S}_D^{g}$ the set of generic elements in $\mathcal{S}_D$, and regard $\mathcal{S}^g_{D}$ as $\mathcal{S}_{D,0}$. Then $\mathcal{S}_{D,m}$ is contained in the (topological) closure of $\mathcal{S}_{D,m-1}$ for $1\le m\le D-1$. 
It follows that $\mathcal{S}_D$ is a stratified space where the strata are the connected components of $\mathcal{S}^g_{D}$ and the spaces $\mathcal{S}_{D,m}$. 

\begin{proof} [Proof of Lemma \ref{lem:finite}]
Let $\mathcal{S}_{D,m}, 1\le m\le D-1$ be the sets defined above. Due to the stratified structure of $\mathcal{S}_D$, for any $[f]\in\mathcal{S}_D$, we can find  a neighborhood $\mathcal{U}([f])\subset\mathcal{S}_D$ of $[f]$ such that if $\mathcal{U}([f])\cap\mathcal{S}_{D,m_0}\not=\emptyset$ for some $1\le m_0\le D-1$, then there are only finitely many connected components of $\mathcal{S}_{D,m_0}$ intersecting $\mathcal{U}([f])$. Pick a curve $\gamma\subset\mathcal{S}_D$ connecting $[f_1]$ and $[f_2]$, and consider the above neighborhood $\mathcal{U}([f])$ for each $[f]\in\gamma$. By the compactness of $\gamma$, we can choose finitely many points in $\gamma$ whose corresponding neighborhoods cover $\gamma$. In these finitely many neighborhoods, we can find a desired piecewise generic curve. 
\end{proof}

\begin{proof}[Proof of Proposition \ref{thm:metric}]
By Lemma \ref{lem:finite}, the function $\rho_D$ is well-defined. It is straightforward to see that $\rho_D$ satisfies $\rho_D([f],[f]) = 0$ and the triangle inequality. By Proposition \ref{prop:iota} (1), 
we have that $\rho_D([f_1],[f_2])=\rho_D([f_2],[f_1])$. Therefore $\rho_D$ is a distance function.
\end{proof}

From the definition of $\rho_D$ and \eqref{eq_entropydistance}, we obtain the following corollary.  
\begin{corollary}\label{coro:greater}
Consider any two generic points $[f_1], [f_2]$ in $\mathcal{S}_D$ and let   $\hat \ell_B([f_1])$ and $\hat \ell_B([f_2])$ be the unit entropy base length functions associated to $[f_1]$ and $[f_2]$, respectively.  Then
 $$\rho_D([f_1],[f_2])\ge\rho_{\mathfrak{h},\mathcal{R}}(\hat\ell_B([f_1]),\hat\ell_B([f_2])).$$
\end{corollary}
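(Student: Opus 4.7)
The plan is to lift any piecewise generic curve $\gamma$ from $[f_1]$ to $[f_2]$ to a piecewise smooth path $\beta := \hat\ell_B \circ \gamma$ in $\mathfrak{M}^1_{\mathcal{R}}$ joining $\hat\ell_B([f_1])$ and $\hat\ell_B([f_2])$, and to show that the entropy length of $\beta$ does not exceed the $\rho_D$-length of $\gamma$. The corollary will then follow by taking the infimum over $\gamma$ and invoking the definition \eqref{eq_entropydistance} of $\rho_{\mathfrak{h},\mathcal{R}}$.

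For well-definedness, I would first observe that on the generic locus the critical heights are smooth functions of $[f]$ (their orderings are strict), and $\mathfrak{h}$ is real analytic by Proposition \ref{prop:property}, so each piece $\beta_j := \hat\ell_B \circ \gamma_j$ is smooth; the joining values $\hat\ell_B(\gamma(a_j))$ agree automatically, making $\beta$ piecewise smooth. The main step is the pointwise inequality $\|\dot\gamma_j(t)\|_{\mathcal{S}_D} \ge \|\dot\beta_j(t)\|_{\mathfrak{h},\mathcal{R}}$. If $\dot\gamma_j(t)\notin\mathfrak{T}_{\gamma_j(t)}$, then Proposition \ref{prop:iota}(2) applied to $\dot\gamma_j(t)$ and its height projection $\pi_{\mathfrak{H}}(\dot\gamma_j(t))$ (both outside $\mathfrak{T}$ and sharing the same height component) yields $\iota_{\gamma_j(t)}(\dot\gamma_j(t)) = \iota_{\gamma_j(t)}(\pi_{\mathfrak{H}}(\dot\gamma_j(t)))$. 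By \eqref{def_st1}, along a height line segment the length function $\ell_t$ coincides with $\ell_B$ of the segment, so the right-hand side is the directional derivative of $\hat\ell_B$ in the direction $\pi_{\mathfrak{H}}(\dot\gamma_j(t))$. Since $\hat\ell_B$ factors through the critical heights map $\mathcal{G}$ and $\mathcal{G}$ annihilates $\mathfrak{T}$, the chain rule gives $d\hat\ell_B(\pi_{\mathfrak{H}}(\dot\gamma_j(t))) = d\hat\ell_B(\dot\gamma_j(t)) = \dot\beta_j(t)$, yielding equality of norms. If instead $\dot\gamma_j(t)\in\mathfrak{T}_{\gamma_j(t)}$, heights are stationary at $t$, so $\dot\beta_j(t)=0$ and the inequality is trivial.

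Integrating the pointwise inequality over each piece, summing, and applying \eqref{eq_entropydistance} to the piecewise smooth path $\beta$ then gives
$$\sum_{j=1}^m \int_{a_j}^{a_{j+1}} \langle \dot\gamma_j(t), \dot\gamma_j(t) \rangle_{\mathcal{S}_D}^{1/2} \, dt \;\ge\; \mathfrak{L}_{\mathfrak{h},\mathcal{R}}(\beta) \;\ge\; \rho_{\mathfrak{h},\mathcal{R}}(\hat\ell_B([f_1]), \hat\ell_B([f_2])),$$
and the infimum over piecewise generic curves finishes the proof. The main technical obstacle is the identification $\iota_{\gamma_j(t)}(\dot\gamma_j(t)) = \dot\beta_j(t)$ in the non-twist case, which bundles together Proposition \ref{prop:iota}(2), the formula \eqref{def_st1} on height segments, and the fact that $\hat\ell_B$ depends only on the critical heights.
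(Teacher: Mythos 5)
Your argument is correct and fills in exactly what the paper leaves implicit when it asserts the corollary ``follows from the definition of $\rho_D$ and \eqref{eq_entropydistance}'': one pushes a competing piecewise generic curve $\gamma$ forward by $\hat\ell_B$ to a piecewise smooth curve $\beta$ in $\mathfrak{M}^1_{\mathcal{R}}$ joining the two unit entropy base length functions, verifies $\|\dot\gamma_j(t)\|_{\mathcal{S}_D}\ge\|\dot\beta_j(t)\|_{\mathfrak{h},\mathcal{R}}$ pointwise, integrates, and takes the infimum. Your key identity $\iota_{\gamma_j(t)}(\dot\gamma_j(t))=\dot\beta_j(t)$ off the twist directions (and $\dot\beta_j(t)=0$ in the twist directions, since $\hat\ell_B$ factors through $\mathcal{G}$) is precisely the mechanism behind both the corollary and the later Proposition \ref{equ:same}.
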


If $\gamma\subset\mathcal{S}_D$ is a piecewise generic curve, we denote by $\mathfrak{L}_D(\gamma)$ the {\it entropy length} of $\gamma$; that is 
\begin{equation*}
\mathfrak{L}_D(\gamma) \defeq \sum_{j=1}^m\int_{a_j}^{a_{j+1}}||(\gamma_j,\dot{\gamma_j})||dt,
\end{equation*}
where $\gamma_j:[a_j,a_{j+1}]\to\mathcal{S}_D$, $1\le j\le m$, are  generic curves such that $\gamma=\cup_{j=1}^m\gamma_j$. 

To end this subsection, we show that the unit entropy base length function provides a natural local embedding for height line segments which preserves distances.

\begin{proposition}\label{equ:same}
Let $\gamma(t), t\in(-1,1)$ be a smooth curve in $\mathcal{S}_D$ with $\dot{\gamma}(t)\not\in  \mathfrak{T}_{[\gamma(t)]}$ for all $t$. Then for sufficiently small $|t|$, the unit entropy base length function $\hat\ell_B(\gamma(t))$ is an embedding of $\gamma(t)$ into $\mathfrak{M}_{\mathcal{R}}^1$; in particular
$$\mathfrak{L}_D(\gamma)=\mathfrak{L}_{\mathfrak{h},\mathcal{R}}(\hat\ell_B(\gamma)).$$
\end{proposition}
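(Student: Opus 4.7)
The plan is to reduce the whole statement to a pointwise identification: at every $t \in (-1,1)$, the hypothesis $\dot\gamma(t)\notin\mathfrak{T}_{[\gamma(t)]}$ forces the length function constructed from $\dot\gamma(t)$ to be given by formula \eqref{def_st1} rather than \eqref{def_st2}. In particular, the length function $\ell_{[\gamma(t)]}$ appearing in the definition of $\iota_{[\gamma(t)]}$ coincides with the base length function $\ell_B(\gamma(t))$ at $t$, and after normalization by the entropy we get $\hat\ell_{[\gamma(t)]} = \hat\ell_B(\gamma(t))$ along the entire curve. By Remark \ref{rmk:segment} the derivatives (not just values) are also parametrization-independent, so differentiating this identification will give the norm identity we need.

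For the embedding assertion, I would show that $t \mapsto \hat\ell_B(\gamma(t))$ is locally a smooth injective immersion. Injectivity of $\hat\ell_B$ on $\gamma$: from the values $\hat\ell_B(\gamma(t))(e_j)$ for $D\le j\le 2D-2$, which all equal $\mathfrak{h}(\ell_B(\gamma(t)))$, one recovers the entropy; then from the values on $e_1, \dots, e_{D-1}$ one recovers $h_1(\gamma(t))$ together with all ratios $h_j(\gamma(t))/h_1(\gamma(t))$, and hence the full critical height vector $\mathcal{G}(\gamma(t))$. Thus it suffices to show that $\mathcal{G}\circ\gamma$ is locally injective. By Proposition \ref{prop:fiber}, at a generic point the fiber of $\mathcal{G}$ is a smooth $(D-1)$-torus tangent to $\mathfrak{T}_{[\gamma(t)]}$, so $\ker d\mathcal{G}_{[\gamma(t)]} = \mathfrak{T}_{[\gamma(t)]}$. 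The hypothesis $\dot\gamma(t)\notin\mathfrak{T}_{[\gamma(t)]}$ then gives $d\mathcal{G}(\dot\gamma(t))\neq 0$, so $\mathcal{G}\circ\gamma$ is an immersion and locally injective, which yields an embedding on a sufficiently small interval around each $t$.

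For the length equality, fix $t_0$ and apply Remark \ref{rmk:segment} with the smooth path $\gamma$ in place of the straight segment $[f_0]+t\vec v$: since we are in the height line segment regime, the tangent vector $\iota_{[\gamma(t_0)]}(\dot\gamma(t_0))\in T_{\hat\ell_B(\gamma(t_0))}\mathfrak{M}^1_{\mathcal{R}}$ is precisely $\frac{d}{dt}\big|_{t=t_0}\hat\ell_B(\gamma(t))$. Consequently
\[
\|([\gamma(t_0)],\dot\gamma(t_0))\| \;=\; \|\iota_{[\gamma(t_0)]}(\dot\gamma(t_0))\|_{\mathfrak{h},\mathcal{R}} \;=\; \bigl\|\bigl(\hat\ell_B(\gamma(t_0)),\,\tfrac{d}{dt}\hat\ell_B(\gamma(t_0))\bigr)\bigr\|_{\mathfrak{h},\mathcal{R}},
\]
and integrating over $t$ gives $\mathfrak{L}_D(\gamma) = \mathfrak{L}_{\mathfrak{h},\mathcal{R}}(\hat\ell_B(\gamma))$.

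The point I expect to require the most care is justifying that $\ker d\mathcal{G}_{[\gamma(t)]} = \mathfrak{T}_{[\gamma(t)]}$ holds at every point along $\gamma$, not merely that $\mathcal{G}\circ\gamma$ is an immersion at a single chosen $t_0$. This uses both Proposition \ref{prop:fiber} (smoothness and dimension count for the fibers at generic points) and Remark \ref{rmk:twist} (the height-twist decomposition of the tangent space), together with the fact that $\gamma$ remains in the open generic stratum — a genericity assumption I would explicitly invoke at the start of the argument before letting $|t|$ shrink.
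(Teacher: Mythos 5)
Your proof is correct, but it routes through the critical heights map $\mathcal{G}$ where the paper does not. The paper's argument for the local embedding is a short direct computation: assuming $\dot{\hat\ell}_t = 0$ and writing out the product rule, the constancy of the last $D-1$ coordinates of $\ell_t$ forces $\dot{\mathfrak{h}}(\ell_t) = 0$, which in turn forces $\dot\ell_t = 0$, contradicting the hypothesis. Your alternative --- read the entropy off the constant coordinates of $\hat\ell_B(\gamma(t))$, reconstruct $\mathcal{G}(\gamma(t))$ from the remaining coordinates, and appeal to $\mathcal{G}\circ\gamma$ being an immersion --- reaches the same conclusion and makes the injectivity structurally transparent rather than relying on a derivative computation. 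The trade-off, which you correctly flag as the delicate step, is that you must establish $\ker d\mathcal{G}_{[\gamma(t)]} = \mathfrak{T}_{[\gamma(t)]}$ from Proposition \ref{prop:fiber} and Remark \ref{rmk:twist}; the paper sidesteps this by asserting $\dot\ell_t \neq 0$ directly, an assertion that rests on the same underlying dichotomy in Remark \ref{rmk:twist} but is left implicit. You are also more explicit than the paper on the length identity: the paper simply says the equality follows after shrinking $|t|$, whereas you invoke Remark \ref{rmk:segment} to identify $\iota_{[\gamma(t_0)]}(\dot\gamma(t_0))$ with $\frac{d}{dt}\big|_{t=t_0}\hat\ell_B(\gamma(t))$, making the pointwise norm identity and hence the integral identity fully visible.
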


\begin{proof}
Note that the length function $\ell_t \defeq \ell_{\gamma(t)}$ in $\mathfrak{M}_{\mathcal{R}}$ for $\gamma(t)$ has non-zero derivative for sufficiently small $|t|$. Then $\mathfrak{h}(\ell_t)\ell_{t}$ has non-zero derivative. Indeed, for otherwise,  for any $1\le j\le 2D-2$, we have  
$$\dot{\mathfrak{h}}(\ell_t)\ell_t(e_j)=-\dot\ell_{t}(e_j)\mathfrak{h}(\ell_t). $$
Since $\dot\ell_{t}(e_j) = 0$ for any $D\le j\le 2D-2$, we have $\dot{\mathfrak{h}}(\ell_t)=0$, which implies that $\dot\ell_t(e_j)=0$ for any $1\le j\le 2D-2$.
This is a contradiction. Thus for sufficiently small $|t|$, the length function $\hat\ell_t:=\mathfrak{h}(\ell_t)\ell_t=\hat\ell_B(\gamma(t))$ is injective in $t$. Thus the map $\gamma(t)\to\hat\ell_t$ is  a local embedding near $t=0$. Then shrinking $|t|$ if necessary, we obtain the desired equality on the lengths. 
\end{proof}

\section{Metric completion of $(\mathcal{S}_D,\rho_D)$} \label{sec_completion}
In this section, we study the Cauchy sequences in $(\mathcal{S}_D,\rho_D)$ and establish Theorem \ref{thm:X}.  
 Throughout this section, any sequence $\{[f_k]\}_{k\ge0}$ in $\mathcal{S}_D$ 
  consists of generic points, unless specified otherwise.

\subsection{Entropy for base length functions} \label{subsec_5.1}
Given a sequence $\{[f_k]\}_{k\ge0}$  in $\mathcal{S}_D$, recall from Section \ref{sec:PG} the base length function $\ell_k:=\ell_B([f_k])$ and the unit entropy base length function $\hat\ell_k:=\hat\ell_B([f_k])$ associated to $[f_k]$. In this subsection, using results from Section \ref{sec:entropy-rose}, we study the limiting behavior of the entropy for $\{\ell_k\}_{k\ge0}$.



Recall  the (uniform) divergence in Definition \ref{def_l_kdiv}. 
\begin{definition}\label{def:seq}
Let $\{[f_k]\}_{k\ge0}$ be a sequence in $\mathcal{S}_D$ and consider the base length functions $\ell_k$ associated to $[f_k]$.
\begin{enumerate}
\item We say that $\{[f_k]\}_{k\ge0}$ is \emph{divergent} in $\mathcal{S}_D$ if $\{\ell_k\}$ is divergent in $\mathfrak{M}_{\mathcal{R}}$.
\item We say that $\{[f_k]\}_{k\ge0}$ is 
 \emph{uniformly divergent} in $\mathcal{S}_D$ if $\{\ell_k\}$ is uniformly divergent in $\mathfrak{M}_{\mathcal{R}}$. 
\item We say that $\{[f_k]\}_{k\ge0}$ is \emph{degenerating} in $\mathcal{S}_D$ if $\ell_k(e_1)\to\infty$, as $k\to\infty$. 
\end{enumerate}
\end{definition}

\begin{remark}
\begin{enumerate}
\item The definition of divergence in Definition \ref{def:seq} (1) is equivalent to that $\{[f_k]\}$ is not contained in any compact subset of $\mathcal{S}_D$.
\item For Definition \ref{def:seq} (3), since $\ell_k(e_1)=h_1(f_k)(e_1)$ for each $k\ge0$, if $\{[f_k]\}$ is degenerating, then it is divergent. However, it may not be uniformly divergent. 
\end{enumerate}
\end{remark}

We have the following observation. Recall the set $\mathbf{I}(\{\ell_k\})$ in Definition \ref{def_I}. 
\begin{lemma}\label{lem:I0}
Let $\{[f_k]\}_{k \ge 0}\subset\mathcal{S}_D$ be a uniformly divergent sequence and 
consider the base length functions $\ell_k$ associated to $[f_k]$. Then $\mathbf{I}(\{\ell_k\})\not=\emptyset$.
\end{lemma}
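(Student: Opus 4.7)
The plan is a short unwinding of the definition, using the explicit form of the base length function. Recall from Definition \ref{def_I} that $\mathbf{I}(\{\ell_k\}) = \emptyset$ happens only in case (1), namely when $\ell_k(e_i) \to \infty$ as $k \to \infty$ for every $1 \le i \le 2D-2$. Hence it suffices to exhibit a single index $i$ along which $\ell_k(e_i)$ does not diverge to infinity, which will force the sequence into case (2) or case (3), both of which assign $\mathbf{I}$ a nonempty set.

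Concretely, I would point to the construction of $\ell_B([f])$ in Section \ref{sec:PG}: for every $[f] \in \mathcal{S}_D$ generic and every index $j$ with $D \le j \le 2D-2$, one has $\ell_B([f])(e_j) = 1$. Applied to the sequence at hand, $\ell_k(e_D) = 1$ for all $k$, so $\ell_k(e_D) \not\to \infty$. This rules out case (1) of Definition \ref{def_I}. In case (2) one has $\mathbf{I}(\{\ell_k\}) = \{1, \ldots, 2D-2\}$ by construction, and in case (3) the set $\mathbf{I}(\{\ell_k\})$ is a nonempty subset of $\{1, \ldots, 2D-2\}$. In either situation $\mathbf{I}(\{\ell_k\}) \neq \emptyset$, completing the argument.

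There is no real obstacle here; the whole proof is a one-line check that exploits the asymmetric form of $\ell_B([f])$, whose last $D-1$ coordinates are pinned at $1$ regardless of $[f]$. The only point to keep in mind is that uniform divergence (Definition \ref{def:seq}(2)) only guarantees convergence of each coordinate in $\mathbb{R}_{\ge 0}\cup\{\infty\}$, which is consistent with (and in fact exploits) the constant coordinates $\ell_k(e_D) = \cdots = \ell_k(e_{2D-2}) = 1$.
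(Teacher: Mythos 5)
Your proof is correct and takes exactly the same approach as the paper: both observe that the last $D-1$ coordinates of the base length function are pinned at $1$, which immediately excludes case (1) of Definition~\ref{def_I} and forces $\mathbf{I}$ to be nonempty. The paper states this in a single line; your version simply spells out the same observation.
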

\begin{proof}
Indeed, we have $\ell_k(e_j)=1$ for all $D\le j\le 2D-2$. 
\end{proof}

Applying Lemma \ref{lem:entropy-existence} and Proposition \ref{prop:I}, we obtain the following result on entropy. 
\begin{lemma}\label{lem:entropy-finite}
Let $\{[f_k]\}_{k \ge 0}\subset\mathcal{S}_D$ be a  uniformly divergent sequence and 
consider the base length functions $\ell_k$ associated to $[f_k]$. Then the entropy $\mathfrak{h}(\ell_k)\to\mathfrak{h}_\infty$ with $\mathfrak{h}_\infty \in \widehat{\mathbb{R}}_{\ge 0}$, as $k \to \infty$. Moreover, $\mathfrak{h}_\infty=0$ if and only if $D=2$ and $\ell_k(e_1)=h_1(f_k)\to\infty$.
\end{lemma}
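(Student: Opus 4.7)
The first assertion is immediate: since $\{\ell_k\}\subset\mathfrak{M}_{\mathcal{R}}$ is uniformly divergent by hypothesis, Lemma~\ref{lem:entropy-existence} gives $\mathfrak{h}(\ell_k)\to\mathfrak{h}_\infty\in\widehat{\mathbb{R}}_{\ge 0}$. Moreover, by Lemma~\ref{lem:I0} we have $\mathbf{I}:=\mathbf{I}(\{\ell_k\})\neq\emptyset$. The key structural observation driving the ``moreover'' part is the rigidity of base length functions: $\ell_k(e_1)=h_1(f_k)$ is the only coordinate that can diverge to $\infty$; $\ell_k(e_j)=h_j(f_k)/h_1(f_k)\in(0,1]$ for $2\le j\le D-1$; and $\ell_k(e_j)\equiv 1$ for $D\le j\le 2D-2$. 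In particular, when $D\ge 3$ there are at least two coordinates on which $\ell_k$ is identically $1$, and these will provide the obstruction to $\mathfrak{h}_\infty=0$.

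For the ``if'' direction, suppose $D=2$ and $h_1(f_k)\to\infty$. Then $\ell_k(e_1)\to\infty$ and $\ell_k(e_2)=1$, so Definition~\ref{def_I}(3) yields $\mathbf{I}=\{2\}$, and Proposition~\ref{prop:I}(2) gives $\mathfrak{h}(\ell_k)=o(1/\ell_k(e_2))=o(1)$, hence $\mathfrak{h}_\infty=0$. (Equivalently, in \eqref{eq_entropy} all terms involving $\ell_k(e_1)$ tend to $0$, so the surviving term $\exp(-\mathfrak{h}(\ell_k))$ must approach $1$, forcing $\mathfrak{h}(\ell_k)\to 0$.)

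For the ``only if'' direction, I would run a case analysis on $|\mathbf{I}|$ under the assumption $\mathfrak{h}_\infty=0$. If $|\mathbf{I}|\ge 2$, Proposition~\ref{prop:I}(1) gives $\mathfrak{h}(\ell_k)\asymp 1/\ell_k(e_i)$ for every $i\in\mathbf{I}$, so $\ell_k(e_i)\to\infty$ for every $i\in\mathbf{I}$; but only $e_1$ can diverge, contradicting $|\mathbf{I}|\ge 2$. Hence $|\mathbf{I}|=1$, say $\mathbf{I}=\{i_0\}$. Proposition~\ref{prop:I}(2) then gives $1/\ell_k(e_j)=o(\mathfrak{h}(\ell_k))$ for every $j\neq i_0$; combined with $\mathfrak{h}_\infty=0$, this forces $\ell_k(e_j)\to\infty$ for all $j\neq i_0$. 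If $D\ge 3$, the set $\{D,\ldots,2D-2\}$ has cardinality $D-1\ge 2$, so at least one index $j^\ast\in\{D,\ldots,2D-2\}\setminus\{i_0\}$ exists; for this $j^\ast$ we have $\ell_k(e_{j^\ast})=1\not\to\infty$, a contradiction. Hence $D=2$. In this case $\mathbf{I}=\{1\}$ or $\mathbf{I}=\{2\}$; the first would force $1=1/\ell_k(e_2)=o(\mathfrak{h}(\ell_k))$, giving $\mathfrak{h}_k\to\infty$, a contradiction. So $\mathbf{I}=\{2\}$, and Definition~\ref{def_I}(3) requires $\ell_k(e_2)=o(\ell_k(e_1))$, i.e., $h_1(f_k)\to\infty$.

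The main obstacle is the bookkeeping around Definition~\ref{def_I}: identifying the correct case for each asymptotic configuration of $\{\ell_k\}$ and verifying that the uniqueness of $\mathbf{I}$ is compatible with the structural constraints coming from the two-block shape of the base length function (the divergent block $\{e_1\}$ and the constant-$1$ block $\{e_D,\ldots,e_{2D-2}\}$). Once this is set up, the rest is a clean extraction of the obstruction ``$|\{D,\ldots,2D-2\}|\ge 2$'' from Proposition~\ref{prop:I}.
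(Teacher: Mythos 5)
Your proposal is correct and follows essentially the same route as the paper: appeal to Lemma \ref{lem:entropy-existence} for existence of $\mathfrak{h}_\infty$, then characterize $\mathfrak{h}_\infty=0$ by a case split on $|\mathbf{I}(\{\ell_k\})|$ via Lemma \ref{lem:I0} and Proposition \ref{prop:I}, using the rigidity $\ell_k(e_j)\equiv 1$ for $D\le j\le 2D-2$ to rule out the offending configurations. The only cosmetic difference is that where the paper dispatches the case $|\mathbf{I}|\ge 2$ with a terse ``impossible by Lemma \ref{lem:I0},'' you spell out the underlying reason (at most $e_1$ can diverge in a base length function), which is a slightly more transparent way to land the same contradiction.
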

\begin{proof}
The existence of $\mathfrak{h}_\infty$ following immediately from Lemma \ref{lem:entropy-existence}. Writing $\mathbf{I} \defeq \mathbf{I}(\{\ell_k\})$, we now characterize the case that $\mathfrak{h}_\infty=0$.  If $|\mathbf{I}|\ge 2$, then $\mathfrak{h}_\infty=0$ if and only if $\ell_k(e_i)\to\infty$ for all $i\in\mathbf{I}$ by Proposition \ref{prop:I} (1). It is impossible by Lemma \ref{lem:I0}. 
 Now let us consider the case that $|\mathbf{I}|=1$.   If $\mathfrak{h}_\infty=0$,  then  $\ell_k(e_j)\to\infty$ for  $j\not\in\mathbf{I}$ by Proposition \ref{prop:I} (2). It implies that $D=2$ and $\ell_k(e_1)\to\infty$. Conversely, if $D=2$ and $\ell_k(e_1)\to\infty$, since $\ell_k(e_2)=1$,  we have  $\mathfrak{h}_\infty=0$ by Proposition \ref{prop:I} (2). 
\end{proof}

Applying Proposition \ref{prop:I} and Corollary \ref{coro:infinity}, we characterize uniformly divergent sequences in $\mathcal{S}_D$ with entropy tending to infinity.
\begin{proposition}\label{lem:entropy-infinite}
Let $\{[f_k]\}_{k \ge 0}\subset\mathcal{S}_D$ be a  uniformly divergent  sequence and consider the base length functions $\ell_k$ associated to $[f_k]$. Write $\mathfrak{h}_\infty\defeq\lim\limits_{k \to \infty} \mathfrak{h}(\ell_k)$. 
Then the following hold:
 \begin{enumerate} 
 \item Suppose that $D=2$. Then  $\mathfrak{h}_\infty=\infty$ if and only if $h_1(f_k)\to 0$; in this case $\mathfrak{h}(\ell_k)=o(1/h_1(f_k))$.
 \item Suppose that $D\ge 3$. Then $\mathfrak{h}_\infty=\infty$ if and only if $h_1(f_k)\to 0$ or $h_{D-1}(f_k)/h_1(f_k)\to 0$ as $k \to \infty$. More precisely, 
\begin{enumerate}
\item If $h_{D-1}(f_k)\asymp h_{D-2}(f_k)=o(h_1^2(f_k))$  and $h_{D-1}(f_k)/h_1(f_k)\asymp h_{D-2}(f_k)/h_1(f_k)=o(1)$, then $\mathfrak{h}(\ell_k)\asymp h_1(f_k)/h_{D-1}(f_k)$.
\item If $h_{D-1}(f_k)\asymp h_1(f_k)^2$, then  $\mathfrak{h}(\ell_k)\asymp h_1(f_k)/h_{D-1}(f_k)$. 
\item  If $h_{D-1}(f_k)=o(h_i(f_k))$ for $1\le i\le D-2$ and $h_{D-1}(f_k)=o(h_1(f_k)^2)$, then $\mathfrak{h}(\ell_k)= o(h_1(f_k)/h_{D-1}(f_k))$. 
\item  If $h_1(f_k)^2=o(h_{D-1}(f_k))$, then $\mathfrak{h}(\ell_k)= o(1/h_1(f_k))$.
\end{enumerate} 
The above four cases are all the possible scenarios where we have $\mathfrak{h}_\infty=\infty$. 
\end{enumerate} 
\end{proposition}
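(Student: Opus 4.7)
The plan is to deduce the proposition from the general results about length functions on rose graphs, namely Corollary \ref{coro:infinity} and Proposition \ref{prop:I}, by identifying the set $\mathbf{I}(\{\ell_k\})$ explicitly for the base length function. Recall that $\ell_k(e_1) = h_1(f_k)$, $\ell_k(e_j) = h_j(f_k)/h_1(f_k)$ for $2 \le j \le D-1$, and $\ell_k(e_j) = 1$ for $D \le j \le 2D-2$. Since $h_j \ge h_{D-1}$ for $j \le D-1$, and since the edges $e_D,\ldots,e_{2D-2}$ contribute length $1$, the asymptotically smallest length among the $\ell_k(e_i)$ is $\min\{h_1(f_k),\, h_{D-1}(f_k)/h_1(f_k),\, 1\}$, and $\mathbf{I}(\{\ell_k\})$ consists of the indices attaining this asymptotic minimum.

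For part (1), when $D = 2$, the base length function is $(h_1(f_k),\, 1)$. If $h_1(f_k) \to 0$, then $\mathbf{I} = \{1\}$, and Corollary \ref{coro:infinity} gives $\mathfrak{h}(\ell_k) \to \infty$ while Proposition \ref{prop:I}(2) yields $\mathfrak{h}(\ell_k) = o(1/h_1(f_k))$. If instead $h_1(f_k) \not\to 0$, then no $\ell_k(e_i)$ with $i \in \mathbf{I}$ tends to $0$, so by Corollary \ref{coro:infinity} we cannot have $\mathfrak{h}(\ell_k)\to\infty$.

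For part (2), when $D \ge 3$, I first establish the iff statement: since the lengths $\ell_k(e_j)=1$ on $e_D,\ldots,e_{2D-2}$ never contribute to the minimum, Corollary \ref{coro:infinity} gives $\mathfrak{h}(\ell_k)\to\infty$ if and only if $\min\{h_1(f_k),\, h_{D-1}(f_k)/h_1(f_k)\} \to 0$, which is exactly the condition $h_1(f_k)\to 0$ or $h_{D-1}(f_k)/h_1(f_k)\to 0$. For the four asymptotic formulas, I identify $\mathbf{I}$ in each case. In case (a), the hypotheses $h_{D-1}=o(h_1^2)$ and $h_{D-1}\asymp h_{D-2}$ give $\ell_k(e_{D-1}) = o(\ell_k(e_1))$ and $\ell_k(e_{D-1}) \asymp \ell_k(e_{D-2})$, so $\{D-2,D-1\}\subseteq\mathbf{I}$, and Proposition \ref{prop:I}(1) yields $\mathfrak{h}(\ell_k)\asymp 1/\ell_k(e_{D-1}) = h_1/h_{D-1}$. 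In case (b), $h_{D-1}\asymp h_1^2$ forces $h_1\to 0$ (otherwise $h_{D-1}/h_1$ would not tend to $0$) and makes $\ell_k(e_1)\asymp\ell_k(e_{D-1})\to 0$, so $\{1,D-1\}\subseteq\mathbf{I}$ and Proposition \ref{prop:I}(1) yields $\mathfrak{h}\asymp 1/h_1\asymp h_1/h_{D-1}$. In case (c), combining $h_{D-1}=o(h_i)$ for $i\le D-2$ with $h_{D-1}=o(h_1^2)$ shows $\ell_k(e_{D-1})$ is asymptotically smaller than every other $\ell_k(e_j)$, forcing $\mathbf{I}=\{D-1\}$, so Proposition \ref{prop:I}(2) gives $\mathfrak{h}= o(1/\ell_k(e_{D-1}))=o(h_1/h_{D-1})$. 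In case (d), $h_1^2 = o(h_{D-1})$ together with $h_{D-1}\le h_1$ forces $h_1\to 0$ and gives $\ell_k(e_1) = o(\ell_k(e_{D-1}))$, so $\mathbf{I}=\{1\}$ and Proposition \ref{prop:I}(2) gives $\mathfrak{h}=o(1/h_1)$.

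Finally, for exhaustiveness, after passing to a subsequence so that the ratios $h_{D-1}(f_k)/h_1(f_k)^2$ and $h_{D-1}(f_k)/h_{D-2}(f_k)$ both converge in $[0,\infty]$, the three possibilities for the first ratio ($\to 0$, $\asymp 1$, $\to \infty$) together with the dichotomy for the second ($\asymp 1$ vs.\ $\to 0$) account exactly for the four listed cases whenever $\mathfrak{h}(\ell_k)\to\infty$. The main obstacle I anticipate is verifying that $\mathbf{I}$ is correctly identified in case (c): ruling out that any index $i\ne D-1$ lies in $\mathbf{I}$ requires the full hypothesis on all $h_i$ for $i\le D-2$, not only on $h_{D-2}$, together with the comparison $h_{D-1}=o(h_1^2)$ to separate $e_{D-1}$ from $e_1$. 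Once $\mathbf{I}$ is pinned down in each case, the remaining work is routine asymptotic bookkeeping using Proposition \ref{prop:I}.
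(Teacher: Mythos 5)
Your proof is correct and follows essentially the same path as the paper's argument: identify $\mathbf{I}(\{\ell_k\})$ using the structure of the base length function (the unit edges $e_D,\ldots,e_{2D-2}$, the monotonicity $\ell_k(e_2)\ge\cdots\ge\ell_k(e_{D-1})$ from Remark \ref{rmk:beta}, and $\ell_k(e_1)=h_1$), then invoke Corollary \ref{coro:infinity} for the divergence criterion and Proposition \ref{prop:I}(1)--(2) for the asymptotics. The paper organizes the four cases by the dichotomy $|\mathbf{I}|\ge 2$ vs.\ $|\mathbf{I}|=1$ and cites Lemma \ref{lem:I0} for $\mathbf{I}\neq\emptyset$, while you pin down $\mathbf{I}$ case by case and run the exhaustiveness check by taking subsequential limits of the ratios $h_{D-1}/h_1^2$ and $h_{D-1}/h_{D-2}$, but these are the same argument at a slightly different level of bookkeeping.
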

\begin{proof}
Statment (1) follows from Lemma \ref{lem:entropy-finite} and Proposition \ref{prop:I} (2).
The first assertion of statement (2) follows from Corollary \ref{coro:infinity}. Now let us check cases (2a)-(2d). Consider $\mathbf{I}:= \mathbf{I}(\{\ell_k\})\subset\{1,2,\ldots, 2D-2\}$. 
 By Proposition \ref{prop:I}, Remark \ref{rmk:beta} 
 and the observation that $\ell_k(e_j)=1$ for any $D\le j\le 2D-2$ and any $k\ge 0$, if $|\mathbf{I}|\ge 2$, we have cases (2a) and (2b);  if $|\mathbf{I}|=1$, we have cases (2c) and (2d). Since $\mathbf{I}\not=\emptyset$ 
  by Lemma \ref{lem:I0}, cases (2a) and (2d) are all the possibilities where we have $\mathfrak{h}_\infty=\infty$.
Thus the conclusion follows.
\end{proof}

Recall the (non)-singularity in Definition \ref{def_l_ksingular}. 
\begin{corollary}\label{coro:singular}
Under the assumption of Proposition \ref{lem:entropy-infinite}, then the following hold:
\begin{enumerate}
\item Assume that $D=2$. Then the sequence  $\{\ell_k\}$ is singular. 
\item Assume that $D\ge 3$. The sequence  $\{\ell_k\}$ is singular if and only if either case (2c) or case (2d) in Proposition \ref{lem:entropy-infinite} occurs.
\end{enumerate}
Moreover, if  $\{\ell_k\}$ is singular, then $\mathbf{I}(\{\ell_k\})\subset\{1, D-1\}$. 
\end{corollary}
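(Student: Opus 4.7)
The strategy is a direct analysis of the index set $\mathbf{I} \defeq \mathbf{I}(\{\ell_k\})$ exploiting the rigid structure of the base length function: $\ell_k(e_1) = h_1(f_k)$, $\ell_k(e_j) = h_j(f_k)/h_1(f_k) \in (0,1]$ for $2 \le j \le D-1$, and $\ell_k(e_j) = 1$ for $D \le j \le 2D-2$. For part (1), when $D=2$ the base length function reduces to the pair $(h_1(f_k),\, 1)$; uniform divergence forces $\lim h_1(f_k) \in \{0,\infty\}$, so exactly one edge has strictly slowest growth and $|\mathbf{I}|=1$, giving singularity.

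For part (2) with $D \ge 3$, I would prove the forward direction by directly computing $\mathbf{I}$ in each of the two named regimes. In case (2c), the hypotheses $h_{D-1} = o(h_j)$ for $1 \le j \le D-2$ and $h_{D-1} = o(h_1^2)$ translate via the formulas above into $\ell_k(e_{D-1}) = o(\ell_k(e_j))$ for every $j \ne D-1$: for $j=1$ one uses $h_{D-1}/h_1^2 \to 0$; for $2 \le j \le D-2$ one uses $h_{D-1}/h_j \to 0$; and for $j \ge D$ one uses the $j=1$ instance $h_{D-1}/h_1 \to 0$. Hence $\mathbf{I} = \{D-1\}$. In case (2d), $h_1^2 = o(h_{D-1})$ forces $h_1 \to 0$ (since $h_{D-1} \le h_1$ gives $h_1^2/h_{D-1} \ge h_1$), and an analogous comparison of $\ell_k(e_1) = h_1$ against each other edge yields $\mathbf{I} = \{1\}$.

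For the converse direction, suppose $\{\ell_k\}$ is singular. Since $D \ge 3$, at least one unit-length edge $e_j$ (with $j \ge D$) lies outside $\mathbf{I}$, so Proposition \ref{prop:I}(2) gives $1 = 1/\ell_k(e_j) = o(\mathfrak{h}(\ell_k))$ and hence $\mathfrak{h}_\infty = \infty$; by Proposition \ref{lem:entropy-infinite}(2) the sequence lies in one of the regimes (2a)--(2d). I would then rule out (2a) and (2b) by exhibiting two indices in $\mathbf{I}$: in (2a), the relations $h_{D-2} \asymp h_{D-1}$ and $h_{D-1}/h_1 \asymp h_{D-2}/h_1 = o(1)$ together with $h_{D-1} = o(h_1^2)$ make $\ell_k(e_{D-2}) \asymp \ell_k(e_{D-1})$ strictly smaller than every other edge, so $\{D-2, D-1\} \subset \mathbf{I}$; in (2b), $h_{D-1} \asymp h_1^2$ forces $h_1 \to 0$ (as in case (2d)) and gives $\ell_k(e_1) \asymp \ell_k(e_{D-1})$ both lying in $\mathbf{I}$. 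In either case $|\mathbf{I}| \ge 2$, contradicting singularity. The final inclusion $\mathbf{I} \subset \{1, D-1\}$ then follows at once from the explicit descriptions of $\mathbf{I}$ established in the forward direction.

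The main obstacle is the careful asymptotic bookkeeping in the converse: one must unpack the overlapping $o$ and $\asymp$ conditions defining each regime (2a)--(2d) to pin down precisely which edges achieve the slowest growth rate, and in particular verify that cases (2a) and (2b) each produce exactly a pair of asymptotically comparable slowest-growing edges rather than a singleton.
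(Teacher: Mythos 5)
Your proof is correct and follows essentially the same approach as the paper: part (1) by inspection of the two-edge length function, and part (2) by directly computing $\mathbf{I}(\{\ell_k\})$ from the asymptotic conditions defining each regime. The only cosmetic difference is in the ``only if'' direction, where the paper first pins down $\mathbf{I}\subset\{1,D-1\}$ with $\ell_k(e_i)\to 0$ and then invokes Proposition~\ref{lem:entropy-infinite}, while you first deduce $\mathfrak{h}_\infty=\infty$ via Proposition~\ref{prop:I}(2) and then rule out regimes (2a), (2b) by exhibiting two comparable slowest-growing edges; both variants rely on the same asymptotic bookkeeping and on the (implicit) use of Lemma~\ref{lem:I0} to pass from ``singular'' to $|\mathbf{I}|=1$.
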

\begin{proof}
Write $\mathbf{I} \defeq \mathbf{I}(\{\ell_k\})$. Statement (1) follows immediately since $\ell_k(e_2)=1$ and $\ell_k(e_1)$ converges to $0$ or $\infty$. We now show statement (2). If  $\{\ell_k\}_{k\ge0}$ is singular, then $|\mathbf{I}|=1$.  Since 
$\ell_k(e_j)=1$ for any $D\le j\le 2D-2$ and for any $k\ge 0$, by Remark \ref{rmk:beta}, we conclude that $\mathbf{I}\subset\{1,D-1\}$ and $\ell_k(e_i)\to 0$, as $k\to\infty$, for the unique $i\in\mathbf{I}$.  Then by Proposition \ref{lem:entropy-infinite}, we conclude that either case (2c) or case (2d) occurs. Conversely, if case (2c) occurs, then $\mathbf{I}=\{D-1\}$; and if case (2d) occurs, then $\mathbf{I}=\{1\}$.
\end{proof}

\begin{lemma}\label{lem:existence-uni}
Any divergent sequence in $\mathcal{S}_D$ contains a uniformly divergent subsequence. 
\end{lemma}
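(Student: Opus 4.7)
The plan is to prove this by a straightforward iterated compactness argument. Given a divergent sequence $\{[f_k]\}_{k\ge 0}\subset\mathcal{S}_D$, I work with the associated base length functions $\ell_k:=\ell_B([f_k])\in\mathfrak{M}_{\mathcal{R}}$. By Definition \ref{def_l_kdiv}(1) applied to $\{\ell_k\}$, there exists an index $i_0\in\{1,\ldots,2D-2\}$ and an accumulation value $a_\infty\in\{0,\infty\}$ of the sequence $\{\ell_k(e_{i_0})\}_{k\ge 0}$.

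The first step is to extract a subsequence, still denoted $\{\ell_k\}$, along which $\ell_k(e_{i_0})\to a_\infty$. This immediately preserves the divergence condition since the required accumulation point in $\{0,\infty\}$ is now a genuine limit. Next, I extract further subsequences, one coordinate at a time, to force convergence on each remaining edge. For $D\le j\le 2D-2$ the coordinates are already constant ($\ell_k(e_j)=1$), so no extraction is needed. For $2\le j\le D-1$ we have $\ell_k(e_j)=h_j(f_k)/h_1(f_k)\in(0,1]$ by Remark \ref{rmk:beta}, which sits inside the compact set $[0,1]\subset\widehat{\mathbb{R}}_{\ge 0}$, so Bolzano--Weierstrass yields a convergent subsequence. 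For $j=1$ we have $\ell_k(e_1)=h_1(f_k)\in(0,\infty)$, which is relatively compact in the two-point compactification $\widehat{\mathbb{R}}_{\ge 0}=[0,\infty]$, so again a convergent subsequence exists.

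Iterating this through the finitely many indices (at most $2D-3$ extractions after the initial one), the resulting diagonal subsequence satisfies $\ell_k(e_i)$ converges in $\widehat{\mathbb{R}}_{\ge 0}$ for every $i\in\{1,\ldots,2D-2\}$, and since subsequential limits are inherited, $\ell_k(e_{i_0})\to a_\infty\in\{0,\infty\}$ still holds. By Definition \ref{def_l_kdiv}(2) the subsequence $\{\ell_k\}$ is uniformly divergent in $\mathfrak{M}_{\mathcal{R}}$, hence by Definition \ref{def:seq}(2) the corresponding subsequence $\{[f_k]\}$ is uniformly divergent in $\mathcal{S}_D$. There is no genuine obstacle here; the entire argument is just finite iteration of sequential compactness in $\widehat{\mathbb{R}}_{\ge 0}$, with care only needed to ensure that the initial extraction fixes the accumulation point in $\{0,\infty\}$ so that divergence is not lost when passing to subsequences.
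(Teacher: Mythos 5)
Your proof is correct and takes essentially the same compactness approach as the paper: observe that every coordinate $\ell_k(e_i)$ of the base length function lies in the compact space $[0,\infty]$ (indeed in $[0,1]$ for $2\le i\le 2D-2$) and extract coordinate-by-coordinate convergent subsequences. Your version is in fact slightly more careful than the paper's terse proof, since you begin by fixing an index $i_0$ along which $\ell_k(e_{i_0})$ actually converges to a point of $\{0,\infty\}$, which guarantees that divergence survives the subsequent extractions; the paper leaves this point, and the extraction for the coordinate $e_1$, implicit.
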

\begin{proof}
Let $\{[f_k]\}_{k\ge 0}\subset\mathcal{S}_D$ be a divergent sequence. Then $\ell_k(e_1)\in[0,\infty]$ and $\ell_k(e_j)\in[0,1]$ for any $2\le j\le 2D-2$. It follows that we can choose a subsequence $\{\ell_{k_j}\}_{j\ge0}$ in $\{\ell_k\}$ such that  $\ell_{k_j}(e_i)$ converges, as $j\to\infty$, for each $2\le i \le 2D-2$. Thus the sequence $\{[f_{k_j}]\}$ is uniformly divergent. 
\end{proof}

\begin{proposition}\label{lem:entropy-der}
Fix $D\ge 3$. Let $\{[f_k]\}_{k \ge 0}\subset\mathcal{S}_D$ be a divergent sequence such that the sequence $\{\ell_k\}_{k \ge 0}$ of base length functions is non-singular. For each $k \ge0$, let $\gamma_k:(-1,1)\to\mathcal{S}_D$ be a generic curve with $\gamma_k(0)=[f_k]$ such that the length function $\ell_{\gamma_k(t)}, t\in(-1,1)$ is a 
smooth curve in $t$. 
Assume that $\{[f_k]\}_{k \ge 0}$ satisfies one of the following:
\begin{enumerate}
\item $\sup_{t}\dot{\ell}_{\gamma_k(t)}(e_j)/\ell_{\gamma_k(t)}(e_j)\to 0$ for all $1\le j\le 2D-2$ as $k\to\infty$.
\item $\ell_{\gamma_k(t)}(e_1)\to\infty$ and $\sup_{t}\dot{\ell}_{\gamma_k(t)}(e_j)/\ell_{\gamma_k(t)}(e_j)\to 0$ for all $2\le j\le 2D-2$ as $k\to\infty$.
\end{enumerate}
Then up to passing to a subsequence, we have $$\sup_{t}\dot{\mathfrak{h}}(\ell_{\gamma_k(t)})/\mathfrak{h}(\ell_{\gamma_k(t)})\to 0.$$
Moreover,  if in addition $\{[f_k]\}$ is uniformly divergent, then $\sup_{t}\dot{\mathfrak{h}}(\ell_{\gamma_k(t)})/\mathfrak{h}(\ell_{\gamma_k(t)})\to 0$.
\end{proposition}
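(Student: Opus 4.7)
The plan is to deduce Proposition \ref{lem:entropy-der} from Proposition \ref{prop:entropy-der} applied to the rose graph $\mathcal{R} = \mathfrak{R}_{2D-2}$ and to the smooth paths $t \mapsto \ell_{\gamma_k(t)} \in \mathfrak{M}_{\mathcal{R}}$. First, if $\{[f_k]\}$ is merely divergent, I would invoke Lemma \ref{lem:existence-uni} to extract a uniformly divergent subsequence; the hypothesis that $\{\ell_k\}$ is non-singular then guarantees that $\mathbf{I} \defeq \mathbf{I}(\{\ell_k\})$ is a well-defined subset of $\{1,\dots,2D-2\}$ with $|\mathbf{I}| \ge 2$. (In the moreover statement the sequence is already uniformly divergent, so no subsequence is needed.) It then suffices to verify the two hypotheses of Proposition \ref{prop:entropy-der} for the families $\gamma_k$ viewed as paths in $\mathfrak{M}_{\mathcal{R}}$.

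The key estimate is obtained by integrating the log-derivative hypothesis. Setting
\[
\epsilon_k \defeq \sup_t \max_{j \in J} \left| \frac{\dot\ell_{\gamma_k(t)}(e_j)}{\ell_{\gamma_k(t)}(e_j)} \right|,
\]
where $J = \{1,\dots,2D-2\}$ in case (1) and $J = \{2,\dots,2D-2\}$ in case (2), we have $\epsilon_k \to 0$, and integration along $[0,t]$ gives
\[
e^{-\epsilon_k} \le \frac{\ell_{\gamma_k(t)}(e_j)}{\ell_k(e_j)} \le e^{\epsilon_k} \quad \text{for all } |t| < 1, \ j \in J.
\]
Thus the asymptotic ordering that defines the index set $\mathbf{I}$ (as in Definition \ref{def_I}) is preserved uniformly in $t$ for indices in $J$. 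In case (2), edge $e_1$ escapes this estimate, but $\ell_{\gamma_k(t)}(e_1) \to \infty$ uniformly in $t$ combined with $\ell_{\gamma_k(t)}(e_j) = 1$ for $j \ge D$ (cf.\ Remark \ref{rmk:beta}) forces $1 \notin \mathbf{I}(\{\ell_{\gamma_k(t_k)}\})$ for any sequence $\{t_k\}$, so the missing control on $e_1$ is irrelevant. Consequently, for any diagonal sequence $\{t_k\} \subset (-1,1)$, after passing to a further subsequence so that each $\ell_{\gamma_k(t_k)}(e_j)$ converges in $\widehat{\mathbb R}_{\ge 0}$, the family $\{\ell_{\gamma_k(t_k)}\}$ is uniformly divergent and $\mathbf{I}(\{\ell_{\gamma_k(t_k)}\}) = \mathbf{I}$, verifying hypothesis (1) of Proposition \ref{prop:entropy-der}. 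Hypothesis (2) of that proposition restricts to $i \in \mathbf{I} \subseteq J$, so the assumed log-derivative bound directly supplies it.

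The main obstacle will be the uniformity in $t$: the integration gives $\ell_{\gamma_k(t_k)}(e_j) \asymp \ell_k(e_j)$ but not convergence to a common limit, so uniform divergence of $\{\ell_{\gamma_k(t_k)}\}$ and the equality of index sets must be arranged through a diagonal subsequence argument. Once this is in place, Proposition \ref{prop:entropy-der} delivers
\[
\sup_t \frac{\dot{\mathfrak{h}}(\ell_{\gamma_k(t)})}{\mathfrak{h}(\ell_{\gamma_k(t)})} \to 0
\]
along the chosen subsequence. In the uniformly divergent case, the preliminary extraction via Lemma \ref{lem:existence-uni} is unnecessary and no subsequence is required, yielding the moreover conclusion.
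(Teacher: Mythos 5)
Your proposal is correct and takes essentially the same route as the paper: extract a uniformly divergent subsequence via Lemma \ref{lem:existence-uni}, check the two hypotheses of Proposition \ref{prop:entropy-der} (the key point being that $\mathbf{I}(\{\ell_{\gamma_k(t_k)}\})$ is the same for every choice of diagonal sequence), and then invoke that proposition. The paper simply asserts the stability of $\mathbf{I}$ from assumptions (1)--(2) and the bound $\ell_k(e_j)\in(0,1]$ for $2\le j\le 2D-2$, whereas you make the mechanism explicit via the Gr\"onwall-type integration of the log-derivative hypothesis, which is a nice clarification but not a different argument.
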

\begin{proof}
By Lemma \ref{lem:existence-uni}, we consider any uniformly divergent subsequence of  $\{[f_k]\}$. Abusing notation, we also denote this subsequence by $\{[f_k]\}$. By Lemma \ref{lem:entropy-finite}, the entropy $\mathfrak{h}(\ell_k)$ converges to $\mathfrak{h}_\infty$ for some  $\mathfrak{h}_\infty\in\widehat{\mathbb{R}}_{>0}$. By assumptions (1) (2) and the observation  that $\ell_k(e_j)\in(0,1]$ for any $2\le j\le 2D-2$, we conclude that  the set $\mathbf{I}(\{\ell_{\gamma_k(t_k)}\})\not=\emptyset$ is the same for any sequence $\{t_k\}_{k\ge0} \subset(-1,1)$. Then the conclusion follows from Proposition \ref{prop:entropy-der}. 
\end{proof}

\subsection{Infinite diameter} \label{subsec_infdiam}
In this subsection, we show that $(\mathcal{S}_D, \rho_D)$ has infinite diameter. 
\begin{proposition}\label{prop:infinite-length}
Let $\{[f_k]\}_{k \ge 0}\subset\mathcal{S}_D$ be a uniformly divergent  sequence and consider the base length functions $\ell_k$ associated to $[f_k]$. If $|\mathbf{I}(\{\ell_k\})|=1$, then $\rho_D([f_0], [f_k])\to\infty$, as $k\to\infty$. 
\end{proposition}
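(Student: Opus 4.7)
The plan is to reduce the claim via Corollary~\ref{coro:greater} to a statement about the normalized base length functions $\hat\ell_k=\mathfrak{h}(\ell_k)\,\ell_k\in\mathfrak{M}^1_{\mathcal{R}}$, and then to invoke Proposition~\ref{prop_714}: any $\hat\ell\in\mathfrak{M}^1_{\mathcal{R}}$ with a sufficiently short petal is far, in the entropy distance, from the uniform length function $\hat\ell^*_0$ appearing in that statement. The entire task therefore reduces to producing a \emph{vanishing petal} in $\hat\ell_k$ as $k\to\infty$.

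To produce the short petal, write $\mathbf{I}(\{\ell_k\})=\{i_0\}$. By definition of $\mathbf{I}$, one has $\ell_k(e_{i_0})=o(\ell_k(e_j))$ for every $j\neq i_0$. Substituting this into the entropy identity \eqref{eq_entropy} (exactly as in the $m=1$ case of the proof of Proposition~\ref{prop:I}(2)) collapses all terms $S\ni i_0$ except $S=\{i_0\}$ into the dominant contribution, forcing
\[
\exp\bigl(-\mathfrak{h}(\ell_k)\,\ell_k(e_{i_0})\bigr)=1+o(1),
\]
and hence
\[
\hat\ell_k(e_{i_0})=\mathfrak{h}(\ell_k)\,\ell_k(e_{i_0})\longrightarrow 0.
\]

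With the short-petal property in hand, fix $M>0$ and let $\epsilon>0$ be the constant from Proposition~\ref{prop_714}. For all sufficiently large $k$ one has $\min_j\hat\ell_k(e_j)\le\hat\ell_k(e_{i_0})\le\epsilon$, whence $\rho_{\mathfrak{h},\mathcal{R}}(\hat\ell_k,\hat\ell^*_0)\ge M$. Setting $C\defeq\rho_{\mathfrak{h},\mathcal{R}}(\hat\ell_0,\hat\ell^*_0)$, which is a finite constant depending only on the fixed base point, the triangle inequality yields
\[
\rho_{\mathfrak{h},\mathcal{R}}(\hat\ell_0,\hat\ell_k)\ \ge\ \rho_{\mathfrak{h},\mathcal{R}}(\hat\ell_k,\hat\ell^*_0)-C\ \ge\ M-C.
\]
Letting $M\to\infty$ and combining with Corollary~\ref{coro:greater}, I conclude $\rho_D([f_0],[f_k])\to\infty$.

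The only real point that needs care is the claim $\hat\ell_k(e_{i_0})\to 0$; everything else is a soft application of the triangle inequality together with the large-scale geometry of $(\mathfrak{M}^1_{\mathcal{R}},\rho_{\mathfrak{h},\mathcal{R}})$ already supplied by Proposition~\ref{prop_714}. That this petal-shrinking is the \emph{only} mechanism by which $\{\hat\ell_k\}$ can escape to infinity in $\widehat{\mathfrak{M}}^1_{\mathcal{R}}$ is precisely what Aougab--Clay--Rieck's completion theorem (Theorem~\ref{thm:com}) encodes, so once we identify that mechanism the conclusion is essentially forced.
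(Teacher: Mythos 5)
Your proof is correct and follows essentially the same route as the paper's: it reduces via Corollary~\ref{coro:greater} to the rose-graph side, uses the entropy equation (this is exactly Proposition~\ref{prop:I}(2), which the paper accesses through Proposition~\ref{lem:entropy-infinite} and Corollary~\ref{coro:singular}) to show $\hat\ell_k(e_{i_0})\to 0$, and then applies Proposition~\ref{prop_714} together with the triangle inequality. The only difference is expositional: you re-derive the vanishing petal and spell out the triangle inequality, while the paper folds both into citations.
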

\begin{proof}
 Consider the unit entropy base length function $\hat\ell_k=\mathfrak{h}(\ell_k)\ell_k$. Writing $\mathbf{I}(\{\ell_k\})=\{i_0\}$, by Proposition \ref{lem:entropy-infinite} and Corollary \ref{coro:singular},  we have that  $\hat\ell_k(e_{i_0})\to 0$ and $\hat\ell_k(e_j)\to\infty$ for all $j\not=i_0$. By Proposition \ref{prop_714} and Corollary \ref{coro:greater}, we have $\rho_D([f_0],[f_k]) \to \infty$ as $k \to \infty$. 
 \end{proof}
 
 Corollary \ref{coro:singular} characterizes the sequences $\{[f_k]\}$ with $|\mathbf{I}(\{\ell_k\})|=1$.  Moreover, in the case that $D=2$, we have the following incompleteness. 
 
\begin{corollary}\label{lem_D2}
The metric space $(\mathcal{S}_2, \rho_2)$ is complete. 
\end{corollary}
\begin{proof}
Consider any sequence $\{[f_k]\}_{k\ge 0}\subset\mathcal{S}_2$ with $h_1(f_k) \to \infty$ or $h_1(f_k) \to 0$, as $k \to \infty$. Then $\{[f_k]\}_{k\ge0}$ is uniformly divergent. By Corollary \ref{coro:singular} (1) and Proposition \ref{prop:infinite-length}, we have $\rho_D([f_0],[f_k]) \to \infty$ as $k \to \infty$. Hence the conclusion follows.  
\end{proof}


\subsection{Height line segments} \label{subsec_heightlineseg}
Recall from Section \ref{sec:PG} the height line segments and twist line segments. We say that a curve in $\mathcal{S}_D$ is a \emph{piecewise height line segment} if it is a union of finitely many 
 height line segments. 
We show that any two points in $\mathcal{S}_D$ can be connected with (piecewise) height/twist line segments. This allows  us to bound $\rho_D$-distances 
 in our later argument. 

\begin{proposition}\label{prop:line}
Let $[f_1]$ and $[f_2]$ be two elements in $\mathcal{S}_D$. 
\begin{enumerate}
\item If $[f_1]$ and $[f_2]$ can be connected by a generic curve, then they can be connected by either a height line segment or a twist line segment. 
\item If $[f_1]$ and $[f_2]$ are not in the same component of the fiber $\mathcal{G}^{-1}(\mathcal{G}([f_1]))$, then they can be connected by a piecewise height line segment. 
\end{enumerate}
\end{proposition}
\begin{proof}
For statement (1), without loss of generality, we assume that  both $[f_1]$ and $[f_2]$ are generic and that the generic curve $\gamma$ connecting $[f_1]$ and $[f_2]$ consists of two pieces  $\gamma_1$ and $\gamma_2$ such that $\mathcal{G}$ is constant on $\gamma_1$ and  injective on $\gamma_2$. Considering the action \eqref{equ:action} for $N=D-1$, we let $w_j(t)=\theta_j(t)+is_j(t)\in\mathbb{H}$ for each $1\le j\le D-1$ be smooth in $t\in[0,1]$ such that $w(t)=(w_1(t), \dots, w_{D-1}(t))\in\mathbb{H}^{D-1}$ acts on $\gamma_2(0)$ with resulting element $\gamma_2(t)$, and let $w'=(\theta_1'+i, \dots, \theta_{D-1}'+i)\in\mathbb{H}^{D-1}$ act on $[f_1]$ with resulting element $\gamma_2(0)$. Now  for each $1\le j\le D-1$, consider $\tilde w_j(t)=\tilde\theta_j(t)+is_j(t)\in\mathbb{H}$ such that $\tilde\theta_j(t)$ is smooth in $t\in[0,1]$ with $\tilde\theta_j(0)=0$ and $\tilde\theta_j(1)=\theta_j(1)+\theta_j'$. Then the action of $\tilde w(1)=(\tilde w_1(1), \dots\tilde w_{D-1}(1))\in\mathbb{H}^{D-1}$ on $[f_1]$ gives $[f_2]$. Consider the curve $\tilde\gamma:[0,1]\to\mathcal{S}_D$ such that the action of $\tilde w(t)=(\tilde w_1(t), \dots, \tilde w_{D-1}(t))\in\mathbb{H}^{D-1}$ on $[f_1]$ is $\tilde{\gamma}(t)$. Then $\tilde\gamma$ connects $[f_1]$ and $[f_2]$. Observe that the map $t \mapsto (s_1(t), \dots, s_{D-1}(t))$ is injective. Thus statement (1) holds. 

Statement (2) follows from Lemma \ref{lem:finite} and statement (1).
\end{proof}

\subsection{Cauchy sequences}\label{sec:cauchy}
In this subsection, we always assume that $D \ge 3$ and aim to study Cauchy sequences in $(\mathcal{S}_D, \rho_D)$. Recall the map $\hat\pi:\mathbb{P}\mathcal{M}_D\to\mathbb{P}\mathcal{T}^\ast_D$ and the map $\pi:\mathbb{P}\mathcal{T}^\ast_D\to\mathbb{P}\mathcal{H}_{D}$. 


We begin with some necessary conditions for a sequence in $\mathcal{S}_D$ to be Cauchy. 
\begin{proposition}\label{prop:cauchy}
Let $\{[f_k]\}_{k\ge 0}\subset\mathcal{S}_D$ be a divergent sequence which is Cauchy.  
Then the following hold:
\begin{enumerate}
\item Either $h_{D-1}(f_k)/h_{D-2}(f_k)$ or $h_{D-1}(f_k)/h^2_{1}(f_k)$ converges in $\mathbb{R}_{>0}$.
\item The sequence $\{[f_k]\}_{k\ge0}$ is  uniformly divergent.
\item There exists a unique $\zeta\in\mathbb{P}\mathcal{T}_D^\ast$ such that $\hat \pi([f_k])\to\zeta$, as $k\to\infty$. 
\end{enumerate}
\end{proposition}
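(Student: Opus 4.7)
The plan is to push the Cauchy condition on $\{[f_k]\}$ into the space of unit-entropy length functions, use the metric completion result (Theorem \ref{thm:com}) to obtain a limit there, and then unpack this limit to extract all three conclusions. Concretely, Corollary \ref{coro:greater} gives $\rho_D\ge \rho_{\mathfrak{h},\mathcal{R}}$ on base length functions, so $\hat\ell_k:=\hat\ell_B([f_k])$ is Cauchy in $(\mathfrak{M}^1_{\mathcal{R}},\rho_{\mathfrak{h},\mathcal{R}})$ and converges to some $\hat\ell_\infty\in\widehat{\mathfrak{M}}^1_{\mathcal{R}}\subset(0,\infty]^{2D-2}$, giving coordinate-wise convergence $\hat\ell_k(e_j)\to\hat\ell_\infty(e_j)\in(0,\infty]$. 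Because $\ell_k(e_j)\equiv 1$ for $D\le j\le 2D-2$, the entropies all collapse to the single limit $\mathfrak{h}_\infty:=\hat\ell_\infty(e_D)\in(0,\infty]$, which is nonzero for $D\ge 3$ by Lemma \ref{lem:entropy-finite}.

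For part (2), I would combine the identity $\ell_k(e_j)=\hat\ell_k(e_j)/\mathfrak{h}(\ell_k)$ with Proposition \ref{prop:infinite-length}: the Cauchy property forbids any uniformly divergent subsequence from having $|\mathbf{I}|=1$, so (via Lemma \ref{lem:existence-uni} plus the consistency $\mathbf{I}=\{j:\hat\ell_\infty(e_j)<\infty\}$ when $\mathfrak{h}_\infty=\infty$) the set $\mathbf{I}$ is the same along every uniformly divergent subsequence. When $\mathfrak{h}_\infty<\infty$, each $\ell_k(e_j)$ converges directly to $\hat\ell_\infty(e_j)/\mathfrak{h}_\infty\in[0,\infty]$. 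When $\mathfrak{h}_\infty=\infty$, the indices with $\hat\ell_\infty(e_j)$ finite give $\ell_k(e_j)\to 0$, and the remaining indices are pinned down by the $\mathbf{I}$-analysis together with the asymptotic dichotomy of Proposition \ref{lem:entropy-infinite}; uniform divergence follows.

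For part (1), I split again on $\mathfrak{h}_\infty$. If $\mathfrak{h}_\infty<\infty$, then $\ell_k(e_j)\le 1$ for $2\le j\le D-1$ forces $\hat\ell_\infty(e_j)\in(0,\infty)$, so $h_{D-1}/h_{D-2}=\hat\ell_k(e_{D-1})/\hat\ell_k(e_{D-2})\to\hat\ell_\infty(e_{D-1})/\hat\ell_\infty(e_{D-2})\in\mathbb{R}_{>0}$. If $\mathfrak{h}_\infty=\infty$, Corollary \ref{coro:singular} and the non-singularity forced by Cauchy place us in case (2a) or (2b) of Proposition \ref{lem:entropy-infinite}. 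In case (2a), the asymptotics $\mathfrak{h}(\ell_k)\asymp h_1/h_{D-1}$ and $h_{D-1}\asymp h_{D-2}$ yield $\hat\ell_k(e_{D-1}),\hat\ell_k(e_{D-2})\asymp 1$ and hence $\hat\ell_\infty(e_{D-1}),\hat\ell_\infty(e_{D-2})\in(0,\infty)$, so $h_{D-1}/h_{D-2}$ converges in $\mathbb{R}_{>0}$. An analogous calculation in case (2b) gives convergence of $h_{D-1}/h_1^2$ in $\mathbb{R}_{>0}$ via the ratio $\hat\ell_k(e_{D-1})/\hat\ell_k(e_1)$.

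For part (3), parts (1)--(2) deliver convergence of the projectivized critical heights $[\mathcal{G}([f_k])]\in\mathbb{P}\mathcal{H}_D$. By compactness of $\mathbb{P}\mathcal{T}_D^\ast$, every subsequence of $\{\hat\pi([f_k])\}$ has a further subsequence converging to some $\zeta\in\mathbb{P}\mathcal{T}_D^\ast$, and all such limit points project to the common projectivized heights limit under $\pi$. The hard part will be uniqueness of this $\zeta$: distinct subsequential limits sit in the same fiber of $\pi$ but could a priori live in distinct connected components of a $\mathcal{G}$-fiber, which $\mathcal{T}_D^\ast$ separates by construction. My plan is to close this gap using the Cauchy property together with an upper bound on $\rho_D$-distances between distinct twist-deformation components within a fixed critical-heights fiber (after normalization by stretching), so that Cauchy representatives are eventually all pushed into a single such component. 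This is precisely the role of Proposition \ref{prop:deg-con} announced in the introduction, and obtaining this quantitative twist-orbit control is the main technical hurdle of the argument.
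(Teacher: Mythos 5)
Your strategy of pushing the Cauchy condition forward through Corollary \ref{coro:greater} into the completion $\widehat{\mathfrak{M}}^1_{\mathcal{R}}$ and then unwinding the limit $\hat\ell_\infty$ is a somewhat cleaner framing than the paper's subsequence--contradiction argument, and the observation that $\hat\ell_k(e_D)=\mathfrak{h}(\ell_k)$ (so that $\mathfrak{h}_\infty$ is literally one coordinate of $\hat\ell_\infty$) is a genuinely nice point. The paper instead argues by contradiction: if a statement failed, it extracts two uniformly divergent subsequences via Lemma \ref{lem:existence-uni}, connects representatives by piecewise generic curves, and uses Propositions \ref{prop:completion-g}, \ref{equ:same}, and Corollary \ref{coro:greater} to get a lower bound on $\rho_D$ along those curves, contradicting Cauchy. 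The underlying tools overlap, but the logical shape is different.

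There is, however, a genuine gap in your treatment of part (2) precisely where you wave your hands. Convergence $\hat\ell_k\to\hat\ell_\infty$ in $\widehat{\mathfrak{M}}^1_{\mathcal R}$ controls $\ell_k(e_j)=\hat\ell_k(e_j)/\mathfrak{h}(\ell_k)$ only when at least one of the two sides stays finite. When $\mathfrak{h}_\infty=\infty$ and $\hat\ell_\infty(e_j)=\infty$, both numerator and denominator blow up and the ratio $\ell_k(e_j)$ is simply not determined by $\hat\ell_\infty$: two subsequences can have $\ell(e_1)\to 2$ and $\ell(e_1)\to\infty$ respectively while producing the identical point $\hat\ell_\infty$ in the completion. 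So ``uniform divergence follows'' does not follow from the completion limit alone, and the ``$\mathbf{I}$-analysis plus asymptotic dichotomy'' you invoke only determines which coordinates lie in $\mathbf{I}$, not the actual limits of the coordinates outside $\mathbf{I}$. The paper fills this with a separate argument: it assumes two distinct $\ell$-accumulation points, produces connecting curves, and shows these curves have $\rho_D$-length bounded below using Propositions \ref{prop:completion-g} and \ref{equ:same} --- a step you have not replaced.

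For part (3) you have the logic exactly backwards. Proposition \ref{prop:deg-con} gives an \emph{upper} bound on $\rho_D$-distance between distinct twist-deformation components at fixed critical heights; the paper even remarks that it cannot deduce Theorem \ref{coro:tree} from Theorem \ref{thm:X} precisely because the fiber cardinality is unbounded, and introduces Proposition \ref{prop:deg-con} as a replacement upper bound. An upper bound does nothing to ``push Cauchy representatives into a single component'' --- for that you would need a lower bound, bounded away from zero, on distances between distinct components. What the paper actually does for (3) is again the contradiction argument from (2): if $\hat\pi([f_k])$ accumulated at two distinct $\zeta_1,\zeta_2\in\mathbb{P}\mathcal{T}_D^\ast$, connecting curves between nearby representatives would have entropy length bounded below. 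You should replace the appeal to Proposition \ref{prop:deg-con} with this curve-length estimate. (Finally, a small point: for $D=3$ your identity $h_{D-1}/h_{D-2}=\hat\ell_k(e_{D-1})/\hat\ell_k(e_{D-2})$ fails because $e_{D-2}=e_1$ carries $h_1$ rather than a ratio; the conclusion still comes out because in that case $\hat\ell_k(e_2)/\hat\ell_k(e_1)=h_2/h_1^2$ is exactly the second alternative in statement (1), but the bookkeeping should be made explicit.)
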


\begin{proof}
Consider the base length functions $\ell_k$  and the unit entropy base length functions $\hat\ell_k$ associated to $[f_k]$, respectively. 
 Suppose on the contrary that statement (1) fails. Then by Lemma \ref{lem:existence-uni}, up to taking a uniformly divergent subsequence of $\{[f_k]\}_{k\ge0}$ if necessary,  we conclude that there exists $j_0\in\{1,D-1\}$ such that $\ell_k(e_{j_0})/\ell_k(e_{j})\to 0$ for any $1\le j \not=j_0\le 2D-2$. 
 By Lemma \ref{prop:infinite-length}, for any fixed $k_0\ge 0$, the distance $\rho_D([f_{k_0}],[f_{k_0+m}])$ does not converge to $0$ as $m\to \infty$. This contradicts that $\{[f_k]\}$ is Cauchy. 

For statement (2), it suffices to show that $\{\ell_k\}_{k\ge0}$ has a unique accumulation point. Suppose on the contrary that $\{[f_{k_i}]\}_{i\ge 0}$ and $\{[f_{m_i}]\}_{i\ge0}$ are two uniformly divergent subsequences of $\{[f_k]\}$ such that $\{\ell_{k_i}\}$ and $\{\ell_{m_i}\}$ converge to two distinct points in $[0,\infty]^{2D-2}$ as $i\to\infty$. Without loss of generality, we can assume that $\mathcal{G}([f_{k_i}])\not=\mathcal{G}([f_{m_i}])$ for all $i\ge 0$. By Lemma \ref{lem:finite} for each $i\ge0$, consider any piecewise generic curve $\gamma_i: [0,1]\to\mathcal{S}_D$ connecting $[f_{k_i}]$ and $[f_{m_i}]$.  Without loss of generality, we can assume that $\gamma_i$ is generic. Consider the base length functions $\ell_{\gamma_i(t)}$ and the unit entropy base  length functions $\hat \ell_{\gamma_i(t)}$ 
 for $\gamma_t$. 
   By Propositions \ref{prop:completion-g} 
    and \ref{equ:same}, 
 there exists $C>0$ such that $\rho_{\mathfrak{h},\mathfrak{R}_{2D-2}}(\gamma_i)>C$ for large $i$. Then by  Corollary \ref{coro:greater}, we conclude that $\rho_D([f_{k_i}], [f_{m_i}])>C$ for large $i$. This contradicts that $\{[f_k]\}$ is Cauchy. 
 
Now let us show statement (3). Suppose on the contrary that $\hat \pi([f_k])$ has (at least) two distinct accumulation points $\zeta_1,\zeta_2$ in $\mathbb{P}\mathcal{T}_D^\ast$. Let $\{[f_{k_i}]\}_{i\ge0}$ and $\{[f_{m_i}]\}_{i\ge0}$ be two subsequences of $\{[f_k]\}$ such that $\hat \pi([f_{n_i}])\to\zeta_1$ and  $\hat\pi([f_{m_i}])\to\zeta_2$ as $i\to\infty$. 
Then applying a similar argument as in the previous paragraph, we obtain a contradiction to $\{[f_k]\}_{k\ge0}$ being Cauchy. 
\end{proof}

\begin{remark}
Statements (1)-(3) in Proposition \ref{prop:cauchy} are not sufficient for a sequence $\{f_k\}_{k \ge 0}$ in $\mathcal{S}_D$ to be Cauchy. For example, if $\ell_k(e_1)\to 0$ and some of $\ell_k(e_2),\dots,\ell_k(e_{D-1})$ converge to 0 as $k \to \infty$, statements (1)-(3) cannot distinguish the rates at which they converge to 0. In this case, it is the rates of convergence that determine if the sequence is Cauchy.
\end{remark}

If $\{[f_k]\}\subset\mathcal{S}_D$ is a divergent sequence which is Cauchy, by Proposition \ref{prop:cauchy} (3), 
we denote by $\zeta_{\{[f_k]\}}$ the unique point in $\mathbb{P}\mathcal{T}_D^\ast$ such that $\hat \pi([f_k])\to\zeta_{\{[f_k]\}}$ as $k\to\infty.$ The proof of Proposition \ref{prop:cauchy} (3) implies the following  invariance of $\zeta_{\{[f_k]\}}$. Recall that two Cauchy sequences $\{[f_k]\}$ and $\{[g_k]\}$ in $\mathcal{S}_D$ are {\it equivalent} if $\rho_D([f_k],[g_k]) \to 0$ as $k \to \infty$.

\begin{proposition}\label{prop:equi}
Let $\{[f_k]\}$ and $\{[g_k]\}$ be two equivalent Cauchy sequences in $\mathcal{S}_D$ which are divergent. 
Then 
$$\zeta_{\{[f_k]\}}=\zeta_{\{[g_k]\}}.$$
\end{proposition}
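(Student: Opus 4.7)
The plan is to reduce the statement to Proposition \ref{prop:cauchy}(3) via an interleaving trick. Define a new sequence $\{[h_k]\}_{k\ge 0}$ in $\mathcal{S}_D$ by setting $[h_{2k}]\defeq[f_k]$ and $[h_{2k+1}]\defeq[g_k]$. If $\{[h_k]\}$ can be shown to be a divergent Cauchy sequence, then Proposition \ref{prop:cauchy}(3) produces a unique $\zeta\in\mathbb{P}\mathcal{T}_D^\ast$ with $\hat\pi([h_k])\to\zeta$. Since both $\{\hat\pi([f_k])\}$ and $\{\hat\pi([g_k])\}$ are subsequences of $\{\hat\pi([h_k])\}$, uniqueness of limits immediately forces $\zeta_{\{[f_k]\}}=\zeta=\zeta_{\{[g_k]\}}$, which is the desired conclusion.

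The bulk of the work is therefore verifying the two hypotheses for $\{[h_k]\}$. For the Cauchy property, fix $\epsilon>0$ and use the Cauchy condition on each of $\{[f_k]\}$, $\{[g_k]\}$ together with the equivalence $\rho_D([f_k],[g_k])\to 0$ to choose $N$ such that each of the three quantities $\rho_D([f_k],[f_\ell])$, $\rho_D([g_k],[g_\ell])$, and $\rho_D([f_k],[g_k])$ is smaller than $\epsilon/3$ for all $k,\ell\ge N$. A short case analysis on the parities of $m,n\ge 2N$, combined with the triangle inequality, then yields $\rho_D([h_m],[h_n])<\epsilon$. For divergence, I would invoke the equivalence (noted in the remark following Definition \ref{def:seq}) between divergence and failure to be contained in any compact subset of $\mathcal{S}_D$: since the even-indexed subsequence $\{[h_{2k}]\}=\{[f_k]\}$ is already not contained in any compact subset by hypothesis, neither is $\{[h_k]\}$, so $\{[h_k]\}$ is divergent in the sense of Definition \ref{def:seq}(1).

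I do not anticipate any serious obstacle along this route. The substantive content is fully packaged in Proposition \ref{prop:cauchy}(3), and after that is invoked the merged-sequence argument is essentially formal. The only point that requires a moment of care is the passage from divergence of a single subsequence to divergence of the whole sequence, but this is immediate from the compact-subset characterization of divergence, so there is nothing genuinely delicate to overcome.
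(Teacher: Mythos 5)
Your proof is correct, and it takes a genuinely different route from the paper. The paper's intended argument (the proposition is prefaced with ``The proof of Proposition~\ref{prop:cauchy}~(3) implies\ldots'') is to \emph{re-run} the contradiction argument from the proof of Proposition~\ref{prop:cauchy}~(3) applied to the pair $\{[f_k]\}, \{[g_k]\}$: assume $\zeta_{\{[f_k]\}}\neq\zeta_{\{[g_k]\}}$, pick piecewise generic curves $\gamma_k$ joining $[f_k]$ to $[g_k]$, and use the finiteness of the fibers of $\mathbb{P}\mathcal{ST}^*_D \to \mathbb{P}\mathcal{SH}_D$ together with Proposition~\ref{prop:completion-g}, Proposition~\ref{equ:same}, and Corollary~\ref{coro:greater} to bound $\rho_D([f_k],[g_k])$ away from zero, contradicting equivalence. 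You instead treat Proposition~\ref{prop:cauchy}~(3) as a black box: the interleaved sequence $\{[h_k]\}$ is Cauchy by a three-term triangle inequality, divergent because its even-indexed subsequence already escapes every compact set, and then the statement of Proposition~\ref{prop:cauchy}~(3) applied once to $\{[h_k]\}$ (and once to each original sequence, to pin down $\zeta_{\{[f_k]\}}$ and $\zeta_{\{[g_k]\}}$) forces all three limits to coincide, since subsequences of a convergent sequence converge to the same limit. The payoff of your route is that no additional geometric estimates are needed beyond what Proposition~\ref{prop:cauchy}~(3) already guarantees, whereas the paper's route is a direct argument that exposes once more the geometric mechanism. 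Both are valid; yours is more formal and economical. One small point worth stating explicitly in a write-up: the merged sequence $\{[h_k]\}$ still consists of generic points, as required by the standing convention of the section, since both $\{[f_k]\}$ and $\{[g_k]\}$ do.
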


Recall that $\mathcal{X}$ is the set of equivalence classes of degenerating Cauchy sequences $\{[f_k]\}\subset\mathcal{S}_D$ with  $h_j(f_k)/h_1(f_k) \to L_j$, as $k\to\infty$, for some $L_j \in \mathbb{R}_{>0}$ and for each $2\le j\le D-1$.  We now  characterize the elements in $\mathcal{X}$. 

\begin{proposition}\label{prop:X}
	Let $\{[f_k]\}_{k\ge 0}\subset\mathcal{S}_D$ be a degenerating Cauchy sequence. Then $\{[f_k]\}_{k \ge 0}$ belongs to a class in $\mathcal{X}$ if and only if there exists a point $\zeta\in\mathbb{P}\mathcal{ST}_D^\ast$ such that $\hat \pi([f_k])\to\zeta$ as $k\to\infty$. 
\end{proposition}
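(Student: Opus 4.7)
The plan is to reduce the statement to continuity of the projection $\pi:\mathbb{P}\mathcal{T}_D^\ast\to\mathbb{P}\mathcal{H}_D$ together with the identification $\mathbb{P}\mathcal{ST}_D^\ast=\pi^{-1}(\mathbb{P}\mathcal{SH}_D)$ inside $\mathbb{P}\mathcal{T}_D^\ast$. I would first verify this identification: since $[f]\in\mathcal{S}_D$ iff every coordinate of $\mathcal{G}([f])$ is strictly positive, and this property is preserved under both the collapsing of connected fibers that defines $\mathcal{T}_D^\ast$ and the stretching action, restricting $\mathcal{M}_D\to\mathcal{T}_D^\ast\to\mathcal{H}_D$ to the shift locus and passing to the projectivizations reviewed in Section \ref{sec:poly} yields exactly $\pi^{-1}(\mathbb{P}\mathcal{SH}_D)=\mathbb{P}\mathcal{ST}_D^\ast$.

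For the forward direction, I would start from a representative $\{[f_k]\}$ of a class in $\mathcal{X}$, i.e.\ degenerating and Cauchy with $h_j(f_k)/h_1(f_k)\to L_j\in\mathbb{R}_{>0}$ for $2\le j\le D-1$. Degenerating sequences are divergent, so Proposition \ref{prop:cauchy}(3) produces a unique $\zeta\in\mathbb{P}\mathcal{T}_D^\ast$ with $\hat\pi([f_k])\to\zeta$. Applying the continuous map $\pi$ and using the slice $\{h_1=1\}$ model of $\mathbb{P}\mathcal{H}_D$, the sequence $\pi(\hat\pi([f_k]))$ is precisely $(1,h_2(f_k)/h_1(f_k),\dots,h_{D-1}(f_k)/h_1(f_k))$, which converges to $(1,L_2,\dots,L_{D-1})$. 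By continuity, $\pi(\zeta)=(1,L_2,\dots,L_{D-1})\in\mathbb{P}\mathcal{SH}_D$, and the identification above gives $\zeta\in\mathbb{P}\mathcal{ST}_D^\ast$.

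For the converse, I would assume $\hat\pi([f_k])\to\zeta\in\mathbb{P}\mathcal{ST}_D^\ast$ and apply continuity of $\pi$ to conclude $\pi(\hat\pi([f_k]))\to\pi(\zeta)\in\mathbb{P}\mathcal{SH}_D$, which has a representative $(1,L_2,\dots,L_{D-1})$ with every $L_j>0$. Reading off coordinates recovers $h_j(f_k)/h_1(f_k)\to L_j\in\mathbb{R}_{>0}$, so together with the given hypothesis that $\{[f_k]\}$ is degenerating and Cauchy, the sequence represents a class in $\mathcal{X}$. I do not anticipate a real obstacle; the proposition is essentially a repackaging of Proposition \ref{prop:cauchy}(3) once one observes that the defining positivity condition for $\mathcal{X}$ corresponds under $\pi$ to landing inside $\mathbb{P}\mathcal{SH}_D$. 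The only point that needs to be stated cleanly is the identification $\mathbb{P}\mathcal{ST}_D^\ast=\pi^{-1}(\mathbb{P}\mathcal{SH}_D)$, but this is immediate from \cite{DeMarco11}.
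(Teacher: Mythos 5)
Your identification $\mathbb{P}\mathcal{ST}_D^\ast = \pi^{-1}(\mathbb{P}\mathcal{SH}_D)$ is correct, and the ``only if'' direction via continuity of $\pi$ is a clean way to deduce $\zeta\in\mathbb{P}\mathcal{ST}_D^\ast$ once Proposition~\ref{prop:cauchy}(3) has produced $\zeta$; the paper is vaguer here, referring only to ``an analog of the argument in the proof of Proposition~\ref{prop:cauchy}.''

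The problem is the ``if'' direction, and it is a genuine gap even though the literal statement hands you the Cauchy hypothesis. You invoke that hypothesis and are immediately done, but the paper's proof does not use it: it writes ``Thus it suffices to show that $\{[f_k]\}$ is Cauchy'' and then supplies the estimate. It reduces by Corollary~\ref{coro:twist} to the case $\hat\pi([f_k])=\zeta$ with $\mathcal{G}([f_k])$ pairwise distinct, connects $[f_{k_0}]$ to $[f_{k_0+m_0}]$ by a height line segment via the action~\eqref{equ:action1}, differentiates~\eqref{eq_entropy} to control $\dot{\mathfrak{h}}$, and then uses~\eqref{eq_gammadotgamma} to bound $\rho_D([f_{k_0}],[f_{k_0+m_0}])$ by a quantity tending to $0$ as $k_0\to\infty$, uniformly in $m_0$. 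What the paper actually establishes is that a degenerating sequence with $\hat\pi([f_k])\to\zeta\in\mathbb{P}\mathcal{ST}_D^\ast$ is \emph{automatically} Cauchy, and that stronger conclusion is exactly what is used downstream: it yields $\mathcal{X}\neq\emptyset$ and hence incompleteness of $(\mathcal{S}_D,\rho_D)$ for $D\ge 3$ in Theorem~\ref{thm:X}(1), and together with Lemmas~\ref{lem:Y} and~\ref{lem:Z} it gives the $\mathbb{P}\mathcal{ST}_D^\ast$ part of the surjectivity of $\phi$ in Theorem~\ref{thm:X}(2). By leaning on the Cauchy hypothesis you skip the entire Cauchy estimate, which is the substance of the ``if'' direction; the ``Cauchy'' in the hypothesis should be treated as a redundancy in the statement rather than something that makes the implication trivial, and a complete proof still needs the quantitative estimate along height line segments.
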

To prove Proposition \ref{prop:X}, a crucial point is to deal with twist deformation orbits. More precisely, we will show that with the length functions in \eqref{def_st2}, the diameters of twist deformation components shrink to $0$ along a degenerating sequence in $\mathcal{S}_D$. 
For brevity, we use the following definition.

\begin{definition}
Let $\{[f_k]\}_{k\ge 0}\subset\mathcal{S}_D$ be a sequence and let  $\{U_k\}_{k\ge 0}$ be a sequence of connected subsets in $\mathcal{S}_D$ with $[f_k]\in U_k$ for each $k \ge 0$. 
\begin{enumerate}
\item 
We say that $\{U_k\}$ is a \emph{height small o neighborhood} (resp.  \emph{loose height small o neighborhood}) of $\{[f_k]\}$ if 
$$|h_i(g_k)-h_i(f_k)|=o(h_i(f_k)),$$ 
 as $k\to\infty$, 
 for any $[g_k]\in U_k$ and $1\le i\le D-1$ (resp. $2\le i\le D-1$.). 
\item 
We say that $\{U_k\}$ is a \emph{degenerating loose height small o neighborhood} of $\{[f_k]\}$ if $\{U_k\}$ is a loose height small o neighborhood of $\{[f_k]\}$ and 
 $h_1(g_k)\to\infty$,   
 as $k \to \infty$, 
 for any $[g_k]\in U_k$. 
\item We say that $\{U_k\}$ is a \emph{small o neighborhood} of $\{[f_k]\}$ if 
$$\rho_D([f_k],[g_k])\to 0,$$ 
as  $k\to\infty$,  for any $[g_k]\in U_k$. 
\end{enumerate}
\end{definition}

\begin{lemma}
Let $\{[f_k]\}_{k\ge 0}\subset\mathcal{S}_D$ be a  uniformly divergent sequence. Suppose that the  sequence $\{\ell_k\}$ of base length functions associated to $\{[f_k]\}$ is non-singular. Then any (degenerating loose) height small o neighborhood of $\{[f_k]\}$ is a small o neighborhood of $\{[f_k]\}$. 
\end{lemma}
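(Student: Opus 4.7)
The plan is to estimate $\rho_D([f_k],[g_k])$ for each $[g_k]\in U_k$ by constructing an explicit connecting path and bounding its entropy length via the rose-graph tools from Section \ref{sec_Rosegraphs}. First I would use Proposition \ref{prop:line} to connect $[f_k]$ to $[g_k]$ by a piecewise line segment $\gamma_k=\gamma_k^H\cup\gamma_k^T$: a height line segment $\gamma_k^H$ from $[f_k]$ to an auxiliary element $[f_k']$ chosen so that $\mathcal{G}([f_k'])=\mathcal{G}([g_k])$, followed by a twist line segment $\gamma_k^T$ from $[f_k']$ to $[g_k]$. It then suffices to show that each piece has entropy length tending to $0$.

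For the height piece, parametrize so that the critical heights interpolate linearly. The hypothesis that $\{U_k\}$ is a height small $o$ (resp.\ degenerating loose height small $o$) neighborhood guarantees $h_i(\gamma_k^H(t))=h_i(f_k)(1+o(1))$ uniformly in $t$ for $1\le i\le D-1$ (resp.\ for $2\le i\le D-1$, with $h_1\to\infty$ along $\gamma_k^H$). A direct computation using \eqref{def_st1} then yields $\sup_t \dot\ell_{\gamma_k^H(t)}(e_j)/\ell_{\gamma_k^H(t)}(e_j)\to 0$ for every $1\le j\le 2D-2$ in the plain case, or for every $2\le j\le 2D-2$ together with $\ell_{\gamma_k^H(t)}(e_1)\to\infty$ in the degenerating loose case. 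Thus assumption (1) or (2) of Proposition \ref{lem:entropy-der} applies, yielding $\sup_t \dot{\mathfrak{h}}(\ell_{\gamma_k^H(t)})/\mathfrak{h}(\ell_{\gamma_k^H(t)})\to 0$. Proposition \ref{coro:length-0} then forces the entropy length of $\hat\ell_B(\gamma_k^H)$ in $\mathfrak{M}_{\mathcal{R}}^1$ to tend to $0$, and Proposition \ref{equ:same} identifies this with $\mathfrak{L}_D(\gamma_k^H)$.

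For the twist piece, the key observation is that uniform divergence together with non-singularity forces $H([f_k])\to\infty$. Indeed, if $h_1(f_k)$ stays bounded, then since $\ell_k(e_j)=1$ for $j\ge D$ the condition $|\mathbf{I}(\{\ell_k\})|\ge 2$ forces some coordinate $h_i(f_k)/h_1(f_k)\to 0$, and hence $h_{D-1}(f_k)\to 0$, giving $H_0\ge 1/h_{D-1}(f_k)\to\infty$; otherwise $H_0\ge h_1(f_k)\to\infty$. Since $[f_k']$ has heights comparable to those of $[f_k]$, the same holds at $[f_k']$. Choosing the shortest representative of $[g_k]$ in the twist-deformation orbit of $[f_k']$ produces a straight-line twist segment with $|\widetilde\theta_j(t)|\le 1$ and bounded derivative, so by \eqref{def_st2} we obtain $\dot\ell_{\gamma_k^T(t)}(e_j)=O(1/H([f_k']))$ and $\ell_{\gamma_k^T(t)}(e_j)\to 1$ uniformly in $t$ for $D\le j\le 2D-2$, while $\dot\ell_{\gamma_k^T(t)}(e_j)=0$ for $1\le j\le D-1$. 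Propositions \ref{lem:entropy-der} and \ref{coro:length-0} again apply, and since $\|\cdot\|_{\mathcal{S}_D}$ is defined via the embedding $\iota$ into $T\mathfrak{M}_{\mathcal{R}}^1$, we directly obtain $\mathfrak{L}_D(\gamma_k^T)\to 0$.

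Combining the two estimates yields $\rho_D([f_k],[g_k])\le \mathfrak{L}_D(\gamma_k^H)+\mathfrak{L}_D(\gamma_k^T)\to 0$ uniformly in $[g_k]\in U_k$. The most delicate part is the twist piece: one must verify $H([f_k'])\to\infty$ in every uniformly divergent, non-singular case and choose the shortest twist representative so that $|\widetilde\theta_j(t)|\le 1$ uniformly; once this is in hand, the remainder is a routine application of the entropy-metric machinery developed in Section \ref{sec_Rosegraphs}.
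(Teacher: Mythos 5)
Your overall strategy — connect $[f_k]$ to $[g_k]\in U_k$ by explicit height/twist paths and control the entropy length of each piece via Propositions \ref{lem:entropy-der} and \ref{coro:length-0} — is in the right spirit, and your estimates on each piece (the $H([f_k'])\to\infty$ observation, the $o(1)$ bounds on $\dot\ell/\ell$) are essentially the same as the paper's. However, the \emph{construction} of the connecting path has a genuine gap that the paper's proof avoids.

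You claim, citing Proposition \ref{prop:line}, that $[f_k]$ and $[g_k]$ can always be joined by a single height line segment $\gamma_k^H$ (from $[f_k]$ to some $[f_k']$ with $\mathcal{G}([f_k'])=\mathcal{G}([g_k])$) followed by a twist line segment $\gamma_k^T$ (from $[f_k']$ to $[g_k]$). Proposition \ref{prop:line} does not give this decomposition: part (1) says a generic curve can be replaced by a height segment \emph{or} a twist segment (a single piece, not a concatenation), while part (2) says two points with different heights can be joined by a \emph{piecewise} height line segment. Your single height segment is a generic curve by definition, so it cannot cross a non-generic wall; but the paper explicitly allows for $\gamma_k$ to contain a non-generic interior point (hence two height pieces). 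So the single segment $\gamma_k^H$ you invoke may not exist. More importantly, even granting a (piecewise) height path to some $[f_k']$ with $\mathcal{G}([f_k'])=\mathcal{G}([g_k])$, you have not shown that $[f_k']$ lands in the same twist-deformation component as $[g_k]$ — without that, the twist segment $\gamma_k^T$ does not exist. The paper sidesteps both problems by a dichotomy: if $\mathcal{G}([f_k])=\mathcal{G}([g_k])$, it uses Lemma \ref{lem:DP} and the neighborhood hypothesis to conclude they lie in the \emph{same} twist component and joins them by a twist segment alone; if $\mathcal{G}([f_k])\neq\mathcal{G}([g_k])$, it joins them directly by a piecewise height line segment to $[g_k]$ itself via Proposition \ref{prop:line}(2), with no twist piece at all. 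You would need to either adopt that dichotomy or justify carefully why your proposed height-then-twist path exists, including its behavior across non-generic walls and the matching of twist components.
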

\begin{proof}
Let $\{U_k\}$ be a (degenerating loose) height small o neighborhood of $\{[f_k]\}$. Let $[g_k]\in U_k$. For any fixed sufficiently large $k$, we estimate the distance $\rho_D([g_k],[f_k])$. We have two cases. 

If $\mathcal{G}([f_k])=\mathcal{G}([g_k])$, then  by Lemma \ref{lem:DP} and the assumption on $U_k$, we conclude that $[f_k]$ and $[g_k]$ are in a same twist deformation component. Consider a twist line segment $\gamma_k:[-1,1]\to\mathcal{S}_D$  contained in this component with $\gamma_k(0)=[f_k]$ and $\gamma_k(1)=[g_k]$.  Note that $\dot\ell_{\gamma_k(t)}(e_j)=0$ for all $1\le j\le D-1$. Moreover, since $\{[f_k]\}$ is uniformly divergent, from \eqref{def_st2}, we have $\sup_t\dot\ell_{\gamma_k(t)}(e_j)\to 0$  for all $D\le j\le 2D-2$, and hence $\sup_t\dot\ell_{\gamma_k(t)}(e_j)/\ell_{\gamma_k(t)}(e_j)\to 0$, as $k\to\infty$, for all $D\le j\le 2D-2$. 

If $\mathcal{G}([f_k])\not=\mathcal{G}([g_k])$, by Proposition \ref{prop:line} (2), we can consider a piecewise height line segment $\gamma_k:[-1,1]\to\mathcal{S}_D$ with $\gamma_k(0)=[f_k]$ and $\gamma_k(1)=[g_k]$. Since $[g_k]\in U_k$, then by the assumption on $U_k$, we can choose $\gamma_k$ such that it contains at most $2$ height line segments. Indeed, the interior $\gamma_k$ contains at most one non-generic point in $\mathcal{S}_D$. Without lose of generality, we can assume that $\gamma_k$ is generic. It follows that $\dot\ell_{\gamma_k(t)}(e_j)=0$  for all $D\le j\le 2D-2$. Moreover, by the assumption on $U_k$, we have  $\sup_{t}\dot\ell_{\gamma_k(t)}(e_j)/\ell_{\gamma_k(t)}(e_j) \to 0$, as $k\to\infty$,  for all $1\le j\le D-1$.

In both cases, applying Propositions \ref{coro:length-0} and \ref{lem:entropy-der}, we conclude that  $\sup_t||(\gamma_k(t),\dot{\gamma}_k(t))||\to 0$, as $k\to\infty$, and hence $\rho_D([g_k],[f_k])\to 0$. Thus $\{U_k\}$ is a  small o neighborhood of $\{[f_k]\}$.
\end{proof}

\begin{corollary}\label{coro:twist}
Let $\{[f_k]\}_{k\ge 0}\subset\mathcal{S}_D$ be a  uniformly divergent sequence such that its associated  sequence $\{\ell_{k}\}$ of base length functions is non-singular.
Consider a (degenerating loose) height small o neighborhood  $\{U_k\}$   of $\{[f_k]\}$. Then there exists a small o neighborhood $\{W_k\}$ 
containing both 
 $U_k$ and the twist deformation space of $[g_k]$ 
  for any generic $[g_k]\in W_k$ and any $k \ge 0$.
\end{corollary}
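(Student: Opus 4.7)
The plan is to enlarge $U_k$ by adjoining the twist deformation orbits of its generic points, and then verify that the enlarged neighborhood still satisfies the hypotheses of the preceding lemma. Concretely, for each generic $[h_k] \in U_k$, let $\mathcal{O}([h_k]) \subset \mathcal{S}_D$ denote the connected component of $\mathcal{G}^{-1}(\mathcal{G}([h_k]))$ through $[h_k]$; by Proposition \ref{prop:fiber} this component is one of finitely many smooth $(D-1)$-tori and coincides with the twist deformation orbit of $[h_k]$. Define
$$W_k \;:=\; U_k \;\cup\; \bigcup_{[h_k] \in U_k \text{ generic}} \mathcal{O}([h_k]).$$
Each orbit $\mathcal{O}([h_k])$ is connected and meets $U_k$ at $[h_k]$, so $W_k$ is connected; moreover every point adjoined is generic, since genericity depends only on $\mathcal{G}$, which is constant on each $\mathcal{O}([h_k])$.

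The twist-closure property required in the statement is then automatic from the construction. If $[g_k] \in W_k$ is generic, then either $[g_k] \in U_k$, in which case $\mathcal{O}([g_k]) \subset W_k$ by definition, or $[g_k] \in \mathcal{O}([h_k])$ for some generic $[h_k] \in U_k$, in which case $\mathcal{O}([g_k]) = \mathcal{O}([h_k]) \subset W_k$.

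Next, I would verify that $\{W_k\}$ is again a (degenerating loose) height small o neighborhood of $\{[f_k]\}$. Because twist deformation preserves all critical heights, for any $[g_k] \in \mathcal{O}([h_k])$ with $[h_k] \in U_k$ we have $h_i(g_k) = h_i(h_k)$ for every $1 \le i \le D-1$. Hence the estimate $|h_i(g_k) - h_i(f_k)| = |h_i(h_k) - h_i(f_k)| = o(h_i(f_k))$ transfers from $[h_k]$ to $[g_k]$ for the appropriate range of indices $i$, and in the degenerating case the growth condition $h_1(g_k) = h_1(h_k) \to \infty$ is likewise inherited. Since the hypotheses on $\{[f_k]\}$ (uniform divergence and non-singularity of $\{\ell_k\}$) depend only on $\{[f_k]\}$ itself and not on the ambient neighborhood, the preceding lemma applies directly to $\{W_k\}$ and gives $\rho_D([f_k],[g_k]) \to 0$ for every $[g_k] \in W_k$. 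Thus $\{W_k\}$ is a small o neighborhood of $\{[f_k]\}$, as desired.

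The main (mild) point to keep in mind is that adjoining twist orbits does not spoil the height smallness estimates, which is precisely what makes the construction work; this in turn relies crucially on the fact that twisting preserves critical heights, so that the base length functions are constant along each added orbit. No further quantitative control is needed: the subtler analytic estimates on entropy and its derivative along generic and twist line segments have already been absorbed into the preceding lemma.
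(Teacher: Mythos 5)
Your proposal is correct and fills in the argument in the way the paper clearly intends (the paper states the corollary immediately after the lemma without a separate proof, treating it as a direct consequence). The key observations you make — that twisting preserves $\mathcal{G}$, hence preserves the base length functions and the height smallness estimates, and hence adjoining twist orbits to $U_k$ does not disturb the hypotheses of the preceding lemma — are exactly the points one needs. Your twist-closure verification via $\mathcal{O}([g_k]) = \mathcal{O}([h_k])$ when $[g_k] \in \mathcal{O}([h_k])$, the connectedness of $W_k$ as a union of connected orbits each meeting the connected set $U_k$, and the preservation of genericity along orbits (via Definition \ref{def:generic}) are all sound and complete the argument.
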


Now we prove Proposition \ref{prop:X}.
\begin{proof}[Proof of Proposition \ref{prop:X}]
Observe that if $\{[f_k]\}$ is a uniformly divergent sequence belonging to a class in $\mathcal{X}$, then the coordinate-wise limit of the associated sequence $\{\ell_k\}$ of base length functions in $\mathfrak{M}_{\mathfrak{R}_{2D-2}}$ is positive (or $+\infty$).
Then applying an analog of the argument in the proof of Proposition \ref{prop:cauchy}, we can obtain the ``only if" part.  Now let us show the ``if" part. Since $h_1(f_k)\to\infty$, as $k\to\infty$, we have that if $\{[f_k]\}$ is Cauchy, then it belongs to a class in $\mathcal{X}$. Thus it suffices to show that $\{[f_k]\}$ is Cauchy. By Corollary \ref{coro:twist}, we can assume that $\mathcal{G}([f_k])$ are pairwise distinct and $\hat \pi([f_k])=\zeta$. It follow that $\ell_k(e_1)\to\infty$, as $k\to\infty$, and $\ell_k(e_j)$ is constant in $k$ for each $2\le j\le 2D-2$. Let $k_0,m_0 \ge 0$. By the action \eqref{equ:action1}, we consider a height line segment $\gamma_{k_0}:[0,1]\to\mathcal{S}_D$ connecting $[f_{k_0}]$ and $[f_{k_0+m_0}]$ and  consider the base length function $\ell_{\gamma_{k_0}}$ for $\gamma_{k_0}$. 

 We now bound $\rho_D([f_{k_0}], [f_{k_0+m_0}])$ as follows.  First observe that $\mathfrak{h}(\ell_{\gamma_{k_0}(t)})$ is contained in $\mathbb{R}_{>0}$ for each $t$ by Lemma \ref{lem:entropy-finite} and Proposition \ref{lem:entropy-infinite}. Differentiating \eqref{eq_entropy} and using the assumption, we compute that as $k_0\to\infty$, 
 $$\dot{\mathfrak{h}}(\ell_{\gamma_{k_0}(t)})\asymp\left(\dot{\mathfrak{h}}(\ell_{\gamma_{k_0}(t)})\ell_{\gamma_{k_0}(t)}(e_1)+\mathfrak{h}(\ell_{\gamma_{k_0}(t)})\dot{\ell}_{\gamma_{k_0}(t)}(e_1)\right)\exp\left({-\mathfrak{h}(\ell_{\gamma_{k_0}(t)})\ell_{\gamma_{k_0}(t)}(e_1)}\right).$$
 It follows that there exist constants $C_1, C_2>0$, independent of $k_0\gg 1$ and $m_0$, such that 
 $$\left|\dot{\mathfrak{h}}(\ell_{\gamma_{k_0}(t)})\right|\le C_1\left(\ell_{\gamma_{k_0}(1)}(e_1)-\ell_{\gamma_{k_0}(0)}(e_1)\right)\exp\left({-C_2\ell_{\gamma_{k_0}(t)}(e_1)}\right).$$
 Applying \eqref{eq_gammadotgamma}, we conclude that there exist $C_3, C_4>0$, independent of $k_0\gg 1$ and $m_0$, such that 
 $$||(\gamma_{k_0}(t),\dot{\gamma}_{k_0}(t))||\le C_3\left(\ell_{\gamma_{k_0}(1)}(e_1)-\ell_{\gamma_{k_0}(0)}(e_1)\right)\exp\left({-C_4\ell_{\gamma_{k_0}(t)}(e_1)}\right).$$
Thus, there exist $C, C'>0$, independent of $k_0\gg 1$ and $m_0$, such that  for sufficiently large $k_0$, 
$$\rho_D([f_{k_0}], [f_{k_0+m_0}])\le  C\left(\ell_{\gamma_{k_0}(1)}(e_1)-\ell_{\gamma_{k_0}(0)}(e_1)\right)\exp\left({-C'\left(\ell_{\gamma_{k_0}(1)}(e_1)-\ell_{\gamma_{k_0}(0)}(e_1)\right)}\right).$$
Thus for any $\epsilon>0$, there exists $K>0$ such that $\rho_D([f_{k_0}], [f_{k_0+m_0}])<\epsilon$  for any $k_0\ge K$ and any $m_0\ge 0$. We obtain that $\{[f_k]\}$ is Cauchy. 
\end{proof}

\subsection{Proof of Theorem \ref{thm:X}}\label{sec:X}
Recall that $\widehat{\mathcal{S}}_D$ denotes the completion of $(\mathcal{S}_D,\rho_D)$. Then each element of $\widehat{\mathcal{S}}_D$ is 
 an equivalence classes of Cauchy sequences in ${\mathcal{S}}_D$.  Statement (1) of 
 Theorem \ref{thm:X} 
  follows from Corollary \ref{lem_D2} and Proposition \ref{prop:X}. Indeed, observing that 
  $\mathcal{X} \not=\emptyset$ for any $D\ge3$, by Proposition \ref{prop:X} we conclude that $(\mathcal{S}_D,\rho_D)$ is incomplete for any $D\ge 3$.

For the rest of this section, we assume that $D\ge 3$. 
 Proposition \ref{prop:equi} yields a map 
\begin{equation}\label{eq_mapPhi}
\phi:\widehat{\mathcal{S}}_D\setminus\mathcal{S}_D\to\mathbb{P}\mathcal{T}_D^\ast, 
\end{equation}
sending the equivalence class of a Cauchy sequence $\{[f_k]\}$ to $\zeta_{\{[f_k]\}}$. We show that $\phi$ satisfies Theorem \ref{thm:X} (2). 

We first show that $\phi(\widehat{\mathcal{S}}_D\setminus\mathcal{S}_D)=\mathbb{P}\mathcal{T}_D^\ast\setminus\{\mathit{O}\}$. By definition of $\phi$, we have that $\phi(\widehat{\mathcal{S}}_D\setminus\mathcal{S}_D)\subseteq\mathbb{P}\mathcal{T}_D^\ast\setminus\{\mathit{O}\}$. 
The converse inclusion $\mathbb{P}\mathcal{T}_D^\ast\setminus\{\mathit{O}\} \subseteq\phi(\widehat{\mathcal{S}}_D\setminus\mathcal{S}_D)$ follows from   Proposition \ref{prop:X} and the following two results (Lemmas \ref{lem:Y} and \ref{lem:Z}). 
For $\zeta\in\mathbb{P}\mathcal{T}^\ast_D$ and $1\le j\le D-1$, denote by $\pi(\zeta)(j)$ the $j$-th entry of $\pi(\zeta)\in\mathbb{P}\mathcal{H}_{D}$. 
\begin{lemma}\label{lem:Y}
Let $\zeta\in\mathbb{P}\mathcal{T}_D^\ast\setminus(\mathbb{P}\mathcal{ST}_D^\ast\cup\{\mathit{O}\})$. If $\pi(\zeta)$ has at least 2 zero entries, then there exists a Cauchy sequence $\{[f_k]\}_{k\ge 0}\subset\mathcal{S}_D$ such that  its associated length functions $\{\ell_k\}$ satisfy  $\ell_k(e_1)\to\infty$ and $\ell_k(e_j)\to\pi(\zeta)(j)$, as $k\to\infty$, for any $2\le j\le D-1$. 
\end{lemma}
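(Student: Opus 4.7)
The plan is to construct a sequence $\{[f_k]\}\subset\mathcal{S}_D$ with the required asymptotics and then verify it is Cauchy.

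For the construction, normalize $\zeta$ on the unit slice so that $\pi(\zeta)=(1,\pi_2,\ldots,\pi_{D-1})$, and let $Z:=\{2\le j\le D-1:\pi_j=0\}$, so $|Z|\ge 2$ by hypothesis. By the surjectivity of the composition $\mathcal{M}_D\to\mathcal{T}_D^\ast\to\mathbb{P}\mathcal{T}_D^\ast$ (Proposition \ref{thm:G}), pick a representative $[\tilde f]\in\mathcal{M}_D$ with $\hat\pi([\tilde f])=\zeta$ and $h_1(\tilde f)=1$; then $[\tilde f]\notin\mathcal{S}_D$ because of the $|Z|\ge 2$ non-escaping critical points. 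For each $k$, let $[\tilde f_k]$ be the stretch of $[\tilde f]$ by the factor $k$, so that $h_j(\tilde f_k)=k\pi_j$ and the projective image $\hat\pi([\tilde f_k])$ remains equal to $\zeta$. I will then choose small positive numbers $\epsilon^{(k)}_j$ for $j\in Z$ with $\epsilon^{(k)}_j/k\to 0$, and produce $[f_k]\in\mathcal{S}_D$ with critical heights $(k,k\pi_2,\ldots,k\pi_{D-1})$ where each $h_j$ with $j\in Z$ is replaced by $\epsilon^{(k)}_j$, sufficiently close to $[\tilde f_k]$ in $\mathcal{M}_D$. Since $\epsilon^{(k)}_j/k\to 0$, the relative perturbation shrinks, and continuity of $\mathbb{P}\mathcal{M}_D\to\mathbb{P}\mathcal{T}_D^\ast$ then gives $\hat\pi([f_k])\to\zeta$. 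By construction, the base length functions $\ell_k$ satisfy $\ell_k(e_1)=k\to\infty$ and $\ell_k(e_j)\to\pi_j=\pi(\zeta)(j)$ for all $2\le j\le D-1$.

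To verify that $\{[f_k]\}$ is Cauchy, observe first that $\{\ell_k\}$ is uniformly divergent and $|\mathbf{I}(\{\ell_k\})|\ge|Z|\ge 2$, so the sequence is non-singular in the sense of Definition \ref{def_l_ksingular}. For large $k,m$, connect $[f_k]$ and $[f_m]$ by the concatenation of a piecewise height line segment (Proposition \ref{prop:line}) and, if needed, a twist segment within the fiber whose contribution is negligible by Corollary \ref{coro:twist}. Along the height line segment $\gamma:[0,1]\to\mathcal{S}_D$, the derivatives $\dot\ell_{\gamma(t)}(e_j)$ for $j\ge 2$ tend to zero in absolute value as $k,m\to\infty$ (since $\ell_k(e_j)-\ell_m(e_j)\to 0$ for such $j$), while for $j=1$ the large value $\ell_{\gamma(t)}(e_1)\ge\min(k,m)$ produces the exponential decay $\exp(-\mathfrak{h}(\ell_{\gamma(t)})\ell_{\gamma(t)}(e_1))$. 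Following the estimate at the end of the proof of Proposition \ref{prop:X}, this yields an upper bound on $\rho_D([f_k],[f_m])$ that tends to $0$ as $\min(k,m)\to\infty$, establishing the Cauchy condition.

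The main obstacle lies in the perturbation step of the construction: namely, showing that the stretched polynomial $[\tilde f_k]$ admits a shift locus approximation $[f_k]$ whose $\mathbb{P}\mathcal{T}_D^\ast$-image stays close to $\zeta$. The hypothesis $|Z|\ge 2$ is essential here, since it supplies at least two non-escaping critical points of $[\tilde f]$ which can be independently and simultaneously deformed into escape at distinct small rates $\epsilon^{(k)}_j$, landing the perturbation in the generic part of $\mathcal{S}_D$; with only one such non-escaping critical point, the ordering constraints of $\mathcal{H}_D$ would in general obstruct the existence of such a lift whose projective image approaches $\zeta$. Making this deformation rigorous should use the tree-theoretic description of $\mathcal{T}_D$ together with the height–twist local parametrization of $\mathcal{S}_D^N$ provided by Lemma \ref{lem:DP}.
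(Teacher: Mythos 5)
Your approach tracks the paper's quite closely: both constructions produce a degenerating sequence whose base length functions have the prescribed limit, then verify the Cauchy property by connecting consecutive terms with height line segments and bounding the derivative of the entropy. The paper does this by lifting a line segment $L \subset \mathbb{P}\mathcal{H}_D$ with endpoint $\pi(\zeta)$ successively through $\pi$ and $\hat\pi$ to a segment $\tilde L$ in $\mathbb{P}\mathcal{S}_D$ ending at $\zeta$, and then taking $[f_k]$ along a lift of $\tilde L$ with critical heights $\bigl(h_1(f_k), a_2 h_1(f_k), \ldots, a_{i_0}h_1(f_k), 1, \ldots, 1\bigr)$; your stretch-and-perturb description of $[f_k]$ as a shift-locus approximation of $[\tilde f_k]$ is morally the same device. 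So the route is not genuinely different.

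There is, however, a concrete gap in your perturbation step. You take arbitrary $\epsilon^{(k)}_j > 0$ with $\epsilon^{(k)}_j/k \to 0$ for $j \in Z$ and then assert that $|\mathbf{I}(\{\ell_k\})| \ge |Z| \ge 2$, hence non-singularity. That implication fails: the index set $\mathbf{I}$ in Definition \ref{def_I} picks out only those coordinates decaying at the \emph{slowest comparable rate}, so if, say, $\epsilon^{(k)}_{D-1} = 1/k$ while $\epsilon^{(k)}_{D-2} = 1$, then $\ell_k(e_{D-1}) = o(\ell_k(e_{D-2}))$ and $\mathbf{I} = \{D-1\}$ is a singleton. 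In that case the sequence is singular, Proposition \ref{prop:infinite-length} gives $\rho_D([f_0],[f_k]) \to \infty$, and the sequence is not Cauchy. You must impose $\epsilon^{(k)}_i \asymp \epsilon^{(k)}_j$ for all $i, j \in Z$; the cleanest choice, and the one the paper makes implicitly, is $\epsilon^{(k)}_j \equiv 1$, so all the ``small'' critical heights are constant and all the corresponding $\ell_k(e_j) = 1/h_1(f_k)$ decay at exactly the same rate. With that fix, case (2a) of Proposition \ref{lem:entropy-infinite} applies and $\mathfrak{h}(\ell_k) \asymp h_1(f_k)$, which is what the Cauchy estimate actually uses.

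A smaller point: your closing explanation of why $|Z|\ge 2$ is essential — attributing it to ``the ordering constraints of $\mathcal{H}_D$'' obstructing a lift — is not the right mechanism. Even when $\pi(\zeta)$ has a single zero entry, one can certainly build degenerating sequences in $\mathcal{S}_D$ with $\ell_k(e_1)\to\infty$ and $\ell_k(e_j)\to\pi(\zeta)(j)$. The obstruction is metric: such a sequence necessarily has $\mathbf{I} = \{D-1\}$, is singular, and so by Proposition \ref{prop:infinite-length} and Remark \ref{rmk:Y} it has infinite $\rho_D$-diameter and is never Cauchy. The paper reaches $\zeta$ with one zero entry in Lemma \ref{lem:Z} by sending $\ell_k(e_1)\to 0$ (taking $h_1\to 0$ and $h_{D-1}\asymp h_1^2$) rather than $\ell_k(e_1)\to\infty$, which restores $|\mathbf{I}|\ge 2$. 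Also note that the concluding estimate is not quite the one from Proposition \ref{prop:X}: here the ratios $\ell_{\gamma(t)}(e_j)$ for $j>i_0$ vary along the segment, so instead of the single exponential from the first coordinate you need to first establish $\dot{\mathfrak{h}} = O(\mathfrak{h})$ from the coordinates $j>i_0$ (via the argument of Proposition \ref{prop:entropy-der}) before applying \eqref{eq_gammadotgamma}.
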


\begin{lemma}\label{lem:Z} 
Let $\zeta\in\mathbb{P}\mathcal{T}_D^\ast\setminus(\mathbb{P}\mathcal{ST}_D^\ast\cup\{\mathit{O}\})$. If $\pi(\zeta)$ has only 1 zero entry, then there exists a Cauchy sequence $\{[f_k]\}_{k\ge 0}\subset\mathcal{S}_D$ such that its associated length functions $\{\ell_k\}$ satisfy $\ell_k(e_1)\to0$ and $\ell_k(e_j)\to\pi(\zeta)(j)$, as $k \to\infty$,  for any $2\le j\le D-1$. 
\end{lemma}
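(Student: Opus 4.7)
My plan is to construct $\{[f_k]\}$ as a stretched perturbation of a polynomial that represents $\zeta$ on the boundary of $\mathcal{S}_D$. First, since $\pi(\zeta) \in \mathbb{P}\mathcal{H}_D$ has exactly one zero entry---necessarily the last, by the nonincreasing ordering---normalize $\pi(\zeta) = (1, h_2, \ldots, h_{D-2}, 0)$ with $h_2 \ge \cdots \ge h_{D-2} > 0$. Using the surjectivity of $\mathcal{G}$ (Proposition~\ref{thm:G}) and the factorization $\mathcal{G} = \pi \circ q$ where $q : \mathcal{M}_D \to \mathcal{T}_D^\ast$, pick $[g] \in \mathcal{M}_D$ with $\mathcal{G}([g]) = (1, h_2, \ldots, h_{D-2}, 0)$ in the fiber of $\hat\pi$ over $\zeta$; since $h_{D-1}([g]) = 0$, one has $[g] \notin \mathcal{S}_D$. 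Then I would perturb $[g]$ into $\mathcal{S}_D$: choose $\{[g_k]\} \subset \mathcal{S}_D$ with $[g_k] \to [g]$ in $\mathcal{M}_D$ and $h_{D-1}(g_k) =: \tau_k \to 0^+$ (available by the continuity of $\mathcal{G}$ and the openness of $\mathcal{S}_D$ in $\mathcal{M}_D$), and define $[f_k] := \sigma_k \cdot [g_k]$ by stretching, with $\sigma_k \asymp \tau_k$ and $\sigma_k \to 0^+$.

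With this construction the length-function limits are immediate: $\ell_k(e_1) = \sigma_k h_1(g_k) \to 0$, $\ell_k(e_j) \to \pi(\zeta)(j)$ for $2 \le j \le D-2$, and $\ell_k(e_{D-1}) = \tau_k/h_1(g_k) \to 0 = \pi(\zeta)(D-1)$. The proportionality $\sigma_k \asymp \tau_k$ makes $\ell_k(e_1) \asymp \ell_k(e_{D-1})$, so $\mathbf{I}(\{\ell_k\}) = \{1, D-1\}$ and $\{\ell_k\}$ is non-singular (Definition~\ref{def_l_ksingular}); a direct analysis of \eqref{eq_entropy} yields $\mathfrak{h}(\ell_k)\sigma_k \to c_\ast$ for an explicit constant $c_\ast > 0$. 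Continuity of $\hat\pi$ and the stretching-invariance of $\mathbb{P}\mathcal{T}_D^\ast$ force $\hat\pi([f_k]) = \hat\pi([g_k]) \to \hat\pi([g]) = \zeta$, so once the sequence is shown to be Cauchy its equivalence class is automatically mapped to $\zeta$ under $\phi$.

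The remaining step---verifying the Cauchy property---is the core of the proof. For large $k_0$ I would bound $\rho_D([f_{k_0}], [f_{k_0+m_0}])$ by the length of an explicit path $\gamma = \gamma_{\mathrm{str}} \cup \gamma_{\mathrm{pert}}$: a pure stretching segment (varying $\sigma$ with $[g_{k_0}]$ fixed) followed by a small perturbation segment (from $[g_{k_0}]$ to $[g_{k_0+m_0}]$ at $\sigma = \sigma_{k_0+m_0}$), with any twist discrepancy absorbed using Corollary~\ref{coro:twist}. Proposition~\ref{equ:same} reduces each piece to a $\rho_{\mathfrak{h},\mathcal{R}}$-length in $\mathfrak{M}^1_\mathcal{R}$. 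On $\gamma_{\mathrm{str}}$ only $\ell(e_1)$ varies; implicit differentiation of \eqref{eq_entropy} together with $\mathfrak{h}\sigma \to c_\ast$ should yield the cancellation $\dot{\hat\ell}(e_1) \to 0$ on the short edge while $\dot{\hat\ell}(e_j) \asymp |\dot\sigma|/\sigma^2$ on long edges $j \notin \{1, D-1\}$. Inserting these into \eqref{eq_gammadotgamma} and using $\exp(-\hat\ell(e_j)) \asymp \exp(-c'/\sigma)$ on long edges, I expect a bound of the form $\|(\gamma_{\mathrm{str}}, \dot\gamma_{\mathrm{str}})\| \le C|\dot\sigma|/\sigma^2 \cdot \exp(-C'/\sigma)$, whose integral in $\sigma$ is $O(\exp(-C'/\sigma_{k_0})) \to 0$; the segment $\gamma_{\mathrm{pert}}$ contributes $o(1)$ via Proposition~\ref{lem:entropy-der}.

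The hard part will be the norm estimate along $\gamma_{\mathrm{str}}$. Unlike Proposition~\ref{prop:X}, where $\ell(e_1) \to \infty$ supplied the exponential decay directly via $\exp(-\hat\ell(e_1))$, here $\ell(e_1), \ell(e_{D-1}) \to 0$, so the short-edge factors $\exp(-\hat\ell(e_1)), \exp(-\hat\ell(e_{D-1}))$ stay $O(1)$ and the required decay must be extracted from the long edges. The two-short-edges structure $|\mathbf{I}| = 2$ is essential: it keeps the denominator of \eqref{eq_gammadotgamma} bounded below (through the $O(1)$ contributions of $\hat\ell(e_1), \hat\ell(e_{D-1}) \to c_\ast$) while allowing the numerator to inherit $\exp(-c'/\sigma)$ from the long edges. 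Making this delicate balance between the cancellation on the short edges and the decay on the long edges robust along the entire stretching arc is the main obstacle.
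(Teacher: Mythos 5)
Your construction of the sequence $\{[f_k]\}$ is sound and amounts to the same thing as the paper's: both produce sequences with $\ell_k(e_1) \asymp \ell_k(e_{D-1}) \to 0$, $\ell_k(e_j) \to \pi(\zeta)(j) > 0$ for $2 \le j \le D-2$, and $\mathbf{I}(\{\ell_k\}) = \{1, D-1\}$, landing in case (2b) of Proposition~\ref{lem:entropy-infinite}. Your observation that the exponential decay in \eqref{eq_gammadotgamma} must come from the long edges rather than from $\hat\ell(e_1)$, which is the new feature relative to the $\ell(e_1) \to \infty$ situation of Lemma~\ref{lem:Y} and Proposition~\ref{prop:X}, is also the right thing to highlight.

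However, the path decomposition $\gamma = \gamma_{\mathrm{str}} \cup \gamma_{\mathrm{pert}}$ is flawed in a way that makes the Cauchy estimate fail as stated. On the pure stretching segment $\gamma_{\mathrm{str}}$ from $[f_{k_0}] = \sigma_{k_0}\cdot[g_{k_0}]$ to $\sigma_{k_0+m_0}\cdot[g_{k_0}]$, the ratio coordinates $\ell_t(e_j)$, $2 \le j \le D-1$, are \emph{frozen}: stretching scales all critical heights by the same factor, so $\ell_t(e_{D-1}) = \tau_{k_0}/h_1(g_{k_0}) \asymp \sigma_{k_0}$ is constant while $\ell_t(e_1) = \sigma(t) h_1(g_{k_0})$ shrinks from $\asymp \sigma_{k_0}$ down to $\asymp \sigma_{k_0+m_0}$. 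For $m_0$ large the endpoint of $\gamma_{\mathrm{str}}$ therefore has $\ell(e_1) \asymp \sigma_{k_0+m_0} \ll \sigma_{k_0} \asymp \ell(e_{D-1})$, which puts it in the \emph{singular} regime: $\mathbf{I} = \{1\}$, $\mathfrak{h}(\ell) = o(1/\ell(e_1))$ by Proposition~\ref{prop:I}(2), and so $\hat\ell(e_1) \to 0$ as $m_0 \to \infty$ (with $k_0$ fixed). By Proposition~\ref{prop_714} this endpoint is arbitrarily far from the base point $\hat\ell_0$ in $(\mathfrak{M}^1_{\mathcal{R}}, \rho_{\mathfrak{h},\mathcal{R}})$; since $\rho_{\mathfrak{h},\mathcal{R}}(\hat\ell_0, \hat\ell_B([f_{k_0}]))$ is bounded (the base length functions of $\{[f_k]\}$ converge in $\widehat{\mathfrak{M}}^1_{\mathcal{R}}$ by Theorem~\ref{thm:com}), the triangle inequality and Proposition~\ref{equ:same}/Corollary~\ref{coro:greater} force $\mathfrak{L}_D(\gamma_{\mathrm{str}}) \to \infty$ as $m_0 \to \infty$. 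So this particular path has unbounded length and cannot be used to establish $\rho_D([f_{k_0}],[f_{k_0+m_0}]) < \epsilon$ uniformly in $m_0$.

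The fix, and the point of the paper's terse ``similar argument as in Lemma~\ref{lem:Y}'', is to \emph{not} split stretching from perturbation but rather to connect $[f_{k_0}]$ to $[f_{k_0+m_0}]$ by a single height line segment that stays on the lift of $\tilde L$ inside $\mathcal{S}_D$. On that curve the constraint $h_{D-1} = h_1^2$ (equivalently $h_j/h_1 \equiv a_j$ for $2 \le j \le D-2$) is preserved throughout, which keeps $\ell_t(e_1) = \ell_t(e_{D-1}) = h_1(\gamma(t))$ decreasing together; one never leaves the non-singular regime $\ell(e_1) \asymp \ell(e_{D-1})$. Then the estimate you correctly anticipate---cancellation $\dot{\hat\ell}(e_1), \dot{\hat\ell}(e_{D-1}) \to 0$ obtained by differentiating \eqref{eq_entropy}, combined with $\exp(-\hat\ell(e_j)) \to 0$ for the long edges $j \notin \{1, D-1\}$---goes through as in Lemma~\ref{lem:Y}. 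The lesson is that in this degeneration the two ``small'' edges $e_1, e_{D-1}$ must be coupled along the connecting path; a pure stretching decouples them and sends you to infinity.
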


\begin{proof}[Proof of Lemma \ref{lem:Y}]
Let $\zeta\in\mathbb{P}\mathcal{T}_D^\ast\setminus(\mathbb{P}\mathcal{ST}_D^\ast\cup\{\mathit{O}\})$. 
Write $\pi(\zeta)=(1,a_2,\dots, a_{i_0},0,\ldots,0)$ with $a_2\ge\ldots\ge a_{i_0}>0$ for some $2\le i_0\le D-3$ so that $\pi(\zeta)$ has at least 2 zero entries.
Consider a line segment $L\subset\mathbb{P}\mathcal{H}_D$ such that one endpoint of $L$ is $\pi(\zeta)$ and the rest of the segment is contained in $\mathbb{P}\mathcal{SH}_D$.
Let $\hat L\subset \mathbb{P}\mathcal{T}_D^\ast$ be the lift of $L$ under the map $\pi$ 
 so that $\hat{L}$ has an endpoint at $\zeta$; and then let $\tilde L\subset \mathbb{P}\mathcal{M}_D$ be a lift of $\hat L$ under the map $\hat \pi$ 
  so that $\tilde{L}$ is contained in $\mathbb{P}\mathcal{S}_D$ except for one endpoint.

Consider a sequence $\{[f_k]\}_{k \ge 0}\subset\mathcal{S}_D$ with $h_1(f_k)\to\infty$ as $k \to \infty$ and 
$$\mathcal{G}([f_k])=\left(h_1(f_k),a_2h_1(f_k), \dots, a_{i_0}h_1(f_k), 1,\dots, 1\right)$$ 
such that the sequence $\{[f_k]\}$ is contained in a lift of $\tilde L$ in $\mathcal{S}_D$. It follows that $\ell_k(e_1)\to\infty$ and $\ell_k(e_j)\to\pi(\zeta)(j)$ as $k \to \infty$ for any $2\le j\le D-1$. 

Now we show that $\{[f_k]\}_{k \ge 0}$ is Cauchy.  We first note that  $\mathfrak{h}(\ell_k)\asymp h_1(f_k)$ for each $k \ge 0$ by Proposition \ref{lem:entropy-infinite} (2). Let $k_0, m_0 \ge 0$ be integers.
Let $\gamma_{k_0}:[0,1]\to\mathcal{S}_D$ be a height line segment connecting $[f_{k_0}]$ and $[f_{k_0+m_0}]$, and  consider the base length function $\ell_{\gamma_{k_0}}$ for $\gamma_{k_0}$. 
 We first bound the derivative $\dot{\mathfrak{h}}(\ell_{\gamma_{k_0}(t)}).$ 
Applying a similar argument in the proof of Proposition \ref{prop:entropy-der}, we conclude that for $i_0+1\le j\le D-1$, 
$$\dot{\mathfrak{h}}(\ell_{\gamma_{k_0}(t)})\ell_{\gamma_{k_0}(t)}(e_j)+\mathfrak{h}(\ell_{\gamma_{k_0}(t)})\dot\ell_{\gamma_{k_0}(t)}(e_j)=o(1).$$ 
Hence 
$$\dot{\mathfrak{h}}(\ell_{\gamma_{k_0}(t)})=O\left(h_1(\gamma_{k_0}(t))\right)=O\left(\mathfrak{h}(\ell_{\gamma_{k_0}(t)})\right).$$
Applying \eqref{eq_gammadotgamma}, we conclude that there exist $C_1, C_2>0$, independent of $k_0\gg 1$ and $m_0$, and $C_3=C_3(k_0)\to 0$, as $k_0\to\infty$, such that 
 $$||(\gamma_{k_0}(t),\dot{\gamma}_{k_0}(t))||\le C_1\ell_{\gamma_{k_0}(t)}(e_1)\cdot\left(\ell_{\gamma_{k_0}(1)}(e_1)-\ell_{\gamma_{k_0}(0)}(e_1)\right) \cdot\exp\left({-C_2\ell_{\gamma_{k_0}(t)}(e_1)}\right)+C_3.$$
 Thus for any $\epsilon>0$, there exists $K>0$ such that $\rho_D([f_{k_0}], [f_{k_0+m_0}])<\epsilon$  for any $k_0\ge K$ and any $m_0\ge 0$. Hence $\{f_k\}$ is Cauchy. 
\end{proof}

\begin{proof}[Proof of Lemma \ref{lem:Z}]
Let $\zeta\in\mathbb{P}\mathcal{T}_D^\ast\setminus(\mathbb{P}\mathcal{ST}_D^\ast\cup\{\mathit{O}\})$. Write $\pi(\zeta)=(1,a_2,\dots, a_{D-2},0)$ with $a_2\ge\ldots\ge a_{D-2}>0$.  Consider the line segment $L$ and its lifts $\hat{L}, \tilde{L}$ as in the proof of Lemma \ref{lem:Y}. Let $\{[f_k]\}_{k\ge0}\subset\mathcal{S}_D$ be a sequence with $h_1(f_k)\to0$ and 
$$\mathcal{G}([f_k])=\left(h_1(f_k),a_2h_1(f_k), \dots, a_{D-2}h_1(f_k), h_1(f_k)^2\right)$$ 
such that $\{[f_k]\}_{k\ge0}$ is contained in a lift of $\tilde L$ in $\mathcal{S}_D$.  Applying a similar argument as in Lemma \ref{lem:Y}, we obtain the conclusion. 
\end{proof}

\begin{remark}\label{rmk:Y} 
Consider any uniformly divergent sequence $\{[f_k]\}_{k\ge 0}\subset\mathcal{S}_D$ such that $\ell_k(e_1)\not\to 0$, $\ell_k(e_{D-1})\to 0$ and  $\ell_k(e_{D-1})=o(\ell_k(e_j))$ for any $1\le j\le D-2$. Then by Proposition \ref{prop_714} and Corollary \ref{coro:greater}, we have 
$$\rho_D([f_{k_0}], [f_{k_0+m_0}])\to\infty,$$ 
as $m_0\to\infty$, for any fixed $k_0\ge 0$. Thus for any  $\zeta\in\mathbb{P}\mathcal{T}_D^\ast\setminus(\mathbb{P}\mathcal{ST}_D^\ast\cup\{\mathit{O}\})$ with only 1 zero entry in $\pi(\zeta)$, there is no Cauchy sequences $\{[f_k]\}_{k\ge 0}\subset\mathcal{S}_D$ with  $\ell_k(e_1)\to\infty$ and $\ell_k(e_j)\to\pi(\zeta)(j)$ as $k \to \infty$ for any $2\le j\le D-1$. 
\end{remark}

We now prove the continuity of $\phi$.  We 
have the following observation about the topology on $\mathcal{S}_D$. 
\begin{lemma}
For $D \ge 2$, the topology on $\mathcal{S}_D$ induced by the metric $\rho_D$  is equivalent to the quotient topology on $\mathcal{S}_D$ obtained from the uniform convergence topology on $\mathrm{Poly}_D$. 
\end{lemma}

For an open subset $U$ of $\mathbb{P}\mathcal{T}_D^\ast \setminus \{O\}$, let $V := U\cap\mathbb{P}\mathcal{ST}_D^\ast$ and consider $V_1:=\hat\pi^{-1}(V)\subset\mathbb{P}\mathcal{S}_D$ . 
 Consider the preimage $V_2\subset\mathcal{S}_D$ of $V_1$ under the quotient map $\mathcal{S}_D\to\mathbb{P}\mathcal{S}_D$. It follows that $V_2$ is open. Moreover, the following lemma follows immediately from the definition of $V_2$. 
\begin{lemma}
Fix the notion as above. Let $\{[f_k]\}_{k\ge 0}\subset\mathcal{S}_D$ be a divergent sequence which is Cauchy. Then $\zeta_{\{[f_k]\}}\in U$ if and only if $[f_k]\in V_2$ for sufficiently large $k$. 
\end{lemma}
Observe that the equivalence classes of Cauchy sequences $\{[f_k]\}_{k\ge 0}\subset\mathcal{S}_D$ that are divergent  with $[f_k]\in V_2$ for sufficiently large $k$ form an open subset in $\widehat{\mathcal{S}}_D\setminus\mathcal{S}_D$. This implies that $\phi$ is continuous. 

Moreover, by Proposition \ref{prop:X}, Lemma \ref{lem:Y} and Remark \ref{rmk:Y}, statements (2a) and (2b) hold. This completes the proof of Theorem \ref{thm:X}. \qed

To end this section, we slightly generalize Lemma \ref{lem:Z} as follows; its proof is an analog of the argument in Lemma \ref{lem:Z}, so we omit it here. 
\begin{proposition}\label{prop:10}
Let $\zeta\in\mathbb{P}\mathcal{T}_D^\ast\setminus(\mathbb{P}\mathcal{ST}_D^\ast\cup\{\mathit{O}\})$ be such that $\pi(\zeta)$ has only 1 zero entry. Suppose that  $\{[f_k]\}_{k\ge 0}\subset\mathcal{S}_D$ is a uniformly divergent sequence satisfying the following:
\begin{enumerate}
\item passing to quotients $\mathcal{M}_d\to\mathbb{P}\mathcal{M}_d\to\mathbb{P}\mathcal{T}_d^\ast$, it converges to $\zeta$, as $k \to\infty$, and 
\item its associated sequence $\{\ell_{k}\}$ of base length functions satisfies $\ell_k(e_1)\to0$, as $k \to\infty$. 
\end{enumerate} 
 If there exists $a\in\mathbb{R}_{>0}$ such that $\ell_k(e_1)/\ell_k(e_{D-1})\to a$, as $k\to\infty$, then $\{[f_k]\}$ is a Cauchy sequence. 
\end{proposition}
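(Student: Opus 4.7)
The plan is to adapt the strategy of Lemma \ref{lem:Z}: for any sufficiently large $k_0$ and any $m_0 \ge 0$, I would construct an explicit (piecewise) height line segment $\gamma_{k_0}: [0,1] \to \mathcal{S}_D$ connecting $[f_{k_0}]$ to $[f_{k_0+m_0}]$ using Proposition \ref{prop:line} (with Corollary \ref{coro:twist} absorbing any twist component), parametrized by linear interpolation of the heights $h_1, \ldots, h_{D-1}$; then show that $\int_0^1 ||(\gamma_{k_0}(t),\dot\gamma_{k_0}(t))||\,dt \to 0$ uniformly in $m_0$ as $k_0 \to \infty$.

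First I would establish the entropy asymptotics. The hypotheses imply that $\{\ell_k\}$ is a non-singular uniformly divergent sequence with $\mathbf{I}(\{\ell_k\}) = \{1, D-1\}$: indeed $\ell_k(e_1) \asymp \ell_k(e_{D-1}) \to 0$ with ratio tending to $a$, $\ell_k(e_j) \to \pi(\zeta)(j) > 0$ for $2 \le j \le D-2$, and $\ell_k(e_j) = 1$ for $D \le j \le 2D-2$. By Proposition \ref{prop:I}\,(1), $\mathfrak{h}(\ell_k) \asymp 1/h_1(f_k) \to \infty$; and along $\gamma_{k_0}$ the normalized quantities $\alpha(t) := \mathfrak{h}(\ell_t)\ell_t(e_1)$ and $\beta(t) := \mathfrak{h}(\ell_t)\ell_t(e_{D-1})$ stay uniformly bounded and converge uniformly in $t$ to positive constants $\alpha_\infty, \beta_\infty$ determined by $1 = e^{-\alpha_\infty}+e^{-\beta_\infty}+3\,e^{-\alpha_\infty-\beta_\infty}$ together with $\alpha_\infty/\beta_\infty = a$.

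Next, differentiating the entropy equation \eqref{eq_entropy} along $\gamma_{k_0}$ and isolating the dominant terms indexed by $\emptyset \ne S \subseteq \{1, D-1\}$ (the remaining terms contribute $o(1)$, since their exponential factor $e^{-\mathfrak{h}\ell(S)}$ decays like $e^{-c\mathfrak{h}}$ for some $c > 0$ and dominates the at-most-polynomial growth of the accompanying $\dot{\hat\ell}(S)$), one obtains the asymptotic relation
$$X\,e^{-\alpha(t)} + Y\,e^{-\beta(t)} + 3(X+Y)\,e^{-\alpha(t)-\beta(t)} = o(1),$$
where $X := \dot{\hat\ell}_{\gamma_{k_0}(t)}(e_1)$ and $Y := \dot{\hat\ell}_{\gamma_{k_0}(t)}(e_{D-1})$. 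The linear interpolation of the $h_j$'s along $\gamma_{k_0}$ together with the hypothesis $\ell_k(e_1)/\ell_k(e_{D-1}) \to a$ (i.e.\ $h_{D-1}(f_k) = h_1(f_k)^2/a + o(h_1(f_k)^2)$) gives the complementary proportionality $Y = X/a + o(1)$; combining this with the previous display and the strict positivity of $e^{-\alpha_\infty} + e^{-\beta_\infty}/a + 3\,e^{-\alpha_\infty-\beta_\infty}(1+1/a)$ forces $X, Y = o(1)$ uniformly in $t$ as $k_0 \to \infty$.

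Substituting these estimates into formula \eqref{eq_gammadotgamma}, the dominant contributions to the numerator (from the simplices in the cycle complex supported on $\{e_1, e_{D-1}\}$) become $o(1)$ while the denominator stays bounded away from $0$; the subdominant contributions, though $\dot{\hat\ell}(S)$ can be as large as $O(\mathfrak{h})$, are exponentially suppressed by the factor $e^{-\hat\ell(\Delta)} = O(e^{-c\mathfrak{h}})$. Hence $||(\gamma_{k_0}(t),\dot\gamma_{k_0}(t))|| = o(1)$ uniformly in $t$, and integrating yields $\rho_D([f_{k_0}], [f_{k_0+m_0}]) \to 0$ uniformly in $m_0$, proving that $\{[f_k]\}$ is Cauchy. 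The main obstacle is verifying the proportionality $Y = X/a + o(1)$ in sufficient precision for general sequences satisfying the hypotheses rather than only those lying exactly on the curve $h_{D-1} = h_1^2/a$; this requires tracking the error terms coming from the deviation of $\ell_k(e_1)/\ell_k(e_{D-1})$ from its limit $a$ and the deviation of $h_{D-1}(t)/h_1(t)^2$ from $1/a$ under linear interpolation.
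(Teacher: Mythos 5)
Your overall plan mirrors the paper's: reduce Proposition \ref{prop:10} to the argument of Lemma \ref{lem:Z}. You correctly identify $\mathbf{I}(\{\ell_k\})=\{1,D-1\}$, the entropy asymptotics $\mathfrak{h}(\ell_k)\asymp 1/h_1(f_k)\to\infty$, the limiting equation $1 = e^{-\alpha_\infty}+e^{-\beta_\infty}+3e^{-\alpha_\infty-\beta_\infty}$, the dominant-term relation $Xe^{-\alpha}+Ye^{-\beta}+3(X+Y)e^{-\alpha-\beta}=o(1)$, and the exponential suppression of the remaining cycle-complex terms. But the ``main obstacle'' you flag at the end is a genuine gap that your argument does not close, and your claim that linear interpolation of the heights yields $Y=X/a+o(1)$ is in fact false.

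Concretely, if you interpolate $h_1(t)=(1-t)h_1(f_{k_0})+t\,h_1(f_{k_0+m_0})$ and $h_{D-1}(t)=(1-t)h_{D-1}(f_{k_0})+t\,h_{D-1}(f_{k_0+m_0})$ linearly, then $h_1(t)^2/h_{D-1}(t)$ is a quadratic over a linear and does not stay near $a$. For endpoints exactly on the parabola with $h_1(f_{k_0+m_0})=3h_1(f_{k_0})$, one checks $h_1(1/2)^2/h_{D-1}(1/2)=4a/5$. Since Cauchyness requires estimates uniform over $m_0$, the ratio of endpoint heights is unbounded and the distortion is not $o(1)$; so $\dot\ell_t(e_{D-1}) \neq \dot\ell_t(e_1)/a + o(\dot\ell_t(e_1))$ along your path, and the chain $X,Y=o(1)$ does not follow.

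The paper bypasses this entirely by applying Corollary \ref{coro:twist} in a stronger way than you do (you use it only to absorb twist components). Since $\ell_k(e_j)=\pi(\zeta)(j)(1+o(1))$ for $2\le j\le D-1$ and $\ell_k(e_1)=a\,\ell_k(e_{D-1})(1+o(1))$, one may pick $[g_k]$ with the \emph{exact} relations $\ell_{[g_k]}(e_j)=\pi(\zeta)(j)$ and $\ell_{[g_k]}(e_1)/\ell_{[g_k]}(e_{D-1})=a$ and $h_1(g_k)=h_1(f_k)$; this $\{[g_k]\}$ lies in a height small o neighborhood of $\{[f_k]\}$, so $\rho_D([f_k],[g_k])\to 0$ and the two sequences have the same Cauchy status. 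For the cleaned-up $\{[g_k]\}$ one takes the height path along the lift of the line segment $L\subset\mathbb{P}\mathcal{H}_D$ as in Lemmas \ref{lem:Y}--\ref{lem:Z}, which \emph{does} preserve the exact ratios, and then your proportionality holds on the nose and the rest of the estimate closes. Without the reduction and the correct choice of curve your argument does not go through as written.
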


\section{Proof of Theorem \ref{coro:tree}} \label{sec:corolalry}
In this section, we establish Theorem \ref{coro:tree}. Statement (1) follows from Theorem \ref{thm:X} (1). The ``if" part of statement (2) follows from Lemma \ref{lem:existence-uni} and Proposition \ref{prop:infinite-length}. 
Thus it suffices to prove the ``only if" part of statement (2). This part follows from the following result. 

\begin{proposition}\label{prop:deg-con}
For $D\ge 3$, let $\{[f_k]\}_{k\ge 0}\subset\mathcal{S}_D$ be a  degenerating and uniformly divergent sequence. Suppose that $h_{D-1}(f_k)\asymp h_{D-2}(f_k)$.  Then there exists $C>0$ such that $\rho_D([f_0],[f_k])<C$ for all $k\ge0$. 
\end{proposition}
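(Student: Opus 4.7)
The plan is to construct, for each $k$, an explicit piecewise generic curve from $[f_0]$ to $[f_k]$ whose entropy length is uniformly bounded in $k$. The key tool is an exponential-decay estimate along stretching trajectories, analogous to the one in the proof of Proposition \ref{prop:X}, made available by the \emph{non-singularity} of the base length functions $\{\ell_k := \ell_B([f_k])\}$.

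First I check non-singularity: the hypothesis $h_{D-1}(f_k) \asymp h_{D-2}(f_k)$ combined with uniform divergence forces the indices $D-2$ and $D-1$ to lie either both inside or both outside $\mathbf{I}(\{\ell_k\})$, so $|\mathbf{I}(\{\ell_k\})| \ne 1$. By Corollary \ref{coro:singular} and Proposition \ref{lem:entropy-infinite}(2a), this gives $\mathfrak{h}(\ell_k) \asymp h_1(f_k)/h_{D-1}(f_k)$. Passing to a subsequence, I may assume $h_j(f_k)/h_1(f_k) \to a_j \in [0,1]$ for each $2 \le j \le D-1$, with $a_{D-1}$ and $a_{D-2}$ either both positive or both zero.

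Next, set $[f_k'] := [f_k]^{h_1(f_0)/h_1(f_k)}$, the Branner--Hubbard stretching of $[f_k]$ that brings its first critical height down to $h_1(f_0)$, and split
$$\rho_D([f_0],[f_k]) \le \rho_D([f_0],[f_k']) + \rho_D([f_k'],[f_k]).$$
The trajectory $s \mapsto [f_k]^s$ for $s \in [h_1(f_0)/h_1(f_k),1]$ is a height line segment realizing the second summand. By Proposition \ref{equ:same}, its length equals that of its image curve $\hat\ell^s = \mathfrak{h}(\ell^s)\ell^s$ in $\mathfrak{M}^1_{\mathcal{R}}$, in which only $\ell^s(e_1) = s h_1(f_k)$ varies. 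Differentiating \eqref{eq_entropy} in $s$ exhibits $|d\mathfrak{h}_s/ds|$ as a ratio whose numerator $\sum_{S\ni 1}(2|S|-1)\exp(-\mathfrak{h}_s \ell^s(S))$ decays exponentially, while the denominator stays bounded below using non-singularity. Substituting into \eqref{eq_gammadotgamma} and separating the sums by whether $S\ni 1$ yields $||(\hat\ell^s,d\hat\ell^s/ds)||_{\mathfrak{h},\mathcal{R}} = O(\exp(-c\,\mathfrak{h}_s s h_1(f_k)))$ for some $c>0$ independent of $k$, whose integral in $s$ is uniformly bounded by a constant $C_2$ (compare the end of the proof of Proposition \ref{prop:X}).

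For $\rho_D([f_0],[f_k'])$, the family $\{[f_k']\}$ has fixed first critical height $h_1(f_0)$ and relative heights converging in $[0,1]^{D-2}$. When all $a_j>0$, the sequence $\{[f_k']\}$ stays in a compact subset of $\mathcal{S}_D$, giving a uniform bound by continuity of $\rho_D$. When some $a_j = 0$, the $[f_k']$ approach the boundary of $\mathcal{S}_D$; however, the $\asymp$ relation is preserved under stretching, so $\{\ell_B([f_k'])\}$ remains non-singular, and the same exponential-decay mechanism (applied to connecting paths supplied by Lemma \ref{lem:finite}, Proposition \ref{prop:line} and Corollary \ref{coro:twist} within the slice $\{h_1 \le h_1(f_0)\}$) again gives a uniform bound $C_1$. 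Combining, $\rho_D([f_0],[f_k]) \le C_1 + C_2$.

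\textbf{Main obstacle.} The principal technical difficulty is the uniform-in-$k$ control of the constants entering the exponential decay and of the lower bound on the denominator of \eqref{eq_gammadotgamma}, especially in the case $a_{D-1}=0$ where the entropy $\mathfrak{h}(\ell^s)$ itself blows up. A careful splitting of the sums over subsets $S\subseteq\{1,\dots,2D-2\}$ is required to verify that the non-$\{1\}$ contributions to the denominator remain bounded away from zero and that the products $\mathfrak{h}_s \ell^s(e_j)$ for $j\ge 2$ stay uniformly controlled. This mirrors and refines the estimate carried out in Proposition \ref{prop:X}, but with the added subtlety of tracking how every constant depends on the limiting relative heights $\{a_j\}$.
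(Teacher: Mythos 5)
Your decomposition $\rho_D([f_0],[f_k]) \le \rho_D([f_0],[f_k']) + \rho_D([f_k'],[f_k])$ via the stretched intermediate point $[f_k']$ is a genuine alternative to the paper's construction, which instead builds a single piecewise height line segment $\gamma_k = \gamma_k^{(1)}\cup\gamma_k^{(2)}\cup\gamma_k^{(3)}$ passing through a polynomial whose critical heights are $\mathcal{G}([f_k])$ and then through one whose critical heights are all equal to $h_1(f_k)$. Your treatment of the stretching piece $\rho_D([f_k'],[f_k])$ via the exponential-decay estimate from differentiating \eqref{eq_entropy} is essentially the same mechanism the paper uses for the analogous piece $\gamma_k^{(1)}$ (and which appears already in Lemma \ref{lem:Y} and Proposition \ref{prop:X}), and is correct; the non-singularity verification via the $\asymp$ hypothesis is also correct.

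However, there is a gap in the slice piece $\rho_D([f_0],[f_k'])$ when some $a_j = 0$. You write that ``the same exponential-decay mechanism again gives a uniform bound,'' but that mechanism cannot apply: along a height line segment from $[f_0]$ to $[f_k']$ inside $\{h_1 = h_1(f_0)\}$, the normalized coordinates $\hat\ell(e_{D-1})$ and $\hat\ell(e_{D-2})$ remain bounded (they are $\asymp 1$ by non-singularity), so the simplices $\Delta$ supported on $\{e_{D-2},e_{D-1}\}$ contribute terms to \eqref{eq_gammadotgamma} with no exponential suppression whatsoever. The exponential decay you rely on controls only the terms involving edges whose $\hat\ell$-coordinate blows up (including $e_1$, since $\mathfrak{h}\to\infty$ forces $\hat\ell(e_1)=\mathfrak{h}h_1(f_0)\to\infty$), but it says nothing about the surviving $\{e_{D-2},e_{D-1}\}$-contributions, and these are precisely what must be integrated to bound the length. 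The correct tool for this piece is the metric completion machinery: the normalized base length functions $\hat\ell_B(\eta_k(t))$ converge, as $k\to\infty$, to a curve in $\widehat{\mathfrak{M}}^1_{\mathfrak{R}_{2D-2}}$, and one must invoke Proposition \ref{prop:completion-g} (the semi-norm extends continuously to $T\widehat{\mathfrak{M}}^1_{\mathfrak{R}_{2D-2}}$), Theorem \ref{thm:com}, and Proposition \ref{equ:same} to conclude the entropy lengths converge to a finite limit. This is exactly what the paper does for $\gamma_k^{(2)}$, and without it your bound on $C_1$ is unjustified. The citations you do give (Lemma \ref{lem:finite}, Proposition \ref{prop:line}, Corollary \ref{coro:twist}) supply paths but not length control. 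Once you replace ``exponential decay'' by the completion argument for the slice piece, your alternative decomposition does yield a complete proof.
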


Indeed, by the continuity of the map $\mathbb{P}\mathcal{T}_D\to\mathbb{P}\mathcal{H}_D$, any degenerating sequence in $\mathcal{S}_D$ that is convergent in $\overline{\mathcal{S}}_D$ is uniformly divergent. 

\begin{proof}[Proof of Proposition \ref{prop:deg-con}]
Note that $\mathcal{G}([f_0])\not=\mathcal{G}([f_k])$ for all sufficiently large $k$. To proceed the argument, we will first connect $[f_0]$ and $[f_k]$ by a piecewise height line segment $\gamma_k$ in $\mathcal{S}_D$ consisting of (at most) three subcurves such that each subcurve contains height line segments with the same slope, and  then show that $\rho_D(\gamma_k)$ is bounded uniformly in $k$.  The existence of subcurves in $\gamma_k$ can be seen from Proposition \ref{prop:line} and the map $\mathcal{M}_D \to \mathcal{T}_D^\ast$. 

We consider a  piecewise height line segment $\gamma^{(1)}_k:[0,1]\to\mathcal{S}_D$ such that $\gamma_k^{(1)}(0)=[f_0]$ and $\mathcal{G}(\gamma_k^{(1)}(1))=\mathcal{G}([f_k])$. Regarding $\mathcal{G}(\gamma_k^{(1)}(t))\subset\mathcal{H}_D\subset\mathbb{R}^{D-1}$, we can set $\dot{\ell}_{\gamma^{(1)}_k(t)}$ to be constant in $t$. 

If $\gamma_k^{(1)}(1)$ and $[f_k]$ are contained in the same component of $\mathcal{G}^{-1}(\mathcal{G}([f_k]))$, we in fact can choose $\gamma^{(1)}_k$ such that $\gamma_k^{(1)}(1)=[f_k]$ by Proposition \ref{prop:line} (2). In this case, we set $\gamma_k=\gamma_k^{(1)}$.

If $\gamma_k^{(1)}(1)$ and $[f_k]$ are not in the same component of $\mathcal{G}^{-1}(\mathcal{G}([f_k]))$, then we consider two more curves as follows.  Consider a  piecewise height line segment $\gamma^{(2)}_k:[0,1]\to\mathcal{S}_D$ such that $\gamma_k^{(2)}(0)=\gamma_k^{(1)}(1)$ and each entry of $\mathcal{G}(\gamma_k^{(2)}(1))$ is $h_1(f_k)$. Consider a  piecewise height line segment $\gamma^{(3)}_k:[0,1]\to\mathcal{S}_D$ such that $\gamma_k^{(3)}(0)=\gamma_k^{(2)}(1)$ and $\gamma_k^{(3)}(1)=[f_k]$. Again, we can set both $\dot{\ell}_{\gamma^{(2)}_k(t)}$  and  $\dot{\ell}_{\gamma^{(3)}_k(t)}$  to be constant in $t$. 

We set $\gamma_k \defeq \cup_{i=1}^3\gamma^{(i)}_k$, and observe that $\mathfrak{L}_D(\gamma^{(2)}_k)=\mathfrak{L}_D(\gamma^{(3)}_k)$. Thus to establish the existence of the constant $C$, it suffices to show that both $\mathfrak{L}_D(\gamma^{(1)}_k)$ and $\mathfrak{L}_D(\gamma^{(2)}_k)$ have upper bounds, independent of $k$.  For $\gamma^{(1)}_k$, we have that $\dot{\ell}_{\gamma^{(1)}_k(t)}(e_j)$ is bounded for each $2\le j\le D-1$. Thus we obtain $$\dot{\mathfrak{h}}(\ell_{\gamma^{(1)}_{k}(t)})\ell_{\gamma^{(1)}_{k}(t)}(e_j)+\mathfrak{h}(\ell_{\gamma^{(1)}_{k}(t)})\dot\ell_{\gamma^{(1)}_{k}(t)}(e_j)=o(1)$$ for any $j$ (if exists)  with $\mathfrak{h}(\ell_{\gamma^{(1)}_{k}(t)})\ell_{\gamma^{(1)}_{k}(t)}(e_j)$ converging in $\mathbb{R}_{>0}$ for each $t$, as $k\to\infty$, and that $\dot{\mathfrak{h}}(\ell_{\gamma^{(1)}_{k}(t)})=O(\mathfrak{h}(\ell_{\gamma^{(1)}_{k}(t)}))$ (see the proof of Lemma \ref{lem:Y}). Then applying \eqref{eq_gammadotgamma}, we conclude that $||(\gamma^{(1)}_k(t),\dot{\gamma}^{(1)}_{k}(t))||$ has an upper bound that is uniform for both $k$ and $t$. Thus  $\mathfrak{L}_D(\gamma^{(1)}_k)$ has an upper bound, independent of $k$. For $\gamma^{(2)}_k$, denote by 
$\hat \ell_\infty(0)$ and $\hat \ell_\infty(1)$ the limits of $\mathfrak{h}(\ell_{\gamma_k^{(2)}(0)})\ell_{\gamma_k^{(2)}(0)}$ and $\mathfrak{h}(\ell_{\gamma_k^{(2)}(1)})\ell_{\gamma_k^{(2)}(1)}$, respectively. 
By Proposition \ref{prop:completion-g}, Theorem \ref{thm:com} and Proposition \ref{equ:same}, we conclude that $\mathfrak{L}_D(\gamma_k^{(2)}(t))$ converges to the length of the line segment connecting  $\hat\ell_\infty(0)$ and $\hat\ell_\infty(1)$ in $\widehat{\mathfrak{M}}^1_{\mathfrak{R}_{2D-2}}$.  Thus  $\mathfrak{L}_D(\gamma^{(2)}_k)$ has an upper bound, independent of $k$. The conclusion follows.
\end{proof}

\section{The cubic shift locus}\label{sec:cubic}
In this section, we illustrate Theorem \ref{thm:X} for the shift locus $\mathcal{S}_3$. 
Let $\{[f_k]\}_{k\ge 0}$ be a uniformly divergent sequence in $\mathcal{S}_3$ such that $\mathcal{G}([f_i]) \neq \mathcal{G}([f_j])$ for any $i \neq j$. Assume that passing to the quotients $\mathcal{M}_3\to\mathbb{P}\mathcal{M}_3\to\mathbb{P}\mathcal{T}_3^\ast$, the sequence $\{[f_k]\}$ converges to a point in $\mathbb{P}\mathcal{T}_3^\ast$. Let $\ell_k$ be the base length function for $[f_k]$ in $\mathfrak{M}_{\mathfrak{R}_4}$, i.e.,
$$\ell_k(e_i)=
\begin{cases}
h_1(f_k)\ \ &\text{if}\ \ i=1;\\
h_2(f_k)/h_1(f_k)\ \ &\text{if}\ \ i=2;\\
1\ \ &\text{if}\ \ i=3, 4. 
\end{cases} $$

Let $\rho_3$ be the distance function on $\mathcal{S}_3$. The following result gives the asymptotic behavior of the distance $\rho_3([f_0],[f_k])$ as $k \to \infty$, which follows from Propositions \ref{lem:entropy-infinite}, \ref{prop:infinite-length}, \ref{prop:X} and \ref{prop:10}.

\begin{proposition} \label{lem_deg3}
With the above notations, we have the following cases:
\begin{enumerate}
\item Suppose $h_1(f_k)$ converges to a non-zero number in $\mathbb{R}_{>0}$ as $k \to \infty$. Then $h_2(f_k)$ converges to $0$ and $\rho_3([f_0], [f_k])\to\infty$ as $k\to\infty$.
\item Suppose $h_1(f_k)$ converges to $\infty$ as $k \to \infty$. 
\begin{enumerate}
\item If $h_2(f_k)=o(h_1(f_k))$, then $\rho_3([f_0], [f_k])\to\infty$ as $k\to\infty$.
\item If $h_2(f_k)/h_1(f_k)\to a$, as $k\to\infty$, for some $0<a\le 1$, then $\{[f_k]\}_{k\ge0}$ is Cauchy.
\end{enumerate}
\item Suppose $h_1(f_k)$ converges to $0$ as $k\to\infty$. 
\begin{enumerate}
\item If $h_2(f_k)=o(h_1(f_k)^2)$ or $h_1(f_k)^2=o(h_2(f_k))$, then  $\rho_3([f_0], [f_k])\to\infty$ as $k\to\infty$. 
\item If $h_2(f_k)/h_1(f_k)^2\to a$, as $k\to\infty$, for some $a\in\mathbb{R}_{>0}$, then $\{[f_k]\}_{k\ge0}$ is Cauchy.
\end{enumerate}
\end{enumerate}
\end{proposition}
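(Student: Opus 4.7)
The plan is to analyze each case by computing the index set $\mathbf{I}(\{\ell_k\})\subset\{1,2,3,4\}$ from Definition \ref{def_I}, and then to apply the indicated propositions according to whether $|\mathbf{I}|=1$ (singular, so Proposition \ref{prop:infinite-length} gives $\rho_3([f_0],[f_k])\to\infty$) or $|\mathbf{I}|\ge 2$ with enough control to apply Proposition \ref{prop:X} or Proposition \ref{prop:10} for the Cauchy conclusion. Throughout the argument I will exploit that $\ell_k(e_3)=\ell_k(e_4)=1$ and $\ell_k(e_2)\in(0,1]$ (since $h_2\le h_1$), which pins down most of the asymptotic comparisons among the $\ell_k(e_i)$'s.

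First I would dispose of cases (1), (2a), and (3a), in each of which I show $|\mathbf{I}(\{\ell_k\})|=1$. In case (1), since $\ell_k(e_1)=h_1(f_k)$ is bounded away from $0$ and $\infty$ and $\ell_k(e_3),\ell_k(e_4)$ are constant, uniform divergence forces $\ell_k(e_2)\to 0$ (as $\ell_k(e_2)\le 1$), so $h_2(f_k)\to 0$ and $\mathbf{I}=\{2\}$. In case (2a), $\ell_k(e_1)\to\infty$ and $\ell_k(e_2)=h_2/h_1\to 0$, so again $\mathbf{I}=\{2\}$. In case (3a), either $\ell_k(e_2)/\ell_k(e_1)=h_2/h_1^2\to 0$ giving $\mathbf{I}=\{2\}$, or $\ell_k(e_1)/\ell_k(e_2)=h_1^2/h_2\to 0$ giving $\mathbf{I}=\{1\}$. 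In all three situations Proposition \ref{prop:infinite-length} yields $\rho_3([f_0],[f_k])\to\infty$, with the accompanying entropy asymptotics controlled by Proposition \ref{lem:entropy-infinite}.

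For the Cauchy cases (2b) and (3b), the idea is to verify the hypotheses of Propositions \ref{prop:X} and \ref{prop:10} respectively. In case (2b), the assumption $h_2/h_1\to a\in(0,1]$ guarantees that the projective image of $\mathcal{G}([f_k])$ in $\mathbb{P}\mathcal{H}_3$ converges to $(1,a)\in\mathbb{P}\mathcal{SH}_3$; since by hypothesis $\hat{\pi}([f_k])$ converges to some $\zeta\in\mathbb{P}\mathcal{T}_3^\ast$ and $\pi(\zeta)=(1,a)$ has all positive entries, $\zeta\in\mathbb{P}\mathcal{ST}_3^\ast$. Combined with $h_1(f_k)\to\infty$ and $h_2(f_k)/h_1(f_k)\to a\in\mathbb{R}_{>0}$, this places $\{[f_k]\}$ exactly in the setting of Proposition \ref{prop:X}, which yields the Cauchy property. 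For case (3b), I compute $\pi(\zeta)=(1,0)$ (since $h_2/h_1=h_1\cdot(h_2/h_1^2)\to 0$), so $\zeta\in\mathbb{P}\mathcal{T}_3^\ast\setminus(\mathbb{P}\mathcal{ST}_3^\ast\cup\{O\})$ with exactly one zero entry. Uniform divergence is given, $\ell_k(e_1)=h_1(f_k)\to 0$, and $\ell_k(e_1)/\ell_k(e_{D-1})=\ell_k(e_1)/\ell_k(e_2)=h_1^2/h_2\to 1/a\in\mathbb{R}_{>0}$, so all hypotheses of Proposition \ref{prop:10} hold and it delivers the Cauchy conclusion.

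The main obstacle is essentially careful bookkeeping: correctly identifying $\mathbf{I}(\{\ell_k\})$ in each subcase, handling the borderline $a=1$ allowed in (2b), and verifying in (3b) that $\pi(\zeta)$ has exactly one zero entry so that Proposition \ref{prop:10} applies. No new techniques are needed beyond careful bookkeeping of the asymptotic ratios and direct invocation of the four cited propositions.
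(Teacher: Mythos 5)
Your proposal is correct and takes essentially the same route as the paper, which simply records that the proposition ``follows from Propositions \ref{lem:entropy-infinite}, \ref{prop:infinite-length}, \ref{prop:X} and \ref{prop:10}''; you have supplied exactly the case-by-case computation of $\mathbf{I}(\{\ell_k\})$ and verification of hypotheses that the paper leaves implicit.
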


%
%
%

\begin{corollary}\label{coro:in3}
The incomplete directions in $(\mathcal{S}_3,\rho_3)$ correspond to the equivalence classes of Cauchy sequences with (1) $h_1(f_k)\to\infty$ and $h_2(f_k)\asymp h_1(f_k)$  or (2) $h_2(f_k)\asymp h_1(f_k)^2$. 
\end{corollary}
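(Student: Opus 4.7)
The plan is to derive Corollary \ref{coro:in3} directly from Proposition \ref{lem_deg3}: I will identify incomplete directions in $(\mathcal{S}_3,\rho_3)$ with equivalence classes of Cauchy sequences that do not converge in $\mathcal{S}_3$, arrange a representative of each such class to fit the hypotheses of Proposition \ref{lem_deg3}, and then read off from that proposition which asymptotic regimes actually produce Cauchy sequences.

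First I would argue that if $\{[f_k]\}\subset\mathcal{S}_3$ is Cauchy and represents a point in $\widehat{\mathcal{S}}_3\setminus\mathcal{S}_3$, then $\{[f_k]\}$ must be divergent in $\mathcal{S}_3$, since any accumulation point in $\mathcal{S}_3$ together with the Cauchy property would force convergence there. By Lemma \ref{lem:existence-uni} I may pass to a uniformly divergent subsequence; since the distance between a Cauchy sequence and any subsequence of it tends to $0$, this does not change the equivalence class in $\widehat{\mathcal{S}}_3$. By Proposition \ref{prop:cauchy} (3) I may further pass to a subsequence along which $\hat\pi([f_k])$ converges to a unique $\zeta\in\mathbb{P}\mathcal{T}_3^\ast$, and (using Corollary \ref{coro:twist} if necessary to absorb fibers into small-$o$ neighborhoods) I may thin further so that the values $\mathcal{G}([f_k])$ are pairwise distinct. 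The resulting subsequence then satisfies the hypotheses of Proposition \ref{lem_deg3}.

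Next I would run through the five cases of Proposition \ref{lem_deg3}. Cases (1), (2a), and (3a) each force $\rho_3([f_0],[f_k])\to\infty$, contradicting the Cauchy property, so these regimes cannot supply incomplete directions. Only cases (2b) and (3b) remain: (2b) gives $h_1(f_k)\to\infty$ and $h_2(f_k)/h_1(f_k)\to a\in(0,1]$, equivalently $h_1(f_k)\to\infty$ and $h_2(f_k)\asymp h_1(f_k)$; (3b) gives $h_1(f_k)\to 0$ and $h_2(f_k)/h_1(f_k)^2\to a\in\mathbb{R}_{>0}$, equivalently $h_2(f_k)\asymp h_1(f_k)^2$ (which already forces $h_1(f_k)\to 0$ since $h_2(f_k)\le h_1(f_k)$). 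This yields the two asymptotic types listed in the corollary. For the converse, Proposition \ref{lem_deg3} (2b) and (3b) assert that sequences of these two types are Cauchy, and neither can converge in $\mathcal{S}_3$ since $h_1(f_k)$ tends to $\infty$ or to $0$; so each represents a genuine point of $\widehat{\mathcal{S}}_3\setminus\mathcal{S}_3$.

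The argument is essentially bookkeeping on top of Proposition \ref{lem_deg3}; I do not expect any serious obstacle. The only mildly delicate step is the reduction to the hypotheses of Proposition \ref{lem_deg3}, where I need to know that successive subsequence extractions and collapsing within degenerating loose height small-$o$ neighborhoods preserve the Cauchy equivalence class, but these facts are already encoded in Lemma \ref{lem:existence-uni}, Proposition \ref{prop:cauchy}, and Corollary \ref{coro:twist}.
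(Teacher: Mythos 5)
Your proposal is correct and matches the paper's (implicit) approach: the paper presents Corollary \ref{coro:in3} as an immediate read-off from Proposition \ref{lem_deg3}, and your argument simply supplies the bookkeeping (divergence of non-convergent Cauchy sequences, uniform divergence from Proposition \ref{prop:cauchy}(2), convergence in $\mathbb{P}\mathcal{T}_3^\ast$ from Proposition \ref{prop:cauchy}(3)) needed to place a general Cauchy sequence into the hypotheses of that proposition and then rule out cases (1), (2a), (3a). One small inaccuracy: the parenthetical claim that $h_2(f_k)\asymp h_1(f_k)^2$ together with $h_2(f_k)\le h_1(f_k)$ forces $h_1(f_k)\to 0$ is not quite right — those two conditions only give $h_1(f_k)\lesssim 1$, i.e.\ boundedness; what forces $h_1(f_k)\to 0$ is the additional fact that the sequence is divergent (otherwise $\ell_k(e_1)$ and $\ell_k(e_2)$ would both stay bounded away from $0$ and $\infty$, so the sequence would lie in a compact set).
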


Let $\widehat{\mathcal{S}}_3$ be the metric completion of $(\mathcal{S}_3,\rho_3)$. The set $\mathcal{X}$ contains the equivalence classes of Cauchy sequences $\{[f_k]\}_{k\ge0}$ in $(\mathcal{S}_3,\rho_3)$ such that $h_1(f_k)\to\infty$ and $h_2(f_k)\asymp h_1(f_k)$. It gives a copy of $\mathbb{P}\mathcal{ST}_3^\ast$ in $\widehat{\mathcal{S}}_3$. The set $\mathcal{Y}$ equals $\mathcal{X}$ in this case (we mention that $\mathcal{X}\subsetneq\mathcal{Y}$ for any $D \ge 4$).
The set of equivalence classes of Cauchy sequences $\{[f_k]\}_{k\ge0}$ in $(\mathcal{S}_3,\rho_3)$ such that $h_2(f_k)\asymp h_1(f_k)^2$ is mapped by (\ref{eq_mapPhi}) continuously onto
 $\mathbb{P}\mathcal{T}_3^\ast\setminus\{\mathbb{P}\mathcal{ST}_3^\ast\cup\{O\}\}$ in $\widehat{\mathcal{S}}_3$. 


Figure \ref{Fig_1} gives an illustration of the metric completion $\widehat{\mathcal{S}}_3$. Given any $[f] \in \mathcal{S}_3$, since $\mathcal{G}([f])=(h_1(f),h_2(f))$ with $h_1(f) \ge h_2(f)>0$, the region underneath and including the ray $h_1= h_2$ and above the $h_1$-axis is the image $\mathcal{G}(\mathcal{S}_3)$. The incomplete directions in $(\mathcal{S}_3,\rho_3)$ are given by (the equivalence classes of) Cauchy sequences approaching the two 
 dots at $(0,0)$ and $(\infty,\infty)$ with manner as specified in Corollary \ref{coro:in3}. 
  All other directions are complete.

\begin{figure}[h]
\includegraphics[width=0.4\textwidth]{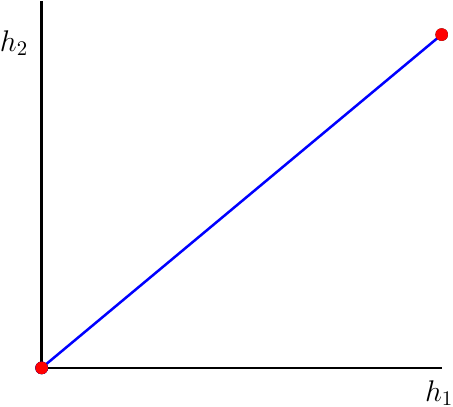}
\caption{A pictorial explanation of the metric completion $\widehat{\mathcal{S}}_3$. }
\label{Fig_1}
\end{figure}

\bibliographystyle{siam}
\bibliography{references}
\end{document}